\documentclass[a4paper,11pt]{amsart}
\usepackage{amssymb,mathrsfs}
\usepackage{amsthm}
\usepackage{amsmath}
\usepackage{latexsym}
\usepackage{fancyhdr}
\usepackage{ascmac}
\usepackage{color}
\usepackage[all]{xy}
\usepackage{amscd}

\setlength{\textwidth}{167mm}
\setlength{\textheight}{250mm}
\setlength{\topmargin}{-18mm}
\setlength{\oddsidemargin}{-1mm}
\setlength{\evensidemargin}{-1mm}

\theoremstyle{plain}
\newtheorem{thm}{Theorem}[section]
\newtheorem{prop}[thm]{Proposition}

\newtheorem{lem}[thm]{Lemma}
\newtheorem{dfn}[thm]{Definition}
\newtheorem{conj}[thm]{Conjecture}
\newtheorem{rmk}[thm]{Remark}
\pagestyle{plain}

\makeatletter

\@addtoreset{equation}{section}
\makeatother

\newcommand{\bQ}{\overline{\mathbb{Q}}}

\newcommand{\bZ}{\overline{\mathbb{Z}}}

\newcommand{\bF}{\overline{\mathbb{F}}}

\newcommand{\C}{\mathbb{C}}
\newcommand{\R}{\mathbb{R}}
\newcommand{\Q}{\mathbb{Q}}

\newcommand{\Z}{\mathbb{Z}}

\newcommand{\F}{\mathbb{F}}

\newcommand{\lra}{\longrightarrow}

\newcommand{\A}{\mathbb{A}}

\renewcommand{\O}{\mathcal{O}}

\newcommand{\br}{\overline{\rho}}

\newcommand{\diag}{{\rm diag}}
\newcommand{\ds}{\displaystyle}

\newcommand{\G}{\Gamma}

\newcommand{\uk}{\underline{k}}

\newcommand{\e}{\varepsilon}
\newcommand{\rr}{\overline{r}}
\newcommand{\ve}{\overline{\varepsilon}}
\newcommand{\bs}{\backslash}
\newcommand{\gs}{{\rm GSp}}
\newcommand{\T}{\overline{T}}
\newcommand{\op}{\overline{\psi}}
\newcommand{\oa}{\overline{a}}
\newcommand{\ob}{\overline{b}}
\newcommand{\ot}{\overline{\tau}}
\newcommand{\oc}{\overline{\chi}}
\newcommand{\wc}{\widetilde{\chi}}

\title[Serre weights for $GSp_4$ over totally real fields]
{Serre weights for $GSp_4$ over totally real fields}
\author{Takuya Yamauchi}
\keywords{Serre weights, Serre conjecture, automorphic representation for $GSp_4$, Galois representations}
\thanks{The author
is partially supported by JSPS KAKENHI Grant Number (B) No.19H01778}
\subjclass[2010]{11F, 11F33, 11F80}

\address{Takuya Yamauchi \\ 
Mathematical Inst. Tohoku Univ.\\
 6-3,Aoba, Aramaki, Aoba-Ku, Sendai 980-8578, JAPAN}
\email{takuya.yamauchi.c3@tohoku.ac.jp}

\begin{document}

\maketitle

\begin{abstract}
We prove the existence of a potentially diagonalizable lift of a given automorphic mod $p$ Galois representation 
$\br:{\rm Gal}(\overline{F}/F)\lra {\rm GSp}_4(\bF_p)$ for any totally real field $F$ 
and any rational prime $p>2$ under the adequacy condition by using automorphic lifting techniques developed by Barnet-Lamb, Gee, Geraghty, and Taylor. 
As an application, when $p$ is split completely in $F$,  
we prove a variant of Serre's weight conjecture for $\br$. 
The formulation of our Serre conjecture is done by following Toby Gee's philosophy.  
Applying these results to the case when $F=\Q$ with a detailed study of 
potentially diagonalizable, crystalline lifts with some prescribed properties, we also define 
classical (naive) Serre's weights. This weight would be the minimal weight among 
possible classical weights in some sense which occur in candidates of holomorphic 
Siegel Hecke eigen cusp forms of degree 2 with levels prime to $p$. 
The main task is to construct a potentially ordinary automorphic lift for $\br$ by 
assuming only the adequacy condition. The main theorems in this paper also extend many results  
obtained by Barnet-Lamb, Gee and Geraghty \cite{BGG1} for potentially ordinary lifts and 
Gee and Geraghty for companion forms \cite{gg}. 
\end{abstract}

\tableofcontents

\section{Introduction}
Let $\mathcal{G}$ be a connected reductive group over a number field $K$ and $p$ be a prime number. 
Fix an embedding $\overline{K}\hookrightarrow \C$ and an isomorphism $\bQ_p\simeq \C$. 
Let $\iota=\iota_p:\overline{K}\lra\bQ_p$ be am embedding which is compatible with $\overline{K}\hookrightarrow \C$ and 
$\bQ_p\simeq \C$ fixed right before. 
The number theorists have expected that any geometric $p$-adic representation 
$\rho_{p,\iota}:G_K:={\rm Gal}(\overline{K}/K)\lra {}^L \mathcal{G}(\bQ_p)\rtimes G_K$ such that 
the composition with the second projection becomes the identity map ${\rm Id}_{G_K}$ 
corresponds to a $C$-algebraic automorphic representation of $G(\A_K)$ where ${}^L \mathcal{G}$ is the Langlands $L$-group of $\mathcal{G}$ 
(cf. \cite{BG}). The problem asking the modularity (or the automorphy in other words) of $\rho_{p,\iota}$ 
would be nowadays divided into two major tasks which are firstly various modular lifting theorems symbolizing as ``$R=T$" type 
theorems and 
secondary the modularity (or the automorphy) of mod $p$ Galois representations which is called Serre conjecture for $\mathcal{G}$. 
In this decade, the modular (automorphy) lifting theorems have been developed for the groups $\mathcal{G}$ so that the Taylor-Wiles-Kisin system 
can be applicable. For example, such $\mathcal{G}$'s are symplectic groups, orthogonal groups, or unitary groups but ultimately 
everything known so far is reduced to the inner form $\mathcal{G}_n$ (introduced in \cite{CHT}) of ${\rm GL}_n\times {\rm GL}_1$ for some $n\ge 1$ by passing to well-known techniques including base changes and so on. 
The readers should consult with the currently definitive article \cite{BGGT}. 
To address Serre conjecture for $\mathcal{G}$, the emergency task would be to specify all information, as much as possible, of candidates of  
automorphic forms or automorphic representations which give rise to a given mod $p$ Galois representation $\br$ as above. 
In particular, to understand all possible weights conjecturally corresponding to $\br$ is now called (generalized) Serre weight conjecture and 
it is now understood and generalized in fully general settings  in the context of several philosophies 
compatible with mod $p$ local Langlands conjecture \cite{herzig}, \cite{bdj} and \cite{GHS} (see also many works referred therein) 
and also Gee's observation so that what kind of lifts we can lift to a crystalline lift \cite{gee}. 

Let us recall Serre conjecture for $GL_2/\Q$ which is the prototype due to J-P. Serre \cite{serre}. 
Let $f$ be an elliptic Hecke eigen cusp form of level $N$ and weight $k$ with character $\e$. 
It is well-known that for each prime $p$, one can associate $f$ with a mod $p$ Galois representation $\br_{f,p}:
G_\Q:={\rm Gal}(\bQ/\Q)\lra {\rm GL}_2(\bF_p)$ by Deligne et al. 
On the other hand, for any odd, irreducible Galois representation $\br:G_\Q\lra {\rm GL}_2(\bF_p)$, in a celebrated paper \cite{serre},  
J-P. Serre defined the three invariants consisting of  
weight $k(\br)$, level $N(\br)$, and character $\e(\br)$, respectively. 
In those definitions, the most elaborated one is the weight $k(\br)$ and others are easily defined from $\br$. 
We say such $\br$ is modular if there exists an elliptic Hecke eigen cusp form $f$ such that $\br\simeq \br_{f,p}$. 
There are a number of Hecke eigen cusp forms with the same property. So it is important to specify all possible weights and
 a minimal choice among the candidates. Serre conjectured if $\br$ is modular, then one can find 
an elliptic Hecke eigen cusp form $f$ with the weight $k(\br)$  
such that $\br\simeq \br_{f,p}$. 
Serre's modularity conjecture for any odd irreducible Galois representation $\br:{\rm Gal}(\bQ/\Q)\lra {\rm GL}_2(\bF_p)$ was proved by Khare-Wintenberger \cite{kw1},\cite{kw2}. In their proof, 
the weight reduction of Edixhoven's Theorem 4.5 of \cite{Edix} played an important role. Further, 
with Theorem 3.17 of \cite{bdj}, we have known all candidates of (regular) weights of 
modular forms give rise to a given mod $p$ Galois representation $\br$ and it is revealed that $k(\br)$ is 
the ``minimal" weight in some sense.  

In this paper, we will address Serre weight conjecture for $GSp_4$ over a totally real field $F$ 
formulated along Toby Gee's philosophy. The main ingredient is to give an improvement of automorphy lifting theorems.   
We will prepare some notation to explain our main results. 
 Let $GSp_4=GSp_J$ be the symplectic similitude group in $GL_4$ associated to $J=\begin{pmatrix} 0_2& s\\-s &0_2\end{pmatrix},\ 
s=\begin{pmatrix} 0& 1\\1 &0\end{pmatrix}$  with the similitude character $\nu:GSp_4\lra GL_1$. 
Let $\pi$ be an automorphic cuspidal representation of ${\rm GSp}_4(\A_F)$ whose infinite components are all 
discrete series representations or limits of discrete series representations. We say $\pi$ is regular if 
all infinite components are discrete series representations. 

Thanks to the works \cite{taylor-thesis},\cite{taylor-low},\cite{laumon},\cite{wei1},\cite{wei} 
for $F=\Q$ and \cite{Sor}, \cite{Mok} for general $F$, for each prime $p$ and 
$\iota=\iota_p:\bQ_p\stackrel{\sim}{\lra} \C$,  
there exists a semisimple $p$-adic Galois representation 
$\rho_{\pi,\iota_p}:G_F\lra {\rm GSp}_4(\bQ_p)$ which satisfies all expected properties which will be 
explained in Section \ref{ARGSp}. The anticipated Arthur's classification of automorphic representations for $GSp_4$ 
(over any number field) 
is now available due to Gee-Ta\"ibi \cite{GeeT} and it follows from this that 
the construction of $\rho_{\pi,\iota_p}$ is now 
unconditional under the assumption explained in the second paragraph of p.472 in \cite{GeeT}.  
By choosing a suitable lattice, we have a mod $p$ Galois representation 
$\br_{\pi,\iota_p}:G_F\lra {\rm GSp}_4(\bF_p)$ and for a given mod $p$ Galois representation 
$\br:G_F\lra {\rm GSp}_4(\bF_p)$ we may ask the existence of such  a $\pi$ so that $\br\simeq \br_{\pi,\iota_p}$ 
as a representation to ${\rm GL}_4(\bF_p)$. 
We call $\br$ is modular or automorphic if such a $\pi$ exists. 

The first task is to study a suitable lift so that we can apply various automorphy lifting theorems for 
$\br$ under a mild condition. 
Then our first main result is the following:       
\begin{thm}\label{main-thm1}Let $F$ be a totally real field and $p$ be an odd prime number.    
Let $\br:G_F\lra {\rm GSp}_4(\bF_p)$ be an irreducible continuous mod $p$ Galois representation. 
Assume the followings: 
\begin{enumerate}
\item $\br|_{G_{F(\zeta_p)}}$ is irreducible and $\br(G_{F(\zeta_p)})$ is adequate;  
\item $\br$ comes from a regular cuspidal automorphic representation of 
${\rm GSp}_4(\A_F)$. 
\end{enumerate}
Then there exists a $p$-adic Galois representation $\rho:G_F\lra {\rm GSp}_4(\bQ_p)$ such that  
\begin{enumerate}
\item $\rho|_{G_{F,v}}$ is potentially diagonalizable for each finite place $v$ above $p$;
\item $\rho$ comes from a regular cuspidal automorphic representation of ${\rm GSp}_4(\A_F)$ of level potentially prime to $p$.  Further if 
 $\br$ comes from a regular cuspidal automorphic representation of 
${\rm GSp}_4(\A_F)$ of level prime to $p$, then so is $\rho$.  
\end{enumerate}
\end{thm}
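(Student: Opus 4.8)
\emph{Strategy of proof.} The plan is to transfer the problem to ${\rm GL}_4$ over a CM field, feed in potentially ordinary local lifts at the places above $p$, invoke the potential automorphy and automorphy lifting machinery of Barnet-Lamb--Gee--Geraghty--Taylor, and then descend the resulting lift back to ${\rm GSp}_4$ over $F$. To begin, I would use the standard embedding ${\rm GSp}_4\hookrightarrow{\rm GL}_4$ and the Arthur classification for ${\rm GSp}_4$ of Gee--Ta\"ibi \cite{GeeT}: hypothesis (2) yields a regular algebraic, essentially self-dual automorphic representation $\Pi$ of ${\rm GL}_4(\A_F)$ with $\rho_{\Pi,\iota}=\rho_{\pi,\iota}$ and similitude (polarization) character $\mu:=\nu\circ\rho_{\pi,\iota}$, and $\Pi$ is necessarily cuspidal because $\br$ is irreducible (a nontrivial isobaric decomposition of $\Pi$ would make the semisimplification $\br^{\rm ss}=\br$ reducible). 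Thus $\rr:=\br$, viewed in ${\rm GL}_4(\bF_p)$, is automorphic and symplectically --- hence totally oddly --- polarized in the sense of Clozel--Harris--Taylor \cite{CHT}. I would then pass to a CM field: fix an imaginary quadratic field $\mathcal K$ in which $p$ splits, put $F'=F\mathcal K$, and, by varying $\mathcal K$ among the infinitely many such fields, arrange that $F'$ is linearly disjoint from $\oF^{\ker\br}(\zeta_p)$ over $F$. Then $\br(G_{F'(\zeta_p)})=\br(G_{F(\zeta_p)})$ is still adequate, $\br|_{G_{F'(\zeta_p)}}$ is still irreducible, every place of $F$ above $p$ splits in $F'/F$, and solvable base change carries $\Pi$ to a cuspidal $\Pi_{F'}$, placing us in the Clozel--Harris--Taylor setup over $F'$.

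The key local input --- and, together with the lifting theorem, the technical heart of the argument --- is the existence, for each place $w\mid p$ of $F'$, of a potentially ordinary (hence potentially diagonalizable) crystalline lift $\rho_w\colon G_{F'_w}\to{\rm GSp}_4(\bQ_p)$ of $\br|_{G_{F'_w}}$, of a suitable regular Hodge--Tate type and with similitude $\mu|_{G_{F'_w}}$. I would establish this by the detailed study of potentially diagonalizable crystalline lifts alluded to in the abstract, building on Gee's analysis of crystalline lifts \cite{gee}: one chooses a regular weight spread out enough that $\br|_{G_{F'_w}}$ admits an ordinary (Borel-valued, with crystalline graded pieces) lift of that weight, and arranges the pairing on the graded pieces so that the lift is symplectic. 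When $\br$ already comes from a regular cuspidal automorphic representation of level prime to $p$, the local component $\br|_{G_{F_v}}$ is the reduction of the crystalline representation $\rho_{\pi,\iota}|_{G_{F_v}}$, and I would take $\rho_w$ to be a crystalline (not merely potentially crystalline) potentially ordinary lift over $F'_w$ itself.

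The global step is then to glue. I would invoke the potential automorphy plus automorphy lifting theorem of \cite{BGGT} in its potentially diagonalizable (potentially ordinary) form: a Moret--Bailly argument on a Dwork--Kuga--Sato family produces a CM extension $F''/F'$ over which a Galois representation interpolating $\br|_{G_{F''}}$ and the prescribed local data $\{\rho_w\}$ is residually automorphic, and Taylor--Wiles--Kisin patching over the appropriate framed deformation ring --- legitimate precisely because $\br(G_{F(\zeta_p)})$ is adequate, following Clozel--Harris--Taylor and the improvements needed to drop the remaining hypotheses of \cite{BGG1} --- upgrades this to a polarized automorphic lift $\rho''\colon G_{F''}\to{\rm GL}_4(\bQ_p)$ of $\rr|_{G_{F''}}$ which is potentially diagonalizable (crystalline, in the level-prime-to-$p$ case) at every place above $p$ and has controlled ramification elsewhere. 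Solvable descent along $F''/F'$ gives such a $\rho'$ over $F'$; since $\rr$ is symplectically polarized and the $\rho_w$ were chosen symplectic, $\rho'$ is conjugate into ${\rm GSp}_4(\bQ_p)$, and since $\rr$ is irreducible, $\rho'$ is irreducible, so the Gee--Ta\"ibi descent of its ${\rm GL}_4$-avatar is cuspidal and produces a regular cuspidal automorphic representation of ${\rm GSp}_4(\A_{F'})$ realizing $\rho'$. A further solvable descent along $F'/F$ --- transferring to ${\rm GL}_4$, descending by Arthur--Clozel, and transferring back --- produces a regular cuspidal automorphic representation $\pi'$ of ${\rm GSp}_4(\A_F)$ with $\rho:=\rho_{\pi',\iota}$ the desired lift: at each $v\mid p$ it is potentially diagonalizable by construction, hence potentially crystalline, so $\pi'$ has level potentially prime to $p$; and in the level-prime-to-$p$ case the crystalline local condition together with local--global compatibility for ${\rm GSp}_4$ forces $\pi'$ to have level prime to $p$ as well.

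The main obstacle is the construction of the potentially ordinary local lifts at $p$ under only the adequacy hypothesis, together with running the potential automorphy and Taylor--Wiles--Kisin arguments in the potentially ordinary setting while keeping track of the symplectic polarization through the passage ${\rm GSp}_4\rightsquigarrow{\rm GL}_4$, the CM and solvable base changes, and back. This is exactly what the abstract calls the main task, and it is where the improvement over \cite{BGG1} and \cite{gg} lies; by comparison, the Arthur transfer, the choice of $\mathcal K$, and the solvable descents are routine.
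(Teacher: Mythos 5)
The central difficulty of this theorem is exactly the step you treat as a black box: to run any of the Barnet-Lamb--Gee--Geraghty--Taylor automorphy lifting theorems (say Theorem~4.2.1 or Proposition~4.1.1 of \cite{BGGT}), one must already know that $\br$ has \emph{some} automorphic lift $\rho_{\pi}$ whose restrictions at places above $p$ are potentially diagonalizable \emph{and} lie on the same irreducible component of the relevant local deformation ring (``$\sim$'' in the notation of \cite{BGGT}) as the desired lift. Hypothesis~(2) only says $\br$ is automorphic; it says nothing about the local behavior of $\rho_{\pi,\iota_p}$ at $p$, and in general $\rho_{\pi,\iota_p}|_{G_{F_v}}$ will \emph{not} be potentially diagonalizable. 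Producing potentially diagonalizable \emph{local} lifts $\rho_w$ of $\br|_{G_{F'_w}}$ (your ``key local input'', which is Section~9 of the paper and is indeed available) does not resolve this: you cannot feed them into Taylor--Wiles--Kisin patching unless the residual representation also has a \emph{global} automorphic lift that is connected to the prescribed local data, and that is precisely what is being proved. Similarly, the Moret--Bailly/Dwork step you invoke, applied to a chosen pd lift, returns automorphy only over a (typically nonsolvable) extension $F''$, so it cannot be descended along $F''/F'$ by Arthur--Clozel; and applied ``the other way'', it again presupposes the connectedness at $p$ with the existing $\rho_\pi$. So the chicken-and-egg problem remains open in your argument, and you essentially acknowledge this by saying the main obstacle is ``running the potential automorphy and Taylor--Wiles--Kisin arguments\dots together with\dots dropping the remaining hypotheses of \cite{BGG1}'' without saying how.

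The paper's actual route is to manufacture the missing potentially ordinary automorphic lift by hand, working over \emph{solvable} extensions of $F$ throughout so that descent is available at the end. Concretely: first one produces a weight-zero automorphic representation congruent to $\br$ by passing, after solvable base change, through the Jacquet--Langlands correspondence to a definite inner form $G_B$ of $GSp_4$ and using mod $p$ algebraic modular forms (Theorem~5.1); then, still working with algebraic modular forms on $G_B$, one switches the local types at $v\mid p$ so that the resulting representation is crystalline of weight zero and Klingen-ordinary but non-(Borel-)ordinary, lying on a controlled component of the local crystalline deformation space (Lemma~7.1; this uses the parahoric-restriction tables of R\"osner, Sorensen's Jacquet--Langlands and the classification of non-tempered CAP/endoscopic contributions); and finally one applies Harris's tensor-product trick (Lemma~2.1.2 and Theorem~3.5.1 of \cite{BGG1}, with $2$-bigness replaced by adequacy), tensoring with $\mathrm{Ind}_{G_M}^{G_{F^+_2}}\theta$ for a suitable CM field $M$ and character $\theta$ constructed by Moret--Bailly, together with the local deformation ring analysis of Section~4 (Propositions~4.3 and~4.4), to convert the Klingen-ordinary non-ordinary lift into a fully $\iota_p$-ordinary automorphic lift over a solvable totally real extension $L^+$ (Proposition~7.2). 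Once that lift exists, it is crystalline and ordinary, hence potentially diagonalizable by Lemma~1.4.3 of \cite{BGGT}, and Theorem~1.1 follows. None of the Jacquet--Langlands/weight-zero, type-switching, or tensor-trick machinery appears in your proposal, and without some substitute for it the argument does not close.
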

This claim is known if we further assume that $\br$ comes from a potentially diagonalizable regular cuspidal automorphic 
representation. Therefore, the main novelty is to show this expected condition is guaranteed under the above condition.     
Once this kind of lifts is guaranteed to exist, we can apply Proposition 4.1.1 of \cite{BGGT} to show 
the existence of automorphic lifts with the prescribed local properties as expected in \cite{gee} and in Section 1 
(see the last few lines 
right before Section 2) of \cite{gg}. 
\begin{thm}\label{main-thm2} Let $F$ be a totally real field and $p$ be an odd prime number.    
Let $\br:G_F\lra {\rm GSp}_4(\bF_p)$ be an irreducible continuous mod $p$ Galois representation. 
Assume the followings: 
\begin{enumerate}
\item $\br|_{G_{F(\zeta_p)}}$ is irreducible and $\br(G_{F(\zeta_p)})$ is adequate;  
\item $\br$ comes from a regular cuspidal automorphic representation of 
${\rm GSp}_4(\A_F)$ of level $($resp. potentially$)$ prime to $p$.  
\end{enumerate}
Assume that for each finite place $v$ of $F$ lying over $p$, a potentially diagonalizable, crystalline lift 
$\rho_v$ of $\br|_{G_{F,v}}$ with regular Hodge-Tate weights for any embedding $F_v\hookrightarrow \bQ_p$ 
is given. 
Then there exists a regular cuspidal automorphic representation $\pi$ of ${\rm GSp}_4(\A_F)$ of level $($resp. potentially$)$ prime to $p$  
such that 
\begin{enumerate}
\item 
$\br_{\pi,\iota_p}\simeq \br$; 
\item for each finite place $v$ of $F$ lying over $p$, $\rho_{\pi,\iota_p}|_{G_{F,v}}$ 
is crystalline with the same Hodge-Tate weights as $\rho_v$. 
Further,  $\rho_{\pi,\iota_p}|_{G_{F,v}}$ connects to  
$\rho_v$ in the sense of the notation given in p.530, lines 15-22 of \cite{BGGT} or Definition 3.1.4 of \cite{BGG1}; 
\item if $\br|_{G_{F,v}}$ is ordinary for each finite place $v$ of $F$ lying over $p$, then $\pi$ can be 
ordinary at all $v$ above $p$. 
\end{enumerate}
Further, we can freely choose the types of $\pi$ among 
L-packets at infinite places. In particular, $\pi_\infty=\otimes\pi_{v|\infty}\pi_v$ can be 
the holomorphic discrete series representation.     
\end{thm}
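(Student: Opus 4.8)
The plan is to deduce Theorem \ref{main-thm2} from Theorem \ref{main-thm1} together with the transfer of $\br$ to the inner form $\mathcal{G}_4$ of $\mathrm{GL}_4\times\mathrm{GL}_1$ and the potentially diagonalizable automorphy lifting machinery of \cite{BGGT}. First I would use hypothesis (2) and Theorem \ref{main-thm1} to produce a potentially diagonalizable $\rho\colon G_F\to\mathrm{GSp}_4(\bQ_p)$ coming from a regular cuspidal automorphic representation of $\mathrm{GSp}_4(\A_F)$ of level (potentially) prime to $p$; composing with the standard $4$-dimensional representation and recording the similitude character gives an essentially self-dual, totally odd automorphic $\rho'\colon G_F\to\mathrm{GL}_4(\bQ_p)$ which is potentially diagonalizable at all $v\mid p$. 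The adequacy hypothesis (1) is exactly what is needed so that the Taylor--Wiles--Kisin patching over $\mathcal{G}_4$ applies, i.e.\ the hypotheses of \cite[Prop.~4.1.1]{BGGT} (after a suitable solvable totally real base change, using that automorphy, potential diagonalizability, and adequacy are all stable under such base change) are met by the pair $(\br,\rho')$.

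Next I would feed in the given local lifts. For each $v\mid p$ the prescribed $\rho_v$ is a potentially diagonalizable crystalline lift of $\br|_{G_{F,v}}$ with regular Hodge--Tate weights; since $\rho'|_{G_{F,v}}$ is also potentially diagonalizable with the same mod $p$ reduction, $\rho_v$ and $\rho'|_{G_{F,v}}$ lie on the same irreducible component of the relevant local lifting ring, hence \emph{connect} in the sense of \cite[p.~530]{BGGT}. Applying \cite[Prop.~4.1.1]{BGGT} to $\rho'$ with the local deformation conditions pinned down by the $\rho_v$ (crystalline, prescribed Hodge--Tate weights, same component) yields a $\mathrm{GL}_4$-automorphic $\rho''$ with $\br_{\rho''}\simeq\br$ and $\rho''|_{G_{F,v}}$ crystalline of the same Hodge--Tate weights as $\rho_v$ and connecting to $\rho_v$. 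One then descends $\rho''$ back to an automorphic representation $\pi$ of $\mathrm{GSp}_4(\A_F)$: the symplectic-type / self-duality of $\rho''$ together with the Gee--Ta\"ibi form of Arthur's classification (as invoked in Section \ref{ARGSp}) shows that $\rho''$ arises from a $\mathrm{GSp}_4$-valued $\rho_{\pi,\iota_p}$, and the level (potentially) prime to $p$ is preserved because the local deformation problems away from $p$ were taken to be the minimal (unramified-fixed) ones. Regularity of $\pi$ corresponds to the regularity of the Hodge--Tate weights, which we imposed.

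For part (3), if $\br|_{G_{F,v}}$ is ordinary at every $v\mid p$, I would instead run the ordinary automorphy lifting theorem: ordinary crystalline lifts of prescribed regular Hodge--Tate weights exist, the ordinary (nearly ordinary) deformation ring is known to have the expected dimension, and patching over $\mathcal{G}_4$ in the ordinary setting (as in \cite{BGG1}, whose hypotheses are covered by our adequacy assumption) produces $\rho''$ ordinary at all $v\mid p$; ordinarity of $\rho_{\pi,\iota_p}$ then translates into $\pi$ being ordinary at all $v\mid p$ via the compatibility between the Galois side and the Hecke side at $p$. Finally, the freedom to choose $\pi_\infty$ within its $L$-packet, and in particular to make it the holomorphic (limit of) discrete series, follows because the archimedean $L$-packets for $\mathrm{GSp}_4(\R)$ of the relevant (regular, cohomological) infinitesimal character contain the holomorphic discrete series, and switching archimedean members inside an $L$-packet does not change $\rho_{\pi,\iota_p}$ nor the local conditions at finite places; one realizes this switch at the level of the inner form / Arthur packet when descending from $\mathrm{GL}_4$ to $\mathrm{GSp}_4$.

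The main obstacle, and the step requiring genuine care rather than citation, is the clean two-way passage between $\mathrm{GSp}_4$ and $\mathrm{GL}_4$: (i) ensuring that the $\mathrm{GL}_4$ Galois representation we lift is genuinely symplectic (not merely essentially self-dual) so that Arthur's classification returns a \emph{cuspidal} $\pi$ on $\mathrm{GSp}_4$ rather than an isobaric sum or a representation on an endoscopic group, and (ii) checking that all the local conditions imposed on the $\mathrm{GL}_4$ side (potential diagonalizability, connectedness to $\rho_v$, ordinarity, level prime to $p$) descend faithfully to the corresponding local conditions on $\mathrm{GSp}_4$ and on $\pi$. Both are handled by combining the explicit description of $\rho_{\pi,\iota_p}$ and its local-global compatibility recalled in Section \ref{ARGSp} with the Gee--Ta\"ibi classification, but the bookkeeping of similitude characters and of the archimedean packets is where the argument genuinely lives.
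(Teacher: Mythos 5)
Your proposal is essentially the strategy the paper follows, and in fact matches the roadmap announced in the introduction (apply Proposition 4.1.1 of \cite{BGGT} on top of Theorem \ref{main-thm1}). The paper's actual proof in Section \ref{pmt} does not quote Proposition 4.1.1 as a black box but instead unpacks it in a $\mathrm{GSp}_4$-flavoured form: it sets up a $\mathcal{G}_4$-valued deformation problem $\mathcal{S}=(L/F,S,\widetilde S,\mathcal{O},\rr,\psi,\{R_v\})$ whose local data at $v\mid p$ are symplectic crystalline lifting rings $R^{\mathrm{symp},{\rm v}_\lambda,{\rm cr},\psi}$ with $R_v$ chosen to be the component containing $\rho_v$, applies Proposition 1.5 of \cite{BGGT} for the lower bound on the Krull dimension, Theorem 2.3.2 of \cite{BGGT} for finiteness of $R^{\rm univ}_{\mathcal{S}}$, Lemma 7.4.1 of \cite{gg} to land back in $\mathrm{GSp}_4(\bQ_p)$, and Theorem B of \cite{BGGT} for automorphy, before descending to $\mathrm{GSp}_4(\A_F)$. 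So where you switch to $\mathrm{GL}_4$ and only at the end assert a symplectic descent, the paper keeps the symplectic structure throughout the deformation theory (via $R^{\mathrm{symp}}$ and $\mathcal{G}_4$) and thereby sidesteps the issue you flagged as the ``main obstacle'' about checking the lift is honestly symplectic.

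One actual slip in your write-up: you claim that since both $\rho_v$ and $\rho'|_{G_{F,v}}$ are potentially diagonalizable crystalline lifts of the same $\br|_{G_{F,v}}$, they ``lie on the same irreducible component of the relevant local lifting ring, hence connect.'' This is false as stated. In general $\rho_v$ and $\rho'|_{G_{F,v}}$ have different Hodge--Tate weights, so they do not even live on the same $p$-adic Hodge type stratum, let alone the same irreducible component; the relation $\sim$ in the sense of \cite[p.~530]{BGGT} (or Definition 3.1.4 of \cite{BGG1}) requires equality of Hodge--Tate weights. The flexibility that lets you impose prescribed $\rho_v$ with arbitrary regular weights is not ``same component'' but exactly the content of Proposition 4.1.1 of \cite{BGGT} (and, underlying it, the defining properties of potential diagonalizability). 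Your conclusion (that Proposition 4.1.1 applies) is still correct, but the justification you gave is not; the paper avoids this problem entirely because the lift $\rho$ it constructs is \emph{by construction} a point of $R_v$, and connecting to $\rho_v$ is then automatic since $R_v$ was chosen to be the component on which $\rho_v$ lies.

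Your treatment of the ordinary case (part (3)) and the archimedean $L$-packet freedom matches the paper's (choose $R_v$ ordinary in the Khare--Wintenberger comparison; switch members of the $L$-packet at $\infty$ via \cite[Thm.~7.4.1]{GeeT}).
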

This generalizes main results of \cite{gg},  
\cite{til&her}, and \cite{tilouine1} for companion forms. 
Our advantage is that we can also consider companion forms in non-ordinary cases (see Section \ref{companion}).  

As another application, we can study a kind of Serre's weight conjecture for 
$GSp_4$ over a totally real field in some cases 
which is formulated in Conjecture \ref{conj} according to Toby Gee's philosophy. 
To simplify the situation we consider the base change version of Serre's weight 
conjecture. As defined later 
(see Conjecture \ref{conj} and Definition \ref{Pot-Serre-weights-FL-range} for the notation), let us introduce a collection $W_{\mathcal{I}}(\br)$ of Serre weights for $\br|_{G_L}$ and a totally real field $L$ of even degree. 
Here $L$ runs over some finite set $\mathcal{I}$ with respect to $\br$. 
By definition, it never loses much information. We will also define  
a collection $W^{C_0\cup C_1}_{{\rm pd-cris},\mathcal{I}}(\br)$ of Serre weights $F(\underline{\lambda})$ such that $\underline{\lambda})$ is in the range $C_0\cup C_1$ inside $p$-restricted weights and such that t $\br|_{G_L}$ 
has a potentially diagonalizable, crystalline lift whose Hodge-Tate weights are related to 
the Serre weight $F(\underline{\lambda})$. The former set is 
defined by using automorphic forms. In contrast, the latter set is defined by a 
combinatorial way in both of (modular) representation theory  and $p$-adic Hodge theory.   
\begin{thm}\label{main-thm3} Let $F$ be a totally real field and $p$ be an odd prime number which is split completely 
in $F$.    
Let $\br:G_F\lra {\rm GSp}_4(\bF_p)$ be an irreducible continuous mod $p$ Galois representation. 
Assume the followings: 
\begin{enumerate}
\item $\br|_{G_{F(\zeta_p)}}$ is irreducible and $\br(G_{F(\zeta_p)})$ is adequate;  
\item $\br$ comes from a regular cuspidal automorphic representation of 
${\rm GSp}_4(\A_F)$ of level prime to $p$;  
\item for each finite place $v$ above $p$, $\br|_{G_{F,v}}$ is semisimple, but not irreducible and generic in the sense of 
Definition \ref{g-genericity}. 
\end{enumerate}
Then it holds that 
$$W_{ \mathcal{I}}(\br)\supset W^{C_0\cup C_1}_{{\rm pd-cris},\mathcal{I}}(\br)$$
for any non-empty set $ \mathcal{I}$ of $ \mathcal{I}^{{\rm BC}}(\br)$ 
$($see Conjecture \ref{conj} and Definition \ref{Pot-Serre-weights-FL-range} for the notation$)$.  
\end{thm}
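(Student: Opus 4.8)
The plan is to derive this from Theorem \ref{main-thm2}, applied one field of $\mathcal{I}$ at a time. The inclusion to be proved is the automorphic-lifting direction: given a Serre weight lying in the combinatorially defined set $W^{C_0\cup C_1}_{{\rm pd-cris},\mathcal{I}}(\br)$, we must produce an actual holomorphic Siegel automorphic realisation of the prescribed weight, of level prime to $p$. So fix $F(\underline{\lambda})\in W^{C_0\cup C_1}_{{\rm pd-cris},\mathcal{I}}(\br)$ and fix $L\in\mathcal{I}\subset\mathcal{I}^{{\rm BC}}(\br)$; since $W_{\mathcal{I}}(\br)$ is defined field-by-field over the members of $\mathcal{I}$, it is enough to prove that $F(\underline{\lambda})$ is an automorphic Serre weight for $\br|_{G_L}$. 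Unwinding Definition \ref{Pot-Serre-weights-FL-range}, membership of $F(\underline{\lambda})$ in the right-hand side means that $\underline{\lambda}$ lies in the Fontaine--Laffaille range $C_0\cup C_1$ inside the $p$-restricted weights, and that for every finite place $v$ of $L$ above $p$ there is a potentially diagonalizable, crystalline lift $\rho_v$ of $\br|_{G_{L,v}}$ whose Hodge--Tate weights, at every embedding $L_v\hookrightarrow\bQp$, are exactly those attached to $\underline{\lambda}$ by the local recipe.

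Next I would check the hypotheses of Theorem \ref{main-thm2} for $\br|_{G_L}$. That $\br|_{G_{L(\zeta_p)}}$ is irreducible with adequate image, and that $\br|_{G_L}$ comes from a regular cuspidal automorphic representation of ${\rm GSp}_4(\A_L)$ of level prime to $p$, are both built into the definition of the set $\mathcal{I}^{{\rm BC}}(\br)$ to which $L$ belongs; the modularity statement is available because solvable base change for ${\rm GSp}_4$ holds, through Arthur's classification as made available by Gee--Ta\"ibi \cite{GeeT}, and because $\mathcal{I}^{{\rm BC}}(\br)$ consists of fields in which $p$ is unramified, so that base change introduces no level at $p$ and the local shape assumed in hypothesis (3) persists at every place above $p$. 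Then I would apply Theorem \ref{main-thm2} over $L$, feeding in the lifts $\rho_v$: it produces a regular cuspidal automorphic representation $\pi_L$ of ${\rm GSp}_4(\A_L)$ of level prime to $p$ with $\br_{\pi_L,\iota_p}\simeq\br|_{G_L}$ and with $\rho_{\pi_L,\iota_p}|_{G_{L,v}}$ crystalline of the same Hodge--Tate weights as $\rho_v$ for every $v\mid p$, and by the final clause of Theorem \ref{main-thm2} we may take $\pi_{L,\infty}$ to be the holomorphic discrete series in the relevant $L$-packet. Since the Hodge--Tate weights of $\rho_{\pi_L,\iota_p}$ above $p$ are those prescribed by $\underline{\lambda}$, the weight of $\pi_L$ is $\underline{\lambda}$ up to the standard $\rho$-shift, and because $\underline{\lambda}$ is $p$-restricted the mod $p$ reduction of the algebraic representation of highest weight $\underline{\lambda}$ is irreducible and equal to $F(\underline{\lambda})$; hence $\pi_L$ exhibits $F(\underline{\lambda})\in W(\br|_{G_L})$ in the sense of Conjecture \ref{conj}. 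Letting $L$ range over $\mathcal{I}$ yields $F(\underline{\lambda})\in W_{\mathcal{I}}(\br)$.

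I expect the genuine difficulty to be not the lifting step but the matching of the two weight dictionaries. The set $W^{C_0\cup C_1}_{{\rm pd-cris},\mathcal{I}}(\br)$ is defined through ``Hodge--Tate weights related to $F(\underline{\lambda})$'', whereas on the automorphic side the weight $\underline{\lambda}$ of a holomorphic Siegel eigenform is read off from the Hodge--Tate weights of its attached Galois representation, and one must check that the two prescriptions coincide. This is exactly where the restriction to the range $C_0\cup C_1$ and to generic $\br|_{G_{F,v}}$ (hypothesis (3), through Definition \ref{g-genericity}) is essential: only in the Fontaine--Laffaille range is the chain crystalline lift $\leftrightarrow$ mod $p$ reduction $\leftrightarrow$ Serre weight clean and bijective, and ``semisimple but not irreducible'' together with genericity is what allows the $\rho_v$ to be written down as twists of sums of crystalline characters and crystalline two-dimensional blocks subject to the symplectic self-duality, so that the numerics line up with the automorphic weight. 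A secondary bookkeeping point is keeping $p$ unramified (indeed split, as far as possible) along the base changes $L/F$, which is precisely why Conjecture \ref{conj} and this theorem are formulated over the fields of $\mathcal{I}^{{\rm BC}}(\br)$ rather than over $F$ itself. Granting these compatibilities, the statement is a direct consequence of Theorem \ref{main-thm2}, hence ultimately of Theorem \ref{main-thm1} and Proposition 4.1.1 of \cite{BGGT}.
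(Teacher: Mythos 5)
Your proposal correctly reduces the claim to a single field $L\in\mathcal{I}$ and correctly identifies Theorem \ref{main-thm2} as the source of automorphic lifts, but the step where you assert ``because $\underline{\lambda}$ is $p$-restricted the mod $p$ reduction of the algebraic representation of highest weight $\underline{\lambda}$ is irreducible and equal to $F(\underline{\lambda})$'' is false in the range $C_1$. By Lemma \ref{constituent}, for $\lambda\in C_1\cap X_1(T)$ the reduction $W(\lambda)\otimes\bF_p$ is a nonsplit extension of $\widetilde{F}(\mu)$ (with $\mu=(p-b-3,p-a-3;c)$) by the socle $\widetilde{F}(\lambda)$. So once Theorem \ref{main-thm2} hands you an automorphic $\pi_L$ of $p$-adic weight $W(\lambda)$, you have only shown that \emph{some} constituent of $W(\lambda)\otimes\bF_p$ lies in $W(\br|_{G_L})$, not that the intended one does. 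There is also a conflation earlier on: in Definition \ref{Pot-Serre-weights-FL-range} the pd-cris lift has Hodge--Tate weights dictated by the highest weight of a Weyl module whose \emph{reduction contains} the target $\widetilde{F}(\underline{\lambda})$ as a constituent, and that Weyl-module highest weight need not equal $\underline{\lambda}$; the ``local recipe attaching Hodge--Tate weights to $\underline{\lambda}$'' you invoke is therefore not what the definition says.

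The paper's proof accordingly treats the $C_0$ and $C_1$ cases separately, and your argument only covers the $C_0$ case, where $W(\lambda)\otimes\bF_p$ really is irreducible. The substantive work is to rule out, when $\lambda_{1,v}\in C_1$ for some $v$, the possibility that the target weight $\widetilde{F}(\mu_v)$ is the quotient constituent rather than the socle of $W(\lambda_{1,v})\otimes\bF_p$. The paper argues by contradiction: if $\widetilde{F}(\mu_v)$ were the quotient, one applies Theorem \ref{main-thm2} to get an automorphic lift $\pi$ with Harish--Chandra parameter $(p-y,p-x;w+3)$ at $v$; the genericity hypothesis (Definition \ref{g-genericity}, used precisely here and not merely for bookkeeping) keeps $\rho_{\pi,\iota_p}|_{G_{F,v}}$ in the Fontaine--Laffaille range, so Proposition 3.3 of \cite{DZ} forces a second explicit description of $\br|_{I_{F,v}}$. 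Comparing that with the description built into Definition \ref{Pot-Serre-weights-FL-range}, separately in the Borel, Siegel, Klingen, and endoscopic shapes, produces a contradiction in each case. This case-by-case analysis is the mathematical content of the theorem and is absent from your proposal.
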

\begin{rmk}The set $W^{C_0\cup C_1}_{{\rm pd-cris},\mathcal{I}}(\br)$ in the 
above claim is incomprehensive and different from the conjectural description of Serre weights due to 
Herzig-Tilouine \cite{til&her} $($cf. Corollaire 4.15 of \cite{til&her}$)$ where 
their description is taken over the compatibility of mod $p$ local Langlands 
correspondence for $GSp_4$.  
\end{rmk}

Let us consider the case when $F=\Q$. 
In Section \ref{Serre-weights}, we will define a triple of three integers $(k_1(\br),k_2(\br),w(\br))$ satisfying $k_1(\br)\ge k_2(\br)\ge 3$ and $w(\br)\in \Z_{\ge 0}$  
for any given mod $p$ irreducible continuous Galois representation $\br:G_\Q\lra {\rm GSp}_4(\bF_p)$. In fact, it is 
actually depending only on $\br|_{G_{\Q_p}}$ by definition. 
Then as an application of our main results, we have an analogue of a weight reduction theorem which generalizes 
Edixhoven's result (Theorem 4.5 of \cite{Edix}) to the case of $GSp_4/\Q$;
\begin{thm}\label{main-thm4} 
Let $\br:G_\Q\lra {\rm GSp}_4(\bF_p)$ be an irreducible continuous mod $p$ Galois representation. 
Assume the followings: 
\begin{enumerate}
\item $p>2$;
\item $\br|_{G_{\Q(\zeta_p)}}$ is irreducible and $\br(G_{\Q(\zeta_p)})$ is adequate;  
\item $\br$ comes from a regular cuspidal automorphic representation of ${\rm GSp}_4(\A_\Q)$ of level prime to $p$.  
\end{enumerate}
Then there exists a Hecke eigen Siegel cusp form $F$ of weight $(k_1(\br),k_2(\br))$ with the level prime to $p$ such that 
$$\br\simeq \br_{F,p}\otimes \op\ve^{w(\br)}$$
where $\op:G_{\Q}\lra \F^\times_p$ is a character of conductor prime to $p$ and $\ve$ is 
the mod $p$ cyclotomic character.  
\end{thm}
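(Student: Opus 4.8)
The plan is to deduce the statement from Theorem \ref{main-thm2} combined with the explicit construction of the triple $(k_1(\br),k_2(\br),w(\br))$ carried out in Section \ref{Serre-weights}. This is exactly the $\GSp_4/\Q$ analogue of the way Edixhoven's weight reduction in \cite{Edix} follows, in modern language, from an automorphy lifting theorem applied to a carefully chosen potentially diagonalizable crystalline lift at $p$: one produces the correct local lift at $p$ ``by hand'', and then quotes the global machinery.

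First I would supply the local input at $p$. By the very definition of $(k_1(\br),k_2(\br),w(\br))$ --- which depends only on $\br|_{G_{\Q_p}}$ and satisfies $k_1(\br)\ge k_2(\br)\ge 3$ and $w(\br)\in\Z_{\ge 0}$ --- there is a character $\op\colon G_\Q\to\F_p^\times$ of conductor prime to $p$ such that the twist $\br':=\br\otimes\op^{-1}\ve^{-w(\br)}$ has the property that $\br'|_{G_{\Q_p}}$ admits a crystalline lift $\rho_p\colon G_{\Q_p}\to\GSp_4(\bQp)$ which is potentially diagonalizable and whose regular Hodge--Tate weights are precisely those of the $p$-adic Galois representation attached to a holomorphic Siegel cusp form of weight $(k_1(\br),k_2(\br))$ (in the normalization of Section \ref{ARGSp}), and such that the similitude character of $\br'$ matches the one forced on such a Galois representation up to a finite-order character of conductor prime to $p$. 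Here $w(\br)$ absorbs the cyclotomic shift bringing $\br|_{G_{\Q_p}}$ into the normalization of this minimal weight, while $\op$ absorbs the away-from-$p$ discrepancy of the similitude character; that a crystalline lift in the relevant Hodge--Tate range can be chosen potentially diagonalizable is part of the analysis of Section \ref{Serre-weights}, resting on the potential diagonalizability results recalled in \cite{BGG1}, \cite{BGGT}.

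Next I would check that $\br'$ satisfies the hypotheses of Theorem \ref{main-thm2} and then invoke it. The conditions ``$\br|_{G_{\Q(\zeta_p)}}$ irreducible'' and ``$\br(G_{\Q(\zeta_p)})$ adequate'' are unaffected by twisting $\br$ by a character of $G_\Q$ (for $\ve$ this is clear since $\ve|_{G_{\Q(\zeta_p)}}$ is trivial, and for $\op$ it is the standard invariance of irreducibility and adequacy under a character twist), so (1) is inherited by $\br'$; and if $\pi_0$ is a regular cuspidal automorphic representation of $\GSp_4(\A_\Q)$ of level prime to $p$ with $\br_{\pi_0,\iota_p}\simeq\br$, then twisting $\pi_0$ by the Hecke character corresponding to $\op^{-1}\ve^{-w(\br)}$ --- a product of a power of the norm character with a finite-order character unramified at $p$, hence unramified at $p$ --- yields a regular cuspidal automorphic representation of $\GSp_4(\A_\Q)$, still of level prime to $p$, giving rise to $\br'$, so (2) is inherited. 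Feeding the local lift $\rho_p$ into Theorem \ref{main-thm2} and using its final assertion on the freedom to choose the archimedean component, I obtain a regular cuspidal automorphic representation $\pi$ of $\GSp_4(\A_\Q)$ of level prime to $p$ with $\pi_\infty$ in the holomorphic discrete series $L$-packet, with $\br_{\pi,\iota_p}\simeq\br'$ and $\rho_{\pi,\iota_p}|_{G_{\Q_p}}$ crystalline with the same Hodge--Tate weights as $\rho_p$. By the dictionary between such automorphic representations and classical vector-valued Siegel cusp forms of degree $2$ (Section \ref{ARGSp}), $\pi$ arises from a Hecke eigen holomorphic Siegel cusp form $F$ of weight $(k_1(\br),k_2(\br))$ and some level $N$ with $p\nmid N$, so that $\br_{F,p}\simeq\br_{\pi,\iota_p}\simeq\br'=\br\otimes\op^{-1}\ve^{-w(\br)}$, i.e.\ $\br\simeq\br_{F,p}\otimes\op\ve^{w(\br)}$, as desired.

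I expect the main obstacle to be the local analysis underpinning the second paragraph: establishing that the minimal classical weight attached to $\br|_{G_{\Q_p}}$ really lies in the range $k_1\ge k_2\ge 3$, and that $\br'|_{G_{\Q_p}}$ genuinely possesses a potentially diagonalizable crystalline lift with exactly the Hodge--Tate weights prescribed by that weight. This is the $\GSp_4$ analogue of Edixhoven's weight computation and is the heart of Section \ref{Serre-weights}; the subtle point is that $\br|_{G_{\Q_p}}$ need not be in a Fontaine--Laffaille range, so potential diagonalizability has to be extracted from a reasonably explicit description of the lift rather than from a black-box comparison. A secondary, more bookkeeping issue is pinning down $\op$ and $w(\br)$ so that the similitude character of $\br'$ has exactly the shape occurring for Galois representations attached to level-prime-to-$p$ holomorphic Siegel cusp forms --- this is precisely what makes the archimedean type in Theorem \ref{main-thm2} realizable by a \emph{holomorphic} discrete series and the resulting $F$ a genuine classical cusp form of level prime to $p$.
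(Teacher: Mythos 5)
Your proposal is correct and is exactly the route the paper takes: Theorem \ref{va-edix} (which is the restated form of this theorem) is proved in one line by invoking Theorem \ref{main-thm2}, and your argument simply unpacks that invocation — taking as the local input at $p$ the potentially diagonalizable crystalline lift that exists by the very definition of $(k_1(\br),k_2(\br),w(\br))$ as the minimal element of $SW^{\rm cl}(\br)$ (whose non-emptiness is the content of Section \ref{prescribed}), checking that twisting by $\op^{-1}\ve^{-w(\br)}$ preserves the hypotheses, and then reading off a holomorphic Siegel eigenform from the resulting automorphic representation with holomorphic discrete series at infinity. The only slight misplacement is your identification of the "main obstacle": that $k_1(\br)\ge k_2(\br)\ge 3$ and that the relevant potentially diagonalizable crystalline lift exists are both built into the definition of $SW^{\rm cl}(\br)$ and the already-established non-emptiness of $SW(\br)$, so they are not further obstacles within this proof.
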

The weight $(k_1(\br),k_2(\br))$ is given by an explicit procedure explained in 
Section \ref{prescribed} and Section \ref{Serre-weights}. 
As explained in Section \ref{examples}, if 
$$\br|_{G_{\Q_p}}=
\begin{pmatrix}
\op_1\ve^{a+b} & \ot_0 & \ast & \ast \\
0 & \op_2\ve^a & \ast & \ast \\
0 & 0 & \op_1\op^{-1}_2\ve^b & -(\op_1\op_2\ve^{a+b})^{-1}\ot_0 \\
0 & 0 & 0 & 1
\end{pmatrix}
$$ where $\op_1,\op_2$ are unramified character and 
the integers are enjoying the condition $p-1\ge a>b>0$, 
then we have 
$$(k_1(\br),k_2(\br))=\left\{\begin{array}{cl}
(a+1,b+2) & \mbox{if $\tau_0$ is peu ramifi\'ee} \\
(a+1,1+2)+(p-1,p-1) & \mbox{if $\tau_0$ is tr\`es ramifi\'ee ($b=1$ by definition)}
\end{array}\right.
$$

Let us explain an application to the companion form theorem.  
Let $\br$ be in Theorem \ref{main-thm2}.
In the case of elliptic modular forms, the companion forms had been understood as 
the existence of another form which gives rise to the same mod $p$ Galois representation up to the twist by a 
power of the mod $p$ cyclotomic character. 
There also exists a beautiful explanation of such a form in terms of BGG complex (cf. \cite{Gee-com0},\cite{tilouine} and see also \cite{til&her}).  
This is now understood well as a part of the context of a prescribed modular (automorphy) lifting theorem 
(cf. \cite{Gee-com},\cite{gg}). Thanks to Theorem \ref{va-edix}, 
we don't need to assume $\br$ to be ordinary and even when it is ordinary, 
the condition (3) of Theorem 7.6.9 of \cite{gg} is unnecessary. 
Hence Theorem \ref{va-edix} includes the (extensive) companion form theorem. A detailed formulation will be discussed somewhere else for non-ordinary cases. 
Let us here state the companion form theorem only for the ordinary case:
\begin{thm}\label{companion}
Let $\br:G_\Q\lra {\rm GSp}_4(\bF_p)$ be an irreducible continuous mod $p$ Galois representation. 
Assume the followings: 
\begin{enumerate}
\item $p>2$;
\item $\br|_{G_{\Q(\zeta_p)}}$ is irreducible and $\br(G_{\Q(\zeta_p)})$ is adequate;  
\item $\br$ comes from a holomorphic Siegel cusp form $h$ on ${\rm GSp}_4(\A_\Q)$ 
of a regular weight;  
\item $\br|_{G_{\Q_p}}=\op_3 \ve^c
\begin{pmatrix}
\op_1\ve^{a+b} & \ast & \ast & \ast \\
0 & \op_2\ve^a & \ast & \ast \\
0 & 0 & \op_1\op^{-1}_2\ve^b & \ast \\
0 & 0 & 0 & 1
\end{pmatrix} $ where $\op_i,\ i=1,2,3$ are unramified characters,  
$0<b<a< p-1,\ a+b<p-1$, and $c\in \Z$.  
\end{enumerate}
Then there exists a companion form for $h$ expected in  
Section 5 of \cite{til&her}. 
\end{thm}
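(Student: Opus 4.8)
The plan is to produce each companion form predicted in Section~5 of \cite{til&her} by applying Theorem~\ref{main-thm2} to a well-chosen potentially diagonalizable crystalline lift of $\br|_{G_{\Q_p}}$. In the language of Conjecture~\ref{conj} and Definition~\ref{Pot-Serre-weights-FL-range}, these companion forms are exactly the Siegel modular forms realizing the Serre weights in $W^{C_0\cup C_1}_{{\rm pd-cris},\mathcal{I}}(\br)$ associated to $\br|_{G_{\Q_p}}$. So I would fix such a Serre weight $F(\underline{\lambda})$, with $\underline{\lambda}$ in the range $C_0\cup C_1$, and first construct a crystalline lift $\rho_p:G_{\Q_p}\lra\GSp_4(\bQp)$ of $\br|_{G_{\Q_p}}$ (normalized as in Definition~\ref{Pot-Serre-weights-FL-range}) which is ordinary --- this is where hypothesis~(4), that $\br|_{G_{\Q_p}}$ is ordinary, is used --- symplectic with a suitable similitude character, and of regular Hodge--Tate weights determined by $\underline{\lambda}$. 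Being crystalline and ordinary, such a $\rho_p$ is automatically potentially diagonalizable (see \cite{BGGT}).

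I would build $\rho_p$ by hand, as in the computation of Section~\ref{examples}: lift each character on the diagonal of $\br|_{G_{\Q_p}}$ to a crystalline character of $G_{\Q_p}$ of the prescribed Hodge--Tate weight (using that crystalline characters of $G_{\Q_p}$ realize every Hodge--Tate weight and reduction up to an unramified twist, which lets one match the two ``dual'' products and so pin down the similitude character), and then fill in the off-diagonal entries so that the matrix is symplectic for the pairing interchanging the first and fourth (resp.\ second and third) graded pieces and reduces to $\br|_{G_{\Q_p}}$. The inequalities $0<b<a<p-1$ and $a+b<p-1$ of~(4) guarantee that all the Hodge--Tate weights in play stay in the Fontaine--Laffaille range $C_0\cup C_1$, where the mod~$p$ off-diagonal classes lift to crystalline ($H^1_f$) classes and Fontaine--Laffaille theory controls the reduction. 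With $\rho_p$ in hand I apply Theorem~\ref{main-thm2} with $F=\Q$, level prime to $p$, and local datum $\rho_v=\rho_p$ at the unique place $v=p$; its hypotheses~(1),(2) are provided by our~(2),(3). Since $\br|_{G_{\Q_p}}$ is ordinary, conclusion~(3) yields a regular cuspidal automorphic representation $\pi$ of $\GSp_4(\A_\Q)$ of level prime to $p$, ordinary at $p$, with $\br_{\pi,\iota_p}\simeq\br$ and $\rho_{\pi,\iota_p}|_{G_{\Q_p}}$ crystalline of the Hodge--Tate weights of $\rho_p$; by the last assertion of Theorem~\ref{main-thm2} I may take $\pi_\infty$ to be the holomorphic discrete series. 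Hence $\pi$ is generated by a Hecke eigen holomorphic Siegel cusp form of level prime to $p$, of the classical weight attached to $F(\underline{\lambda})$. Letting $F(\underline{\lambda})$ run over $W^{C_0\cup C_1}_{{\rm pd-cris},\mathcal{I}}(\br)$ yields the family of companion forms of \cite{til&her}, and matching the weights and Hecke eigensystems with their Corollaire~4.15 (resp.\ Section~5) is a bookkeeping matter with the dictionary of Section~\ref{ARGSp}.

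The step I expect to be the main obstacle is the explicit local construction of the second paragraph: producing, for $\br|_{G_{\Q_p}}$ as in~(4), a \emph{symplectic}, potentially diagonalizable, ordinary crystalline lift realizing a given companion Serre weight. One must verify that the mod~$p$ off-diagonal extension classes genuinely lift to crystalline ones --- the $\GSp_4$-analogue of the peu/tr\`es ramifi\'ee dichotomy for $\gl_2$ --- and that the lift can be arranged to be symplectic with the correct similitude character. This is precisely where the hypotheses $0<b<a<p-1$ and $a+b<p-1$ enter: they keep the pertinent Hodge--Tate weights in the Fontaine--Laffaille range, where the relevant local deformation problems are unobstructed. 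Granting this construction, the theorem is a formal consequence of Theorem~\ref{main-thm2}.
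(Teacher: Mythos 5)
Your strategy is the same as the paper's: reduce the companion form theorem to Theorem~\ref{main-thm2} via suitably chosen potentially diagonalizable, ordinary crystalline lifts of $\br|_{G_{\Q_p}}$ realizing the companion weights of \cite{til&her} (the paper packages this through Theorem~\ref{va-edix}, which itself rests on Theorem~\ref{main-thm2}). You also correctly identify the one genuinely delicate step, namely realizing each predicted companion weight by such a lift.

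The diagnosis in your final paragraph is, however, not quite right. You attribute the liftability of the off-diagonal extension classes to the hypotheses $0<b<a<p-1$ and $a+b<p-1$ placing the Hodge--Tate weights in the Fontaine--Laffaille range, where the deformation problems are unobstructed. That is not the obstruction at issue: a tr\`es ramifi\'ee class is \emph{not} liftable to a crystalline class of the naive Hodge--Tate weight no matter what range one is in (see Proposition~\ref{prop-d1}); one must raise the relevant weight by $p-1$. The reason this does not derail the argument --- and this is precisely the point the paper's proof makes --- is that Section~5 of \cite{til&her} \emph{already builds this $p-1$ shift into the formulation of the companion weights}, exactly to accommodate the tr\`es ramifi\'ee contingency, so that every predicted companion weight is realized by an ordinary crystalline lift. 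Once you insert that observation, your argument closes; in particular Fontaine--Laffaille control is not needed to apply Theorem~\ref{main-thm2}, since an ordinary crystalline lift is already potentially diagonalizable by Lemma~1.4.3 of \cite{BGGT}.
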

\begin{proof}
Only concerning is when the tr\`es ramifi\'ee occurs. 
However, the conjectural weight due to Section 5 of \cite{til&her} has been formulated by raising 
the weights by $p-1$ so that the the tr\`es ramifi\'ee class  in question is 
liftable to an ordinary crystalline class. Then the claim immediately follows from 
Theorem \ref{va-edix}.   
\end{proof}

\begin{rmk}\label{working-assumption}
Even if the case when $F=\Q$, in the course of the proofs of Theorem \ref{main-thm1} through Theorem \ref{companion}, 
we apply the main results in \cite{GeeT} and \cite{T} which have been built upon 
several assumptions in the trace formula $($see the second paragraph in p.472 of \cite{GeeT}$)$. 
Therefore, our results are as unconditional as the results in \cite{GeeT} and \cite{T}.    
\end{rmk}

To prove Theorem \ref{main-thm1} we follow the techniques developed in \cite{BGG1} as they expected 
(see ``Note added in proof",p.1578 in loc.cit.). The key ingredients are as follows; 
\begin{enumerate}
\item construction of a potentially automorphic lift of weight zero; 
\item construction of a non-ordinary automorphic lift after suitable solvable base change;  
\item well-known Harris's tensor product trick to switch 
a non-ordinary (semi-ordinary) automorphic lift to a potentially ordinary automorphic lift. 
\end{enumerate}
The first ingredient is an important step and it has been well-known for $GL_2/F$ but not for higher rank cases. 
To carry out this, we use algebraic modular forms by passing to Jacquet-Langlands correspondence after 
a suitable base change. 
For the second ingredient, we take a base change lift for a constructed automorphic representation $\pi$ of weight zero and move on the setting in \cite{Sor1} to apply 
Jacquet-Langlands correspondence. Then applying well-known analysis on algebraic modular forms 
on a reductive group which is compact modulo the center at all infinity places, we can change  
the possible ramification type of $\pi$ at all places dividing $p$ to be that  
the corresponding Weil-Deligne representations have no monodromy. 
Then Harris's tensor trick employed in \cite{BGG1} yields a desired lift. 
To restrict ourselves to consider the case of $GSp_4$ seems restrictive nowadays for some experts in comparison with 
the current developments in this field. However as Base change, Jacquet-Langlands correspondence, and 
arithmetic geometry around $GSp_4/F$,  
we fully apply these powerful tools which are still conditional depending on the expected completion of 
Arthur's classification (see \cite{GeeT},\cite{T}) and it would include some advantages rather than to consider general cases. 


Regarding Theorem \ref{main-thm4}, the definition of $(k_1(\br),k_2(\br),w(\br))$ follows from 
Toby Gee's philosophy. In this context, under a good condition so called ``peu ramif\'ee" considered in 
\cite{Muller},\cite{GHLS},  
the authors there constructed crystalline lifts with certain prescribed types for a given mod $p$ local 
Galois representation $\rr:G_K\lra {\rm GL}_n(\bF_p)$ for each finite extension $K/\Q_p$. In general, constructing crystalline lifts 
becomes complicated as $n$ is large, since the extension classes show up in 
$\rr$ are intervened each other. 
This may be one reason to introduce the notion ``peu ramif\'ee" in \cite{Muller},\cite{GHLS} to carry out systematic study 
for crystalline lifts.  
To explain an idea to define $(k_1(\br),k_2(\br),w(\br))$, let us consider the original case of $GL_2/\Q$ due to Serre. 
If $\br|_{G_{\Q_p}}$ is not ``peu ramif\'ee", hence is tr\`es ramifi\'ee, then 
it is of form $\br|_{G_{\Q_p}}\simeq 
\begin{pmatrix} \ve & \ast \\ 0 &1 \end{pmatrix}$ up to a twist and the 
extension class $\ast$ comes from a non-unit element in the Galois cohomology $H^1(\Q_p,\F_p(\ve))\simeq \Q^\times_p/(\Q^\times_p)^p$ via Kummer theory. 
The shape of $\br|_{G_{\Q_p}}$ looks like the local type at $p$ of a mod $p$ Galois representation comes from 
an elliptic cusp form  $f$ of weight 2 but it can not be possible since $\ast$ is not finite flat by definition  
and therefore, $\rho_{f,p}$ is not crystalline at $p$. 
Observe that any element of $H^1(\Q_p,\F_p(\ve))$ can be liftable to a non-torsion element of  
$H^1_f(\Q_p,\O(\psi\e))$ or $H^1_f(\Q_p,\O(\psi \e^{p}))$ for the integer ring $\O=\O_E$ of 
some finite extension $E/\Q_p$  and an unramified character $\psi:G_{\Q_p}\lra \O^\times$ whose reduction modulo 
the maximal ideal of $\O$ is trivial. Here $\e$ is the $p$-adic cyclotomic character. The above extension class 
$\ast$ is liftable to $H^1_f(\Q_p,\O(\psi \e^{p}))$ for some $\psi$. 
As is well-known (cf. Proposition 3.5 of \cite{KW}), there exists an elliptic cusp form $g$ of 
weight $2+p-1=p+1$ such that $\br_{g,p}\simeq \br$. Hence $\rho_{g,p}$ is a lift of $\br$ which is crystalline at $p$.  
As defined in \cite{serre}, this is a reason why we need to raise the weight 2,
 which is naively observed,   
to $k(\br)=2+(p-1)=p+1$. 

In the case of $GSp_4$, for simplicity, we visit the following form again:
$$\br|_{G_{\Q_p}}\simeq \ve^c\op_0 \otimes
\begin{pmatrix}
\br_1 & B \\
0_2 & \op_1\ve^{a+b}\br^\ast_1 
\end{pmatrix}\subset {\rm GSp}_4(\bF_p).$$
Here  
$\br_1=\begin{pmatrix}
 \op_1\ve^{a+b} & \tau_0 \\
0 & \op_2\ve^{a}  
\end{pmatrix}:G_{\Q_p}\lra {\rm GL}_2(\bF_p)$ and 
for each $0\le i\le 2$, $\psi_i:G_{\Q_p}\lra \bF^\times_p$ is an unramified character and $0\le a,b,c \le p-2$ are integers. 
The class $B$ can be regarded as a map from $G_{\Q_p}$ to the unipotent radical $N$ of Siegel parabolic subgroup in 
$GSp_4$. By definition, $N\simeq {\rm Sym}^2{\rm St}_2$. Therefore, 
 the class of $B$ belongs to $H^1(\Q_p,(\op_1\ve^{a+b})^{-1}{\rm Sym}^2(\br_1))$. 

If the extensions $\tau_0$ and $B$ satisfy a good condition (for example ``peu ramifiee" in the sense of \cite{GHLS}) and $0<b<a$, then we can produce   
a Siegel cusp form $F$ with the level prime to $p$ of weight $(a+1,b+2)$ such that $\br_{F,\iota_p}\simeq \br$. 
The general case becomes more complicated.
According to the ramification of each extension class, as observed in 
the original case,   
we have to raise $a$ or $b$ with a multiple of $p-1$ to obtain a crystalline lift as minimal as possible 
among regular Hodge-Tate weights. Thus, $(k_1(\br),k_2(\br),w(\br))$ is defined to be the minimal element among  
all triples of non-negative integers $k_1,k_2,w$ such that $k_1-1\equiv a$ mod $p-1$, $k_2-2\equiv b$ mod $p-1$, and 
there exists a potentially diagonalizable, crystalline lift of $\br|_{G_{\Q_p}}$ with the 
regular Hodge Tate weights $\{w,w+k_2-2,w+k_1-1,w+k_1+k_2-3\}$ for some $w\in \Z$. Here the regularity condition 
means $k_1\ge k_2\ge 3$. The details are given in Section \ref{Serre-weights}.

This paper will be organized as follows. 
In Section 2 through Section 3, we recall basics of cohomological cuspidal automorphic 
representations of ${\rm GSp}_4(\A_F)$. Section 3 contains several important ingredients which would play an important role in switching the ramification types.  
Section 4,5, and 6 are devoted to proving several facts which are standard, but necessary to modify the automorphy lifting theorem in question. Section 7 is the most heavy part and a bulk of this paper so that 
the existence of a non-ordinary lift and switching it to a potentially ordinary lift are both proved. 
In the course of proving them, we borrow several ideas from \cite{BGG1},\cite{BGG2}.  
Section 8 is devoted to proving first two main theorems.   
A detailed study of potentially diagonalizable crystalline lift of mod $p$ Galois representations is 
given in Section 9. Using the results there, we will define the classical (naive) Serre weights for ${\rm GSp}_4$ in Section 10 and some examples are given there. 
In Section 11, we revisit Serre weights again and give a proof of Theorem \ref{main-thm3}.  
In Section 12, we briefly discuss a sufficient condition on adequacy. Finally, we give some comments on a generalization of our results.

\textbf{Acknowledgment.}The author would like to thank H. Atobe, W-T. Gan, T. Gee, D-R. Gullota, F. Herzig, T. Ito, Kai-Wen. Lan, 
Henry H.Kim, N. Tsuzuki, Y. Ozeki, and C. Sorensen for answering many questions. In particular, professor Tetsushi Ito 
read some parts of the article carefully and pointed out some mistakes in 
an earlier version and then it helps to remove the unnecessary condition on $p$ and to develop the contents substantially. 
Professor F. Herzig pointed out some mistakes on Serre's weight conjecture and kindly guided the author 
consulting the important article \cite{BGG-unitary}. 
A part of this work was done during the author's visiting to Max Plank Institute f\"ur mathematics.  
The author would like to thank all staffs there for kindness and incredible hospitality.

\section{Automorphic representations for ${\rm GSp}_4(\A_F)$}\label{ARGSp} 
In this section we recall basic facts of automorphic cuspidal representations for ${\rm GSp}_4(\A_F)$ and corresponding Galois representations.  
Except for Jacquet-Langlands correspondence all contents may be known for most readers and they may skip this section. 

\subsection{Automorphic representations and Galois representations}\label{AutoGal}
Let $F$ be a totally real field. For each place $v$ of $F$, let $F_v$ be the completion of $F$ along $v$.  
In this section we recall basic properties of cuspidal automorphic 
representations of ${\rm GSp}_4(\A_F)$ whose infinite components are either 
discrete series or limit of discrete series. We basically follow the notation of Mok's article \cite{Mok} and add more 
necessarily ingredients for our purpose.  

For any place $v$ of $F$, we denote by $W_{F_v}$ the Weil group of $F_v$. 
Let $m_1,m_2,w$ be integers such that $m_1>m_2\ge 0$ and $m_1+m_2\equiv w+1$ mod 2. 
For the L-parameter $\phi_{(w;m_1,m_2)}:W_\R\lra {\rm GSp}_4(\C)$ defined by 
$$\phi_{(w;m_1,m_2)}(z)=|z|^{-w}\diag\Big(\Big(\frac{z}{\overline{z}}\Big)^{\frac{m_1+m_2}{2}},
\Big(\frac{z}{\overline{z}}\Big)^{\frac{m_1-m_2}{2}},
\Big(\frac{z}{\overline{z}}\Big)^{-\frac{m_1-m_2}{2}},
\Big(\frac{z}{\overline{z}}\Big)^{-\frac{m_1+m_2}{2}}\Big)$$
and 
$$\phi_{(w;m_1,m_2)}(j)=
\left(\begin{array}{cc}
0_2 & s \\
(-1)^w s & 0_2
\end{array}
\right)
,\ s=\left(\begin{array}{cc}
0 & 1 \\
1 & 0
\end{array}
\right).$$ 
By Local Langlands correspondence the archimedean L-packet $\Pi(\phi_{(w;m_1,m_2)})$ corresponding to $\phi_{(w;m_1,m_2)}$ 
consists of two elements $\{\pi^H_{(w;m_1,m_2)},\ \pi^W_{(w;m_1,m_2)}\}$ whose 
central characters both satisfy $z\mapsto z^{-w}$ for $z\in \R^\times_{>0}$.  
These are essentially tempered unitary representations of ${\rm GSp}_4(\R)$ and tempered exactly when $w=0$. 
When $m_2\ge 1$ (resp. $m_2=0$) the representation $\pi^H_{(w;m_1,m_2)}$ is called a (resp. limit of) discrete series representation of 
minimal $K$-type $\uk=(k_1,k_2):=(m_1+1,m_2+2)$ which corresponds to an algebraic representation 
$V_{\uk}:={\rm Sym}^{k_1-k_2}{\rm St}_2\otimes \det^{k_2}{\rm St}_2$ of $K_\C={\rm GL}_2(\C)$. 
Here $K$ is the maximal compact subgroup of ${\rm Sp}_4(\R)$. 
The representation $\pi^H_{(w;m_1,m_2)}$ is called if $m_2\ge 1$ (resp. $m_2=0$) a (resp. limit of ) discrete series representation of minimal $K$-type $V_{(m_1+1,-m_2)}$. 

Fix an integer $w$.  
Let $\pi=\otimes'_{v}\pi_v$ be an automorphic cuspidal representation of ${\rm GSp}_4(\A_F)$ such that 
for each infinite place $v$, $\pi_v$ has L-parameter $\varphi_{(w;m_{1,v},m_{2,v})}$ with 
the parity condition $m_{1,v}+m_{2,v}\equiv w+1$ mod 2. Let ${\rm Ram}(\pi)$ be the set of all 
finite places of which $\pi_v$ is ramified. 
Thanks to \cite{Mok} with \cite{GeeT} we can attach $\pi$ with Galois representations:
\begin{thm}\label{gal}(cf. Theorem 3.1 and Remark 3.3, Theorem 1.1 of \cite{Mok} with Theorem C of \cite{Weiss}) 
Assume that $\pi$ is neither CAP nor endoscopic. 
For each prime $p$ and $\iota_p:\bQ_p\stackrel{\sim}{\lra} \C$ 
there exists a continuous, semisimple Galois representation 
$\rho_{\pi,\iota_p}:G_F\lra {\rm GSp_4}(\bQ_p)$ such that 
\begin{enumerate}
\item $\nu\circ \rho_{\pi,\iota_p}(c_\infty)=-1$ for any complex conjugation $c_\infty$ in $G_F$. 

\item $\rho_{\pi,\iota_p}$ is unramified for all finite places which do not belong to ${\rm Ram}(\pi)\cup\{v|p\}$;

\item for each finite place $v$ not lying over $p$, the local-global compatibility holds:
$${\rm WD}(\rho_{\pi,\iota_p}|_{G_{F_v}})^{F-{\rm ss}}\simeq {\rm rec}^{{\rm GT}}_v(\pi_v\otimes |\nu|^{-\frac{3}{2}})$$
with respect to $\iota_p$ where ${\rm rec}^{{\rm GT}}_v$ stands for the local Langlands correspondence 
constructed by Gan-Takeda \cite{GT};

\item for each $v|p$ and an embedding $\sigma:F_v\hookrightarrow \bQ_p$, there is a unique embedding 
$v_\sigma:F\hookrightarrow \C$ such that $\iota_p\circ \sigma|_F=v_\sigma$. 
Then the representation $\rho_{\pi,\iota_p}|_{G_{F_v}}$ is Hodge-Tate of weights 
$$HT_\sigma(\rho_{\pi,\iota_p}|_{G_{F_v}})=\{\delta_{v_\sigma},\delta_{v_\sigma}+m_{2,v_\sigma},
\delta_{v_\sigma}+m_{1,v_\sigma},\delta_{v_\sigma}+m_{2,v_\sigma}+m_{1,v_\sigma} \}$$
where $\delta_{v_\sigma}=\ds\frac{1}{2}(w+3-m_{1,v_\sigma}-m_{2,v_\sigma})$. 
If we assume either $\pi_v$ is discrete series for all infinite places $v$ or 
genuine, then $\rho_{\pi,\iota_p}|_{G_{F_v}}$ is crystalline and the local-global compatibility also holds up to 
semi-simplification.  
\end{enumerate}
\end{thm}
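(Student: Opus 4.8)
The plan is to reduce everything to the already-established construction of Galois representations attached to regular algebraic, essentially self-dual cuspidal automorphic representations of $\gl_4$, and then to descend the outcome to $\gs_4$. First I would transfer $\pi$ to $\gl_4(\A_F)$: since $\pi$ is neither CAP nor endoscopic, its Arthur parameter is of ``general type'', so by the classification of the discrete automorphic spectrum of $\gs_4(\A_F)$ due to Gee--Ta\"ibi \cite{GeeT} (together with \cite{Mok}) the functorial lift of $\pi$ along the standard $4$-dimensional representation of the dual group $\gs_4(\C)$ is a \emph{cuspidal} automorphic representation $\Pi$ of $\gl_4(\A_F)$. From the shape of the archimedean parameters $\phi_{(w;m_{1,v},m_{2,v})}$ — in particular the antidiagonal block form of $\phi_{(w;m_1,m_2)}(j)$, whose similitude involves the sign $(-1)^{w+1}$ — one reads off that $\Pi$ is essentially self-dual of \emph{symplectic} type, with $\Pi^\vee\simeq\Pi\otimes\omega_\Pi^{-1}$ for $\omega_\Pi$ the central character of $\Pi$. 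Because $m_{1,v}>m_{2,v}\ge 0$ at every infinite place, the twist of $\Pi$ corresponding to $\pi\otimes|\nu|^{-3/2}$ (the passage from the unitary to the arithmetic normalization that appears in property (3)) is a regular algebraic, essentially self-dual cuspidal representation of $\gl_4$ over the totally real field $F$.

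Next I would import the Galois representation from $\gl_4$. To such a $\Pi$ one attaches, for each $p$ and $\iota_p$, a continuous semisimple representation $R_{\Pi,\iota_p}\colon G_F\lra\gl_4(\bQ_p)$ by the now-standard construction (recorded in the polarized form needed here in Barnet-Lamb--Gee--Geraghty--Taylor \cite{BGGT}), irreducible since $\Pi$ is cuspidal; it is unramified outside $\mathrm{Ram}(\Pi)\cup\{v\mid p\}$, satisfies local-global compatibility at every finite $v\nmid p$ with the $\gl_4$-local Langlands correspondence up to Frobenius semisimplification, and at each $v\mid p$ is de Rham — crystalline when $\Pi_v$ is unramified — with Hodge--Tate weights prescribed by the infinitesimal character of $\Pi_{v_\sigma}$. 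Pulling the $\gl_4$-local Langlands back through the standard embedding $\gs_4\hookrightarrow\gl_4$, identifying it with $\mathrm{rec}^{\mathrm{GT}}_v$ of Gan--Takeda \cite{GT}, and undoing the $|\nu|^{-3/2}$-twist gives (2) and (3); the explicit Hodge--Tate set with $\delta_{v_\sigma}=\tfrac12(w+3-m_{1,v_\sigma}-m_{2,v_\sigma})$ is then a direct bookkeeping computation from the archimedean parameter, and the crystallinity in (4) — under the stated hypothesis at infinity, which forces $\Pi$ to be of a cohomological type for which the construction is known to yield crystalline representations at the unramified primes over $p$ — is the corresponding property of $R_{\Pi,\iota_p}$.

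It remains to descend to $\gs_4$ and to pin down the sign in (1). Since $R_{\Pi,\iota_p}$ is irreducible and essentially self-dual it preserves a non-degenerate bilinear form, unique up to scalar, hence symmetric or alternating; the symplectic type of $\Pi$ — equivalently the sign appearing in Arthur's classification, cf.\ Weiss's Theorem C \cite{Weiss} — forces this form to be alternating, so $\rho_{\pi,\iota_p}:=R_{\Pi,\iota_p}$ takes values in $\gs_4(\bQ_p)$, with similitude character $\nu\circ\rho_{\pi,\iota_p}$ a product of a finite-order character and a power of the $p$-adic cyclotomic character whose exponent is the common Hodge--Tate weight $w+3$ of the two dual pairs. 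Combining this weight with the archimedean value $\nu(\phi_{(w;m_{1,v},m_{2,v})}(j))$ (a one-line computation with the block matrix) then forces $\nu\circ\rho_{\pi,\iota_p}(c_\infty)=-1$ for every complex conjugation $c_\infty$, which is (1).

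The main obstacle is Step 1: the cuspidality and the symplectic type of the $\gl_4$-transfer rest on the full automorphic classification for $\gs_4$ over a general totally real field, which — as flagged in Remark \ref{working-assumption} — is conditional on the still-expected stabilization of the trace formula used in \cite{GeeT} and \cite{T}. Granting that input, the remaining delicacy is purely one of normalizations: matching $\mathrm{rec}^{\mathrm{GT}}$ with the $\gl_4$-local Langlands through the standard embedding, keeping the $C$- versus $L$-algebraic twist $|\nu|^{-3/2}$ and the shift $\delta_{v_\sigma}$ mutually consistent, and invoking the strong (Frobenius-semisimplified, pure) form of local-global compatibility at the ramified primes — precisely the package assembled in \cite{Mok}, \cite{GeeT} and \cite{Weiss}.
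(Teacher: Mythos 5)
The paper gives no proof of this theorem: it is stated as a citation to Mok (Theorem 3.1, Remark 3.3, Theorem 1.1) and Weiss (Theorem C), and your reconstruction of the argument--transfer to $\mathrm{GL}_4$, invoke the RAESDC machinery of \cite{BGGT}, descend to $\mathrm{GSp}_4$ via the symplectic pairing, match normalizations through Gan--Takeda--is indeed the strategy of those references. So in broad outline you are on the right track.

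There is, however, a genuine gap in the step where you claim that ``the twist of $\Pi$ \ldots is a regular algebraic, essentially self-dual cuspidal representation of $\mathrm{GL}_4$.'' The running hypothesis in Section \ref{ARGSp} is only $m_{1,v}>m_{2,v}\ge 0$, so $m_{2,v}=0$ (limit of discrete series) is allowed; in that case the archimedean $L$-parameter has a repeated eigenvalue and the $\mathrm{GL}_4$-transfer is \emph{not} regular. The direct construction of $R_{\Pi,\iota_p}$ for RAESDC representations does not apply, and the representation has to be produced by a congruence / $p$-adic interpolation argument from regular weights -- this is precisely the extra work done in Mok and in Weiss's thesis, and it is the reason the theorem's part (4) isolates the discrete-series (and ``genuine'') cases as the ones for which crystallinity and local--global compatibility up to semisimplification are asserted. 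In the limit-of-discrete-series case one only expects the weaker conclusions recorded in the statement, and your proof as written offers no construction at all. A smaller point: your computation of the sign in (1) is correct in spirit, but note that $\nu(\phi_{(w;m_1,m_2)}(j))=(-1)^{w+1}$ depends on $w$; the fixed sign $-1$ in (1) only emerges after carefully combining this with the archimedean behaviour of the $|\nu|^{-3/2}$-twist and of the central character, so ``a one-line computation'' undersells the normalization bookkeeping.
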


\begin{dfn}\label{def-odd}Let $F$ be a totally real field and $K$ be a field. Let $\rho:G_F\lra {\rm GSp}_4(K)$ be a 
representation. The natural inclusion ${\rm GSp}_4(K)\subset {\rm GL}_4(K)$ induces the action of $G_F$ on $K^4$. 
We say $\rho$ totally odd if 
the dimension of the subspace of $K^4$ fixed by $\nu\circ \rho(c_\infty)$ is two 
for each complex conjugation $c_\infty$ in $G_F$. 
It is equivalent to the condition that $\nu\circ \rho(c_\infty)=-1$ unless the characteristic of $K$ is two.  
\end{dfn} 
%
\begin{prop}\label{odd-irred} Keep the notation in Theorem \ref{gal}.  
Suppose that $p$ is odd. It holds that 
\begin{enumerate}
\item there exists a finite extension $E/\Q_p$ in $\bQ_p$ such that ${\rm Im}(\rho_{\pi,\iota_p})\subset {\rm GSp}_4(E)$;
\item $\rho_{\pi,\iota_p}$ is totally odd; 
\item if $\br_{\pi,\iota_p}:G_{F}\lra {\rm GSp}_4(\F_q)$  is irreducible over 
some finite extension $\F_q/\F_p$ and its projective image is not Dihedral, then it is also totally odd and absolutely irreducible. 
\end{enumerate}
\end{prop}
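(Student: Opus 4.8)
The plan is to establish each of the three statements in turn. For (1), I would use the standard fact that a continuous representation of a compact group $G_F$ into $\gl_4(\bQ_p)$ has image contained in $\gl_4(E)$ for some finite extension $E/\Q_p$: the image is compact, hence contained in $\gl_4(\O_E)$ after conjugation for some $E$, by a Baire category / completeness argument (every matrix entry takes values in a finitely generated closed subring, and one can bound the degree using that $\rho_{\pi,\iota_p}$ is defined by finitely many Frobenius traces together with continuity, or simply invoke compactness of the image). Since $\rho_{\pi,\iota_p}$ lands in $\gs_4$, the resulting model lies in $\gs_4(E)$. For (2), total oddness is immediate from Theorem \ref{gal}(1): we have $\nu\circ\rho_{\pi,\iota_p}(c_\infty)=-1$ for every complex conjugation $c_\infty$, and since $p$ is odd the scalar $-1$ acts on $E^4$ with $-1$-eigenspace of dimension... wait, more precisely $\nu\circ\rho(c_\infty)=-1$ means the similitude character is $-1$ at $c_\infty$, and for an element $g\in\gs_4(E)$ with $\nu(g)=-1$ and $g^2=1$ one computes (using the symplectic form pairing the $+1$ and $-1$ eigenspaces in a controlled way) that the $+1$-eigenspace is two-dimensional; this is exactly the content of the last sentence of Definition \ref{def-odd}, so (2) follows directly.

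The substantive part is (3): descending total oddness and absolute irreducibility from characteristic zero to the mod $p$ reduction $\br_{\pi,\iota_p}$. First I would observe that $\br_{\pi,\iota_p}$ lands in $\gs_4(\F_q)$ by part (1) and reduction modulo the maximal ideal of $\O_E$ (enlarging $\F_q$ if necessary), and that $\nu\circ\br_{\pi,\iota_p}(c_\infty)=-1$ still holds by reducing statement (2) mod $p$; since $p$ is odd, $-1\ne 1$ in $\F_q$, so the same eigenspace computation as in (2) shows the fixed space of $\nu\circ\br_{\pi,\iota_p}(c_\infty)$ is two-dimensional, i.e. $\br_{\pi,\iota_p}$ is totally odd. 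This part does not even need the irreducibility or non-dihedral hypotheses; those are needed only to make the statement about $\br_{\pi,\iota_p}$ meaningful/nontrivial and perhaps to ensure total oddness is not vacuous, but the computation itself is purely group-theoretic.

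For the \emph{absolute} irreducibility claim in (3): we are \emph{given} that $\br_{\pi,\iota_p}$ is irreducible over $\F_q$ with non-dihedral projective image, and we must upgrade "irreducible over $\F_q$" to "absolutely irreducible", i.e. irreducible over $\bF_p$. The plan is: if $\br_{\pi,\iota_p}$ were irreducible over $\F_q$ but not over $\bF_p$, then by a standard argument (Clifford theory / the structure of representations that become reducible after a finite extension) its semisimplification over $\bF_p$ would be a sum of Galois-conjugate pieces, and the projective image of $\br_{\pi,\iota_p}$ would then be forced to be "induced" in a way that makes it either imprimitive or — in the two-dimensional-summand case, which is the relevant one here since $4 = 2\cdot 2$ — dihedral-type in the $\gl_2$ sense, contradicting the non-dihedral hypothesis; the remaining case of four one-dimensional conjugate summands gives an abelian, hence in particular dihedral-or-smaller, projective image, again excluded. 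I would make this precise by noting that $\br_{\pi,\iota_p}\otimes_{\F_q}\bF_p$ decomposes as $\bigoplus_{\tau}\br^\tau$ over $\mathrm{Gal}(\F_q/\F_p)$-conjugates of an absolutely irreducible constituent $\br^\tau$ of dimension $4/r$ where $r\mid 4$ is the number of conjugates; $r=1$ is absolute irreducibility, and $r\in\{2,4\}$ forces the image of $\br_{\pi,\iota_p}$ into the normalizer of a torus or of a Levi in a way that makes the projective image dihedral (this is where I expect to spend the most care — pinning down exactly which small-image configurations arise and checking each is "dihedral" in the sense excluded by hypothesis). The main obstacle is therefore the bookkeeping in this last step: translating "not absolutely irreducible, yet irreducible over $\F_q$" into a precise constraint on $\mathrm{proj}(\br_{\pi,\iota_p})$ and verifying it contradicts the non-dihedral assumption; everything else is either quoted from Theorem \ref{gal} or a routine eigenspace computation.
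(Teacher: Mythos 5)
Your treatment of (1) and (2) is sound and matches the spirit of the paper's proof (which simply cites \cite{Sor} p.654 and \cite{Taylor-complex} respectively). For the total oddness half of (3), your reduction-mod-$p$ argument is essentially right, though the paper is more careful: it first invokes the irreducibility hypothesis to know the isomorphism class of $\br_{\pi,\iota_p}$ is independent of the choice of symplectic lattice, and then fixes a particular lattice in which $\rho_{\pi,\iota_p}(c_\infty)$ is the explicit anti-diagonal involution $\left(\begin{smallmatrix} s & 0 \\ 0 & s\end{smallmatrix}\right)$, from which oddness of the reduction is visible; your version implicitly assumes the symplectic structure reduces well mod $p$, which is where the lattice choice matters.

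The genuine gap is in your absolute irreducibility argument for (3). You assert that if $\br_{\pi,\iota_p}$ is irreducible over $\F_q$ but not over $\bF_p$, then the base change decomposes into $r \in \{2,4\}$ Galois-conjugate pieces and this forces the projective image to be ``induced'' and hence dihedral. But failure of absolute irreducibility is an \emph{endomorphism-algebra} phenomenon, not an induction phenomenon: it says precisely that $\mathrm{End}_{\F_q[G_F]}(\br_{\pi,\iota_p}) \cong \F_{q^s}$ for some $s > 1$ dividing $4$, so the image lands in $\mathrm{GL}_{4/s}(\F_{q^s})$ viewed inside $\mathrm{GL}_4(\F_q)$ (the Aschbacher class $\mathcal{C}_3$ field-extension subgroups). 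In the case $s = 2$ this is $\mathrm{GL}_2(\F_{q^2}) \cap \mathrm{GSp}_4(\F_q)$, and the image there can be as large as a form of $\mathrm{GL}_2(\F_{q^2})$ with projective image a simple group such as $\mathrm{PGL}_2(\F_{q^2})$ — this is nowhere near dihedral in any reasonable sense, so the non-dihedral hypothesis does not rule it out by itself. (Your $s=4$ case is closer, since there the image is abelian, but even there one has to decide whether ``cyclic'' counts as ``dihedral.'') The paper closes exactly this gap by invoking the classification of semisimple subgroups of $\mathrm{PGSp}_4(\F_q)$ (Theorem 3.2 of \cite{DZ}) together with further case-by-case computation; some such input seems unavoidable, and the Clifford-theory reduction alone, as you have set it up, does not reach the conclusion.
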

\begin{proof}The first claim follows from the explanation in p.654 of \cite{Sor}. 
The second claim is well-known (see comments in front of Proposition 1 
of \cite{Taylor-complex}).  
For the second claim, by assumption, the isomorphism classes of $\br_{\pi,\iota_p}$ is independent of any choices of symplectic 
lattices in $\bQ^{\oplus 4}_p$. Then one can find a symplectic $\bQ_p$-lattice $L$ such that $\rho_{\pi,\iota_p}$ is 
given by 
$\left(\begin{array}{cc}
s & 0_2 \\
0_2 & s
\end{array}\right)
$ where $s=\left(\begin{array}{cc}
0 & 1 \\
1 & 0
\end{array}\right)$. The half of the second claim follows from this. 
The absolute irreducibility now follows by using a classification of all semisimple subgroups in ${\rm PGSp}_4(\F_q)$ for any finite extension $\F_q$  of $\F_p$ (cf. Theorem 3.2 of \cite{DZ}). 
It is carried out by using tedious computation and therefore, the detailed are omitted.  
\end{proof}
Let $B$ be the upper Borel subgroup of $GSp_4$, $P$ the Siegel parabolic subgroup, and $Q$ the Klingen parabolic subgroup (see \cite{RS}) 
\begin{dfn}\label{auto-gal} 
\begin{enumerate}
\item
Let $\rho:G_F\lra {\rm GSp}_4(\bQ_p)$ be an irreducible $p$-adic Galois representation. 
We say $\rho$ is automorphic if there exists a cuspidal automorphic representation $\pi$ of 
${\rm GSp}_4(\A_F)$ with $\pi_v$ a discrete series representation for any $v|\infty$  such that 
$\rho\simeq\rho_{\pi,\iota_p}$ as a representation which takes the values in ${\rm GL}_4(\bQ_p)$. 
By definition, if $\rho$ is automorphic, then it is totally odd.  
\item  Let $\br:G_F\lra {\rm GSp}_4(\bF_p)$ be an irreducible mod $p$  Galois representation. 
We say $\br$ is automorphic if there exists a cuspidal automorphic representation $\pi$ of 
${\rm GSp}_4(\A_F)$ with $\pi_v$ a discrete series representation for any $v|\infty$  such that 
$\br\simeq \br_{\pi,\iota_p}$ as a representation which takes the values in ${\rm GL}_4(\bF_p)$. 
\item Let $\rho:G_F\lra {\rm GSp}_4(\bQ_p)$ be a $p$-adic Galois representation. 
For a finite place $v$ of $F$, $\rho$ is said to be 
\begin{enumerate}
\item Borel ordinary at $v$ if ${\rm Im}(\rho|_{G_{F_v}})\subset B(\bQ_p)$; 
\item Siegel ordinary at $v$ if ${\rm Im}(\rho|_{G_{F_v}})\subset Q(\bQ_p)$;  
\item Klingen ordinary at $v$ if ${\rm Im}(\rho|_{G_{F_v}})\subset P(\bQ_p)$,  
\end{enumerate}
after choosing a basis of $\rho$ suitably. 
It is similarly defined for $\br:G_F\lra {\rm GSp}_4(\bF_p)$. 
Notice that the $L$-group of $GSp_4$ is (exceptionally) isomorphic to $GSp_4$ again. 
Under this isomorphism, $P$ and $Q$ are interchanged. 
\item Let $\pi$ and $\rho_{\pi,\iota_p}$ be as in Theorem \ref{gal}.  
For a finite place $v$ of $F$, $\pi$ is said to be 
\begin{enumerate}
\item Borel ordinary at $v$ if $\rho_{\pi,\iota_p}$ is Borel ordinary at $v$; 
\item Siegel ordinary at $v$ if $\rho_{\pi,\iota_p}$ is Klingen ordinary at $v$;  
\item Klingen ordinary at $v$ if $\rho_{\pi,\iota_p}$ is Siegel ordinary at $v$. 
\end{enumerate}
\end{enumerate}
\end{dfn}

\subsection{Base change}\label{BC} 
Let $\pi$ be a regular cuspidal representation of ${\rm GSp}_4(\A_F)$. 
Assume that $\pi$ is neither CAP nor endoscopic. 
By \cite{GeeT} there exists a unique globally generic representation $\pi'$ such that 
the finite part of $\pi'$ is equivalent to one of $\pi$. The representation $\pi'$ can be transferred to a cuspidal representation $\Pi$ of ${\rm GL}_4(\A_F)$. 
If there exists a solvable extension $F'$ of $F$ such that Arthur-Clozel Base change $BC_{F'}(\pi)$ is not cuspidal, then 
there exists a totally real quadratic extension $E/F$ and a cuspidal representation $\tau$ of ${\rm GL_2}(\A_E)$ 
such that $\Pi={\rm AI}^E_F(\tau)$ where ${\rm AI}^E_F$ stands for the automorphic induction. 
Note that in general $E$ could be imaginary quadratic extension of $F$ but the regularity condition on $\pi$ implies $E$ is totally real.
Observing the conductor of $\Pi$ there are only finitely many possibilities of $E$. 
Let $K_\Pi$ be the composite field of all such $E$'s for $\Pi$ and put $K_\Pi=F$ if no such a field exists. 
Note that $K_\Pi$ is a finite extension since the conductor of $E$ divides one of $\Pi$. 
By taking a strong backward lift to $GSp_4$ and switching the infinite types 
(using \cite{GeeT} again)
we have a base change theorem:
\begin{thm}\label{bc}For any solvable extension $F'$ which is linearly disjoint from $K_\Pi$ over $F$ 
there exists a base change $\pi_{F'}$ to ${\rm GSp}_4(\A_{F'})$ of $\pi$. 
\end{thm}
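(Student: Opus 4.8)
The plan is to transfer $\pi$ to ${\rm GL}_4$, carry out the base change there, check that cuspidality is preserved thanks to the linear disjointness hypothesis, and then transfer back to ${\rm GSp}_4$ over $F'$. First I would use \cite{GeeT} to replace $\pi$ by its globally generic avatar $\pi'$ (the unique globally generic member of the relevant packet, with the same finite part) and to transfer $\pi'$ to an automorphic representation $\Pi$ of ${\rm GL}_4(\A_F)$; since $\pi$ is neither CAP nor endoscopic, $\Pi$ is cuspidal, and it is essentially self-dual of symplectic type, with multiplier the similitude character of $\pi$. Writing the solvable extension $F'/F$ as a tower of cyclic extensions of prime degree, I would then form $\Pi_{F'}$ by iterating Arthur-Clozel cyclic base change (this is what ``$BC_{F'}(\pi)$'' above refers to); local-global compatibility and the compatibility of base change with the local Langlands correspondence then record that, at every finite place $w\mid v$ of $F'$, the local parameter of $\Pi_{F'}$ at $w$ is the restriction to $W_{F'_w}$ of that of $\Pi$ at $v$, and similarly at the archimedean places.

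The key point is that $\Pi_{F'}$ remains cuspidal. If it did not, then by the analysis preceding the statement $\Pi$ would be an automorphic induction ${\rm AI}^E_F(\tau)$ from a cuspidal representation $\tau$ of ${\rm GL}_2(\A_E)$ for a totally real quadratic $E/F$, and $E$ would be one of the finitely many such fields, hence $E\subseteq K_\Pi$. A short computation of $BC_{F'}({\rm AI}^E_F(\tau))$ using the compatibility of base change with automorphic induction --- together with the observation that, $\Pi$ being of symplectic type, an induction from a quartic field (which is what would arise if $\tau$ itself ceased to be cuspidal after base change) always descends to an induction from a quadratic subfield of that quartic --- shows that $\Pi_{F'}$ can be non-cuspidal only if some quadratic subextension of $F'/F$ lies in $K_\Pi$. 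But $F'$ is linearly disjoint from $K_\Pi$ over $F$, so $F'\cap K_\Pi=F$ and no such subextension exists; hence $\Pi_{F'}$ is cuspidal.

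Finally I would transfer $\Pi_{F'}$ back to ${\rm GSp}_4$ over $F'$: it is a cuspidal representation of ${\rm GL}_4(\A_{F'})$, essentially self-dual of symplectic type with the appropriate multiplier, of general Arthur type, so by the classification of \cite{GeeT} it is the functorial transfer of a cuspidal automorphic representation $\sigma$ of ${\rm GSp}_4(\A_{F'})$, which we take to be globally generic (hence neither CAP nor endoscopic). Compatibility of the Gan-Takeda local correspondence with base change and transfer forces ${\rm rec}^{{\rm GT}}_w(\sigma_w)={\rm rec}^{{\rm GT}}_v(\pi_v)|_{W_{F'_w}}$ for every finite place $w\mid v$ of $F'$, so that $\sigma$ is a base change of $\pi$ at all finite places; using the freedom that \cite{GeeT} affords to move within archimedean $L$-packets I would then adjust the infinite type so that, at each archimedean place $w$ of $F'$ lying over an archimedean place $v$ of $F$, the $L$-parameter of $\sigma_w$ is the restriction of that of $\pi_v$ (a discrete series member when $F'_w=\R$, so that the resulting representation is again regular when $F'$ is totally real). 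That representation is the desired $\pi_{F'}$. The main obstacle is the cuspidality step: one must be certain that base change along an $F'$ linearly disjoint from $K_\Pi$ cannot ``unfold'' $\Pi$ as an automorphic induction, and in particular dispose of the quartic-induction possibility; everything else is bookkeeping with \cite{GeeT} and Arthur-Clozel.
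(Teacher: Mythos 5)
Your outline matches the route the paper sets up in the paragraph preceding the theorem: pass to the globally generic member $\pi'$, transfer to a cuspidal $\Pi$ on ${\rm GL}_4(\A_F)$ via \cite{GeeT}, perform Arthur--Clozel base change, and descend back. The paper itself, however, simply invokes Proposition 2.4 of \cite{Mok} at this point rather than re-deriving the argument.

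One caution about the cuspidality step, which you rightly identify as the crux. The preamble asserts that if $BC_{F'}(\Pi)$ is non-cuspidal then $\Pi={\rm AI}^E_F(\tau)$ for some totally real quadratic $E/F$, and $K_\Pi$ is the compositum of all such $E$. From $E\subseteq K_\Pi$ and $F'$ linearly disjoint from $K_\Pi$ you can only deduce $E\cap F'=F$, i.e.\ $E\not\subset F'$; this by itself does not contradict non-cuspidality. What one needs --- and what the paper implicitly outsources to Mok --- is the stronger dichotomy: if $BC_{F'}(\Pi)$ is non-cuspidal then one can choose the quadratic $E$ with $E\subset F'$ (equivalently, $\Pi\cong\Pi\otimes\omega_{E/F}$ for a quadratic character cut out by a subfield of $F'$). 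Your ``short computation of $BC_{F'}({\rm AI}^E_F(\tau))$'' sketch is aimed at exactly this, but as written it does not rule out the case $E\not\subset F'$, $\tau'=BC_{EF'/E}(\tau)$ cuspidal but ${\rm Gal}(EF'/F')$-invariant, which also kills cuspidality of ${\rm AI}^{EF'}_{F'}(\tau')$; that case has to be folded back into the $K_\Pi$-book-keeping (it again produces a quadratic $E'$ with $\Pi={\rm AI}^{E'}_F(\cdot)$). Since the paper's whole proof is the citation, making this one step airtight is precisely the content you are re-deriving; the rest (regularity forcing $E$ totally real, the quartic induction factoring through a quadratic one for symplectic $\rho$, the descent ${\rm GL}_4\to{\rm GSp}_4$ via \cite{GeeT}, and adjusting the archimedean $L$-packet member) is, as you say, bookkeeping.
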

\begin{proof}This follows from Proposition 2.4 of \cite{Mok}. 
\end{proof}

\subsection{Jacquet-Langlands correspondence}\label{JL} 
Throughout this subsection, we assume that $d:=[F:\Q]$ is even. 
Let $B$ be a definite quaternion algebra such that 
$$B\otimes_F\R\simeq \mathbb{H}^d,\ B\otimes F_v\simeq_F M_2(F_v)$$
for all finite places $v$. Here $\mathbb{H}=\Big(\ds\frac{-1,-1}{\R}\Big)$ is Hamilton's quaternions. 
We denote by $\overline{x}$ the usual conjugation of an element $x$ of $B$ and it is naturally extended on ${\rm GL}_2(B)$. 
For $g\in {\rm GL}_2(B)$ we write ${}^\ast g={}^t \overline{g}$ where ``$t$" stands for the transpose. 
Let us consider an algebraic group $G_B$ associating 
any $F$-algebra $R$ with 
$$G_B(R):=\{g\in {\rm GL}_2(B\otimes_FR)\ |\ {}^\ast g g=\mu(g)I_2,\ \mu(g)\in R^\times \}.$$
This is an inner form of $GSp_4/F$ which is compact modulo the center at infinities and by assumption we may fix an isomorphism 
\begin{equation}\label{isom}
\iota_v:G_B(F_v)\stackrel{\sim}{\lra} {\rm GSp}_4(F_v). 
\end{equation}
for any finite place $v$. 
For integers $a,b,c$ satisfying $a\ge b\ge 0$  we denote by 
$\xi_{a,b,c}=\nu^{c}\otimes \rho_{a,b}$ an algebraic representation of ${\rm GSp}_4(\R)$ where 
$\rho_{a,b}$ is a unique irreducible representation of the highest weight $(a,b)$ with respect to the restriction to ${\rm Sp}_4(\R)$. 
Note that the central character of $\xi_{a,b,c}$ is given by $z \mapsto z^{2c+a+b}$ for $z\in \R^\times$. 

For integers $m_1,m_2,w$ satisfying $m_1>m_2>0$ and $m_1+m_2\equiv w+1$ mod 2 the dual of 
$\xi_{\delta,m_1-2,m_2-1}$ has the same central character of $\pi^H_{(w;m_1,m_2)}$ or $\pi^W_{(w;m_1,m_2)}$ 
where $\delta=\frac{1}{2}(w+3-m_1-m_2)\in \Z$. 
For any automorphic representation $\Pi$ of ${\rm GSp}_4(\A_F)$ or $G_B(\A_F)$ we denote by $\omega_\Pi$ the central 
character of $\Pi$ and $\Pi_f$ the finite part of $\Pi$. 
For $\xi_{\delta,m_1-2,m_2-1}$ we can associate a unique irreducible algebraic representation 
$\xi^{{\rm JL}}_{\delta,m_1-2,m_2-1}$ of $G_B(\R)$ whose complexification is isomorphic to those of 
$\xi_{\delta,m_1-2,m_2-1}$. It is well known (cf.  Section 3.2 of \cite{KWY}) that 
\begin{equation}\label{dim-al}
{\rm dim}\hspace{0.5mm}\xi^{{\rm JL}}_{\delta,m_1-2,m_2-1}={\rm dim}\hspace{0.5mm}\xi_{\delta,m_1-2,m_2-1}=\frac{1}{6}(m_1-m_2)(m_1+m_2)m_1m_2. 
\end{equation}

According to \cite{Sor1}, we say a cuspidal representation of ${\rm GSp}_4(\A_F)$ or $G_B(\A_F)$ is stable if it is neither CAP nor endoscopic (see Definition 1.1 of \cite{Sor1} for 
endoscopic representations). Here, according to Definition 3.3, p.102 of \cite{Gan}, a cuspidal representation $\pi$ of 
${\rm GSp}_4(\A_F)$ or $G_B(\A_F)$ is said to be a CAP representation if it is weakly equivalent to the irreducible 
constituent of an induced representation ${\rm Ind}^{{\rm GSp}_4(\A_F)}_{P(\A_F)}\tau$, with 
$\tau$ a cuspidal representation of the Levi factor of a rational parabolic subgroup $P$ of $GSp_4$. 
The following theorem is stated as Theorem B in \cite{Sor1}:
\begin{thm}\label{JL}Fix a continuous character $\omega:F^\times\backslash \A^\times_F\lra \C^\times$. 
Keep the notation in Theorem \ref{gal}. Assume that $m_{2,v}>0$ for all infinite place $v$. 
For each infinite place $v$ of $F$, fix a member $\Pi_v$ in $\Pi(\phi_{(w;m_{1,v},m_{2,v})})$. 
Then there exists one-to-one correspondence between the following sets:
\begin{enumerate}
\item stable, tempered automorphic representations $\pi'$ of $G_B(\A_F)$ such that $\omega_{\pi'}=\omega$ and $\pi'_v=\xi^{{\rm JL}}_{\delta_v,m_{1,v}-1,m_{2,v}-2}$ 
for each infinite place $v$ and 
\item stable, tempered automorphic representations $\pi$ of ${\rm GSp}_4(\A_F)$ such that $\omega_\pi=\omega$ and 
$\pi_v=\Pi_v$ 
for each infinite place $v$. 
\end{enumerate}  
The correspondence takes $\pi'\mapsto \pi'_f\otimes \ds\bigotimes_{v|\infty}\Pi_v$ and $\pi\mapsto \pi_f\otimes \ds\bigotimes_{v|\infty}\xi^{{\rm JL}}_{\delta_v,m_{1,v}-1,m_{2,v}-2}$ under the isomorphism (\ref{isom}). 
\end{thm}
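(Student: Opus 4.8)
The plan is to deduce this from Arthur's classification for ${\rm GSp}_4$ and its inner forms, exactly as in \cite[Theorem B]{Sor1}; I sketch the architecture. The quasi-split inner form of $G_B$ is ${\rm GSp}_4/F$ itself, so the two groups share one and the same set of global discrete $A$-parameters, and by the (conditional) classification of Gee--Ta\"ibi \cite{GeeT},\cite{T} (see Remark \ref{working-assumption}) the discrete automorphic spectrum of each is partitioned into packets indexed by these parameters. First I would observe that a cuspidal representation of ${\rm GSp}_4(\A_F)$ or $G_B(\A_F)$ is stable --- neither CAP nor endoscopic --- precisely when its parameter is tempered and of general type in Arthur's list, i.e.\ the associated self-dual cuspidal representation of ${\rm GL}_4(\A_F)$ does not factor through a proper Levi or through an elliptic endoscopic datum of ${\rm GSp}_4$. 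Since ``stable'' and ``general type'' are conditions on the parameter alone, they match on the two sides automatically.

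Next I would record the local picture. At each finite place $v$ the algebra $B$ splits, so $\iota_v$ identifies $G_B(F_v)$ with ${\rm GSp}_4(F_v)$ and hence the local $L$-packets $\Pi_{\phi_v}$ of the two groups, with no interaction between these finite-place choices and the archimedean ones. At an infinite place $v$ the group $G_B(F_v)$ is compact modulo its center, so its discrete-series $L$-packet attached to $\phi_{(w;m_{1,v},m_{2,v})}$ is the singleton consisting of the finite-dimensional representation $\xi^{{\rm JL}}_{\delta_v,m_{1,v}-1,m_{2,v}-2}$ --- the archimedean Jacquet--Langlands transfer of the two-element packet $\Pi(\phi_{(w;m_{1,v},m_{2,v})})=\{\pi^H_{(w;m_{1,v},m_{2,v})},\pi^W_{(w;m_{1,v},m_{2,v})}\}$, the shift $\delta_v=\tfrac12(w+3-m_{1,v}-m_{2,v})$ being forced by matching central and infinitesimal characters. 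For a general-type tempered parameter the global component group is as small as possible and Arthur's sign character is trivial, so on each of ${\rm GSp}_4(\A_F)$ and $G_B(\A_F)$ the multiplicity formula gives multiplicity exactly one for every choice of local member of the adelic packet.

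Putting this together, for each general-type tempered parameter $\phi$ with central character $\omega$ and the prescribed archimedean infinitesimal data, the stable tempered $\pi'$ of $G_B(\A_F)$ with $\pi'_v=\xi^{{\rm JL}}_{\delta_v,m_{1,v}-1,m_{2,v}-2}$ at infinity are in bijection --- via $\pi'_f\mapsto$ its image under the maps $\iota_v$ at finite places, tensored with the chosen $\Pi_v$ at infinity --- with the stable tempered $\pi$ of ${\rm GSp}_4(\A_F)$ having $\pi_v=\Pi_v$ at infinity; summing over the admissible $\phi$ produces the asserted correspondence together with its inverse.

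I expect the genuine content, and the main obstacle, to be two points internal to \cite{Sor1}: (i) the stabilization of the discrete part of the trace formula for the inner form $G_B$ and the ensuing multiplicity formula, which is precisely where the dependence on the conditional inputs behind \cite{GeeT},\cite{T} enters; and (ii) the archimedean packet computation for the compact-modulo-center group --- identifying its $L$-packet with the single finite-dimensional representation $\xi^{{\rm JL}}$ and verifying that Arthur's sign character restricts trivially there, so that no member of the global packet is annihilated. Everything else is bookkeeping with the parameters and with the choice of symplectic similitude structure.
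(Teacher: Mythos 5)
The paper itself gives no proof of this theorem: immediately before the statement it writes ``The following theorem is stated as Theorem B in \cite{Sor1},'' and simply imports Sorensen's result. So there is no paper-internal argument for you to match or diverge from.

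That said, your sketch is a reasonable account of what lies behind the cited reference, and it correctly isolates the two genuinely hard inputs --- the stabilization of the discrete trace formula for the inner form $G_B$ (with its dependence on the conditional hypotheses behind \cite{GeeT},\cite{T}) and the identification of the archimedean packet of the compact-mod-center group with the single finite-dimensional $\xi^{{\rm JL}}$. One caveat worth flagging: \cite{Sor1} predates the full Arthur classification and the Gee--Ta\"ibi treatment of ${\rm GSp}_4$, so Sorensen's original proof is phrased as a direct comparison of (stabilized) trace formulas for $G_B$ and ${\rm GSp}_4$ together with an isolation argument for stable tempered representations, rather than as an invocation of a packaged multiplicity formula over ``general-type'' parameters. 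Your description is essentially the post-Arthur repackaging of the same mechanism; it is a legitimate route, but if you want to track which hypotheses are actually being used you should be aware that what the paper relies on is Sorensen's trace-formula comparison, with the Arthur-classification gloss layered on afterward through Remark \ref{working-assumption}. Everything else in your sketch --- matching stability conditions across the two sides, identifying finite local packets via $\iota_v$, and forcing $\delta_v$ by infinitesimal-character matching --- is correct and is exactly the bookkeeping one needs.
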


To end this section, we discuss tempered-ness and CAP or endoscopic representations of ${\rm GSp}_4(\A_F)$ and 
$G_B(\A_F)$ respectively. 
As for CAP representations for ${\rm GSp}_4(\A_F)$ or $G_B(\A_F)$,  there are three types of CAP representations associated to Siegel parabolic subgroup, 
Klingen parabolic subgroup, and Borel subgroup respectively. Let us first consider the classification of CAP 
representations. For simplicity, only in this paper, we say a CAP representation associated to 
Siegel parabolic subgroup a Siegel CAP representation and similarly we do the same for other cases.  

Let $\pi=\otimes'_{v}\pi_v$ be a regular cuspidal representation of ${\rm GSp}_4(\A_F)$ such that 
for each infinite place $v$, $\pi_v$ has the L-parameter $\phi_{(w;m_{1,v},m_{2,v})}\ m_{1,v}>m_{2,v}>0$ with 
the parity condition $m_{1,v}+m_{2,v}\equiv w+1$ mod 2 for a fixed integer $w$. 
The regularity means $m_{1,v}+1\ge m_{1,v}+2\ge 3$. 
Suppose that $\pi$ is a CAP representation. 
It is well known by Howe, Piatetski-Shapiro \cite{PS} and Soudry \cite{Soudry} that $\pi$ can be realized as a theta lift. 
Observe that $\phi_{(w;m_{1,v},m_{2,v})}$ is the direct sum of 2-dimensional irreducible representation of 
$W_\R$ which are not isomorphic each other. 
Therefore, $\pi$ can not be any Borel CAP representations by Theorem 7.4.1, p.512 of \cite{GeeT}. 
If $\pi$ is a Klingen CAP representation, then by Theorem 7.4.1 of loc.cit. again, 
$m_{1,v}=m_{2,v}$ for each infinite place $v$ of $F$ 
so that it contradicts the regularity.  We can also get around using Arthur classification as above by 
computing possible minimal $K$-types between the theta correspondence for $(GO(2),GSp_4)$ 
which are done by applying the results in \cite{KW}. 
Therefore, if $\pi$ is CAP, then it has to be a Siegel CAP representation. 
By Theorem 7.4.1 of loc.cit. again, we have $k_v:=m_{1,v}+1=m_{2,v}+2>0$ for each finite place $v$ and 
there exists a regular cuspidal automorphic representation $\tau$ of ${\rm GL}_2(\A_F)$ such that 
$\phi_v(\tau_v)[2]$ is isomorphic to $\phi_{(w;m_{1,v},m_{2,v})}$ for each finite place $v$ of $F$ where 
$\phi_v(\tau_v)$ is the local Langlands parameter of $\tau$ at $v$. 
If $\pi$ is endoscopic, by \cite{R}, it is obtained, as a theta lift, from a pair $(\tau_1,\tau_2)$ of cuspidal 
representations of ${\rm GL}_2(\A_F)$ such that $\phi_v(\tau_{1,v})\oplus \phi_v(\tau_{2,v})$ 
is isomorphic to $\phi_{(w;m_{1,v},m_{2,v})}$ for each finite place $v$ of $F$. 
In either the CAP case or the endoscopic case, one can formally attach a reducible $p$-adic 
Galois representation $\rho_{\pi,\iota_p}:G_F\lra {\rm GSp}_4(\bQ_p)$ to $\pi$ for each $\iota_p:\bQ_p\simeq \C$. 
Note that $\rho_{\pi,\iota_p}$ is a priori constructed as a representation to ${\rm GL}_4(\bQ_p)$ but we 
can easily adjust it having the image to ${\rm GSp}_4(\bQ_p)$. 
Summing up, we have proved the following:
\begin{thm}\label{cla-GSp4}Let $\pi$ be a regular cuspidal representation of ${\rm GSp}_4(\A_F)$ and $\rho_{\pi,\iota_p}$ 
be the corresponding $p$-adic Galois representation. It holds the followings:
\begin{enumerate}
\item $\pi$ is non-tempered if and only if $\pi$ is a Siegel CAP representation. In this case, $\rho_{\pi,\iota_p}$ 
is the direct sum of two characters and a 2-dimensional representation attached to a regular cuspidal representation of 
${\rm GL}_2(\A_F)$;
\item $\pi$ is tempered and $\rho_{\pi,\iota_p}$ is reducible if and only if $\pi$ is endoscopic;
\item $\pi$ is stable and tempered if $\rho_{\pi,\iota_p}$ is irreducible. 
\end{enumerate}
In particular, if $\br_{\pi,\iota_p}$ is irreducible, then $\pi$ is stable and tempered. 
\end{thm}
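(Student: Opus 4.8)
The plan is to assemble the claim from the structural results recalled just above, treating the three ``if and only if'' statements in turn and then reading off the final sentence as a formal consequence. First I would set up the dichotomy coming from Arthur's classification via \cite{GeeT}: a regular cuspidal $\pi$ is either stable (neither CAP nor endoscopic), or CAP, or endoscopic, and these three cases are mutually exclusive. The preceding discussion already shows that under the regularity hypothesis the only CAP representations that can occur are Siegel CAP representations — Borel CAP is excluded because $\phi_{(w;m_{1,v},m_{2,v})}$ is a sum of two non-isomorphic irreducible $2$-dimensional $W_\R$-representations (Theorem 7.4.1 of \cite{GeeT}), and Klingen CAP would force $m_{1,v}=m_{2,v}$, contradicting regularity. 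So the tripartition is really: stable tempered; Siegel CAP; or endoscopic.

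For part (1), one direction is the content of the classification of Siegel CAP representations via the theta correspondence with $GO(2)$ (Howe--Piatetski-Shapiro \cite{PS}, Soudry \cite{Soudry}, with the explicit $K$-type bookkeeping of \cite{KW}): a Siegel CAP $\pi$ is non-tempered, and its transfer to $GL_4$ is of the form $\chi_1\boxplus\chi_2\boxplus (\text{$2$-dimensional cuspidal})$ — more precisely $\tau[2]\boxplus(\text{two characters})$ for a regular cuspidal $\tau$ on $GL_2(\A_F)$ — which one formally lifts to a reducible $\rho_{\pi,\iota_p}$ landing in $\GSp_4(\bQ_p)$ after adjusting the lattice/pairing. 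Conversely, if $\pi$ is non-tempered then it is neither stable (stable representations are tempered by the temperedness part of Arthur's classification) nor endoscopic (endoscopic representations here are tempered, being built from cuspidal pairs on $GL_2$), so by the tripartition it must be Siegel CAP. For part (2): an endoscopic $\pi$ is tempered and, by \cite{R}, arises as a theta lift from a pair $(\tau_1,\tau_2)$ of cuspidal $GL_2(\A_F)$-representations, whence $\rho_{\pi,\iota_p}\simeq\rho_{\tau_1,\iota_p}\oplus\rho_{\tau_2,\iota_p}$ is reducible; conversely a tempered $\pi$ with reducible $\rho_{\pi,\iota_p}$ cannot be stable (for stable tempered $\pi$ the transfer to $GL_4$ is cuspidal, so $\rho_{\pi,\iota_p}$ is irreducible) and is not CAP (CAP forces non-tempered by (1)), hence is endoscopic. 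Part (3) is then immediate: if $\rho_{\pi,\iota_p}$ is irreducible it is in particular reducible-free, so $\pi$ is not Siegel CAP by (1) and not endoscopic by (2), leaving only the stable tempered case. The final sentence follows because irreducibility of $\br_{\pi,\iota_p}$ forces irreducibility of $\rho_{\pi,\iota_p}$ (a reducible $\rho$ would have a $G_F$-stable sublattice whose reduction is a $\br$-subrepresentation), so (3) applies.

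The main obstacle I would expect is not in the formal deduction but in verifying that the temperedness assertions and the CAP/endoscopic classification are genuinely available in the form needed over a general totally real $F$: this is precisely where one is relying on \cite{GeeT} (and the trace-formula hypotheses flagged in Remark \ref{working-assumption}), and one must be careful that the Arthur-type statement ``cuspidal $\Rightarrow$ (stable tempered) or (CAP/non-tempered) or (endoscopic tempered)'' is applied only to the regular $\pi$ at hand, where the archimedean parameters $\phi_{(w;m_{1,v},m_{2,v})}$ with $m_{1,v}>m_{2,v}>0$ are discrete-series and hence rule out the degenerate Arthur parameters. A secondary, more bookkeeping-type point is confirming that in the Siegel CAP and endoscopic cases the a priori $GL_4$-valued $\rho_{\pi,\iota_p}$ can be conjugated to take values in $\GSp_4$ compatibly with the similitude character — this is routine since the constituent characters and $2$-dimensional pieces carry the expected duality, but it does need to be said.
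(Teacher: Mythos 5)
Your proposal follows essentially the same route as the paper: invoke the Arthur classification via Gee--Ta\"{\i}bi to get the stable/CAP/endoscopic trichotomy, eliminate Borel and Klingen CAP using regularity and the archimedean L-parameters $\phi_{(w;m_{1,v},m_{2,v})}$ via Theorem~7.4.1 of \cite{GeeT}, identify Siegel CAP with Saito--Kurokawa type (two characters plus a $GL_2$ piece), identify the endoscopic case with a pair of $GL_2$ cuspforms via Roberts, and read off the final statement about $\br_{\pi,\iota_p}$ as a corollary. That is precisely the paper's ``Summing up'' argument, so the comparison is a match rather than a genuinely different proof.

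Two small points worth flagging. First, a notational slip: the Siegel CAP parameter is of the form $\tau\boxplus\chi[2]$ (a $GL_2$ cuspform plus a character tensored with the $2$-dimensional $SL_2$-rep), not $\tau[2]\boxplus(\text{two characters})$; the conclusion you draw (Galois representation equals two characters plus a $2$-dimensional piece attached to $\tau$) is of course correct. Second, your deduction of the ``only if'' in part (2) rests on ``stable tempered $\Rightarrow$ transfer to $GL_4$ cuspidal $\Rightarrow$ $\rho_{\pi,\iota_p}$ irreducible.'' Cuspidality of the transfer does not by itself give irreducibility of the Galois representation; this step silently appeals to irreducibility results for regular algebraic essentially self-dual cuspidal representations of $GL_n$ with $n\le 5$ (available via \cite{CG}). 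The paper's own discussion glides over this in exactly the same way, so you are not adding a gap, but if you want a self-contained argument you should cite the irreducibility theorem explicitly at that step rather than treating ``cuspidal, hence irreducible'' as automatic.
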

 
Next we consider when $\Pi$ is a regular algebraic cuspidal automorphic representation of $G_B(\A_F)$ such that 
for each infinite place $v$ of $F$, $\Pi_v$ has the L-parameter $\phi_{(w;m_{1,v},m_{2,v})}\ m_{1,v}>m_{2,v}>0$ with 
the parity condition $m_{1,v}+m_{2,v}\equiv w+1$ mod 2 for a fixed integer $w$. 
In this case, the L-packet at such a $v$ is singleton.  
Recall that $H_B$ is the kernel of the similitude character of $G_B(\A_F)$ so that $H_B$ is an inner form of 
$Sp_4$ which is compact at infinity. By the argument after the third paragraph in page 516 of \cite{GeeT}, 
any component of the restriction $\Pi|_{H_B}$ to $H_B$ is a discrete automorphic representation of $H_B(\A_F)$. Pick any 
component $\Pi'$ of $\Pi|_{H_B}$. 
It is easy to see that the L-parameter of $\Pi'$ at each infinite place $v$ of $F$ is 
isomorphic to 
\begin{equation}\label{so5}
1\oplus {\rm Ind}^{W_\R}_{W_\C}(z/\overline{z})^{m_{1,v}}\oplus {\rm Ind}^{W_\R}_{W_\C}(z/\overline{z})^{m_{2,v}}
\end{equation}
which takes the values in ${\rm SO}(5)(\C)\subset {\rm GL}_5(\C)$ which is given as  
the composition of $\phi_{(w;m_{1,v},m_{2,v})}$ with the projection ${\rm GSp}_4(\C)\lra 
{\rm PGSp}_4(\C)\simeq {\rm SO}(5)(\C)$ (see A.7, p.286 of \cite{RS} for the isomorphism). 
We will seek the possible global 
Arthur parameters for $\Pi'$ in terms of Arthur parameter described in Theorem 4.0.1 of \cite{T}. 
What we have known for $\Pi'$ at hand is the L-parameter (\ref{so5}) but it is different from Arthur parameter 
if $\Pi'$ is non-tempered. However, notice that we can compare the infinitesimal character for both parameters 
(cf. Section 3.6 of \cite{CR}). 
The infinitesimal character of (\ref{so5}) is parametrized by the quintuple 
$$(m_{1,v},m_{2,v},0,-m_{2,v},-m_{1,v}).$$ 
Notice that the integers are distinct to each other since 
$m_{1,v}>m_{2,v}>0$ by regularity. 
Therefore, the Arthur parameter of $\Pi'$ does not contain any of the following parameters 
$$\chi_1[2d],\ \chi_2\boxplus\chi_3$$
where $d\in \Z_{\>0}$ and $\chi_1,\chi_2,\chi_3$ are 
characters of  $F^\times\bs {\rm GL}_1(\A_F)$ with $\chi^2_i=1$ for $i=1,2,3$.  
Then we have the possible global 
Arthur parameters of $\Pi'$ as follows:
\begin{enumerate}
\item $1[5]$; 
\item $1\boxplus \pi^{(1)}_2\boxplus \pi^{(2)}_2$, where  
for each $i=1,2$, $\pi^{(i)}_2$ is a cuspidal automorphic representation of ${\rm GL}_2(\A_F)$ with the trivial central 
character and further $\pi^{(1)}_2$ and $\pi^{(2)}_2$ are not isomorphic to each other;  
\item $1\boxplus \pi_2[2]$, where $\pi_2$ is a cuspidal automorphic representation of ${\rm GL}_2(\A_F)$ 
with the trivial central character; 
\item $\pi_2 \boxplus 1[3]$, where   
$\pi_2$ is a cuspidal automorphic representation of ${\rm GL}_2(\A_F)$ with the trivial central character;
\item  $\pi_2\boxplus \pi_3$, where for each $j=2,3$,   
$\pi_j$ is a cuspidal automorphic representation of ${\rm GL}_j(\A_F)$ with the trivial central character and 
further $\pi_3$ is orthogonal;
\item $\chi\boxplus \pi_4$, where $\chi$ is a character of  $F^\times\bs {\rm GL}_1(\A_F)$ with $\chi^2=1$ and 
$\pi_4$ is a cuspidal automorphic representation of ${\rm GL}_4(\A_F)$ which is symplectic and it has the 
central character $\omega_{\pi_4}=\chi$;
\item $\pi_5$, where $\pi_5$ is a cuspidal, orthogonal automorphic representation of ${\rm GL}_5(\A_F)$ with 
the trivial central character.   
\end{enumerate}
By matching of the infinitesimal characters, we see easily that all cuspidal representations 
listed above are regular algebraic. It follows from this that for each cuspidal representation, 
by Theorem 2.1.1 of \cite{BGGT}, one can attach a semisimple 
continuous $p$-adic Galois representation for $\iota_p$. 
Therefore, one can attach a semisimple $p$-adic Galois representation 
$\rho_{\Pi',\iota_p}:G_F\lra {\rm GO}(5)(\bQ_p)$ with $\Pi'$ for $\iota_p$ 
according to each parameter above. 
Let $c:{\rm GO}(5)(\bQ_p)\lra \bQ^\times_p$ be 
the similitude character. The surjection ${\rm GO}(5)(\bQ_p)\lra {\rm SO}(5)(\bQ_p),\ A\mapsto \mu(A)^{-1}A,\ \mu(A)=\det(A)c(A)^{-2}$ 
induces ${\rm GO}(5)(\bQ_p)\simeq {\rm SO}(5)(\bQ_p)\times \bQ^\times_p$ (we do not have a similar isomorphism for 
even orthogonal groups with the similitude). 
Let $p_1$ be the projection onto ${\rm SO}(5)(\bQ_p)$. 
Then we have a continuous representation $p_1\circ \rho_{\Pi',\iota_p}:G_F\lra {\rm SO}(5)(\bQ_p)\simeq {\rm PGSp}_4(\bQ_p)$ by using 
the exceptional isomorphism. By Proposition 2.1.4 of \cite{P}, there exists a continuous, semisimple lift 
$\rho_{\Pi,\iota_p}:G_F\lra {\rm GSp}_4(\bQ_p)$ of $p_1\circ \rho_{\Pi',\iota_p}$ via the 
surjection ${\rm GSp}_4(\bQ_p)\lra {\rm PGSp}_4(\bQ_p)$. 
There may be another lifts but they differ by twists of characters. It is easy to see that 
the adjoint representation $p_1\circ \rho_{\Pi,\iota_p})$ 
is isomorphic to $p_1\circ \rho_{\Pi',\iota_p}$ via the exceptional isomorphism.  

In what follows, the semi-simplification of the reduction $\rho_{\Pi,\iota_p}$ is 
irreducible and in particular, $\rho_{\Pi,\iota_p}$ is irreducible. 
This condition is independent of any choice of $\Pi'$ and any choice of the lifts of $\rho_{\Pi',\iota_p}$. 
Let us observe each parameter above individually.  

First we consider the parameter $1[5]$. This corresponds to the parameter $\phi_{(w;2,1)}$ at infinity such that 
$\Pi_v$ is trivial at infinite place.  
Then $\rho_{\Pi,\iota_p}$ has to be the product of characters and this case is excluded by the 
irreducibility.

Next we consider the parameter $1\boxplus \pi^{(1)}_2\boxplus \pi^{(2)}_2$. 
Let $\rho_{i,\iota_p}:G_F\lra {\rm GL}_2(\bQ_p)$ be the 
associated $p$-adic representation of $\pi^{(i)}_2$ for each $i=1,2$. 
By using the classification of $p$-adic Lie algebra associated to 
${\rm Im}(\rho_{\Pi,\iota_p})$ (cf. Proposition 4.5 of \cite{CG}), the $p$-adic Lie algebra of 
$\rho_{\Pi,\iota_p}$ has to be trivial since $p_1\circ \rho_{\Pi,\iota_p}\simeq \rho_{\Pi',\iota_p}$ as mentioned. 
Then the representation $\rho_{\Pi,\iota_p}$ has to be either of 
\begin{enumerate}
\item the direct sum of two characters  and an irreducible induced 2-dimensional representation of $G_F$, 
\item the direct sum of two irreducible induced 2-dimensional representation of $G_F$, or  
\item an induced representation from an induced 2-dimensional representation of $G_F$.   
\end{enumerate}
In either case, $\rho_{\Pi,\iota_p}$ is reducible and this case is also excluded by irreducibility. 
Similar observation is applied to the parameter $\pi_2 \boxplus 1[3]$ and it turns out that 
$\pi_2$ is orthogonal. Hence $\rho_{\Pi,\iota_p}$ is the direct sum of two characters $\chi_1,\chi_2$ 
and an irreducible induced 2-dimensional representation of $G_F$. 
This case is also excluded by irreducibility.

Next we consider the parameter  $1\boxplus \pi_3[2]$ which corresponds to a Siegel CAP representation $SK(\tau)$ 
(which is so called a Saito-Kurokawa lift) of $H_B(\A_F)$. 
It is classified by 
Theorem 7.1 and Proposition 6.2 of \cite{Gan} where $\tau$ is the cuspidal data on the Levi factor of Siegel parabolic. 
(In this case, by Proposition 6.2 of \cite{Gan},  $\tau$ is regular algebraic and this fact matches with the 
above classification.)   
In this case e see that $\rho_{\Pi,\iota_p}$ is  
the direct sum of two characters and an irreducible 2-dimensional representation of $G_F$ and 
this case is also excluded by irreducibility. 
 
Next we consider the parameter $\pi_2\boxplus \pi_3$. 
In this case, we can also attached a semisimple continuous Galois representation 
$\rho_{\Pi,\iota_p}:G_F\lra {\rm GSp}_4(\bQ_p)$ which is the tensor product of 
a 2-dimensional dihedral representation of $G_F$ and a 2-dimensional non-dihedral representation of $G_F$. 
Here ``dihedral" means ``induced".

Next we consider the parameter $\chi\boxplus \pi_4$. Observing the Langlands parameter of $\pi_4$ at infinity we 
see that $\pi_4$ comes from  the convolution product or the twisted convolution associated to 
a cuspidal representation of ${\rm GO}(2,2)(\A_F)$ or ${\rm GO}(4)(\A_F)$ respectively. 
Note that the case of ${\rm GO}(3,1)(\A_F)$ is excluded by the regularity condition at infinity.  
If $\pi_4=\tau_1\boxtimes \tau_2$ for a regular cuspidal representation $(\tau_1,\tau_2)$ of ${\rm GO}(2,2)(\A_F)$, then 
$\rho_{\Pi,\iota_p}$ is, up to twist, equivalent to $\rho_{\tau_1,\iota_p}\oplus \rho_{\tau_1,\iota_p}$. This happens exactly when 
$\Pi$ is endoscopic. Hence the irreducibility excludes the endoscopic case. 
If $\pi_4$ comes from ${\rm GO}(4)(\A_F)$, then $\rho_{\Pi,\iota_p}$ is an irreducible induced representation of 
2-dimensional representation of $G_K$ for some totally real quadratic extension $K/F$.

Finally, we consider the parameter $\pi_5$. 
In this case, we can also attached a semisimple continuous Galois representation 
$\rho_{\Pi,\iota_p}:G_F\lra {\rm GSp}_4(\bQ_p)$  whose adjoint representation $p_1\circ \rho_{\Pi,\iota_p}$ 
corresponds to $\pi_5$. 

Except for the case of (1),(3), or (4), $\Pi'$ is tempered, hence $\Pi$ is essentially tempered. 
By definition, if $\Pi$ is CAP representation, it is not essentially tempered, hence 
it falls into the case of (1),(3), or (4) in the above list. 
In any case, it is excluded once we assume $\rho_{\Pi,\iota_p}$ is irreducible. 
On the other hand, if $\Pi$ is endoscopic, the parameter of $\Pi'$ is of type 
$1\boxplus \pi^{(1)}_2\boxplus\pi^{(2)}_2$ or $1\boxplus \pi_4$. As observed, in either case, 
$\rho_{\Pi,\iota_p}$ is reducible.  

Summing up the things observed for $G_B$, we have proved the following  
\begin{thm}\label{cla-GB}Let $\Pi$ be a regular cuspidal representation of $G_B(\A_F)$ and $\rho_{\Pi,\iota_p}$ 
be a corresponding $p$-adic Galois representation explained as above. It holds the followings:
\begin{enumerate}
\item $\Pi$ is non-tempered if and only if $\pi$ is of type $(1),(3)$, or $(4)$ in the above list. In this case, 
$\rho_{\Pi,\iota_p}$ is reducible; 
\item $\rho_{\Pi,\iota_p}$ is reducible if $\Pi$ is endoscopic;
\item $\Pi$ is stable and $($essentially$)$ tempered if $\rho_{\Pi,\iota_p}$ is irreducible. 
\end{enumerate}
In particular, if the residual representation $\br_{\Pi,\iota_p}$ is irreducible, then $\Pi$ is stable and 
essentially tempered. 
\end{thm}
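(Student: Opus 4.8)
The strategy is simply to collate the case-by-case description of $\rho_{\Pi,\iota_p}$ obtained in the preceding discussion for each of the seven possible global Arthur parameters of a component $\Pi'$ of $\Pi|_{H_B}$; all the substantive work has already been done, so what remains is bookkeeping. I would begin by recording two facts established above: (i) the infinitesimal character of $\Pi'_v$ is parametrised by the quintuple $(m_{1,v},m_{2,v},0,-m_{2,v},-m_{1,v})$ with $m_{1,v}>m_{2,v}>0$, so no parameter containing a summand $\chi_1[2d]$ or $\chi_2\boxplus\chi_3$ can occur, and only the seven listed types survive; and (ii) temperedness of $\Pi$, endoscopy of $\Pi$, and reducibility of the constructed $\rho_{\Pi,\iota_p}$ are all insensitive both to the choice of component $\Pi'\subset\Pi|_{H_B}$ and to the choice of lift of $p_1\circ\rho_{\Pi',\iota_p}$ from ${\rm SO}(5)$ to ${\rm GSp}_4$, since any two such lifts differ by a character twist.

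For assertion (1), I would observe that among the seven surviving types the parameter of $\Pi'$ carries a nontrivial Speh (Arthur ${\rm SL}_2$) factor exactly in cases (1) $1[5]$, (3) $1\boxplus\pi_3[2]$, and (4) $\pi_2\boxplus 1[3]$, whereas in (2), (5), (6) and (7) every summand sits in degree one; hence $\Pi'$, and therefore $\Pi$ up to the similitude twist, is essentially tempered precisely outside (1), (3), (4). In each of (1), (3), (4) the explicit computation above exhibits $\rho_{\Pi,\iota_p}$ as a direct sum of one- and two-dimensional pieces, so it is reducible; conversely, in these three cases the nontrivial Speh factor forces a non-tempered local component, so $\Pi$ is genuinely non-tempered. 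This gives both directions of the ``if and only if'' together with the reducibility claim.

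For assertion (2), I would recall that, by the analysis carried out parameter by parameter, $\Pi$ is endoscopic exactly when the parameter of $\Pi'$ is $1\boxplus\pi^{(1)}_2\boxplus\pi^{(2)}_2$ (case (2)) or $\chi\boxplus\pi_4$ with $\pi_4=\tau_1\boxtimes\tau_2$ an isobaric sum transferred from ${\rm GO}(2,2)(\A_F)$ (the relevant sub-case of (6)), and that in both situations $\rho_{\Pi,\iota_p}$ was found to be, up to twist, a direct sum of two- or lower-dimensional Galois representations, hence reducible. Assertion (3) is then the contrapositive: if $\rho_{\Pi,\iota_p}$ is irreducible it cannot come from any of (1), (3), (4), nor from an endoscopic $\Pi$, so $\Pi$ must be stable and essentially tempered. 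Finally, for the last sentence I would note that $\rho_{\Pi,\iota_p}$ is semisimple by construction, so if it were reducible it would decompose as a direct sum of at least two pieces, and then $\br_{\Pi,\iota_p}$, being the semisimplification of its reduction, would likewise be a nontrivial direct sum; contrapositively, irreducibility of $\br_{\Pi,\iota_p}$ forces that of $\rho_{\Pi,\iota_p}$, and assertion (3) applies.

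The only genuinely delicate point — the ``main obstacle'', such as it is — is fact (ii): one must check that moving between $G_B$ and $H_B$ and then choosing the ${\rm GSp}_4$-valued lift disturbs none of the three dichotomies (temperedness, endoscopy, reducibility). This has been used implicitly throughout the earlier paragraphs and should be stated cleanly at the outset; beyond that, the proof is a routine assembly of the computations already made.
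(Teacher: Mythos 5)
Your proposal is correct and takes essentially the same approach as the paper: the theorem is proved there simply by the sentence ``Summing up the things observed for $G_B$, we have proved the following,'' i.e.\ by the preceding case-by-case analysis of the seven possible Arthur parameters for a component $\Pi'$ of $\Pi|_{H_B}$, the identification of non-temperedness with the presence of a Speh factor (types (1), (3), (4)), the identification of the endoscopic cases (type (2) and the $\mathrm{GO}(2,2)$ sub-case of (6)), the observation that the construction of $\rho_{\Pi,\iota_p}$ is well-defined up to character twist independently of the choice of $\Pi'$, and the contrapositive for the final sentence. Your write-up merely makes explicit the bookkeeping that the paper leaves implicit; there is no substantive difference in route.
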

Combining previous results, we have the following; 
\begin{thm}\label{non-cap}Let $\pi$ be a regular cuspidal representation of ${\rm GSp}_4(\A_F)$ and $p$ be a prime. 
Assume that $\br_{\pi,\iota_p}:G_F\lra {\rm GSp}_4(\bF_p)$ is irreducible $($this implies that $\pi$ is stable and tempered$)$.  
Let $\Pi$ be the Jacquet-Langlands transfer of $\pi$ to $G_B(\A_F)$ and $\Pi_1$ be any regular cuspidal representation of $G_B(\A_F)$ whose 
eigensystem is congruent to one of $\Pi$ modulo the maximal ideal of $\bQ_p$. 
Then $\Pi_1$ is stable and $($essentially$)$ tempered. 
\end{thm}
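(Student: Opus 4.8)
The plan is to reduce the statement to Theorem \ref{cla-GB}, whose criterion is entirely in terms of the irreducibility of the associated Galois representation $\rho_{\Pi_1,\iota_p}$. Concretely, since $\Pi_1$ is a regular cuspidal representation of $G_B(\A_F)$, Theorem \ref{cla-GB} tells us that $\Pi_1$ is stable and (essentially) tempered as soon as $\rho_{\Pi_1,\iota_p}$ is irreducible; equivalently, by the last sentence of that theorem, it suffices to show that the residual representation $\br_{\Pi_1,\iota_p}$ is irreducible. So the whole problem becomes: show $\br_{\Pi_1,\iota_p}$ is irreducible.

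The first step is to identify the residual representation of $\Pi_1$ with that of $\pi$. By hypothesis the Hecke eigensystem of $\Pi_1$ is congruent modulo the maximal ideal of $\bZp$ to that of $\Pi$, the Jacquet-Langlands transfer of $\pi$ to $G_B(\A_F)$. Since $\pi$ is stable and tempered (this is forced by the irreducibility of $\br_{\pi,\iota_p}$, via the last line of Theorem \ref{cla-GSp4}), the Jacquet-Langlands correspondence of Theorem \ref{JL} applies and produces $\Pi$ with $\Pi_f \simeq \pi_f$ under the isomorphisms $\iota_v$ of \eqref{isom}, so in particular $\rho_{\Pi,\iota_p}$ and $\rho_{\pi,\iota_p}$ have the same characteristic polynomials of Frobenius at almost all places, hence are isomorphic (both being semisimple), and therefore $\br_{\Pi,\iota_p}\simeq \br_{\pi,\iota_p}$. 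The congruence of Hecke eigensystems between $\Pi_1$ and $\Pi$ then forces the reductions $\br_{\Pi_1,\iota_p}$ and $\br_{\Pi,\iota_p}$ to agree on the traces of Frobenius at all unramified places, and since both are semisimple mod $p$ Galois representations, Brauer-Nesbitt (Chebotarev plus semisimplicity) gives $\br_{\Pi_1,\iota_p}\simeq \br_{\Pi,\iota_p}$ as representations into $\gl_4(\bF_p)$.

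Chaining these isomorphisms, $\br_{\Pi_1,\iota_p}\simeq \br_{\Pi,\iota_p}\simeq \br_{\pi,\iota_p}$, which is irreducible by hypothesis. Applying Theorem \ref{cla-GB} to $\Pi_1$ now yields that $\Pi_1$ is stable and (essentially) tempered, as desired. The one point requiring care — and the main technical obstacle — is the passage from ``congruent Hecke eigensystem'' to ``isomorphic residual Galois representation.'' This needs the construction of $\rho_{\Pi_1,\iota_p}$ to be available for an \emph{arbitrary} regular cuspidal $\Pi_1$ on $G_B(\A_F)$ (not just for the Jacquet-Langlands transfer of a known $\pi$), which is exactly what the discussion preceding Theorem \ref{cla-GB} provides by running through Arthur's classification for the inner form $H_B$ of $Sp_4$; once $\rho_{\Pi_1,\iota_p}$ exists and satisfies local-global compatibility at the unramified places, the Brauer-Nesbitt argument is routine, and one must only be mildly attentive that the congruence of Hecke eigensystems at a density-one set of places suffices, which it does by Chebotarev. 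A secondary subtlety is that $\rho_{\Pi_1,\iota_p}$ and the lift produced via the exceptional isomorphism $\mathrm{SO}(5)\simeq \mathrm{PGSp}_4$ are only well-defined up to a character twist, but since irreducibility is insensitive to twisting (as noted in the excerpt right after Proposition 2.1.4 of \cite{P}), this does not affect the argument.
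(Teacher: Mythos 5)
Your proposal is correct and takes essentially the same route the paper intends by its one-line proof ``It follows from Theorem \ref{cla-GSp4} and Theorem \ref{cla-GB}'': transport irreducibility of the residual Galois representation from $\pi$ through $\Pi$ to $\Pi_1$ via the Hecke eigensystem congruence, Chebotarev, and Brauer--Nesbitt, then invoke the irreducibility criterion in the last line of Theorem \ref{cla-GB}. Your explicit attention to the two implicit points---that $\rho_{\Pi_1,\iota_p}$ must be available for an \emph{arbitrary} regular cuspidal $\Pi_1$ on $G_B(\A_F)$, which is exactly what the pre-Theorem \ref{cla-GB} discussion of Arthur parameters supplies, and that the character-twist ambiguity in the lift through ${\rm SO}(5)\simeq {\rm PGSp}_4$ is harmless for irreducibility---is a faithful unpacking of what the paper leaves unstated.
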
   
\begin{proof}It follows from Theorem \ref{cla-GSp4} and Theorem \ref{cla-GB}.  
\end{proof}

\section{$p$-adic algebraic modular forms}\label{pAMF} 
We keep the notation and the assumptions in Section \ref{JL} for $F$ and $G_B$. We refer \cite{CD} as a quick reference for the present.    
In this section we study $p$-adic algebraic modular forms on $G_B(\A_F)$. By passing to Jacquet-Langlands correspondence and 
mod $p$ algebraic modular forms 
we will try to switch a local representation which has an Iwahori fixed vector with some admissible representation 
which never has Iwahori fixed vectors. This is a kind of generalization of Lemma 3.1.5 of \cite{Kisin}. 

Let $K$ be a finite extension of $\Q_p$ contained in $\bQ_p$ with residue field $k$ and $\mathcal{O}$ the ring of integers, and 
assume that $K$ contains the images of all embeddings $F\hookrightarrow \bQ_p$. 
Fix an maximal order $\mathcal{O}_B$ of $B$ and for each finite place $v$ of $F$ fix an isomorphism 
$O_{B,v}\simeq M_2(\mathcal{O}_{F_v})$.  We can also define an integral model of $G_B$ by using $\mathcal{O}_B$ in a 
similar manner. 

For each $v|p$ let $\tau_v$ be  a smooth representation of ${\rm GSp}_4(\mathcal{O}_{F_v})$ on a finite free 
$\mathcal{O}$ module $W_{\tau_v}$. Put $\tau:=\otimes_{v|p}\tau_v$ which is a representation of $\prod_{v|p}{\rm GSp}_4(\mathcal{O}_{F_v})$ 
acting on $W_\tau:=\otimes_{v|p}W_{\tau_v}$. Suppose 
$\psi:F^\times\bs (\A^\infty_F)^\times\lra \mathcal{O}^\times$ is a continuous character so that 
for each $v|p$, $Z_{G_B}(\mathcal{O}_{F_v})\simeq \mathcal{O}^\times_{F_v}$ acts on 
$W_{\tau_v}$ by $\psi^{-1}|_{\mathcal{O}^\times_{F_v}}$. Note that we put the discrete topology on $\mathcal{O}^\times$ and 
therefore such a character is necessarily of finite order. Let $U=\prod_v U_v$ be a compact open subgroup 
of $G_B(\A^\infty_{F})\simeq {\rm GSp}_4(\A^\infty_{F})$ such that $U_v\subset G_B(\mathcal{O}_{F_v})$ 
for all finite place $v$. 
Put $U_p:=\prod_{v|p}U_v$ and $U^{(p)}=\prod_{v\nmid p}U_v$. 
For any local $\mathcal{O}$-algebra $A$ put $W_{\tau,A}:=W_\tau\otimes_{\mathcal{O}} A$. Let 
$\Sigma$ be a finite set of finite places of $F$. 
For each $v\in \Sigma$, let $\chi_v:U_v\lra A^\times$ be a quasi character. Define $\chi_{\Sigma}:U\lra A^\times$ whose local component is $\chi_v$ if $v\in \Sigma$, the trivial representation otherwise. 

\begin{dfn}\label{dfn-pAMF}$(p$-adic algebraic $($Siegel$)$ modular forms$)$ 
Let $S_{0,\tau,\psi}(U,A)$ denote the space of the functions $f:G_B(F)\bs G_B(\A^\infty_F)\lra W_{\tau,A}$ such that 
\begin{itemize}
\item $f(gu)=\tau(u_p)^{-1}f(g)$ for $u=(u^{(p)},u_p)\in U=U^{(p)}\times U_p$ and $g\in G_B(\A^\infty_F)$;
\item $f(zg)=\psi(z)f(g)$ for $z\in Z_{G_B}(\A^\infty_F)$ and $g\in G_B(\A^\infty_F)$.
\end{itemize}
Similarly, 
let $S_{0,\tau,\psi,\chi_{\Sigma}}(U,A)$ denote the space of the functions $f:G_B(F)\bs G_B(\A^\infty_F)\lra W_{\tau,A}$ such that 
\begin{itemize}
\item $f(gu)=\chi^{-1}_{\Sigma}(u)\tau(u_p)^{-1}f(g)$ for $u=(u^{(p)},u_p)\in U=U^{(p)}\times U_p$ and $g\in G_B(\A^\infty_F)$;
\item $f(zg)=\psi(z)f(g)$ for $z\in Z_{G_B}(\A^\infty_F)$ and $g\in G_B(\A^\infty_F)$.
\end{itemize}
We say a function belongs these spaces a $p$-adic algebraic modular form. 
 
\end{dfn}

The similitude $\mu$ of $G_B$ yields 
\begin{equation}\label{ex}
1\lra H_B\lra G_B\stackrel{\mu}{\lra} GL_1\lra 1.
\end{equation}
where $H_B:={\rm Ker}\mu$. Put $D_U:=\mu(U)$. We identify $\A_F$ with the center of $D\otimes_F\A_F$. We write 
$(\A^\infty_F)^\times=\ds\coprod_{i\in I}F^\times t_i D_U$ for some 
$t_i\in (\A^\infty_F)^\times$ and some finite index $I$. 
Take a $s_i\in G_B(\A^\infty_F)$ so that $\mu(s_i)=t_j$. Since $H_B$ is semisimple we have a decomposition 
$G_B(\A^\infty_F)=\ds\coprod_{i\in I}G_B(F)s_i U Z_{G_B}(\A^\infty_F)$. 
Then we have 
\begin{equation}\label{isom1}
S_{0,\tau,\psi}(U,A)\stackrel{\sim}{\lra}\bigoplus_{i\in I}W^{(U Z_{G_B}(\A^\infty_F)\cap s^{-1}_iG_B(F)s_i)/Z_{G_B}(F)}_{\tau,A},\ 
f\mapsto  \{f(s_i)\}_{i\in I}.
\end{equation}

As (3.1.2) of \cite{Kisin} we make the following assumption 
\begin{equation}\label{finiteness}
\mbox{For all $s\in G_B(\A^\infty_F)$ the group $(U Z_{G_B}(\A^\infty_F)\cap s^{-1}G_B(F)s)/Z_{G_B}(F)$ 
has prime to $p$-order.}
\end{equation}
The sequence (\ref{ex}) induces 
\begin{equation}\label{exact-finite}
(U Z_{G_B}(\A^\infty_F)\cap s^{-1}H_B(F)sZ_{G_B}(F))/Z_{G_B}(F)\lra (U Z_{G_B}(\A^\infty_F)\cap s^{-1}G_B(F)s)/Z_{G_B}(F)$$
$$\lra (D_U\cdot ((\A^\infty_F)^\times)^2\cap F^\times)/(F^\times)^2.
\end{equation}
The first group is finite since $H_B(\R)$ is compact and clearly the last group is finite 2-group, and 
therefore the middle one is also finite. 
For a finite place $v$ of $F$ we denote by $U_{B,v}$ and $U_{1,v}$  the subgroups of ${\rm GSp_4}(\mathcal{O}_{F_v})$ 
consisting of all elements which are congruent to  
$$\left(\begin{array}{cccc}
\ast & \ast & \ast & \ast \\
0 & \ast & \ast & \ast \\
0 & 0 & \ast & \ast \\
0 & 0 & 0 & \ast 
\end{array}\right)\ {\rm and} 
\left(\begin{array}{cccc}
1 & \ast & \ast & \ast \\
0 & 1 & \ast & \ast \\
0 & 0 & 1 & \ast \\
0 & 0 & 0 & \ast 
\end{array}\right)
$$
modulo $\pi_v$ respectively. The group $U_{B,v}$ is called Iwahori subgroup. 
We often identify these groups with the corresponding subgroups of $G_B(\mathcal{O}_{F_v})$ under (\ref{isom}). 
The condition (\ref{finiteness}) holds for all sufficiently small $U$ and in fact we have more precise statement as 
follows:   
\begin{lem}\label{double-coset}Assume $p>2$. 
Assume the order of $g$ is prime to $2$. 
\begin{enumerate}
\item Let $v_0$ be a finite place of $F$ lying over a rational odd prime $\ell$. 
Let $K$ be the extension field of $F$ obtained by adding all eigenvalues  of 
any element in the group defined in (\ref{finiteness}) which is of order prime to 2. Put $k=[K:F]$ and 
assume  $U_{v_0}$ is contained in $U_{B,v_0}$. 
If $v_0$ is split completely in $K$, $\ell \nmid k$ and  $k\nmid \ell-1$, 
then the condition (\ref{finiteness}) holds. 
\item Let $v_0$ be a finite place of $F$ above some rational odd prime $\ell$ such that for any 
non-trivial roots $\zeta,\zeta'$ of  unity in an extension of $F$ of degree 4, 
$\zeta+\zeta^{-1}+\zeta'+\zeta'^{-1}\not\equiv 4\ {\rm mod}\ v_0$. 
Assume that $U_{v_0}$ is contained in $U_{1,v_0}$. Then the group in (\ref{finiteness}) is trivial.  
\end{enumerate}
\end{lem}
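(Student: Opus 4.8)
The plan is to reduce both statements to an elementary analysis of the finite group $\Gamma_s := (U Z_{G_B}(\A^\infty_F)\cap s^{-1}G_B(F)s)/Z_{G_B}(F)$ appearing in \eqref{finiteness}, using the exact sequence \eqref{exact-finite} to separate the contribution of $H_B$ from the ``similitude" part, which is a finite $2$-group and hence (for $p>2$) irrelevant to whether $\Gamma_s$ has order prime to $p$. So from the start I would work with an element $\gamma \in \Gamma_s$ whose image lies in the $H_B$-part, i.e. (after conjugating by $s$) an element of $G_B(F)$ with $\mu(\gamma)\in (F^\times)^2$ that is integral and symplectic everywhere and lands in $H_B(F)$ up to a similitude; since $H_B(\R)$ is compact, the characteristic polynomial of $\gamma$ (acting on the $4$-dimensional or, after the exceptional isogeny, $5$-dimensional space) has all roots on the unit circle, and being a product of an integral matrix with $\pm$ similitude factors, its eigenvalues are algebraic integers all of whose conjugates have absolute value $1$ — hence, by Kronecker's theorem, roots of unity. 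The whole point is therefore to control the order of $\gamma$ via local congruence conditions at $v_0$.

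For part (1): assume $\gamma$ has order $n$ prime to $2$ and $n$ divisible by $p$ would be the bad case; more precisely I just need to show $p \nmid n$. The hypothesis is that $U_{v_0}\subseteq U_{B,v_0}$, so the image of $\gamma$ in ${\rm GSp}_4(\O_{F_{v_0}})$ lies in the Iwahori subgroup, hence modulo $\pi_{v_0}$ it is upper triangular; its eigenvalues, being roots of unity, generate the field $K$ over $F$, and each such eigenvalue reduces to an element of the residue field of $F_{v_0}$ of size $\ell^{f}$ where $f$ is the residue degree. Since $v_0$ splits completely in $K$, every eigenvalue of $\gamma$ actually lies in $F_{v_0}^\times$, so each is a root of unity in $F_{v_0}^\times$; the roots of unity in $F_{v_0}^\times$ of order prime to $\ell$ form a group of order dividing $\ell^f - 1 = \ell - 1$ (as $v_0$ is split, $f=1$), while the condition $\ell \nmid k$ forces (together with Kronecker/the structure of $K/F$) the order of $\gamma$ to be prime to $\ell$. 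Then the order of $\gamma$ divides $\ell - 1$ on the one hand and, via $[K:F]=k$, is constrained by $k$; the hypothesis $k \nmid \ell-1$ is exactly what rules out the remaining case, forcing $\gamma = 1$ or at least $p\nmid n$. I would spell this reduction out carefully: the key mechanism is that ``split completely in $K$ + Iwahori at $v_0$" pins the eigenvalues into a cyclic group of order dividing $\ell-1$, and the two divisibility hypotheses on $k$ close off the exceptions.

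For part (2): now $U_{v_0}\subseteq U_{1,v_0}$, so $\gamma \bmod \pi_{v_0}$ is unipotent upper-triangular except possibly in the last diagonal entry, i.e. it is congruent to a matrix with $1$'s on the diagonal in the first three slots; in fact its ${\rm Sp}_4$-part is congruent to the identity modulo $\pi_{v_0}$. Since the eigenvalues of $\gamma$ are roots of unity $\zeta$ lying in an extension of $F$ of degree at most $4$, and the trace of (the ${\rm Sp}_4$-part of) $\gamma$ is of the form $\zeta+\zeta^{-1}+\zeta'+\zeta'^{-1}$, the congruence $\gamma \equiv I \bmod \pi_{v_0}$ gives $\zeta+\zeta^{-1}+\zeta'+\zeta'^{-1}\equiv 4 \bmod v_0$; the hypothesis on $v_0$ says this is impossible unless all the $\zeta$'s are trivial, hence $\gamma$ is unipotent, but a unipotent element of finite order in characteristic $0$ is trivial. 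Therefore $\Gamma_s$ is trivial, which is even stronger than \eqref{finiteness}. I expect the main obstacle is bookkeeping rather than conceptual: one must be careful that the exceptional isogeny ${\rm PGSp}_4 \simeq {\rm SO}_5$ and the passage between $G_B$, $H_B$, and the similitude do not introduce spurious $2$-torsion or change the relevant characteristic polynomials, and one must correctly identify which ``eigenvalues'' (the $4$-dimensional standard representation versus the $5$-dimensional one) the hypotheses at $v_0$ are really controlling — but once the reduction to roots of unity via Kronecker is in place, each part is a short finite-group computation.
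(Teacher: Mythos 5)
Your proposal follows essentially the same route as the paper's own argument. Both use the exact sequence \eqref{exact-finite} to reduce (for $p>2$) to an element in the $H_B$-part; both pass to eigenvalues that are roots of unity; for (1) both exploit the chain ``$U_{v_0}\subseteq U_{B,v_0}$ forces $g\bmod v_0$ upper triangular'' plus ``$v_0$ split completely in $K$'' plus ``$\ell\nmid k$ makes reduction mod $v_0$ injective on $k$-th roots of unity'' to land the eigenvalue in $\F_{v_0}^\times$ with its full order and conclude $k\mid \ell-1$, contradicting $k\nmid\ell-1$; for (2) both force trace $\equiv 4 \bmod v_0$ from $U_{v_0}\subseteq U_{1,v_0}$ and the symplectic constraint on the diagonal, and then use the displayed hypothesis on $v_0$ to conclude the element is unipotent of finite order, hence trivial.

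Two small remarks on the write-up. The Kronecker-theorem digression is unnecessary here: once $\Gamma_s$ is known to be finite (from compactness of $H_B(\R)$ and discreteness, as in the paper's discussion after \eqref{exact-finite}), every element has finite order and its eigenvalues are roots of unity for free. More substantively, in part (1) the parenthetical ``(as $v_0$ is split, $f=1$)'' conflates two different split conditions: ``$v_0$ is split completely in $K$'' concerns the extension $K/F$ and says nothing about the residue degree of $v_0$ over $\ell$, which is what you need to write $\F_{v_0}=\F_\ell$. That said, the paper's own line ``$\zeta$ mod $v_0$ gives an element in $\F_\ell^\times$'' tacitly makes the same identification, so this is an imprecision shared with the source rather than a new gap. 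Finally, your closing sentence of part (1) (``the hypothesis $k\nmid\ell-1$ rules out the remaining case'') is left a bit informal; the paper pins it down by taking $\zeta$ to be a primitive $k$-th root of unity, so that its reduction mod $v_0$ has order exactly $k$ in $\F_{v_0}^\times$ and $k\mid \ell-1$ follows directly.
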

\begin{proof} Let $\zeta$ be a primitive $k$-th root of unity which belongs to $K$. 
Assume that $K\neq F$. Then the group defined in (\ref{finiteness}) has 
a non-trivial element $g$ whose eigenvalues contain $\zeta$. 
It follows $k\ge 3$, since we do not consider 2-torsion elements. 
We may also assume $g\in U_{v_0}$. 
By assumption, $g$ mod $v_0$ is upper triangular and its diagonal part is of order $k$ since $\ell \nmid k$. 
Since $v_0$ is split completely, $\zeta$ mod $v_0$ gives an element in $\F^\times_\ell$ and then 
$k$ has to divide $\ell-1$ which contradicts the assumption. 

For the second claim by (\ref{exact-finite}) we may assume that $g$ belongs to the left hand side of (\ref{finiteness}) since $p\neq 2$. 
Then the trace of $g$ has to be 4 modulo $v_0$ but this is absurd with the assumption.   
\end{proof}

Let $R$ be a finite set of finite places of $F$ containing 
all places $v\nmid p$ such that $U_v\neq G_B(\mathcal{O}_{F_v})$.  
We define the (formal) Hecke algebra 
\begin{equation}\label{Hecke-algebra}
\mathbb{T}^R_A:=A[T_{1,v},T_{2,v},S_v]_{v\not\in R\cup\{v|p\}}
\end{equation} 
where $T_{i,v}=[G_B(\mathcal{O}_{F_v})\iota^{-1}_{v}(t_{i,v})G_B(\mathcal{O}_{F_v})],S_v=[G_B(\mathcal{O}_{F_v})\iota^{-1}_v(
\diag(\pi_v,\pi_v,\pi_v,\pi_v))G_B(\mathcal{O}_{F_v})]$ 
are Hecke operators for $t_1=\diag(1,1,\pi_v,\pi_v)$ and $t_2=\diag(1,\pi_v,\pi_v,\pi^2_v)$. 
As explained in Section 2.3 of \cite{CD} $S_{0,\tau,\psi}(U,A)$ has a natural action of $\mathbb{T}^R_A$. 
Let $\frak m$ be a maximal ideal of $\mathbb{T}^R_A$ with residue field a finite field of characteristic $p$. 
We say $\frak m$ is in the support of $(\tau,\psi)$ if $S_{0,\tau,\psi}(U,A)_{\frak m}\neq 0$. 
Let $f\in S_{0,\tau,\psi}(U,A)$ be an eigenform for $\mathbb{T}^R_A$ satisfying $Tf=a_T(f)f$ for all $T\in \mathbb{T}^R_A$. 
Then the kernel of $\mathbb{T}^R_A\lra A/m_A, T\mapsto a_T(f)\ {\rm mod}\ m_A$ yields an maximal ideal and we call it 
the maximal ideal associated to $f$. 

To switch types of some local representations for a given automorphic representation of $G_B(\A_F)$ with 
certain prescribed local representations  
we make a good use of mod $p$ algebraic modular forms. 
We start with the following on the lifting:
\begin{lem}\label{lift} Keep the assumption (\ref{finiteness}).  
Let $A$ be a finite 
local $\mathcal{O}$-algebra $A$  and $I\subset A$ an ideal. Let $\overline{\psi}$ be the composition of $\psi$ and 
$A^\times \lra (A/I)^\times$. Denote by $\sigma$ a $U_p$-representation of $W_{\sigma,A}$ and write $W_{\overline{\sigma}}$ 
for $W_{\sigma,A/I}$, and for $v|p$ assume that on $U_v\cap Z_{G_B}(F_v)$, $\overline{\sigma}$ is given by $\overline{\psi}^{-1}$. 
Let $W_{\tau}$ be another $U_p$-module and put $W_{\overline{\tau}}=W_\tau\otimes A/I$. Suppose that $W_{\overline{\sigma}}$ occurs 
as a $U_p$-module subquotient of $W_{\overline{\tau}}$. Then if $\frak m$ is in the support of $(\overline{\sigma},\overline{\psi})$, 
then $\frak m$ is in the support of $(\tau,\psi)$.  
\end{lem}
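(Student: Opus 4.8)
The plan is to realise $S_{0,-,\psi}(U,A)$ as an \emph{exact} functor of the coefficient $U_p$-module, to compare it with its reduction modulo $I$, and then to conclude by Nakayama.

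First I would record the exactness. Applying the decomposition (\ref{isom1}) with an arbitrary coefficient module $W$ (carrying the appropriate central character) in place of $W_{\tau,A}$, one obtains a functorial identification
$$S_{0,W,\psi}(U,A)\ \simeq\ \bigoplus_{i}W^{\Gamma_i},\qquad \Gamma_i:=\big(U Z_{G_B}(\A^\infty_F)\cap s^{-1}_iG_B(F)s_i\big)/Z_{G_B}(F),$$
and likewise over $A/I$ with $\overline{\psi}$. By the standing hypothesis (\ref{finiteness}) each $\Gamma_i$ has order prime to $p$; since $A$ and $A/I$ are finite local $\mathcal{O}$-algebras of residue characteristic $p$, the averaging element $e_i=\frac{1}{|\Gamma_i|}\sum_{\gamma\in\Gamma_i}\gamma$ is a well-defined idempotent, and $W\mapsto W^{\Gamma_i}=e_iW$ is exact. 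Hence $W\mapsto S_{0,W,\psi}(U,A)$ and $W\mapsto S_{0,W,\overline{\psi}}(U,A/I)$ are exact on finitely generated $U_p$-modules with the relevant central character; these functors are $\mathbb{T}^R$-equivariant, and as localisation at $\frak m$ is exact the same remains true after localising at $\frak m$.

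Next I would compare the two levels. Because each $e_i$ is defined integrally, taking $\Gamma_i$-invariants commutes with $-\otimes_A A/I$, and $W_{\tau,A}\otimes_A A/I=W_{\overline{\tau}}$; feeding this into (\ref{isom1}) yields a $\mathbb{T}^R_{A/I}$-equivariant isomorphism
$$S_{0,\tau,\psi}(U,A)\otimes_A A/I\ \simeq\ S_{0,\overline{\tau},\overline{\psi}}(U,A/I),$$
compatibly with localisation at $\frak m$. Now the hypothesis that $W_{\overline{\sigma}}$ is a $U_p$-subquotient of $W_{\overline{\tau}}$ provides $U_p$-submodules $W_2\subseteq W_1\subseteq W_{\overline{\tau}}$ with $W_1/W_2\simeq W_{\overline{\sigma}}$ (all of central character $\overline{\psi}^{-1}$, inherited from $W_{\overline{\tau}}$), so by exactness $S_{0,\overline{\sigma},\overline{\psi}}(U,A/I)_{\frak m}$ is a subquotient of $S_{0,\overline{\tau},\overline{\psi}}(U,A/I)_{\frak m}$. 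As $\frak m$ lies in the support of $(\overline{\sigma},\overline{\psi})$ the former is nonzero, hence so is the latter, and then $S_{0,\tau,\psi}(U,A)_{\frak m}\otimes_A A/I\simeq S_{0,\overline{\tau},\overline{\psi}}(U,A/I)_{\frak m}\neq 0$ forces $S_{0,\tau,\psi}(U,A)_{\frak m}\neq 0$; that is, $\frak m$ lies in the support of $(\tau,\psi)$.

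The argument is essentially formal once exactness is in hand, so the only genuine input is the prime-to-$p$ condition (\ref{finiteness}) that makes the $\Gamma_i$-invariants functor exact; the remaining points are bookkeeping — one must know that $\tau$, hence $\overline{\tau}$, carries the central character $\psi^{-1}$ (resp. $\overline{\psi}^{-1}$) so that the spaces above are defined, and that the submodules realising the subquotient are legitimate coefficient systems. If preferred, ``subquotient'' can be split into the sub-case and the quotient-case, each handled by the same short exact sequence; there is no further obstacle.
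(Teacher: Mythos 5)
Your proof is correct and follows essentially the same route as the paper, which simply cites Lemma 3.1.4 of Kisin's paper (where the argument is exactly this one): use (\ref{finiteness}) together with (\ref{isom1}) to show $W\mapsto S_{0,W,\psi}(U,-)$ is an exact functor commuting with reduction modulo $I$, then transport the subquotient relation and observe that nonvanishing of $S_{0,\tau,\psi}(U,A)_{\frak m}\otimes_A A/I$ forces nonvanishing of $S_{0,\tau,\psi}(U,A)_{\frak m}$.
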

\begin{proof}It follows from a similar argument of Lemma 3.1.4 of \cite{Kisin} and so omitted. 
\end{proof}

Let $P_1$ be the Siegel parabolic group in $GSp_4$ consisting of matrices  
$\left(\begin{array}{cc}
A & \ast \\
0_2 & \nu s{}^tAs
\end{array}\right)
$. Let $P_2$ be the Klingen parabolic group in $GSp_4$ consisting of matrices  
$\left(\begin{array}{cccc}
t & \ast & \ast & \ast \\
0 & a    & b    & \ast \\
0 & c    & d    & \ast \\
0 & 0    & 0    & \Delta t^{-1}  \\
\end{array}\right)
$. 
Let $P_i=M_iN_i$ be the Levi decomposition so that $M_1\simeq GL_2\times GL_1$ and $M_2\simeq GL_1\times GL_2$ (see p.20 
of \cite{RS}). Let $B_{{\rm st}}$ be the standard upper Borel subgroup of $GSp_4$. 
For an irreducible admissible representation $\pi$ of ${\rm GL}_2(F_v)$ and quasi characters $\chi_1,\chi_2,\mu:F^\times_v\lra \C^\times$. 
We denote by $\chi_1\times \chi_2 \rtimes \mu$ the normalized induction  
${\rm Ind}^{{\rm GSp}_4(F_v)}_{B_{{\rm st}}(F_v)}\chi_1\otimes\chi_2\otimes \mu$. 
Similarly  $\pi\rtimes \mu$ and $\mu\rtimes \pi$ the normalized inductions 
${\rm Ind}^{{\rm GSp}_4(F_v)}_{{P_1}(F_v)}\pi\otimes \mu$ and 
${\rm Ind}^{{\rm GSp}_4(F_v)}_{{P_2}(F_v)}\mu\otimes \pi$ respectively. For these we followed the notation of \cite{RS}.  
Note that 
\begin{equation}\label{rel-ind}
\chi_1\times \chi_2 \rtimes \mu=\pi(\chi_1,\chi_2)\rtimes \mu=\chi_1\rtimes \pi(\chi_2\mu,\mu)
\end{equation}
where $\pi(\chi_1,\chi_2)={\rm Ind}^{{\rm GL}_2(F_v)}_{B_{GL_2}(F_v)}\chi_1\otimes \chi_2$.  

Henceforth we denote by $v$ a finite place of our totally real field $F$.  
For any irreducible admissible, infinite-dimensional representation $\pi$ of ${\rm GL}_2(F_v)$ and 
a quasi character $\sigma:F^\times_v\lra \C^\times$   
let $\Pi_1(\pi,\mu):=G(|\cdot|^{\frac{1}{2}}_v\pi,|\cdot|^{-\frac{1}{2}}_v \mu)$ be a unique generic quotient of 
$|\cdot|^{\frac{1}{2}}_v\pi\rtimes|\cdot|^{-\frac{1}{2}}_v \mu$ (cf. \cite{ST},\cite{RS}). 
Similarly we define $\Pi_2(\pi,\mu)$ as a unique generic quotient of 
$\mu\rtimes \pi$ or $\mu |\ast|_v\rtimes |\ast|^{-\frac{1}{2}}_v\pi$ when $\pi$ is supercuspidal.
According to the notation in \cite{RS} such a representation is labeled as one of types 
(IIa), (Va), (VIa), (XIa) for $\Pi_1(\pi,\mu)$ and (VII), (VIIIa), (IXa) for $\Pi_2(\pi,\mu)$. Note that we arrow 
a non-trivial central character for the representations and such a representation have handled completely in \cite{ST}. 
In our purpose we will focus only on the types (XIa),(VII),(VIIIa),(IXa) and $\pi$ is a dihedral supercuspidal representation of depth zero.  
For $P=P_i$ we denote by $K_P$ the Siegel parahoric subgroup of 
${\rm GSp}_4(\mathcal{O}_{F_v})$ which consists of all elements 
whose reduction modulo $\varpi_v$ belong to $P(\F_v)$. We also define the subgroup $K^+_{P}$ of $K_P$ which consists of 
all elements whose reduction modulo $\varpi_v$ belong to $N_i(\F_v)$  (cf. p.151 of \cite{Rosner1}). 
For any irreducible admissible representation $\Pi=(\Pi,V)$ of ${\rm GSp}_4(F_v)$
as in Section 2.1 of \cite{Rosner1} we define the parahoric restriction  for $K_P$ by 
$$r_{K_P}(\Pi):=(\Pi|_{K_P},V^{K^+_{P}}).$$
For $GL_n$ we also define the parahoric restriction $r_n$ for $K_n={\rm GL}_n(\mathcal{O}_{F_v})$ and 
$K^+_n:=I_n+\pi_v {\rm M}_n(\mathcal{O}_{F_v})$ in a similar manner (cf. Section 2 of \cite{Rosner1}).     
By definition we also have a representation of $K_P/K^+_P\simeq {\rm GL}_2(\F_v)\times \F^\times_v$ acting on $V^{K^+_{P}}$ 
and denote it by the same symbol.  
\begin{lem}\label{pr}  Let $\pi$ be a supercuspidal representation of ${\rm GL}_2(F_v)$ of depth zero and $\mu$ a quasi character. Then 
\begin{enumerate} 
\item The parahoric restrictions of  $\Pi_1(\pi,\mu)$  for $K_{P_1}$ is given by $r_2(\pi)\times r_1(\mu)$ and 
\item the parahoric restrictions of  $\Pi_2(\pi,\mu)$ for $K_{P_2}$ is given by $r_1(\mu)\times r_2(\pi)$ when $\Pi_2(\pi,\mu)$ is 
of type (VII) or (IXa) and $r_1(\mu)\times r_2(\pi)+r_1(\mu^{-1})\times r_1(\mu)r_2(\pi)$ when $\Pi_2(\pi,\mu)$ is 
of type (VIIIa).
\end{enumerate}
\end{lem}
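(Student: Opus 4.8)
\textbf{Proof plan for Lemma~\ref{pr}.}
The strategy is to compute the parahoric restriction by unwinding the induced-representation models, using that $\pi$ has depth zero, hence is (compactly) induced from an irreducible cuspidal representation of $\mathrm{GL}_2(\F_v)$ inflated to $\mathrm{GL}_2(\O_{F_v})$ (twisted by an unramified character capturing the central character), and then matching up Iwahori/parahoric-level invariants with the parabolic induction functor for the finite group $\mathrm{GSp}_4(\F_v)$. First I would recall, following Section~2 of \cite{Rosner1}, that the parahoric restriction functor $r_{K_P}$ is exact and compatible with normalized parabolic induction: applying $r_{K_{P_i}}$ to $\mathrm{Ind}^{\mathrm{GSp}_4(F_v)}_{P_i(F_v)}(-)$ produces $\mathrm{Ind}^{\mathrm{GSp}_4(\F_v)}_{P_i(\F_v)}$ of the parahoric restriction of the inducing data on the Levi, provided the inducing character/representation is itself parahoric-level, which is the case here since $\mu$ need only be replaced by an unramified twist and $\pi$ is depth zero. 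The factorizations in \eqref{rel-ind} together with the labeling in \cite{RS} identify $\Pi_1(\pi,\mu)$ and $\Pi_2(\pi,\mu)$ as the generic constituents of $|\cdot|_v^{1/2}\pi\rtimes|\cdot|_v^{-1/2}\mu$ and $\mu\rtimes\pi$; since $\pi$ is supercuspidal the first induced module has a simple structure whose generic quotient is the full induction in the Iwahori-fixed range, while for $\mu\rtimes\pi$ one must separate the reducibility cases.

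For part (1): $|\cdot|_v^{1/2}\pi\rtimes|\cdot|_v^{-1/2}\mu$ is irreducible for all but finitely many twists, and type (XIa) is precisely the generic subquotient in the relevant case; either way the $K^+_{P_1}$-invariants of the generic piece coincide with those of the full induced representation, because the other constituents are non-generic and have no Siegel-parahoric invariants (depth-zero supercuspidal support forces all level structure to the minimal parahoric). Restricting to $K_{P_1}$ and using the finite-group induction identity gives $r_{K_{P_1}}(\Pi_1(\pi,\mu)) = \mathrm{Ind}^{\mathrm{GL}_2(\F_v)\times\F_v^\times}_{\ast}(r_2(\pi)\otimes r_1(\mu))$, but since $r_2(\pi)$ is already an irreducible cuspidal representation of $\mathrm{GL}_2(\F_v)$ the induction from the Levi is trivial and we get exactly $r_2(\pi)\times r_1(\mu)$. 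For part (2) the same mechanism applies to $\mu\rtimes\pi$ for $K_{P_2}$, giving $r_1(\mu)\times r_2(\pi)$ in the irreducible cases (types (VII), (IXa)); in the reducible case (VIIIa), $\mu\rtimes\pi$ has length two, and the extra summand $r_1(\mu^{-1})\times r_1(\mu)r_2(\pi)$ arises from the second constituent — its parahoric restriction is computed from the explicit Jacquet-module/constituent description in \cite{RS} (types VIIIa,b) combined with the behavior of $r_2$ under twisting by $\mu$. Here one also needs that $\Pi_2(\pi,\mu)$, being the \emph{generic} constituent, carries the correct piece; a short bookkeeping argument with the finite-group branching rules and the fact that $r_2$ sends a depth-zero supercuspidal to an irreducible cuspidal pins down which summand survives.

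The main obstacle is the reducibility analysis in the (VIIIa) case: one must correctly identify, at the level of $K_{P_2}^+$-invariants, how the length-two representation $\mu\rtimes\pi$ decomposes and which constituent is $\Pi_2(\pi,\mu)$, and then verify that the parahoric restriction of that constituent equals $r_1(\mu)\times r_2(\pi) + r_1(\mu^{-1})\times r_1(\mu)r_2(\pi)$ rather than a single summand. This requires care because parahoric restriction is only right exact on the relevant category unless one knows the constituents have no higher $K^+$-cohomology; I would handle it by invoking the explicit description of parahoric restrictions of the standard types in \cite{Rosner1} (which tabulates exactly these Saito–Kurokawa-type and (VIII)-type cases) and checking consistency of Euler characteristics, so that exactness issues do not interfere. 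The depth-zero and supercuspidality hypotheses on $\pi$ are what make everything reduce to finite-group representation theory of $\mathrm{GSp}_4(\F_v)$ and $\mathrm{GL}_2(\F_v)$, where the computation is mechanical.
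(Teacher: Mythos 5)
The paper's proof is a one-line citation to R\"osner's Tables~5 and~6 in \cite{Rosner1}, so the comparison is really between your proposed re-derivation and a direct table lookup. Your overall strategy --- compatibility of parahoric restriction with normalized parabolic induction for depth-zero inducing data, followed by a bookkeeping argument matching finite-group constituents to $p$-adic ones --- is a legitimate route and is essentially how R\"osner's tables were produced in the first place. However, two points in your sketch would need substantial repair before it becomes a proof.

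First, the key claim in part (1) that ``the other constituents are non-generic and have no Siegel-parahoric invariants'' is unjustified and, as stated, incorrect in general: the non-generic constituent of $|\cdot|^{1/2}\pi\rtimes|\cdot|^{-1/2}\mu$ (type (XIb)) does contribute nontrivially to the parahoric restriction of the \emph{full} induced representation. What actually pins down the answer for (XIa) is not a vanishing statement but an explicit decomposition of $r_{K_{P_1}}$ of the full induction as a finite-group parabolic induction $\mathrm{Ind}_{P_1(\F_v)}^{\mathrm{GSp}_4(\F_v)}(r_2(\pi)\boxtimes r_1(\mu))$ together with a separate identification of which constituent lands on (XIa) versus (XIb); this is exactly the additional content supplied by R\"osner's tables. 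You would need to carry out the Mackey/Weyl-group analysis of this finite-group induction (when is it irreducible, how does it split) and then match constituents, rather than asserting vanishing. The same remark applies to the (VIIIa) case in part (2): you posit that ``the extra summand arises from the second constituent,'' but without an argument this is just restating what needs to be proved.

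Second, your worry that parahoric restriction is ``only right exact'' is unfounded: taking invariants by the pro-$p$ group $K^+_P$ is an exact functor on smooth complex representations, and R\"osner states this explicitly. So there is no Euler-characteristic device to invoke; exactness is free, and the real issue is the decomposition/matching problem described above. Since you ultimately concede that you would ``invoke the explicit description of parahoric restrictions of the standard types in \cite{Rosner1}'' for the delicate cases, in the end your proposal does reduce to the same citation the paper uses; the attempted first-principles derivation, while correct in spirit, has gaps where the claimed vanishing fails and where the reducibility analysis is left schematic.
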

\begin{proof}See the third column of Table 5 in p.158 of \cite{Rosner1} for (XIa) and Table 6 in loc.cit. for 
(VII), (VIIIa) and (IXa). 
\end{proof}

Let $\Pi$ be an irreducible admissible representation of ${\rm GSp}_4(F_v)$ and $\rho_{\Pi}:={\rm rec}^{{\rm GT}}_v(\Pi):W_{F_v}\lra {\rm GSp}_4(\C)$ 
be the corresponding representation of the Weil group $W_{F_v}$ under the local Langlands correspondence due to Gan-Takeda \cite{GT}.  
We say $\Pi$ is $potentially\ good$ if there exists a  finite extension $E$ of $F_v$ such that $\rho_{\Pi}|_{I_{E}}$ is trivial 
where $I_E$ is the inertia subgroup in $W_E$.  
We define three monodromy matrices $\mathcal{N}_i,\ i=1,2,3$ as in p.660 of \cite{Sor}. 
We say $\Pi$ is of type $\mathcal{N}_i$ if the monodromy operator of $\rho_{\Pi}$ is conjugate (in ${\rm GSp}_4(\C)$) to $\mathcal{N}_i$ 
for some $i$.  
\begin{lem}$($Corollary 1 of \cite{Sor}$)$\label{monod} Keep the notation as above. Then $\Pi$ is not potentially good if and only if 
$\Pi$ is Iwahori-spherical and ramified. 
 In this case it holds that $\Pi$ is of type 
$$\left\{
\begin{array}{cl}
\mathcal{N}_1 & \mbox{if $\pi$ is of type (IIa)}\\
\mathcal{N}_2 & \mbox{if $\pi$ is of type (IIIa),(Va),or (VIa)}\\
\mathcal{N}_3 & \mbox{if $\pi$ is of type (IVa)}
\end{array}\right..
$$ 
\end{lem}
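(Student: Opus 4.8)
The statement is a citation from \cite{Sor}, so the plan is to recall the structure of that argument rather than re-derive the classification of Iwahori-spherical representations from scratch. First I would invoke the fundamental dichotomy for representations $\Pi$ of ${\rm GSp}_4(F_v)$ with a nonzero Iwahori-fixed vector: by the Borel--Casselman--Matsumoto theory such $\Pi$ is a subquotient of an unramified principal series $\chi_1\times\chi_2\rtimes\mu$, and the associated Weil--Deligne representation $\rho_\Pi={\rm rec}^{{\rm GT}}_v(\Pi)$ therefore has all of $W_{F_v}$ acting through its tame (indeed unramified) quotient, with the only possible source of ramification being a nontrivial monodromy operator $N$. Hence $\Pi$ is ramified and Iwahori-spherical precisely when $N\neq 0$, and this is the same as saying $\Pi$ is not potentially good: a finite base change $E/F_v$ kills inertia in $\rho_\Pi|_{I_E}$ if and only if it kills the tame/unramified part, but it can never kill $N$. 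This gives the ``if and only if'' in the first assertion. Conversely, if $\Pi$ is not Iwahori-spherical then $\rho_\Pi|_{I_{F_v}}$ already has nontrivial image on the wild or tame inertia in a way that survives all base changes only when it is genuinely ramified as a representation — one must check that a potentially unramified twist-by-ramified-character situation does not arise, which for ${\rm GSp}_4$ follows from the explicit list in \cite{RS}; and if $\Pi$ is Iwahori-spherical but unramified then $\rho_\Pi$ is unramified, hence potentially good. So the non-potentially-good representations are exactly the ramified Iwahori-spherical ones.

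For the second part I would go through Sally--Tadic's / Roberts--Schmidt's classification (Table A.1 of \cite{RS}) of the irreducible subquotients of $\chi_1\times\chi_2\rtimes\mu$ with nonzero monodromy. These are precisely the types (IIa), (IIIa), (IVa), (Va), (VIa). For each one I would read off the Jordan type of $N$ on the four-dimensional space from the known Langlands parameter: type (IIa), the Langlands quotient of $\chi{\rm St}_{GL_2}\rtimes\sigma$-type inductions, has a single Jordan block of size $2$, i.e. $N$ is conjugate in ${\rm GSp}_4(\C)$ to $\mathcal N_1$; types (IIIa), (Va), (VIa) have two Jordan blocks of size $2$ (the Siegel or Saito--Kurokawa-type shape), giving $\mathcal N_2$; and type (IVa), which is $\chi{\rm St}_{GSp_4}$ (the Steinberg representation up to twist), has a single Jordan block of size $4$, giving $\mathcal N_3$. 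This matches the table in p.660 of \cite{Sor}, and these are the only possibilities since $N$ must be a nilpotent element of $\mathfrak{gsp}_4$ compatible with the similitude pairing, whose nilpotent orbits are exactly $0,\mathcal N_1,\mathcal N_2,\mathcal N_3$.

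The only genuine obstacle is bookkeeping: one has to be careful that the central character is allowed to be ramified (so that the depth-zero supercuspidal inducing data and their twists are covered), and that ``Iwahori-spherical'' is taken in the sense of a nonzero $U_{B,v}$-fixed vector rather than an ${\rm Sp}_4$-condition; but since the appeal is directly to Corollary 1 of \cite{Sor}, whose proof already handles these points via the tables of \cite{RS}, I would simply cite that result and record the correspondence of monodromy types as stated. No new computation is needed beyond matching the $\mathcal N_i$ with the Roberts--Schmidt labels.
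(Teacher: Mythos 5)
Your proposal takes the same route as the paper: Lemma \ref{monod} is stated as ``(Corollary 1 of \cite{Sor})'' and the paper offers no argument beyond the citation, and you likewise conclude by deferring to that result. The extra sketch you give (identifying ``not potentially good'' with $N\neq 0$ via Grothendieck monodromy, then reading off the Jordan types of $N$ from the Roberts--Schmidt table entries (IIa), (IIIa)/(Va)/(VIa), (IVa) as $\mathcal N_1$, $\mathcal N_2$, $\mathcal N_3$ respectively) is a correct gloss on why Sorensen's statement holds, even if your treatment of the ``only if'' direction is a bit hand-wavy about ramified twists; none of this is needed since the appeal is ultimately to the cited corollary, exactly as in the paper.
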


\begin{dfn}\label{unipotent-types} Let $\Pi$ be an irreducible admissible representation of ${\rm GSp}_4(F_v)$. Then we say $\Pi$ is unipotent of type $\mathcal{N}_i$ 
for some $1\le i\le 3$ if $\Pi$ is of type $\mathcal{N}_i$. 
\end{dfn}

Let us fix a sufficiently large extension $E$ of $F_v$. We denote by $\F_E$ the residue field of $E$. Fix $\iota_E:E\hookrightarrow \C$.   
\begin{prop}\label{switching}Let $\Pi$ be an admissible representation which is not potentially good. 
Let $P$ be $P_1$ or $P_2$. 
It holds that 
\begin{enumerate}
\item there exists a $\mathcal{O}_E$-lattice $W(\Pi)$ for each constituent of $r_{K_P}(\Pi)$;
\item   for any constituent $\sigma$ of $W(\Pi)\otimes \F_E$ and each $i\in \{1,2\}$ there exists a principal series or depth zero supercuspidal 
representation $\pi$ (in fact, either representation can occur) of ${\rm GL}_2(F_v)$,  
a quasi character $\mu:F^\times_v\lra \C^\times$, and  a $\mathcal{O}_E$-lattice $W(\Pi_i(\pi,\mu))$ of a constituent of 
$r_{K_P}(\Pi_i(\pi,\mu))$ 
so that $\sigma $ occurs in $W(\Pi_i(\pi,\mu))\otimes \F_K$ as a $K_P/K^+_P$-module subquotient. 
Furthermore, $\pi$ can be a dihedral supercuspidal representation for an unramified quadratic extension $M_v/F_v$. 
\end{enumerate}
\end{prop}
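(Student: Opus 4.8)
The plan is to reduce, via Lemma \ref{monod}, to a finite list of representations, to compute the relevant parahoric restrictions on both sides from the tables of \cite{Rosner1}, and to match Jordan--Hölder factors after reduction mod $p$ using the modular representation theory of ${\rm GL}_2$ over the residue field $\F_v$. Part (1) is routine: $K_P/K^+_P\simeq {\rm GL}_2(\F_v)\times\F^\times_v$ is finite, so $r_{K_P}(\Pi)$ is a finite-dimensional complex representation of a finite group; enlarging $E$ I may assume it is a splitting field and that each Jordan--Hölder constituent of $r_{K_P}(\Pi)$ is realized over $E$, and then I choose a stable $\mathcal{O}_E$-lattice $W(\Pi)$ in each constituent.

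For part (2), first observe that by Lemma \ref{monod} the hypothesis that $\Pi$ is not potentially good forces $\Pi$ to be Iwahori-spherical and ramified, hence isomorphic to one of the finitely many representations of ${\rm GSp}_4(F_v)$ of type (IIa), (IIIa), (Va), (VIa) or (IVa) in the notation of \cite{RS}. For each such type and for each $P\in\{P_1,P_2\}$ the parahoric restriction $r_{K_P}(\Pi)$ is listed explicitly in \cite{Rosner1}; inspecting those tables, every constituent is the inflation to $K_P$ of an irreducible representation of ${\rm GL}_2(\F_v)\times\F^\times_v$ whose ${\rm GL}_2(\F_v)$-factor is, up to a twist by a character of $\F^\times_v$, the trivial representation $\mathbf 1$ or the Steinberg representation ${\rm St}$. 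Hence the possible $\sigma$ are exactly the Jordan--Hölder factors, as $K_P/K^+_P$-modules, of the mod $p$ reductions of $\mathbf 1$ and ${\rm St}$ twisted by reductions of characters of $\F^\times_v$.

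Next I would realize a given such $\sigma$ on the other side. If $\pi$ is allowed to be a principal series (or an unramified twist of Steinberg) of ${\rm GL}_2(F_v)$, I choose $\pi$ of depth zero and a tamely ramified quasi character $\mu$ so that $\Pi_i(\pi,\mu)$ is again Iwahori-spherical of one of the \cite{RS}-types whose parahoric restriction, read from the same tables of \cite{Rosner1}, contains the same (twisted) $\mathbf 1$'s and ${\rm St}$'s; adjusting $\mu$ to match the $\F^\times_v$-factor then exhibits $\sigma$ as a $K_P/K^+_P$-module subquotient of the reduction of a lattice $W(\Pi_i(\pi,\mu))$. If instead $\pi$ is to be supercuspidal, I take $\pi$ dihedral, induced from a character of $M^\times_v$ with $M_v/F_v$ the unramified quadratic extension, so that $\Pi_1(\pi,\mu)$ is of type (XIa) and $\Pi_2(\pi,\mu)$ of type (VII), (VIIIa) or (IXa); by Lemma \ref{pr}, $r_{K_{P_1}}(\Pi_1(\pi,\mu))=r_2(\pi)\times r_1(\mu)$ and $r_{K_{P_2}}(\Pi_2(\pi,\mu))$ equals $r_1(\mu)\times r_2(\pi)$ (plus one further summand in type (VIIIa)), where $r_2(\pi)=\theta_\xi$ is the cuspidal representation of ${\rm GL}_2(\F_v)$ attached to a regular character $\xi$ of $\F^\times_{M_v}$. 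Now choose $\xi$ of $p$-power order and nontrivial on the kernel of the norm $\F^\times_{M_v}\to\F^\times_v$ --- possible precisely when $p\mid q_v+1$, $q_v:=\#\F_v$, the congruence that one arranges at these auxiliary places in the applications. For such $\xi$, by the compatibility of Deligne--Lusztig induction with reduction modulo $p$ the reduction of $\theta_\xi$ has Brauer character equal to that of $-R^{\mathbf 1}_{T_{\rm ns}}={\rm St}-\mathbf 1$, hence the same non-trivial Jordan--Hölder factors as the reduction $\overline{{\rm St}}$; twisting $\xi$ and choosing $\mu$ to pin down the $\F^\times_v$-factor then produces the required $\Pi_i(\pi,\mu)$. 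In the degenerate ranges where $\overline{{\rm St}}$, $\overline{\mathbf 1}$ and the cuspidal reductions are already irreducible, the matching is immediate and one falls back on the principal-series option.

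The step I expect to be the main obstacle is this last matching: it rests on the modular representation theory of ${\rm GL}_2$ over $\F_v$ --- namely that the mod $p$ reductions of $\mathbf 1$, ${\rm St}$ and the depth-zero cuspidal representations lie in a common block and share Jordan--Hölder factors (including the degenerate ranges), together with the dimension-and-sign bookkeeping that isolates exactly which $\sigma$ occur. The remainder is a finite but lengthy verification against the tables of \cite{Rosner1} for the types (IIa), (IIIa), (Va), (VIa), (IVa) on one side and (XIa), (VII), (VIIIa), (IXa) on the other, carrying the $\F^\times_v$-similitude factor along through the auxiliary choice of $\mu$.
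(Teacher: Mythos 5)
Your overall architecture matches the paper's: reduce to the five Iwahori--spherical, ramified types via Lemma~\ref{monod}, read off the parahoric restrictions from R\"osner's tables, and match Jordan--H\"older factors of lattice reductions on the ${\rm GL}_2$ side; part~(1) is handled exactly as the paper does. However, your key matching step for the supercuspidal option has a genuine gap.

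You propose choosing $\xi$ of $p$-power order, nontrivial on the norm-one subgroup of $\F^\times_{M_v}$, and note this requires $p\mid q_v+1$, asserting that this ``is the congruence one arranges at these auxiliary places.'' But Proposition~\ref{switching} is applied in Lemma~\ref{non-ord-lift} at the places $v\mid p$ of $F^+_1$, where $q_v=p^f$ for some $f\ge 1$ and hence $q_v+1\equiv 1\pmod p$: the congruence $p\mid q_v+1$ is \emph{never} satisfiable. Equivalently, $\#\F^\times_{M_v}=q_v^2-1$ is prime to $p$, so there is no nontrivial character of $\F^\times_{M_v}$ of $p$-power order, and no $\xi$ of the kind you need exists. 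The underlying difficulty is that your proposed Brauer-character identity $\overline{\theta_\xi}=\overline{-R^{\mathbf 1}_{T_{\rm ns}}}={\rm St}-\mathbf 1$ is a non-defining-characteristic ($\ell\neq p$) phenomenon; at $v\mid p$ one is reducing a ${\rm GL}_2(\F_{q_v})$-representation in defining characteristic, where Deligne--Lusztig virtual characters and Brauer characters interact quite differently because the group has order divisible by $p$.

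The paper avoids this by citing Kisin's Lemma~3.1.5 (and implicitly CDT~\S3.1): there one matches constituents using the explicit defining-characteristic decomposition numbers of ${\rm GL}_2(\F_{q_v})$, choosing the regular character $\xi$ so that the cuspidal type $\theta_\xi$ has the correct highest-weight/central-character data to share constituents with the given twisted $\mathbf 1$ or ${\rm St}$ after reduction --- not so that $\bar\xi$ is trivial. Your remaining bookkeeping (the reduction to a finite table check, the principal-series option for the $W_3$-type pieces via~\eqref{rel-ind}, and the handling of the $\F^\times_v$-similitude factor through $\mu$) is sound; it is only the mechanism for realizing a given constituent inside $\overline{W(\Pi_i(\pi,\mu))}$ for the supercuspidal $\pi$ that needs to be replaced with the defining-characteristic computation.
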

\begin{proof}
We only consider the case of $P=P_1$ and the other case is almost identical. 
We will find $\pi$ and $\mu$ which satisfy both of (1) and (2). By Corollary \ref{monod} we have only to consider the five cases as shown there. 
In the case of (IIa)  
we write $\Pi=\Pi_1(\mu_1{\rm St}\rtimes \mu_0)$ for some quasi characters $\mu_0,\mu_1$ of $F^\times_v$. 
By Table 5 of \cite{Rosner1} we have, as a representation of $K_P/K^+_P={\rm GL}_2(\F_v)\times \F_v$,  
\begin{equation}\label{rKp}
r_{K_{P}}(\pi)=\tilde{\mu}_1{\rm St}\boxtimes\tilde{\mu}_0+\tilde{\mu}^{-1}_1{\rm St}\boxtimes\tilde{\mu}_0\tilde{\mu}^2_1+
(\tilde{\mu}_1\times \tilde{\mu}^{-1}_1)\boxtimes \tilde{\mu}_0\tilde{\mu}_1
\end{equation}
where $\tilde{\mu}_i:=r_1(\mu_i)$. for $i=1,2,3$ we denote by $W_i$ $i$-th component of the above decomposition of $r_{K_P}(\pi)$. 
Enlarging $E$ if necessary we may assume that $W_i$ has a $\mathcal{O}_{E}$-lattice $L_i$ such that $L_i\otimes_{\mathcal{O}_{E},\iota_E}\C=W_i$. 
For $W_1,W_2$ applying a proof of Lemma 3.1.5 of \cite{Kisin} (take $m=1$ in the construction of $\theta$) 
there exists a dihedral, depth zero supercuspidal representation $\pi$ of ${\rm GL}_2(F_v)$ for 
an unramified quadratic extension $M_v/F_v$  and a quasi character $\mu$ of $F^\times_v$ so that a cuspidal ${\rm GL}_2(\mathcal{O}_{F_v})$-type 
contains $\mathcal{O}_{E}$-free submodule $W$ such that $L_i\otimes_{\mathcal{O}_{E}}\F_E$ occurs $W\otimes_{\mathcal{O}_{E}}\F_E$ and also 
$r_{K_P}(\Pi_1(\pi,\mu))$ has a $\mathcal{O}_{E}$-lattice whose reduction contains $L_i\otimes_{\mathcal{O}_{E}}\F_E$. 

For $W_3$ there are three kinds of constituents $W_3$-(1) a twist of 1-dimensional representation (when $\tilde{\mu}_1= 
\tilde{\mu}^{-1}_1$),  
$W_3$-(2)  a twist of Steinberg representation (when $\tilde{\mu}_1= 
\tilde{\mu}^{-1}_1$), and $W_3$-(3) an irreducible principal representation (when $\tilde{\mu}_1\neq 
\tilde{\mu}^{-1}_1$) (cf. Section 3.1 of \cite{CDT}). Let $L_3$ be a $\mathcal{O}_{E}$-lattice of $W_3$.  

For the first two cases by Lemma 3.1.5 of \cite{Kisin} and its proof we would find 
a supercuspidal representation $\pi$ of ${\rm GL}_2(F_v)$ of depth zero and a quasi character $\mu$ of $F^\times_v$ which satisfy 
a similar property. For the third case we would find a principal representation $\pi=\pi(\mu_1,\mu_2)$ of 
${\rm GL}_2(F_v)$ so that 
$$\Pi:=|\cdot|^{\frac{1}{2}}_v\pi\rtimes|\cdot|^{-\frac{1}{2}}_v\mu_0=
|\cdot|^{\frac{1}{2}}_v\mu_1\times |\cdot|^{\frac{1}{2}}_v\mu_2\rtimes |\cdot|^{-\frac{1}{2}}_v\mu_0$$ 
(see (\ref{rel-ind}) for the second equality) and $r_{K_P}(\Pi)$ has a $\mathcal{O}_{E}$-lattice whose reduction contains a constituent of $L_3\otimes_{\mathcal{O}_{E}}\F_E$. 
Similarly we can find $\pi$ and $\mu$ for other remaining cases by using Table 5 of \cite{Rosner1}.  
\end{proof}
\begin{rmk}\label{pot-ord}In the construction of $\Pi_i(\pi,\mu)$ for $i=1,2$ in Proposition \ref{switching} for which $\pi$ is a principal representation as in the case 
$W_3$-(3) in the proof there, one can easily check that $\Pi_i(\pi,\mu)$ is ordinary.   
\end{rmk}

\section{Some local deformation rings for $GSp_4$ and $GL_4$}\label{LDR} 
In this section, we will study some local deformation rings for $GSp_4$ with prescribed local deformation datum. 
To apply the results in Section 3 of \cite{BGG1}, we also discuss the relation between local deformations for $GSp_4$ and those of 
$GL_4$.  
The contents should be handled in more general settings but to avoid an excessive digression we focus only on $GSp_4$. 
We refer \cite{Levin}, \cite{BG} for the general case and Section 7.2 of \cite{gg} for $GSp_4$. 
We normalize the definition of Hodge-Tate weights so that all the Hodge-Tate of $p$-adic cyclotomic character are 
$1$. This normalization is different from the convention in Section \ref{main-results}. 

Let $p>2$ be a prime. 
Let $K$ and $M$ be finite extensions of $\Q_p$ with residue fields $\F_M,\F_K$ respectively . 
We assume that $K$ contains any embedding of $M$ into $\overline{K}$. We denote by ${\rm CLN}_{\mathcal{O}_K}$ 
the category of complete local Noetherian $\mathcal{O}_K$-algebras with local homomorphisms $\iota_B:B\lra \F_K$.  
Let $$\br:G_M\lra {\rm GSp}_4(\F_K)$$ be a continuous representation. 
As in Section 7.2 of \cite{gg} let us consider the framed deformation functor for $\br$ from ${\rm CLN}_{\mathcal{O}_K}$ to the category of sets 
defined by corresponding $B$ 
with the set of triples $(V_B,\beta_B,\alpha_B)$ such that 
\begin{enumerate}
\item $V_B$ is a free $B$-module of rank 4 with a continuous $B$-linear action of $G_M$, 
\item $\alpha_B$ is a perfect symplectic pairing $V_B\times V_B\lra B$ which satisfies that 
for any $g\in G_M$, $\alpha_B(gx,gy)=\lambda(g)\alpha_B(x,y)$ 
for some $\lambda(g)\in B^\times$ , 
\item $\beta_B=\{\beta_1,\beta_2,\beta_3,\beta_4\}$ is a symplectic basis with respect to $\alpha_B$ which means $(\alpha_B(\beta_i,\beta_j))_{i,j}=\lambda J$ 
for some $\lambda \in B^\times$, and 
\item $(V_B,\beta_B,\alpha_B)\otimes_{B,\iota_B}\F_K=\br$. 
\end{enumerate}
As explained in Section 7.2 of \cite{gg} this functor is representable and we denote it by $R^{\Box,{\rm sympl}}_{\br}$ and 
the associated universal symplectic lifting:
$$\rho^{\Box,{\rm sympl}}:G_M\lra {\rm GSp}_4(R^{\Box,{\rm sympl}}_{\br}).$$
Fix a continuous character $\psi:G_M\lra \mathcal{O}^\times_K$. 
By replacing the above deformation condition (2) with  
``(2)' for any $g\in G_M$, $\alpha_B(gx,gy)=\psi(g)\alpha_B(x,y)$", we also have a universal deformation ring 
$R^{\Box,{\rm sympl},\psi}_{\br}$ which is a quotient of $R^{\Box,{\rm sympl}}_{\br}$ and also 
the corresponding universal symplectic lift $\rho^{\Box,{\rm sympl},\psi}:G_M\lra {\rm GSp}_4(R^{\Box,{\rm sympl},\psi}_{\br})$. 

By the natural inclusion $GSp_4\subset GL_4$, we can also view $\br$ as a representation of $G_M$ which takes the 
values in ${\rm GL}_4(\F_K)$. As explained in Section 3.1.5 of \cite{BGG1}, there exists a universal deformation ring 
$R^{\Box}_{\br}$ which represents the deformation functor from ${\rm CLN}_{\mathcal{O}_K}$ to the category of sets 
which takes $A\in {\rm CLN}_{\mathcal{O}_K}$ to the set of continuous lifting $\rho:G_M\lra {\rm GL}_4(A)$ of $\br$. We write 
$\rho^{\Box}:G_M\lra 
{\rm GL}_4(R^{\Box}_{\br})$ for the universal ring. It is immediate that $R^{\Box,{\rm sympl}}_{\br}$ is a quotient of 
$R^{\Box}_{\br}$. Hence ${\rm Spec}\hspace{0.5mm}R^{\Box,{\rm sympl}}_{\br}$ is a closed subscheme of 
${\rm Spec}\hspace{0.5mm}R^{\Box}_{\br}$. 

From now on we assume that a fixed character $\psi$ is crystalline. 
Let $\lambda$ be an element of $(\Z^4_+)^{{\rm Hom}(M,K)}$ and let ${\rm v}_\lambda$ be the associated $p$-adic Hodge type 
(see (2.6), p.531 of \cite{K-ss} or Definition 3.1.6 of \cite{gg}). As explained in p.282 of \cite{gg} there is a unique $p$-torsion free quotient 
$R^{{\rm sympl},{\rm v}_\lambda,{\rm cr},\psi}_{\br}$ of $R^{\Box,{\rm sympl},\psi}_{\br}$ with  the 
property that for any finite $K$-algebra $B$, a homomorphism of $\mathcal{O}_K$-algebra $B\lra 
R^{\Box,{\rm sympl},\psi}_{\br}$ factors through $R^{{\rm sympl},{\rm v}_\lambda,{\rm cr},\psi}_{\br}$ if and only if 
the corresponding representation is crystalline of $p$-adic Hodge type ${\rm v}_\lambda$. 
We also denote by $\rho^{{\rm sympl},{\rm v}_\lambda,{\rm cr},\psi}:G_M\lra {\rm GSp}_4(R^{{\rm sympl},{\rm v}_\lambda,{\rm cr},\psi}_{\br})$ 
the corresponding universal symplectic lift. Let $R^{{\rm v}_\lambda,{\rm cr}}_{\br}$ be a unique $p$-torsion free 
quotient of $R^{\Box}_{\br}$ (see Definition 3.1.2, p.1536 of \cite{BGG1}) and any connected component of 
${\rm Spec}\hspace{0.5mm}R^{{\rm v}_\lambda,{\rm cr}}_{\br}$ is known to be 
irreducible by Theorem 1.2 of \cite{I} provided if $p>2$.

Let $$\overline{r}:G_M\lra {\rm GL}_2(\F_K)$$ be a continuous Galois representation of dimension 2. 
By the proofs of Theorem 2.7.6 and Corollary 2.7.7 of \cite{K-ss} there is the framed crystalline deformation ring 
\begin{equation}\label{Kisin-defo}
R=R^{[0,1],K}_{{\rm cris}}
\end{equation} of $\overline{r}$. 
We denote by $r:G_M\lra {\rm GL}_2(R)$ the universal lift of $\rr$ and $r_B$ its base change for any object $B$ of 
${\rm CLN}_{\mathcal{O}_K}$.    
As explained in Remark 2.1.2 of \cite{Kim} its the generic fiber is related to 
Kisin's flat (framed) deformation ring $R^{{\rm fl},\Box}_{\overline{r}}$ defined in \cite{Kisin} (see (2.3.7) in loc.cit.). Hence 
$(R^{{\rm fl},\Box}_{r}\hat{\otimes}_{W(\F_K)}\mathcal{O}_K)[\frac{1}{p}]\simeq R[\frac{1}{p}]$. 

Let $P_1$ (resp. $P_2$) be Siegel (resp. Klingen) parabolic subgroup in $GSp_4$. For each $P_i\ (i=1,2)$, let $\mathcal{F}_i$ be a flag variety 
over $\mathcal{O}_K$ whose $A$-valued points for any $\mathcal{O}_K$-algebra $A$ consists of $(V_A,\alpha_A,\{{\rm Fil}^{(i)}_j\}_{j\in \Z})$ 
where $(V_A,\alpha_A)$ is the same as above and $\{{\rm Fil}^{(i)}_j\}_{j\in \Z}$ is a filtration with respect to $P_i$.  
For $P_1$ it is an increasing filtration 
\begin{equation}\label{p1-fil}
0={\rm Fil}^{(1)}_0\subset {\rm Fil}^{(1)}_1 \subset {\rm Fil}^{(1)}_2=V_A
\end{equation}
by locally free submodules which, locally, are direct summands and the rank of ${\rm Fil}^{(1)}_1$ is 2. 
For $P_2$  it is a similar increasing filtration 
\begin{equation}\label{p2-fil}
0={\rm Fil}^{(2)}_0\subset {\rm Fil}^{(2)}_1 \subset {\rm Fil}^{(2)}_2 \subset {\rm Fil}^{(2)}_3=V_A
\end{equation}
but the rank of ${\rm Fil}^{(2)}_1$ is 1 and the rank of ${\rm Fil}^{(2)}_2$ is 3. 

Henceforth we assume that $\lambda=0$. For $j=1,\ldots,4$, let $\chi^{0}_j:I_M\lra \mathcal{O}^\times_K$ be 
the character defined in Definition 3.1.2 of \cite{gg} which can be though as 
the restriction of any crystalline character $G_M\lra \overline{K}^\times$ whose Hodge-Tate weight 
with respect to $\tau:M\hookrightarrow \overline{K}$ is given by $j-1$. 

Let ${\rm v}_0={\rm v}_\lambda$ with $\lambda=\{(3,2,1,0)\}_{{\rm Hom}(M,K)}$. 
For $i=1,2$ let $\mathcal{G}_i$ be a subfunctor of the functor defined by the scheme 
\begin{equation}\label{imp-scheme}
\mathcal{F}_i\times_{{\rm Spec}\hspace{0.5mm}\mathcal{O}_K}{\rm Spec}\hspace{0.5mm}
R^{{\rm sympl},{\rm v}_0,{\rm cr},\psi}_{\br}
\times_{{\rm Spec}\hspace{0.5mm}\mathcal{O}_K}{\rm Spec}\hspace{0.5mm}R
\end{equation}
 whose $B$-valued points for any object $B$ in ${\rm CLN}_{\mathcal{O}_K}$ 
correspond to $B$-valued points $(V_B,\beta_B,\alpha,r_B)$ of ${\rm Spec}\hspace{0.5mm}
R^{{\rm sympl},{\rm v}_0,{\rm cr},\psi}_{\br}\times_{{\rm Spec}\hspace{0.5mm}\mathcal{O}_K}{\rm Spec}\hspace{0.5mm}R$ such that 
\begin{enumerate}
\item $\{{\rm Fil}^{(i)}_j\}_{j\in \Z}$ is preserved by the action of $G_M$;   
\item for $\mathcal{F}_1$, ${\rm gr}^{(1)}_2=r_B$ and $\{\beta_3,\beta_4\}$ modulo ${\rm Fil}^{(1)}_1$ is the frame of $r_B$. Note that  
the perfect pairing $\alpha_B$ induces a canonical isomorphism $({\rm gr}^{(1)}_1)^\vee(3)\simeq {\rm gr}^{(1)}_2$; 
\item for $\mathcal{F}_2$, $I_M$ acts on ${\rm gr}^{(2)}_1$ and ${\rm gr}^{(2)}_3$ via $\chi^0_4$ and $\chi^0_1$ 
respectively, and ${\rm gr}^{(2)}_2(-1)=r_B$. Further $\{\beta_2,\beta_3\}$ modulo ${\rm Fil}^{(2)}_2$ is the frame of $r_B$.   
\end{enumerate}
\begin{lem} Keep the notation above. Then $\mathcal{G}_i$ is representable by a closed subscheme of 
$\mathcal{F}_i\times_{{\rm Spec}\hspace{0.5mm}\mathcal{O}_K}{\rm Spec}\hspace{0.5mm}
R^{{\rm sympl},{\rm v}_0,{\rm cr},\psi}_{\br}
\times_{{\rm Spec}\hspace{0.5mm}\mathcal{O}_K}{\rm Spec}\hspace{0.5mm}R$ for each $i=1,2$. 
\end{lem}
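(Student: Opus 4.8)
The plan is to show that each of the three conditions defining $\mathcal{G}_i$ (viewed as a subfunctor of the functor of points of the scheme \eqref{imp-scheme}) is a closed condition, so that $\mathcal{G}_i$ is the scheme-theoretic intersection of finitely many closed subschemes, hence itself closed. First I would recall that the flag variety $\mathcal{F}_i$ is projective over $\mathcal{O}_K$, so the product scheme \eqref{imp-scheme} carries, over it, the universal data $(V_B,\beta_B,\alpha_B,r_B,\{{\rm Fil}^{(i)}_j\})$: here $V_B$ and its symplectic structure $\alpha_B$ and frame $\beta_B$ come from $R^{{\rm sympl},{\rm v}_0,{\rm cr},\psi}_{\br}$, the two-dimensional lift $r_B$ comes from $R$, and the filtration comes from the $\mathcal{F}_i$-factor. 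All of these are locally free of the prescribed ranks by construction of $\mathcal{F}_i$ in \eqref{p1-fil}–\eqref{p2-fil}, so there is no locus to cut out to enforce that part.

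Next I would treat condition (1), Galois-stability of the filtration. For a fixed $g\in G_M$, the requirement that $g$ preserve ${\rm Fil}^{(i)}_j$ is the vanishing of the composite ${\rm Fil}^{(i)}_j\hookrightarrow V_B\xrightarrow{\,g-1\,} V_B\twoheadrightarrow V_B/{\rm Fil}^{(i)}_j$, which is a section of a locally free sheaf, hence vanishes on a closed subscheme; intersecting over all $j$ and then over all $g$ (one may use topological generators of $G_M$, or simply take the intersection over the whole profinite group, which is still closed) gives a closed condition. Conditions (2) and (3) are conditions identifying certain graded pieces, with their frames, with $r_B$ (suitably Tate-twisted) and prescribing the inertial action on the outer graded pieces via the characters $\chi^0_j$; each such identification or prescribed-action statement is again the vanishing of a map between locally free sheaves (the difference of two prescribed maps, or $(g\text{-action})-\chi^0_j(g)$ on a line bundle for $g\in I_M$), hence closed. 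The canonical isomorphism $({\rm gr}^{(1)}_1)^\vee(3)\simeq {\rm gr}^{(1)}_2$ induced by $\alpha_B$ in (2) is automatic once the filtration is $\alpha_B$-isotropic, which itself is a closed condition coming from the definition of $\mathcal{F}_i$ and the pairing; I would simply note this compatibility rather than re-prove it.

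The representability of $\mathcal{G}_i$ as a functor then follows formally: it is represented by the closed subscheme of \eqref{imp-scheme} obtained by intersecting all the closed subschemes just described, and a $B$-point of that subscheme for $B\in {\rm CLN}_{\mathcal{O}_K}$ is exactly a tuple $(V_B,\beta_B,\alpha_B,r_B)$ of \eqref{imp-scheme} together with a filtration satisfying (1)–(3), which is the defining data of $\mathcal{G}_i$. The only genuinely delicate point, and the one I would spend the most care on, is checking that the frame-compatibility clauses in (2) and (3) — "$\{\beta_3,\beta_4\}$ modulo ${\rm Fil}^{(1)}_1$ is the frame of $r_B$", and the analogous clause for $\{\beta_2,\beta_3\}$ in the Klingen case — really are expressible as closed conditions rather than, say, open ones: one must verify that the relevant reduction maps $B^{\oplus 2}\to {\rm gr}^{(i)}_\bullet$ are isomorphisms on the nose (not merely generically), which is where the rank bookkeeping of \eqref{p1-fil}–\eqref{p2-fil} and the isotropy of the filtration are used; once that is in hand, equality of the two frames is the vanishing of a difference of maps of free modules and is closed. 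Everything else is routine, so I would present it tersely, following the corresponding argument in Section 3 of \cite{BGG1} and Section 7.2 of \cite{gg}.
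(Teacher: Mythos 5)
Your proposal is correct and follows the same basic template as the paper's proof, which (citing Lemma 2.1.2 of Geraghty's thesis) also realizes $\mathcal{G}_i$ as the vanishing locus of an explicit ideal cutting out closed conditions on the universal data. The main cosmetic difference is that the paper works concretely in a local frame and packages the Galois-stability and frame-matching conditions into a single ideal $I$ generated by the matrix coefficients of $(\widetilde{\beta}_3,\widetilde{\beta}_4)r_R(\sigma)-(\widetilde{\beta}_3,\widetilde{\beta}_4)\rho^{\mathrm{sympl}}(\sigma)$, whereas you decompose the defining conditions of $\mathcal{G}_i$ one at a time and take the scheme-theoretic intersection; both routes arrive at the same closed subscheme.
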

\begin{proof}We follow the proof of  Lemma 2.1.2, p.51 of \cite{Ge1}. We only consider the case of $\mathcal{F}_1$ and 
the other case is similarly handled. 

Put $\widetilde{R}=R^{{\rm sympl},{\rm v}_0,{\rm cr},\psi}_{\br}\otimes_{\mathcal{O}_K}R$. 
Let $0={\rm Fil}^{(1)}_0\subset {\rm Fil}^{(1)}_1 \subset {\rm Fil}^{(1)}_2=V_{\widetilde{R}}$ be a filtration of free modules over $\widetilde{R}$ 
with respect to $\mathcal{F}_1$. Let $\beta_{\widetilde{R}}=\{\beta_{1,\widetilde{R}},
\beta_{2,\widetilde{R}},\beta_{3,\widetilde{R}},\beta_{4,\widetilde{R}}\}$ be the symplectic basis 
with respect to $\alpha_{\widetilde{R}}$ obtained by the base change of $\beta_{R^{{\rm sympl},{\rm v}_0,{\rm cr},\psi}_{\br}}$ and 
$\alpha_{R^{{\rm sympl},{\rm v}_0,{\rm cr},\psi}_{\br}}$ to $\widetilde{R}$. 
For $i=3,4$ put $\widetilde{\beta}_{i,\widetilde{R}}=\beta_{i,\widetilde{R}}$ modulo ${\rm Fil}^{(1)}_1$. 
We view $r_{R}:G_M\lra GL_2(R)$ as a matrix representation and let it naturally act on ${\rm gr}^{(1)}_2$ by 
$(\widetilde{\beta}_{3,\widetilde{R}},\widetilde{\beta}_{4,\widetilde{R}})r_{R}(\sigma)$ for all $\sigma\in G_M$. 
Then if we denote by $I$ the ideal of $\widetilde{R}$ generated by coefficients (in front of the fixed basis above) of  
$$(\widetilde{\beta}_{3,\widetilde{R}},\widetilde{\beta}_{4,\widetilde{R}})r_{R}(\sigma)-
(\widetilde{\beta}_{3,\widetilde{R}},\widetilde{\beta}_{4,\widetilde{R}})\rho^{{\rm sympl},{\rm v}_\lambda,{\rm cr},\psi}(\sigma)
\ {\rm for\ all}\  \sigma\in G_M,$$
the quotient $\widetilde{R}/I$ gives the desired closed subscheme.   
\end{proof}

Let $\mathcal{V}$ be a connected component of ${\rm Spec}\hspace{0.5mm}R$ and we denote by $\widetilde{R}_{\mathcal{V}}$ 
the base change of $\widetilde{R}$ to $\mathcal{V}$. 
Similarly we define $\mathcal{G}_{i,\mathcal{V}}$ as the base change of $\mathcal{G}_{i}$ to $\mathcal{V}$. 
Let $R^{P_i,{\rm cr}}_{\br}$ be the ring whose spectrum ${\rm Spec}\hspace{0.5mm}R^{P_i,{\rm cr}}_{\br}$ is the scheme 
theoretic image of the morphism 
$$\mathcal{G}_{i,\mathcal{V}}[1/p]\lra {\rm Spec}\hspace{0.5mm} \widetilde{R}_{\mathcal{V}}.$$  
The following is an extension of Lemma 2.4.2 of \cite{Ge1}. 
\begin{prop}\label{conn1}Assume that $\br$ is trivial. Then the scheme $\mathcal{G}_{i,\mathcal{V}}[1/p]$ is connected.  
\end{prop}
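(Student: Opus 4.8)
The plan is to follow the strategy of Lemma 2.4.2 of \cite{Ge1}, adapted to the symplectic setting. First I would reduce the connectedness of $\mathcal{G}_{i,\mathcal{V}}[1/p]$ to a more transparent geometric description by analyzing the fibers of the projection $\mathcal{G}_{i,\mathcal{V}}\lra \mathcal{V}$ (after inverting $p$). Over a point of $\mathcal{V}[1/p]$ corresponding to a crystalline lift $r_B$ of $\rr$ with Hodge-Tate weights $\{0,1\}$, the fiber parametrizes pairs consisting of a crystalline symplectic lift of $\br$ of Hodge type ${\rm v}_0$ together with a $G_M$-stable flag of type $P_i$ whose graded pieces are constrained as in the definition of $\mathcal{G}_i$ (for $P_1$: ${\rm gr}^{(1)}_2\simeq r_B$ and the Tate-twisted dual condition on ${\rm gr}^{(1)}_1$; for $P_2$: the inertial characters $\chi^0_4,\chi^0_1$ on the outer graded pieces and ${\rm gr}^{(2)}_2(-1)\simeq r_B$). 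Since $\br$ is trivial, all the relevant reductions split, and the extension data live in explicit $H^1$ groups of characters and of $\rr$-twists; the key point is that these cohomology groups (or rather the loci inside them cut out by the crystalline/filtration conditions) are affine spaces, hence connected.

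Next I would make this precise using the formalism of Section 3 of \cite{BGG1} together with the filtered-module description of crystalline deformation rings. The morphism $\mathcal{G}_{i,\mathcal{V}}\lra \mathcal{V}\times_{\mathcal{O}_K}{\rm Spec}\, R^{{\rm sympl},{\rm v}_0,{\rm cr},\psi}_{\br}$ realizes $\mathcal{G}_{i,\mathcal{V}}$ as (a subscheme of) a relative flag variety bundle; concretely, once the symplectic crystalline lift and $r_B$ are fixed, the choice of the $P_i$-filtration preserving the Galois action and matching $r_B$ on the appropriate graded piece is parametrized by a torsor under the unipotent radical $N_i$ of $P_i$ (equivalently, by $\Hom_{G_M}$ of the relevant sub/quotient, an affine space), because $\br$ trivial forces the needed $G_M$-equivariant maps to exist. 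So $\mathcal{G}_{i,\mathcal{V}}[1/p]$ fibers over ${\rm Spec}\, R^{{\rm sympl},{\rm v}_0,{\rm cr},\psi}_{\br}[1/p]\times_{K}\mathcal{V}[1/p]$ with connected (affine-space) fibers, and I would then invoke the known connectedness of the base: ${\rm Spec}\, R^{{\rm sympl},{\rm v}_0,{\rm cr},\psi}_{\br}[1/p]$ is connected because for $p>2$ every connected component of the $GL_4$ crystalline deformation ring ${\rm Spec}\, R^{{\rm v}_0,{\rm cr}}_{\br}$ is irreducible by Theorem 1.2 of \cite{I}, the symplectic ring is a union of components of it, and the trivial-$\br$ hypothesis together with the weight-$\{3,2,1,0\}$ regularity pins down a single component; similarly $\mathcal{V}$ is a connected component of ${\rm Spec}\, R$ by construction. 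A map with connected base and connected fibers has connected total space.

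I expect the main obstacle to be verifying that the fibers of $\mathcal{G}_{i,\mathcal{V}}[1/p]\to$ (base) are genuinely connected rather than merely non-empty: one must show the filtration-existence locus is cut out by conditions defining an affine space (or at least an irreducible/connected scheme) inside the flag variety $\mathcal{F}_i$, and this requires controlling, uniformly in the deformation, the relevant Galois cohomology — in particular that the extension classes realizing ${\rm Fil}^{(i)}_1$ (and, for $P_2$, ${\rm Fil}^{(2)}_2$) vary in a linear space with no jumping. Here the triviality of $\br$ is exactly what is used: it guarantees that the pairing $\alpha_B$-compatibility and the prescribed behavior on graded pieces do not over-determine the system, so the $N_i$-torsor structure is maintained. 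A secondary technical point is bookkeeping with the symplectic self-duality constraint linking ${\rm gr}^{(i)}_1$ to the top graded piece via $\alpha_B$ and the Tate twist by $3$ (resp.\ the $\chi^0_j$ normalization for $P_2$); I would handle $P_1$ in detail as in the lemma's proof above and note that $P_2$ is formally identical after relabeling, exactly as indicated in the statement that "the other case is similarly handled."
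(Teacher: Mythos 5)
Your proposal takes a different route from the paper, and it has a genuine gap that the paper's argument is specifically designed to avoid.

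The crucial unjustified step is the assertion that ${\rm Spec}\, R^{{\rm sympl},{\rm v}_0,{\rm cr},\psi}_{\br}[1/p]$ is connected: you claim that Theorem 1.2 of \cite{I} plus triviality of $\br$ and regularity of the Hodge type ``pins down a single component.'' But that theorem only says each \emph{connected} component of ${\rm Spec}\, R^{{\rm v}_0,{\rm cr}}_{\br}$ is irreducible; it gives no control over \emph{how many} components there are, and indeed crystalline deformation rings of the trivial representation typically have several components (for instance ordinary versus non-ordinary loci). If the connectedness of the symplectic crystalline deformation ring were known, Proposition~\ref{conn2} would be an immediate consequence and there would be no need for the machinery of $\mathcal{G}_i$ at all --- the whole point of Propositions~\ref{conn1} and~\ref{conn2} is to produce a statement about components of $R^{{\rm v}_0,{\rm cr}}_{\br}$ without assuming it. So the proposal is circular: you are implicitly assuming (a strengthening of) what the lemma is being used to prove. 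A secondary weakness is the assertion that the fibers of $\mathcal{G}_{i,\mathcal{V}}[1/p]\to\mathcal{V}[1/p]$ are affine spaces: the fiber over a point of $\mathcal{V}$ parametrizes both a symplectic crystalline lift of $\br$ \emph{and} a compatible $P_i$-filtration, not merely an $N_i$-torsor, so this needs a real argument; and ``connected base with connected fibers implies connected total space'' requires some additional hypothesis (openness, properness, or surjectivity plus more) to be valid.

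The paper's proof sidesteps all of this. Rather than trying to control the component structure of the deformation ring in advance, it directly connects an arbitrary closed point $x$ to a canonical ``split'' point. Concretely, one conjugates so that $\rho_x$ lands in $P_i(E)$, introduces a parameter $t$ and the diagonal contractions $d_1=\diag(t,t,1,1)$ or $d_2=\diag(t^2,t,t,1)$, and observes that $\rho'_t=d_i\rho'd_i^{-1}$ stays in $P_i(A)$ and specializes at $t=0$ to the associated graded object $\rho'_0$ (which depends only on $r_x$ and the universal characters $\chi^0_j$). Allowing an additional group-valued parameter $a\in{\rm GSp}_4(A)$, this builds a morphism ${\rm Spec}\,A\to\mathcal{G}_{i,\mathcal{V}}$ from a \emph{domain} $A$ whose image contains both $x$ (at $a=g$, $t=1$) and $\rho'_0$ (at $a=1$, $t=0$). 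Since $A$ is a domain, $x$ and $\rho'_0$ lie on a common irreducible component; and $\rho'_0$ depends only on $\mathcal{V}$ and the $\chi^0_j$, so all $x$ connect to a single point. This is the key idea your argument is missing, and it is exactly what makes the triviality hypothesis on $\br$ enter (so that the $t\to 0$ specialization still reduces to the trivial representation mod the maximal ideal and hence lands in $\mathcal{G}_{i,\mathcal{V}}$).
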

\begin{proof} Let $x$ be a closed point of $\mathcal{G}_{i,\mathcal{V}}[1/p]$. We denote by ${\rm Fil}^{(i)}_x$ 
the corresponding filtration. Let $E$ be the residue field of $x$ and also by $\rho_x$ the corresponding representation of $G_M$. 
Choose $g\in {\rm GSp}_4(\mathcal{O}_E)$ such that $g{\rm Fil}^{(i)}_{{\rm St}}={\rm Fil}^{(i)}_x$ where ${\rm Fil}^{(i)}_{{\rm St}}$ 
is the standard filtration with respect to the parabolic subgroup $P_i$. Then $\rho'=g^{-1}\rho_xg$ takes the values in 
$P_i(E)$ such that the Levi-part is $r_x$ for $i=1$ and $(\chi_{4,x},,r_x,\chi_{1,x})$ for $i=2$ where 
$\chi_{j,x}:G_M\lra E^\times$ is a crystalline character which acts on $I_M$ as $\chi^0_j$ for $j=1,4$.   

Let $A$ be the $m_{\mathcal{O}_E}$-adic completion of  
$\G({\rm Spec}\hspace{0.5mm}{\rm GSp}_4/{\mathcal{O}_E},\mathcal{O}_{{\rm Spec}\hspace{0.5mm}{\rm GSp}_4/{\mathcal{O}_E}})[t]$. 
Put $d_1=\diag(t,t,1,1)$ and $d_2=\diag(t^2,t,t,1)$ which are elements in ${\rm GSp}_4(A[1/t])$.   
Then by a direct computation one can check that $\rho'_{t}:=d_i\rho'd^{-1}_i$ takes the values in $P_i(A)$ and further 
$\rho'_{0}=(r_x)^\vee(3)\oplus r_x$ for $i=1$ and $\rho'_{0}=\chi_{4,x}\oplus r_x(1)\oplus \chi_{1,x}$ for $i=2$. 

For $a\in {\rm GSp}_4(A)$ we define $\rho'_{a,t}:=a\rho'_t a^{-1}$. Then the family $\{\rho_{a,t}\}$ defines 
${\rm Spec}\hspace{0.5mm}A\lra \mathcal{G}_{i,\mathcal{V}}$ since the reduction of $\rho_{a,t}$ modulo $m_A$ is trivial 
representation $\rho$. 
The image of the closed point of ${\rm Spec}\hspace{0.5mm}A[1/p]$ corresponding to $a=g$ and $t=1$ is $x$ while 
the image of the closed point of ${\rm Spec}\hspace{0.5mm}A[1/p]$ corresponding to $a=1_4$ and $t=0$ is $\rho'_0$ as observed. 
Since $A$ is domain, the point corresponds to $\rho'_0$ and $x$ must lie in a common irreducible component.  
Finally  $r_x$ lies in the connected component $\mathcal{V}$ and it is the same for $\chi^0_j$ by the description of universal character 
(cf. p.49 of \cite{Ge1}). This completes the proof.     
\end{proof}

Recall that $\mathcal{G}_{i,\mathcal{V}}[1/p]$ is also a closed subscheme of 
${\rm Spec}\hspace{0.5mm}R^{{\rm v}_0,{\rm cr}}_{\br}$ since $R^{{\rm v}_0,{\rm cr}}_{\br}$ is $p$-torsion free. 
\begin{prop}\label{conn2}Keep the notation in Proposition \ref{conn1}. 
Then any two geometric points $x_1={\rm Spec}\hspace{0.5mm}E_1,\ x_2={\rm Spec}\hspace{0.5mm}E_2$ on 
$\mathcal{G}_{i,\mathcal{V}}[1/p]$ with $E_i$ a finite extension of $K$ for $i=1,2$ lie on a common irreducible component of 
${\rm Spec}\hspace{0.5mm}R^{{\rm v}_0,{\rm cr}}_{\br}$. 
\end{prop}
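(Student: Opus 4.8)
The plan is to reduce the statement to two inputs already at hand: the connectedness of $\mathcal{G}_{i,\mathcal{V}}[1/p]$ proved in Proposition~\ref{conn1}, and Ishikawa's theorem (Theorem~1.2 of \cite{I}, applicable since $p>2$) that every connected component of ${\rm Spec}\hspace{0.5mm}R^{{\rm v}_0,{\rm cr}}_{\br}$ is irreducible. First I would make precise the embedding recalled just before the statement. The universal symplectic lift over $R^{{\rm sympl},{\rm v}_0,{\rm cr},\psi}_{\br}$ is, when viewed in ${\rm GL}_4$, crystalline of $p$-adic Hodge type ${\rm v}_0$ by definition of that ring; hence the morphism from $\mathcal{G}_{i,\mathcal{V}}$ to ${\rm Spec}\hspace{0.5mm}R^{\Box}_{\br}$ obtained by forgetting the filtration $\{{\rm Fil}^{(i)}_j\}$ and the ${\rm GL}_2$-datum $r_B$ factors, after inverting $p$, through ${\rm Spec}\hspace{0.5mm}R^{{\rm v}_0,{\rm cr}}_{\br}[1/p]$. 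Since $R^{{\rm v}_0,{\rm cr}}_{\br}$ is $p$-torsion free, this identifies $\mathcal{G}_{i,\mathcal{V}}[1/p]$ with a closed subscheme of the generic fibre ${\rm Spec}\hspace{0.5mm}R^{{\rm v}_0,{\rm cr}}_{\br}[1/p]$; in particular the geometric points $x_1,x_2$ become points of ${\rm Spec}\hspace{0.5mm}R^{{\rm v}_0,{\rm cr}}_{\br}$ lying in the closed subset $\mathcal{G}_{i,\mathcal{V}}[1/p]$.

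The remainder is a short topological argument. Let $Z=\mathcal{G}_{i,\mathcal{V}}[1/p]$, viewed as a closed subset of ${\rm Spec}\hspace{0.5mm}R^{{\rm v}_0,{\rm cr}}_{\br}[1/p]$; by Proposition~\ref{conn1} it is connected, hence so is its Zariski closure $\overline{Z}$ inside ${\rm Spec}\hspace{0.5mm}R^{{\rm v}_0,{\rm cr}}_{\br}$. A connected subset of a scheme meets only one connected component, so $\overline{Z}$ is contained in a single connected component $\mathcal{C}$ of ${\rm Spec}\hspace{0.5mm}R^{{\rm v}_0,{\rm cr}}_{\br}$; by Ishikawa's theorem $\mathcal{C}$ is irreducible, i.e.\ is an irreducible component. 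Since $x_1,x_2\in Z\subset\overline{Z}\subset\mathcal{C}$, the points $x_1$ and $x_2$ lie on the common irreducible component $\mathcal{C}$, which is the assertion.

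The step that needs care is not the topological deduction but the verification in the first paragraph that the family of ${\rm GL}_4$-deformations carried by $\mathcal{G}_{i,\mathcal{V}}$ genuinely lands in the crystalline locus of Hodge type ${\rm v}_0$, rather than merely in ${\rm Spec}\hspace{0.5mm}R^{\Box}_{\br}$; this rests on the description of the graded pieces of the filtrations (\ref{p1-fil}) and (\ref{p2-fil}) — that $I_M$ acts on the extreme graded pieces through the crystalline characters $\chi^0_j$ and that the middle graded piece is the crystalline $r_B$ — together with the fact that, $R^{{\rm v}_0,{\rm cr}}_{\br}$ being $p$-torsion free and $\mathcal{O}_K$-flat, passing between its generic fibre and the whole scheme preserves connectedness data, which is precisely what licenses taking the closure $\overline{Z}$ above. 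Under the standing hypothesis that $\br$ is trivial, inherited from Proposition~\ref{conn1}, this is all that is required.
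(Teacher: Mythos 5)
Your argument is correct and uses the same three inputs as the paper's proof (connectedness of $\mathcal{G}_{i,\mathcal{V}}[1/p]$ from Proposition~\ref{conn1}, the map to ${\rm Spec}\,R^{{\rm v}_0,{\rm cr}}_{\br}$, and the irreducibility of connected components from \cite{I}); the only cosmetic difference is that the paper explicitly chains irreducible components $V_1,\dots,V_r$ of $\mathcal{G}_{i,\mathcal{V}}[1/p]$ and compares the irreducible components $W_i$ containing them, whereas you pass directly to the closure of the whole image and observe it sits inside a single connected (hence irreducible) component. Both deductions are valid and equivalent; your first paragraph's re-derivation of the embedding is also unnecessary, since the paper states this just before the proposition.
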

\begin{proof}
We may assume that $E_1=E_2$ by enlarging the coefficient fields if necessary. 
By Proposition \ref{conn1}, $x_1,x_2$ is connected by irreducible components $V_1,\ldots,V_r$ for some $r\ge 1$ 
such that $V_i\cap V_j\neq \emptyset$. 
Let $W_i$ be an irreducible component of ${\rm Spec}\hspace{0.5mm}R^{{\rm v}_0,{\rm cr}}_{\br}$ containing $V_i$. 
Obviously, the union $\bigcup_{i=1}^rW_i$ is connected. However any connected component of 
${\rm Spec}\hspace{0.5mm}R^{{\rm v}_0,{\rm cr}}_{\br}$ is irreducible by Theorem 1.2 of \cite{I}. Hence 
there exists $i_0$ with $1\le i \le r$ such that $x_1,x_2$ lie on $W_{i_0}$. 
\end{proof}
\begin{rmk}\label{conn3} For the two points $x_1,x_2$ in Proposition \ref{conn2}, enlarging coefficients if necessary, 
$\rho_{x_1}\sim \rho_{x_2}$ in the sense of Definition 3.1.4 of \cite{BGG1} where $\rho_{x_i}$ is 
the lift of $\br$ corresponding to $x_i$ for $i=1,2$. 
\end{rmk}

\section{Cuspidal representations of weight zero}
Let $\pi$ be a regular cuspidal automorphic representation of ${\rm GSp}_4(\A_F)$ in Section \ref{AutoGal} and 
$\rho_{\pi,\iota_p}$ the associated $p$-adic Galois representation. We denote by $\br_{\pi,\iota_p}$ the reduction of $\rho_{\pi,\iota_p}$. 
We say $\pi$ is of weight zero if any infinite component of $\pi$ has L-parameter $\phi_{(w;2,1)}$ for a fixed $w$.   
In this section we will find another cuspidal automorphic representation which is congruent to $\pi$ after 
a suitable base change. 
An idea is to use algebraic modular forms by passing to Jacquet-Langlands correspondence after a suitable 
base change.   
 
\begin{thm}\label{weight0} Let $\pi$ be a regular cuspidal representation of ${\rm GSp}_4(\A_F)$. 
Assume that $p>2$ and $\br_{\pi,\iota_p}$ is irreducible. 
Then there exists a totally real solvable extension $F_1/F$ and a 
cuspidal automorphic representation  $\pi'$ of ${\rm GSp}_4(\A_{F_1})$ 
such that 
\begin{enumerate}
\item $\br_{\pi',\iota_p}\simeq  \br_{\pi,\iota_p}|_{G_F}$ up to twist;
\item  $\pi'$ is of weight zero.  
\end{enumerate}
\end{thm}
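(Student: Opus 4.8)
The plan is to lower the infinite type of $\pi$ all the way to the minimal regular one, $\phi_{(w';2,1)}$, by transporting the problem, after a solvable base change, onto a definite inner form $G_B$ of ${\rm GSp}_4$, where automorphic forms are algebraic modular forms on a finite set and ``changing the weight'' becomes a group-theoretic statement about representations of $\gs_4(\O_{F_v})$, $v\mid p$, in the spirit of Lemma \ref{lift} and of \cite{Kisin}, \cite{BGG1}. Concretely, let $\Pi_0$ be the cuspidal representation of $\gl_4(\A_F)$ attached to $\pi$ and $K_{\Pi_0}$ the field of Section \ref{BC}. First I would pick a totally real solvable extension $F_1/F$, linearly disjoint from $K_{\Pi_0}$ over $F$, of even degree over $\Q$, such that $\br_{\pi,\iota_p}|_{G_{F_1}}$ remains irreducible (possible since $\mathrm{Im}(\br_{\pi,\iota_p})$ is finite, so only finitely many intermediate solvable fields destroy irreducibility), and such that at an auxiliary finite place $v_0\nmid p$ --- produced by Chebotarev --- the splitting/congruence hypotheses of Lemma \ref{double-coset} hold while $\pi_{v_0}$ stays unramified. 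By Theorem \ref{bc} the base change $\pi_{F_1}$ exists, is again regular cuspidal, and $\br_{\pi_{F_1},\iota_p}=\br_{\pi,\iota_p}|_{G_{F_1}}$. Replacing $F$ by $F_1$ and $\pi$ by $\pi_{F_1}$, I assume from now on that $[F:\Q]$ is even and work with the group $G_B$ of Section \ref{JL}.

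\textbf{Transfer to $G_B$ and descent to the trivial coefficient system.} Since $\pi$ is regular, $m_{2,v}>0$ for all $v\mid\infty$, so by the Jacquet--Langlands correspondence (Theorem \ref{JL}) $\pi$ transfers to a stable, tempered cuspidal representation $\Pi$ of $G_B(\A_F)$ with $\rho_{\Pi,\iota_p}\simeq\rho_{\pi,\iota_p}$ up to twist and with $\Pi_\infty$ the algebraic representation $\xi^{\mathrm{JL}}$ of weight $(m_{1,v},m_{2,v})_v$. Taking $U=U^{(p)}\times\prod_{v\mid p}\gs_4(\O_{F_v})$ with $U_{v_0}$ pro-$p$-Iwahori (so that (\ref{finiteness}) holds, by Lemma \ref{double-coset}), the associated coefficient system $\tau$ and central character $\psi$, the finite part $\Pi_f$ gives a Hecke eigenform in $S_{0,\tau,\psi}(U,\O)$; after rescaling the lattice its reduction is nonzero, so the attached maximal ideal $\frak m$ lies in the support of $(\overline\tau,\overline\psi)$. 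Now let $U'$ be $U$ with $\gs_4(\O_{F_v})$ replaced by its pro-$p$-Iwahori subgroup $U_{1,v}$ at each $v\mid p$. Since $U'_p$ is pro-$p$ and $\overline\tau$ is a nonzero finite-dimensional $\F$-representation, $\overline\tau^{\,U'_p}\neq 0$; arguing as in the proof of Lemma \ref{lift} (Lemma 3.1.4 of \cite{Kisin}) --- the eigenform realizing $\frak m$ already takes values in the $U_p$-submodule of $W_{\overline\tau}$ it generates --- one extracts an irreducible $U_p$-subquotient $\overline\sigma$ of $\overline\tau$ supporting $\frak m$, and since $\overline\sigma^{\,U'_p}\neq 0$ and $\overline\sigma$ is irreducible, Frobenius reciprocity yields a $U_p$-surjection $\mathrm{ind}_{U'_p}^{U_p}\mathbf 1\twoheadrightarrow\overline\sigma$. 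Applying the functor $W\mapsto S_{0,W,\overline\psi}(U,\F)$, which is exact by (\ref{finiteness}), and Shapiro's lemma $S_{0,\mathrm{ind}_{U'_p}^{U_p}\mathbf 1,\overline\psi}(U,\F)\cong S_{0,\mathbf 1,\overline\psi}(U',\F)$, I conclude that $\frak m$ lies in the support of the trivial coefficient system at level $U'$.

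\textbf{Lift to characteristic $0$ and transfer back.} By (\ref{finiteness}) again, $S_{0,\mathbf 1,\psi}(U',\O)\otimes_\O\F\xrightarrow{\,\sim\,}S_{0,\mathbf 1,\overline\psi}(U',\F)$, so a Deligne--Serre argument produces, after enlarging $\O$, a Hecke eigenform $\widetilde f\in S_{0,\mathbf 1,\psi'}(U',\bQp)$ whose system of eigenvalues reduces to $\frak m$ (one allows $\psi'$ to differ from $\psi$ by a power of the norm character, which is exactly what forces the ``up to twist'' in the conclusion). It generates a cuspidal automorphic representation $\Pi'$ of $G_B(\A_F)$ with one-dimensional archimedean component, i.e.\ of weight-zero type; by Theorem \ref{non-cap} together with Theorem \ref{cla-GB}, $\Pi'$ is stable and tempered and $\rho_{\Pi',\iota_p}\simeq\rho_{\pi,\iota_p}|_{G_F}$ up to twist. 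Finally, since the weight-zero parameter $\phi_{(w';2,1)}$ has $m_2=1>0$, the Jacquet--Langlands correspondence (Theorem \ref{JL}) transfers $\Pi'$ back to a stable tempered cuspidal representation $\pi'$ of $\gs_4(\A_F)$ with $\pi'_v$ a freely chosen member of $\Pi(\phi_{(w';2,1)})$ at each $v\mid\infty$; thus $\pi'$ is of weight zero and $\br_{\pi',\iota_p}\simeq\br_{\pi,\iota_p}|_{G_F}$ up to twist, as required.

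\textbf{Main obstacle.} I expect the delicate point to be the descent step above together with the bookkeeping feeding the final step: one must push the mod $p$ Hecke eigensystem from the algebraic coefficient system down to the trivial one while (i) tracking the twist of the central character forced by shrinking the level at $p$ and by passing to an irreducible subquotient, (ii) using the auxiliary place $v_0$ and Lemma \ref{double-coset} to guarantee that (\ref{finiteness}) survives for $U'$, so that reduction stays exact and Deligne--Serre applies, and (iii) verifying that the congruent form so produced on $G_B$ is genuinely cuspidal and of weight-zero type --- for which the preserved irreducibility of $\br_{\pi,\iota_p}|_{G_{F_1}}$ and the classification in Theorem \ref{cla-GB} are indispensable.
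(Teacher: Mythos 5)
Your argument is correct and follows essentially the same route as the paper: solvable base change to even degree over $\Q$ with the auxiliary place $v_0$, Jacquet--Langlands transfer to the definite form $G_B$, descent of the mod~$p$ Hecke eigensystem to the trivial coefficient system by exploiting pro-$p$-ness of the level at $p$, a Deligne--Serre lift back to characteristic zero, temperedness via Theorem \ref{non-cap}/\ref{cla-GB}, and Jacquet--Langlands back. The one bookkeeping difference is that you work at hyperspecial level at $p$ with a nontrivial smooth type $\tau_v$ and then descend to pro-$p$-Iwahori level via Frobenius reciprocity and Shapiro's lemma, whereas the paper first arranges (via base change) that $\pi_{1,v}$ is Iwahori-spherical at $v\mid p$ and then takes level $U_{1,v}$ outright, so that the only coefficient at $p$ is the algebraic weight restricted to $U_{1,v}$, whose mod $p$ reduction is directly unipotent; both versions rely on the same finiteness condition (\ref{finiteness}) for exactness and on \cite{DS} for the lift. (One small slip: in the transfer-back paragraph you write $\rho_{\Pi',\iota_p}\simeq\rho_{\pi,\iota_p}|_{G_F}$, which should be $\br$ rather than $\rho$ --- only the residual representations are matched, up to twist.)
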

\begin{proof}
Put $\br=\br_{\pi,\iota_p}$. 
By using base change in Section \ref{BC}, there exists a totally real solvable finite extension $F_1/F$ and 
a regular cuspidal automorphic representation of $\pi_1$ of ${\rm GSp}_4(\A_{F_1})$ such that 
\begin{enumerate}
\item $[F_1:F]$ is even;
\item $F_1$ is linearly disjoint from $\overline{F}^{{\rm Ker}(\br)}$ over $F$;
\item for each finite place $v$ of $F_1$ lying over $p$, $\pi_{1,v}$ has a non-zero 
Iwahori fixed vector. 
\end{enumerate}
Let $psi$ be a finite character of $G_{F_1}$ such that $\rho_{}$
By Theorem \ref{JL}, we have the Jacquet-Langlands correspondence $\pi^{{\rm JL}}_1$ of $\pi_1$. 
Then there is a Hecke eigen form $f$ generating $\pi^{{\rm JL}}_1$ such that 
all Hecke eigen values for Hecke operators outside the ramified places 
are defined over a suitable $\O=\O_E$ for some finite extension $E/\Q_p$ with the residue field $\F$. 
For each finite place $v$ of $F_1$ lying over $p$, by enlarging $E$ if necessary, 
fix a $\O$-lattice $W_{\tau_v}$ of $\pi^{{\rm JL}}_{1,v}|_{{\rm GSp}_4(\O_{F_1,v})}$. Put $\tau=\otimes_{v|p}\tau_v$. 
Then via ({Inter-map}), $f$ contributes to $S_{0,\tau,\psi}(U,\O)$ for some character $\psi$   and an open 
compact subgroup $U$ of ${\rm GSp}_4(\A^\infty_{F_1})$ such that $U_v=U_{1,v}$ 
for each $v|p$.  By shrining $U$ if necessary, we may assume that $U$ satisfies the condition (\ref{finiteness}). 
Consider the reduction of $f$ to $\overline{f}\in S_{0,\tau,\psi}(U,\O)\otimes_{\O}\F$. Since $U_v$ acts unipotently on 
$W_{\tau}\otimes_{\O}\F$, it can be understood as a successive extension of the 
trivial representations. Then there exists a Hecke eigen form $\overline{g}$ in 
$S_{0,\tau_1,\psi_1}(U,\F)$ corresponding to $\overline{f}$ 
where $\tau_1$ is the trivial representation of ${\rm GSp}_4(\O_{F_1,p})$ and $\psi$ is the trivial character. 
By Lemma \ref{lift} with (\ref{finiteness}) and Lemma 6.11 of \cite{DS}, $\overline{g}$ is liftable to a non-zero Hecke eigen form $h$ of  
$S_{0,\tau_1,\psi_1}(U,\O)$. By Theorem \ref{non-cap}, the cuspidal representation $\pi_h$ associated to $h$ 
is not a CAP representation. Therefore, we can apply Jacquet-Langlands 
correspondence to $\pi_h$. 
Let $\pi'$ be the cuspidal representation of ${\rm GSp}_4(\A_{F_1})$ corresponding to $h$ via Jacquet-Langlands 
correspondence. Since $\tau_1$ is of dimension one, by the formula (\ref{dim-al}), 
any infinite component of $\pi'$ has L-parameter $\phi_{(w;2,1)}$ for a fixed $w$ as desired.  
\end{proof}

\section{Serre weights and algebraic modular forms over $p$-adic rings}\label{Serre-Weights}

Basically we follow the notation in \cite{til&her}. 
Let $B$ be the upper Borel subgroup of ${\rm GSp}_4/\Z$ and $T=\{t=\diag(t_1,t_2,\nu t^{-1}_2,\nu t^{-1}_1)\ |\ t_1,t_2,\nu\in GL_1\}$ be the diagonal torus in $B$. Let $p$ be an odd prime number which is split completely in $F$. 
Let $X(T)$ be the group of characters of $T$. We identify it with the sub-lattice of $\Z^3$ consisting of $(a,b,;c)\in \Z^3$ with 
$a+b\equiv c$ mod 2 by the formula 
$$\lambda=\lambda_{(a,b,;c)}:t=\diag(t_1,t_2,\nu t^{-1}_2,\nu t^{-1}_1)\mapsto t^a_1t^b_2\nu^{\frac{c-a-b}{2}}.$$
Put $X(T)_+=\{\lambda=(a,b,c)\in X(T)\ |\ a\ge b\ge 0\}$. 
As in D\'efinition 4.4 of \cite{til&her} we define the set of $p$-restricted weights by   
\begin{equation}\label{x1}
X_1(T)=\{\lambda=(a,b,c)\in X(T)\ |\ 0\le a-b <p,\ 0\le b<p\}
\end{equation}
and also the set of $p$-regular weights by  
$$X_1(T)=\{\lambda=(a,b,c)\in X(T)\ |\ 0\le a-b <p-1,\ 0\le b<p-1\}$$
with the subgroup $X^0(T)=\{(0,0;c)\ |\ c\in 2\Z\}$. 
\begin{dfn}For each place $v$ above $p$ Serre weights for ${\rm GSp}_4(\F_v)$ are irreducible representations of ${\rm GSp}_4(\F_v)$ over 
$\bF_p$. 
\end{dfn} 
We view $\gs_4$ and its algebraic subgroups as algebraic groups over $\F_v=\F_p$. 
For $\lambda=(a,b;c)\in X(T)_+$ the dual Weyl $\gs_4$-module is defined by $W(\lambda)_{\bF_p}={\rm Ind}^{\gs_4}_{B^{-}}(\bF_p(\lambda))$ 
where $B^-$ is the opposite of $B$. The maximal semisimple $\gs_4$-submodule of $W(\lambda)_{\bF_p}$ is denoted by 
$F(\lambda):={\rm soc}_{\gs_4}(W(\lambda)_{\bF_p})$ which is isomorphic to a simple $\gs_4$-module of the highest weight $\lambda$.    
\begin{prop}\label{swc}(Proposition 4.5 of \cite{til&her}) Any Serre weight is isomorphic to 
$\widetilde{F}(\lambda):=F(\lambda)|_{\gs_4(\F_p)}$ for some $\lambda\in X_1(T)$ and vice versa. For 
$\lambda,\lambda'\in X_1(T)$, $\widetilde{F}(\lambda)\simeq \widetilde{F}(\lambda')$ if and only if $\lambda-\lambda'\in (p-1)X^0(T)$.  
\end{prop}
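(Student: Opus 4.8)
The plan is to deduce the statement from the classical classification of irreducible modular representations of the simply connected semisimple group $\mathrm{Sp}_4$ over $\F_p$, combined with Clifford theory for the abelian quotient $\nu\colon\mathrm{GSp}_4(\F_p)\twoheadrightarrow\F_p^\times$. First I would invoke the Steinberg tensor product theorem together with Curtis' irreducibility theorem for $\mathrm{Sp}_4$ (valid since $p$ is odd; see \cite{til&her} and the references therein): the map $\mu\mapsto L(\mu)|_{\mathrm{Sp}_4(\F_p)}$ is a bijection from the set of $p$-restricted dominant weights $\{(a,b)\ :\ 0\le a-b<p,\ 0\le b<p\}$ of $\mathrm{Sp}_4$ onto the set of isomorphism classes of irreducible $\bF_p[\mathrm{Sp}_4(\F_p)]$-modules, and each restriction $L(\mu)|_{\mathrm{Sp}_4(\F_p)}$ is irreducible.

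Next I would record the structural facts. The similitude yields an exact sequence $1\to\mathrm{Sp}_4(\F_p)\to\mathrm{GSp}_4(\F_p)\xrightarrow{\nu}\F_p^\times\to1$, exact on the right because $\nu$ is surjective on $\F_p$-points (e.g. $\diag(1,1,t,t)$ has similitude $t$), and the quotient $\F_p^\times$ is abelian of order $p-1$, prime to $p$. Since the center of $\mathrm{GSp}_4$ is the connected torus $Z^\circ\cong\mathrm{GL}_1$ and $\mathrm{GSp}_4$ equals the product $Z^\circ\cdot\mathrm{Sp}_4$ over $\bF_p$, conjugation by any $g\in\mathrm{GSp}_4(\F_p)\subset\mathrm{GSp}_4(\bF_p)$ on $\mathrm{Sp}_4$ agrees with conjugation by its $\mathrm{Sp}_4(\bF_p)$-component, hence is an inner automorphism of $\mathrm{Sp}_4$; therefore $\mathrm{GSp}_4(\F_p)$ acts trivially on the set of isomorphism classes of irreducible $\bF_p[\mathrm{Sp}_4(\F_p)]$-modules. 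By Clifford theory (the quotient being abelian of order prime to $p$) it follows that the restriction to $\mathrm{Sp}_4(\F_p)$ of an irreducible $\bF_p[\mathrm{GSp}_4(\F_p)]$-module is irreducible, that every irreducible $\bF_p[\mathrm{Sp}_4(\F_p)]$-module $W$ extends to $\mathrm{GSp}_4(\F_p)$, and that the irreducible $\bF_p[\mathrm{GSp}_4(\F_p)]$-modules restricting to $W$ form a torsor, by twisting, under ${\rm Hom}(\F_p^\times,\bF_p^\times)$, a group of order $p-1$.

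Then I would match the two sides. For $\lambda=(a,b;c)\in X_1(T)$ the flag varieties of $\mathrm{GSp}_4$ and $\mathrm{Sp}_4$ coincide and $Z^\circ$ acts by a scalar character on $W(\lambda)_{\bF_p}$, so the $\mathrm{GSp}_4$- and $\mathrm{Sp}_4$-submodules of $W(\lambda)_{\bF_p}$ are the same; hence $W(\lambda)_{\bF_p}|_{\mathrm{Sp}_4}=W(a,b)_{\bF_p}$ and $F(\lambda)|_{\mathrm{Sp}_4}=L(a,b)$. As $(a,b)$ is $p$-restricted, $\widetilde F(\lambda)|_{\mathrm{Sp}_4(\F_p)}=\widetilde L(a,b)$ is irreducible, so $\widetilde F(\lambda)$ is a Serre weight and $(a,b)$ is recovered from it by Curtis--Steinberg. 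Moreover $F(a,b;c)\otimes\nu^{\otimes m}=F(a,b;c+2m)$, and the characters $\nu^{\otimes m}$, $m\in\Z$, pulled back to $\mathrm{GSp}_4(\F_p)$ exhaust ${\rm Hom}(\F_p^\times,\bF_p^\times)$, so as $c$ runs over all integers $\equiv a+b\pmod 2$ the module $\widetilde F(a,b;c)$ runs over all $p-1$ extensions of $\widetilde L(a,b)$. This gives surjectivity: an arbitrary Serre weight $V$ restricts to $\widetilde L(a,b)$ for a unique $p$-restricted $(a,b)$, whence $V\cong\widetilde F(a,b;c)$ for some $c$, and $(a,b;c)\in X_1(T)$ since $X_1(T)$ imposes no constraint on $c$ beyond parity. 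For the equivalence: if $\widetilde F(\lambda)\simeq\widetilde F(\lambda')$, restriction to $\mathrm{Sp}_4(\F_p)$ forces $(a,b)=(a',b')$, and then $F(a,b;c')=F(a,b;c)\otimes\nu^{\otimes(c'-c)/2}$ together with the pairwise non-isomorphy of the $p-1$ extensions forces $\nu^{\otimes(c'-c)/2}$ to be trivial on $\mathrm{GSp}_4(\F_p)$, i.e. $(p-1)\mid(c'-c)/2$, i.e. $\lambda-\lambda'=(0,0;c-c')\in(p-1)X^0(T)$; the converse is immediate since then $\nu^{\otimes(c-c')/2}$ is trivial on $\mathrm{GSp}_4(\F_p)$.

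The main obstacle is really just the black box: the deep theorem of Steinberg and Curtis describing the irreducible $\bF_p[\mathrm{Sp}_4(\F_p)]$-modules, which I would quote rather than reprove. Everything else --- exactness of the similitude sequence on $\F_p$-points, innerness of $\mathrm{GSp}_4$-conjugation on $\mathrm{Sp}_4$, and the bookkeeping of twists by powers of $\nu$ --- is routine; the one point demanding a little care is that the similitude parameter $c$ is genuinely unconstrained in $X_1(T)$, so that no Serre weight is missed in the surjectivity step.
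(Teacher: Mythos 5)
Your argument is correct, and there is no internal proof in this paper to compare against: Proposition \ref{swc} is simply cited from Herzig--Tilouine (Proposition 4.5 of \cite{til&her}), so the paper relies on the reference rather than supplying a proof. Your derivation --- Curtis--Steinberg for $\mathrm{Sp}_4(\F_p)$, triviality of the $\mathrm{GSp}_4(\F_p)$-conjugation action on the $\mathrm{Sp}_4(\F_p)$-irreducibles (via $\mathrm{GSp}_4=Z^\circ\cdot\mathrm{Sp}_4$ over $\bF_p$), vanishing of the Clifford obstruction in $H^2(\F_p^\times,\bF_p^\times)=0$, and bookkeeping of the $p-1$ twists by powers of $\nu$ --- is exactly the standard route, and all the modular-representation-theoretic steps (semisimplicity of $\bF_p[\F_p^\times]$ because $p\nmid p-1$, $e=1$ in the Clifford decomposition, non-isomorphy of the $\chi$-twists) are handled correctly. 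The one small point worth making explicit if you wrote this out in full: the isomorphism $\mathrm{Hom}(\F_p^\times,\bF_p^\times)\cong\Z/(p-1)$ uses that $\bF_p$ contains all $(p-1)$-th roots of unity, which you are implicitly invoking both there and in the $H^2$ vanishing; it is true and costs nothing, but it is the only place where algebraic closedness of the coefficient field enters twice in slightly different guises.
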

The relation between $p$-adic algebraic modular forms, their mod $p$ reduction, and Serre weights will be 
revealed as follows. 

In what follows, we assume that $[F:\Q]$ is even until we make any change for $F$. Let $B$ be a definite quaternion algebra introduced in 
Section \ref{JL}. The symbol $B$ should not be confused with the upper Borel subgroup.  
Let $E$ be a finite extension of $\Q_p$ in the 
setting of Section \ref{pAMF}. Fix an embedding $E\hookrightarrow \C$.  Let $S_p$ be the 
set of all places above $p$. Fix an integer $w$.  
For $\underline{\lambda}=(\lambda_v)_{v|p}\in (X(T)_+)^{S_p}$ with 
\begin{equation}\label{rel-weights}
\lambda_v=(m_{1,v}-2,m_{2,v}-1;w)\ {\rm with}\ \delta_v=\frac{1}{2}(w+3-m_{1,v}-m_{2,v})
\end{equation} we define 
$\tau^{{\rm alg}}:=W(\underline{\lambda})_{\mathcal{O}_E}=\ds\bigotimes_{v|p}{\rm Ind}^{\gs_4}_{B^{-}}(\mathcal{O}_E(\lambda_v))$. By (3) in p.40 of \cite{J} 
$\tau^{{\rm alg}}_\C:=W(\underline{\lambda})_{\mathcal{O}_E}\otimes\C
\simeq \ds\bigotimes_{v|p}\xi_{\delta_v,m_{1,v}-2,m_{2,v}-1}\otimes \C$ as an algebraic representation of $\prod_{v|p}G_B(\C)=\prod_{v|p}\gs_4(\C)$. 
Therefore we may view $W(\underline{\lambda})_{\mathcal{O}_E}\otimes\C$ as a representation of $G_B(F\otimes_\Q\R)$ via $G_B(F\otimes_\Q\R)\hookrightarrow \prod_{v|p}G_B(\C)=\prod_{v|p}\gs_4(\C)$. Indeed we may also write $W(\underline{\lambda})_{\mathcal{O}_E}\otimes\C=
\xi^{{\rm JL}}_{\delta_v,m_{1,v}-2,m_{2,v}-1}\otimes \C$ as a representation of $G_B(F\otimes_\Q\R)$. 
Recall a smooth representation $W_\tau$ over $\mathcal{O}_E$ of $U_p:=\prod_{v|p}{\rm GSp}_4(\mathcal{O}_{F_v})$ and a compact open subgroup $U$ of $G_B(\A^\infty_F)$ in Section \ref{pAMF}. 
Let us consider $\rho=(W(\underline{\lambda})_{\mathcal{O}_E}\otimes E)\otimes_E W_\tau$ which is regarded as a representation of $U_p\times G_B(F\otimes_\Q\R)$. Put $\rho_\C=\rho\otimes_E\C $ and denote by 
$\rho^\ast_\C$ the $\C$-linear dual of $\rho_\C$. Choose a compact open subgroup $U'$ of $G_B(\A^\infty_F)$ so that $U'_v=U_v$ if $v\nmid p$ and $U'_v\subset U_v$ acts trivially on 
$W_\tau$ if $v|p$. We denote by $C^\infty(G_B(F)\bs G_B(\A_F)/U')$ the space of smooth $\C$-valued functions on $G_B(F)\bs G_B(\A_F)$ which are invariant on the left by $U'$. 
Then the map 
\begin{equation}\label{Inter-map}
A:S_{0,\rho,\psi}(U,E)\lra {\rm Hom}_{G_B(F\otimes_\Q\R)}(\rho^\ast_\C,C^\infty(G_B(F)\bs G_B(\A_F)/U'))
\end{equation}
defined by 
$$f\mapsto [w\mapsto (g=g_{\textbf{f}} g_\infty\mapsto w(\tau^{{\rm alg}}_\C(g_\infty)^{-1}\tau^{{\rm alg}}_\C(g_p)f(g_{\textbf{f}})))]$$
where $g_p$ is the $p$-component of the finite part $g_{\textbf{f}}$ of $g\in G_B(\A_F)$. Note that we view $\rho$ as the smooth representation $\rho|_{U_p}$ of $U_p$ to define 
$S_{0,\rho,\psi}(U,E)$. This map is $\mathbb{T}^R_E$-equivariant and injective. Then for a cuspidal representation $\pi'$ of $G_B(\A_F)$, it is generated by $A(f)$ for some 
Hecke eigen form $f\in S_{0,\rho,\psi}(U,E)$ for sufficiently large $U$ and $E$ if and only if $\pi'_\infty$ is isomorphic to $W(\underline{\lambda})_{\mathcal{O}_E}\otimes\C=
\xi^{{\rm JL}}_{\delta_v,m_{1,v}-2,m_{2,v}-1}\otimes \C$ and $\pi'_p:=\otimes_{v|p}\pi'_v$ contains $W_\tau\otimes_E\C$ as a representation of $U_p$. 
Furthermore if $\pi$ is not a CAP representation, one can correspond a regular cuspidal representation $\pi$ of $\gs_4(\A_F)$ such that $\pi_v$  belongs to the L-packet 
associated to $\phi_{(w;m_{1,v},m_{2,v})}$ for each $v|\infty$ and the same condition for the $p$-component $\pi_p=\otimes_{v|p}\pi_v$.

On the other hand by Proposition-(b) in 4.18, p.57 and II.8.7,8.8,p.271-272 of \cite{J} we have $W(\underline{\lambda})_{\mathcal{O}_E}\otimes \bF_p
\stackrel{\sim}{\lra} \ds\bigotimes_{v|p}W(\lambda_v)_{\bF_p}$ and 
the image has $F(\underline{\lambda}):=\ds\bigotimes_{v|p} F(\lambda_v)$ as a sub-representation. Therefore, 
we may view any Serre weight as 
a representation of $\gs_4(\mathcal{O}_{F_v})$ via the reduction map. 
We define the space of mod $p$ algebraic modular forms of weight $F(\underline{\lambda})$ by $S_{0,F(\underline{\lambda}),\psi}(U,\bF_p)$.   
By (\ref{isom1}) and Lemma \ref{double-coset} the reduction map $S_{0,W(\underline{\lambda}),\psi}(U,\mathcal{O}_E)\lra S_{0,W(\underline{\lambda}),\psi}(U,\bF_p)$ is surjective if $U$ is sufficiently small. 
Then by passing to via Jacquet-Langlands correspondence for any $\mathbb{T}^R_{\mathcal{O}_{E}}$-eigen form $h$ in 
$S_{0,F(\underline{\lambda}),\psi}(U,\bF_p)\subset S_{0,W(\underline{\lambda}),\psi}(U,\bF_p)$ which can be liftable to 
characteristic zero (this is, of course, true for sufficiently small $U$) one can associate $h$ with a mod $p$ Galois representation $\br_{h,p}:G_F\lra \gs_4(\bF_p)$ via Jacquet-Langlands correspondence.  

\begin{dfn}\label{Serre-weights}
\begin{enumerate}
\item An irreducible representation over $\bF_p$ of $\prod_{v|p}\gs_4(\F_v)$ is called a Serre weight. It is of form 
$\widetilde{F}(\underline{\lambda}):=\ds\bigotimes_{v|p} \widetilde{F}(\lambda_v)$ such that 
$\widetilde{F}(\lambda_v)$ is a Serre weight for each $v|p$. 
\item A mod $p$ irreducible Galois representation $\br:G_F\lra \gs_4(\bF_p)$ is said to be automorphic of weight 
$\widetilde{F}(\underline{\lambda})$ if 
there exists a mod $p$ algebraic modular forms $h$ in $S_{0,F(\underline{\lambda}),\psi}(U,\bF_p)$ for a 
sufficiently small $U$ with $U_p\simeq \prod_{v|p}{\rm GSp}_4(\mathcal{O}_{F_v})$ satisfying the finiteness 
condition $($\ref{finiteness}$)$  such that $\br$ is equivalent to $\br_{h,p}$.  
\item For a mod $p$ irreducible Galois representation $\br:G_F\lra \gs_4(\bF_p)$, 
we define the set $W(\br)$ consisting of all Serre weights $\widetilde{F}(\underline{\lambda})$ such that 
$\br$ is automorphic of weight $\widetilde{F}(\underline{\lambda})$ in the above sense.    
\end{enumerate} 
\end{dfn}
We now turn to the case when $F$ is any totally real field but our prime $p$ is split completely in $F$. 
When $[F:\Q]$ is odd, we do not have any satisfactory result for Jacquet-Langlands correspondence except for 
$F=\Q$ (see \cite{Hoften} in this case). To avoid it naively, we use the base change which is now available 
gratifyingly.    
\begin{dfn}\label{Pot-Serre-weights1}Let $F$ be a totally real field and $p$ be a prime which is split 
completely in $F$. Let $\br:G_F\lra \gs_4(\bF_p)$ be 
a mod $p$ irreducible automorphic Galois representation. 
We define the set $\mathcal{I}^{{\rm BC}}(\br)$ consisting of all finite solvable extension 
$L/F$ of totally real fields such that $[L:\Q]$ is even, $p$ is split completely in $L$, and $L$ is linear disjoint from $\overline{F}^{{\rm Ker}(\br)}(\zeta_p)$ 
over $F$. Then for each $L\in \mathcal{I}^{{\rm BC}}(\br)$, if $\br$ comes from a regular cuspidal 
automorphic representation $\pi$ of ${\rm GSp}_4(\A_F)$, then the base change ${\rm BC}_{L/F}(\pi)$ to 
${\rm GSp}_4(\A_F)$ does exist by Section \ref{BC}. 
For any non-empty subset $\mathcal{I}\subset \mathcal{I}^{{\rm BC}}(\br)$, we define the collection of the 
Serre weights for a family: 
$$W_\mathcal{I}(\br)=\bigcup_{L\in \mathcal{I}} W(\br|_{G_{L}}).$$
\end{dfn}
\begin{dfn}\label{Pot-Serre-weights2}Let $\br:G_F\lra \gs_4(\bF_p)$ be 
a mod $p$ irreducible automorphic Galois representation. Let $\mathcal{I}$ be a non-empty subset of 
$ \mathcal{I}^{{\rm BC}}(\br)$. 
For each $L\in \mathcal{I}$, we define the set $W_{{\rm cris}}(\br|_{G_L})$ (resp. $W_{{\rm pd-cris}}(\br|_{G_L})$) consisting of 
all Serre weights $\widetilde{F}(\underline{\lambda})$ such that for each place $v$ lying over $p$, 
$\br|_{G_{L,v}}$ has a crystalline (resp. potentially diagonalizable, crystalline) lift of regular Hodge-Tate weights 
$$\{\delta'_v,\delta'_v+m'_{2,v},\delta'_v+m'_{1,v},\delta'_v+m'_{1,v}+m'_{2,v} \},\ 
\delta'_v=\frac{1}{2}(w'+3-m'_{1,v}-m'_{2,v})
$$ such that the reduction of the Weyl module $W((m'_{1,v}-2,m'_{2,v}-1;w'))$ contains 
$\widetilde{F}(\underline{\lambda})$ as a constituent. 
Here the highest weight $(m'_{1,v}-2,m'_{2,v}-1,w')$ is related to the Hodge-Tate weights via the relation  
$($\ref{rel-weights}$)$. 
For $\ast\in\{{\rm cris},{\rm pd-cris}\}$, put $W_{\ast,\mathcal{I}}(\br)=\bigcup_{L\in \mathcal{I}}W_\ast(\br|_{G_L})$. 
Clearly $W_{{\rm pd-cris}}(\br|_{G_L}) \subset W_{{\rm cris}}(\br|_{G_L})$ and accordingly 
$W_{{\rm pd-cris},\mathcal{I}}(\br) \subset W_{{\rm cris},\mathcal{I}}(\br)$. 
\end{dfn}
According to Toby Gee's philosophy in Serre conjecture in terms of crystalline lifts given in 
Section 4 of \cite{gee} we formulate a kind of Serre's weight conjecture as follows:
\begin{conj}\label{conj}Let $\br:G_F\lra \gs_4(\bF_p)$ be 
a mod $p$ irreducible automorphic Galois representation. 
For any non-empty subset $\mathcal{I}\subset \mathcal{I}^{{\rm BC}}(\br)$, it holds that 
$$W_{ \mathcal{I}}(\br)=W_{{\rm pd-cris},\mathcal{I}}(\br)=W_{{\rm cris},\mathcal{I}}(\br).$$
\end{conj}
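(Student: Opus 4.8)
The plan is to split the three-fold equality into the inclusions
$$W_{{\rm pd-cris},\mathcal{I}}(\br)\subseteq W_{{\rm cris},\mathcal{I}}(\br),\qquad W_{{\rm pd-cris},\mathcal{I}}(\br)\subseteq W_{\mathcal{I}}(\br)\subseteq W_{{\rm cris},\mathcal{I}}(\br),$$
and then to close the circle by proving $W_{{\rm cris},\mathcal{I}}(\br)\subseteq W_{{\rm pd-cris},\mathcal{I}}(\br)$. The first inclusion is immediate from Definition \ref{Pot-Serre-weights2}, so all the content is in the middle chain (which is essentially unconditional, being powered by the two main lifting theorems) and in the last inclusion (which is the genuinely hard point, and the reason the statement is posed as a conjecture). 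Throughout, fix $L\in\mathcal{I}$; since $L$ is linearly disjoint from $\overline{F}^{{\rm Ker}(\br)}(\zeta_p)$ over $F$, the irreducibility and adequacy hypotheses on $\br|_{G_{F(\zeta_p)}}$ pass to $\br|_{G_{L(\zeta_p)}}$, and by Section \ref{BC} $\br|_{G_L}$ still arises from a regular cuspidal automorphic representation of ${\rm GSp}_4(\A_L)$ of level prime to $p$; also $[L:\Q]$ is even, so the definite quaternion setup of Section \ref{JL} is available over $L$.

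For $W_{{\rm pd-cris},\mathcal{I}}(\br)\subseteq W_{\mathcal{I}}(\br)$: take $\widetilde{F}(\underline{\lambda})\in W_{{\rm pd-cris}}(\br|_{G_L})$, so for each $v\mid p$ there is a potentially diagonalizable, crystalline lift of $\br|_{G_{L,v}}$ with regular Hodge--Tate weights encoded by a dominant weight $\underline{\lambda}'$ such that $\widetilde{F}(\underline{\lambda})$ is a constituent of the reduction $W(\underline{\lambda}')_{\bF_p}$. Then Theorem \ref{main-thm2} over $L$ yields a regular cuspidal $\pi$ of ${\rm GSp}_4(\A_L)$ of level prime to $p$, with $\pi_\infty$ holomorphic of type $\underline{\lambda}'$ via (\ref{rel-weights}), $\br_{\pi,\iota_p}\simeq\br|_{G_L}$, and $\rho_{\pi,\iota_p}|_{G_{L,v}}$ crystalline of the same Hodge--Tate weights. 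As $\br_{\pi,\iota_p}$ is irreducible, $\pi$ is stable and tempered (Theorem \ref{cla-GSp4}), hence transfers by Jacquet--Langlands (Section \ref{JL}) to $G_B(\A_L)$ and, via the interpolation map (\ref{Inter-map}), is realized by an $\mathbb{T}^R$-eigenform in $S_{0,W(\underline{\lambda}')_{\mathcal{O}_E},\psi}(U,\mathcal{O}_E)$ for suitable $U,\psi$. Reducing mod $p$ (the reduction is surjective for $U$ small by (\ref{finiteness}) and Lemma \ref{double-coset}) and passing to the $\widetilde{F}(\underline{\lambda})$-part of $W(\underline{\lambda}')_{\bF_p}$ using the structure of Weyl modules modulo $p$ (\cite{J}), one extracts a nonzero mod $p$ Hecke eigenform of Serre weight $\widetilde{F}(\underline{\lambda})$ whose attached Galois representation is $\br|_{G_L}$, i.e. $\widetilde{F}(\underline{\lambda})\in W(\br|_{G_L})\subseteq W_{\mathcal{I}}(\br)$. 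The one nonformal ingredient is that the passage to the $\widetilde{F}(\underline{\lambda})$-constituent can be performed on eigenforms without annihilating the form; this is exactly where genericity and the range restriction (the set $C_0\cup C_1$ in Theorem \ref{main-thm3}) enter.

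For $W_{\mathcal{I}}(\br)\subseteq W_{{\rm cris},\mathcal{I}}(\br)$: conversely, if $\widetilde{F}(\underline{\lambda})\in W(\br|_{G_L})$ then by Definition \ref{Serre-weights} there is a mod $p$ eigenform $h\in S_{0,F(\underline{\lambda}),\psi}(U,\bF_p)$ with $\br_{h,p}\simeq\br|_{G_L}$, liftable to characteristic zero for $U$ small. By Lemma \ref{lift} together with a Deligne--Serre lifting argument, $h$ lifts to an $\mathcal{O}_E$-valued eigenform $f$ in $S_{0,W(\underline{\lambda})_{\mathcal{O}_E},\psi}(U,\mathcal{O}_E)$; by Theorem \ref{non-cap} the associated cuspidal representation of $G_B(\A_L)$ is stable, so Jacquet--Langlands returns a regular cuspidal $\pi$ of ${\rm GSp}_4(\A_L)$ of level prime to $p$ with $\pi_\infty$ of type $\underline{\lambda}$ and $\br_{\pi,\iota_p}\simeq\br|_{G_L}$. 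By Theorem \ref{gal}(4), $\rho_{\pi,\iota_p}|_{G_{L,v}}$ is crystalline with the Hodge--Tate weights attached to $\underline{\lambda}$, hence it is a crystalline lift of $\br|_{G_{L,v}}$ of the required shape and $\widetilde{F}(\underline{\lambda})\in W_{{\rm cris}}(\br|_{G_L})$.

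The remaining inclusion $W_{{\rm cris},\mathcal{I}}(\br)\subseteq W_{{\rm pd-cris},\mathcal{I}}(\br)$ is where I expect the real obstacle to lie: one must know that a crystalline lift of $\br|_{G_{L,v}}$ with regular Hodge--Tate weights — in particular the lift $\rho_{\pi,\iota_p}|_{G_{L,v}}$ produced above — can be taken potentially diagonalizable. This holds automatically when $\underline{\lambda}$ is in the Fontaine--Laffaille range (the lift is then connected, in the sense of \cite{BGGT}, to a sum of crystalline characters) and, in the ordinary case, follows from the potential-ordinarity machinery behind Theorem \ref{main-thm1}; but for an arbitrary regular weight there is at present no construction showing every crystalline lift is potentially diagonalizable, and it is precisely this gap that forces the restriction to the weights $C_0\cup C_1$ and the semisimple/generic hypotheses in Theorem \ref{main-thm3}. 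I would therefore expect a complete proof of Conjecture \ref{conj} to hinge either on new potential-diagonalizability results in integral $p$-adic Hodge theory valid in wider weight ranges, or on replacing $W_{{\rm cris}}$ by a more combinatorial set (as in the formulation of $W^{C_0\cup C_1}_{{\rm pd-cris},\mathcal{I}}(\br)$) for which the equality with $W_{\mathcal{I}}(\br)$ can be verified weight by weight.
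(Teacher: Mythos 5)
The statement is Conjecture \ref{conj}, which the paper itself leaves \emph{open}; there is no proof in the paper to compare against, only Remark \ref{directions} and the partial result Theorem \ref{main-thm3}. Your analysis matches the paper's own reasoning exactly: the definitional inclusion $W_{{\rm pd\text{-}cris},\mathcal{I}}(\br)\subseteq W_{{\rm cris},\mathcal{I}}(\br)$, the ``obvious'' inclusion $W_{\mathcal{I}}(\br)\subseteq W_{{\rm cris},\mathcal{I}}(\br)$ (your Deligne--Serre lift plus Theorem \ref{gal} argument is the unwritten argument behind that word), the reduction of the conjecture to $W_{{\rm pd\text{-}cris},\mathcal{I}}(\br)\subseteq W_{\mathcal{I}}(\br)$ together with $W_{{\rm pd\text{-}cris},\mathcal{I}}(\br)=W_{{\rm cris},\mathcal{I}}(\br)$, and the identification of the latter equality as the genuinely open point --- this is Remark \ref{directions} almost verbatim, including the paper's statement that the pd-cris~$=$~cris equality ``has been remained to be open except for a few cases.'' You also correctly locate the hard step inside the ``provable'' inclusion $W_{{\rm pd\text{-}cris},\mathcal{I}}(\br)\subseteq W_{\mathcal{I}}(\br)$: Theorem \ref{main-thm2} only gives automorphy in the whole Weyl module $W(\underline{\lambda}')\otimes\bF_p$, and extracting the individual constituent $\widetilde{F}(\underline{\lambda})$ is exactly what forces the $C_0\cup C_1$ range restriction, the genericity condition of Definition \ref{g-genericity}, and the semisimplicity and split-completeness hypotheses of Theorem \ref{main-thm3}, which proves only $W_{\mathcal{I}}(\br)\supseteq W^{C_0\cup C_1}_{{\rm pd\text{-}cris},\mathcal{I}}(\br)$. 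The one slight overstatement is calling that middle chain ``essentially unconditional'': as you yourself note two sentences later, it is conditional on exactly those weight-range and genericity constraints, so the word is misleading even if the caveat that follows it is accurate.
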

\begin{rmk}\label{directions} Keep the notation as above. Obviously, 
$$W_{ \mathcal{I}}(\br)\subset W_{{\rm cris},\mathcal{I}}(\br).$$
Hence the relations 
\begin{itemize}
\item $W_{ \mathcal{I}}(\br)\supset W_{{\rm pd-cris},\mathcal{I}}(\br)$;
\item $W_{{\rm pd-cris},\mathcal{I}}(\br)=W_{{\rm cris},\mathcal{I}}(\br)$
\end{itemize}
imply the conjecture here.  
The second claim has been remained to be open except for a few cases though it is believed to be true in general. 
\end{rmk}
Pick a Serre weight $F(\lambda)$ in $W_{{\rm pd-cris},\mathcal{I}}(\br)$. 
By Theorem \ref{main-thm2} which is proved later, there exists a $\lambda'\in X_+(T)^{S_p}$ such that 
$\br$ is automorphic of weight $W(\lambda')\otimes_{\O_E}\C$ and $W(\lambda')\otimes_{\O_E}\F_v$ contains 
$F(\lambda)$ as a constituent. 
We expect $\br$ is automorphic of weight $F(\lambda)$ but in general, $W(\lambda')\otimes_{\O_E}\F_v$ contains many constituents and it would be hard to control the constituents for the general situation. However, if 
$\lambda$ is in some range, then one can control them as the authors in \cite{BGG-unitary}(Section 4 and 5) carried out. 
Let us introduce four alcoves described in p.13 of \cite{til&her}. 
For each $i\ (0\le i\le 3)$, we denote by $C_i$ the subset of $X(T)\otimes \R$ with the coordinate $(x,y;z)$ defined by  
\begin{eqnarray}
C_0&:=&\{(x,y;z)\in X(T)\otimes \R\ |\ x>y>0,\ x+y<p\}, \nonumber \\
C_1&:=&\{(x,y;z)\in X(T)\otimes \R\ |\ x+y>p,\ y<x<p\}, \\
C_2&:=&\{(x,y;z)\in X(T)\otimes \R\ |\ x-y<p<y,\ x+y<2p\}, \nonumber \\
C_3&:=&\{(x,y;z)\in X(T)\otimes \R\ |\ y<p,\ x+y>2p,\ x-y<p\} \nonumber 
\end{eqnarray}
Then we see that $X_1(T)=X(T)_+\cap \ds\bigcup_{i=0}^3 \overline{(C_i-\widetilde{\rho})}$ where 
$\widetilde{\rho}=(2,1;3)$. Since $X_1(T)$ (modulo $X^0(T)$) parametrizes all Serre weights, 
they are divided into four types as above. 
As in (5),(6),(7),(8) in p.15 of \cite{til&her}, we have an explicit decomposition of 
the mod $p$ reduction of the Weyl modules in each case. However to relate the highest weight 
of the Weyl module with the inertia type of $\br$ at $v|p$ is still hard to control. 
Therefore, we further restrict ourself to consider $C_0$ or $C_1$. 
In this vein, we introduce the following:
\begin{dfn}\label{Pot-Serre-weights-FL-range}
Keep the notation in Definition \ref{Pot-Serre-weights2}. 
For each $L\in \mathcal{I}$, we define the subset $W^{C_0\cup C_1}_{{\rm pd-cris}}(\br|_{G_L})$ of 
$W_{{\rm pd-cris}}(\br|_{G_L})$ consisting of 
all Serre weights $\widetilde{F}(\underline{\lambda})$ such that for each place $v$ lying over $p$, 
$\br|_{G_{L,v}}$ has a potentially diagonalizable, crystalline lift of regular Hodge-Tate weights 
$$\{\delta'_v,\delta'_v+m'_{2,v},\delta'_v+m'_{1,v},\delta'_v+m'_{1,v}+m'_{2,v} \},\ 
\delta'_v=\frac{1}{2}(w'+3-m'_{1,v}-m'_{2,v})
$$ such that 
\begin{itemize}
\item 
the reduction of the Weyl module $W((m'_{1,v}-2,m'_{2,v}-1;w'))$ contains 
$\widetilde{F}(\underline{\lambda})$ as a constituent;
\item $(x,y,w):=(m'_{1,v}-2,m'_{2,v}-1;w')\in C_0\cup C_1$;
\item $\br|_{I_{L,v}}$ $($or $\br|_{G_{F,v}}$ for the last case$)$ 
enjoys either of the following cases:
\begin{enumerate}
\item $($Borel ordinary case$)$ $\br|_{I_{F,v}}\simeq (\ve^{x+y}\oplus \ve^{x}\oplus \ve^{y}\oplus \textbf{1})\otimes \ve^{\delta},\ 
\delta:=\frac{1}{2}(w+3-x-y)$;
\item $($Siegel ordinary case$)$ $\br|_{I_{F,v}}\simeq (\ve^{x+y}\oplus (\omega^{x+y p}_2\oplus \omega^{y+x p}_2)\oplus \textbf{1})\otimes \ve^{\delta}$;
\item $($Klingen ordinary case$)$ 
$\br|_{I_{F,v}}\simeq (\omega^{x+y p}_2\oplus \omega^{y+x p}_2)\oplus 
((\omega^{-(x+y p)}_2\oplus \omega^{-(y+x p)}_2)\otimes \ve^{w+3})$;
\item $($Endoscopic case$)$ 
$\br|_{I_{F,v}}\simeq ((\omega^{x+y p}_2\oplus \omega^{y+x p}_2)\oplus 
(\omega^{c+d p}_2\oplus \omega^{d+c p}_2))\otimes \ve^{\delta}$ for some integers 
$0\le d< c\le p-1$ satisfying $c+d\equiv x+y\ {\rm mod}\ p-1$, 
\item  $($Irreducible case$)$ $\br|_{G_{F,v}}$ is irreducible. 
\end{enumerate} 
\end{itemize} 
\end{dfn}

\section{Main results}\label{main-results} In this section we prove main results which show 
the existence of a potentially ordinary lift for a given automorphic mod $p$ Galois representation 
which satisfies the adequacy condition. 
As in Lemma 6.1.1 of \cite{BGG1}, we first construct a non-ordinary lift whose local property is easy to handle in 
conjunction with the study of the local deformations done in Section \ref{LDR}. 
In most recent articles regarding automorphic lifting theorems, the symbol $F$ is denoted to be a CM field while $F^+$ stands for a totally real field. 
We follow this convention.    

\begin{lem}\label{non-ord-lift}$($the existence of a potentially non-ordinary lift$)$ 
Let $F^+$ be a totally real field and $p$ be an odd prime which is split completely in $F^+$. 
Let $\br:G_{F^+}\lra {\rm GSp}_4(\bF_p)$ be an automorphic mod $p$ Galois representation such that 
$\br|_{G_{F^+(\zeta_p)}}$ is irreducible and 
$\br(G_{F^+(\zeta_p)})$ is adequate. Then there exists a finite solvable extension $L^+$ of $F^+$ such that 
\begin{enumerate}
\item $\br|_{G_{L^+(\zeta_p)}}$ is irreducible and $\br(G_{L^+(\zeta_p)})$ is adequate; 
\item $\br|_{G_{L^+_v}}$ is trivial for all places $v|p$ of $L^+$;
\item for each place $v|p$, $[L^+_v:\Q_p]\ge 2$;
\item there exists a cuspidal automorphic representation $\pi$ of ${\rm GSp}_4(\A_{L^+})$ of weight zero such that 
\begin{enumerate}
\item $\br_{\pi,\iota_p}\simeq \br|_{G_{L^+}}$, 
\item $\pi$ is unramified at all finite places, 
\item for all places $v|p$ of $L^+$ and a fixed $i\in \{1,2\}$, $\rho_{\pi,\iota_p}|_{G_{L^+_v}}$ is non-ordinary, 
but Klingen ordinary in the sense of \cite{TU} and further it gives an $\mathcal{O}$-valued point of the scheme 
(\ref{imp-scheme}) for $P_i$ and for the integer ring $\mathcal{O}$ of a suitable finite extension of $\Q_p$.  
\end{enumerate}
\end{enumerate} 
\end{lem}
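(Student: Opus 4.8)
Following Lemma~6.1.1 of \cite{BGG1}, the plan is to realise the first two ingredients from the introduction: produce a weight-zero base point on a definite quaternionic inner form via Jacquet--Langlands, and then use the local deformation theory of Section~\ref{LDR} together with an automorphy lifting theorem to replace it by a representation whose local behaviour at $p$ is the prescribed non-ordinary one. To begin, using Theorem~\ref{bc} repeatedly I would fix a finite solvable totally real extension $L^+/F^+$ with $[L^+:\Q]$ even; with $L^+$ linearly disjoint from $\overline{F^+}^{{\rm Ker}(\br)}(\zeta_p)$ over $F^+$, so that $\br(G_{L^+(\zeta_p)})=\br(G_{F^+(\zeta_p)})$ stays adequate and $\br|_{G_{L^+(\zeta_p)}}$ irreducible --- giving (1); with each completion $L^+_v$, $v\mid p$, containing $\Q_p(\zeta_p)$ (so $[L^+_v:\Q_p]\ge 2$, giving (3)) and the splitting field of $\br|_{G_{\Q_p}}$, so that $\br|_{G_{L^+_v}}$ is trivial --- giving (2); and with $\br|_{G_{L^+}}$ unramified at every finite place not above $p$, possible since $\br$ has finite image. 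Since $\br|_{G_{L^+}}$ is irreducible, Theorem~\ref{non-cap} ensures that every regular cuspidal representation of $\gs_4(\A_{L^+})$ or $G_B(\A_{L^+})$ congruent to it mod $p$ is stable, tempered, non-CAP and non-endoscopic, so Theorem~\ref{JL} and the attached Galois representations are available throughout.

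Next I would build a base point already Borel ordinary at $p$. Theorem~\ref{weight0} over $L^+$, via the $p$-adic algebraic modular forms of Section~\ref{pAMF}, yields a weight-zero cuspidal representation of $G_B(\A_{L^+})$ congruent to $\br|_{G_{L^+}}$; realising the corresponding mod $p$ Hecke eigensystem at spherical level away from $p$ (legitimate as $\br|_{G_{L^+}}$ is unramified there), switching the type at each $v\mid p$ to an unramified \emph{ordinary} principal series by Proposition~\ref{switching} and Remark~\ref{pot-ord}, enforcing (\ref{finiteness}) by shrinking the tame level via Lemma~\ref{double-coset}, and lifting to characteristic zero by Lemma~\ref{lift} and surjectivity of reduction, the Jacquet--Langlands transfer back to $\gs_4$ produces a regular cuspidal $\pi_0$ over $L^+$: unramified at all finite places, of weight zero, with $\br_{\pi_0,\iota_p}\simeq\br|_{G_{L^+}}$ (twisting by a finite-order Hecke character to remove the ``up to twist'' ambiguity), and with $\rho_{\pi_0,\iota_p}|_{G_{L^+_v}}$ crystalline of Hodge type ${\rm v}_0$ and Borel ordinary, by Theorem~\ref{gal}(4). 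In particular, for either $i\in\{1,2\}$, $\rho_{\pi_0,\iota_p}|_{G_{L^+_v}}$ defines an $\O$-point of $\mathcal G_{i,\mathcal V}[1/p]$, its ordinary filtration refining the $P_i$-filtration of Section~\ref{LDR} (with a suitable choice of the component $\mathcal V$).

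Now the heart of the argument. Fix $i$. Since $\br|_{G_{L^+_v}}$ is trivial and $\Q_p(\zeta_p)\subseteq L^+_v$ (so $\ve|_{G_{L^+_v}}$ is trivial), one can pick a crystalline $r_v\colon G_{L^+_v}\to\gl_2(\O)$ with trivial reduction, Hodge--Tate weights $\{0,1\}$ in every embedding, and \emph{non-ordinary}, lying in the connected ring (\ref{Kisin-defo}) and in the chosen component; the representation $\rho_v^0$ sitting in $0\to r_v^\vee(3)\to\rho_v^0\to r_v\to 0$ (respectively the $P_2$-shape of Section~\ref{LDR}) is then crystalline of type ${\rm v}_0$, reducible along $P_i$ --- hence Klingen ordinary in the sense of \cite{TU} --- but not Borel ordinary, and defines an $\O$-point of $\mathcal G_{i,\mathcal V}[1/p]$. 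By Propositions~\ref{conn1} and~\ref{conn2} with Remark~\ref{conn3}, $\rho_{\pi_0,\iota_p}|_{G_{L^+_v}}$ and $\rho_v^0$ lie on a common irreducible component of ${\rm Spec}\, R^{{\rm v}_0,{\rm cr}}_{\br|_{G_{L^+_v}}}$ and are $\sim$-equivalent there. Using adequacy and the standard globalisation of local lifts (as in \cite{BGGT}, and as developed for $\gs_4$ in Section~9) I would construct $\rho^0\colon G_{L^+}\to\gs_4(\O)$ lifting $\br|_{G_{L^+}}$, totally odd, unramified away from $p$, with $\rho^0|_{G_{L^+_v}}\sim\rho_v^0$ for all $v\mid p$; the automorphy lifting theorem of \cite{BGGT} (applied to $\gl_4$ after transfer, legitimate by (1) and the $\sim$-equivalence at $p$) then gives $\rho^0\simeq\rho_{\pi,\iota_p}$ for a regular cuspidal $\pi$ over $L^+$. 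Local-global compatibility forces $\pi$ to be unramified at all finite places and of weight zero, and $\rho_{\pi,\iota_p}|_{G_{L^+_v}}\sim\rho_v^0$; this gives (4a)--(4c), while (1)--(3) are built into $L^+$.

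I expect the main obstacle to be ensuring that the base point $\pi_0$ can be arranged so that $\rho_{\pi_0,\iota_p}|_{G_{L^+_v}}$ \emph{already} lies on the component $\mathcal G_{i,\mathcal V}[1/p]$ carrying the target non-ordinary lift: without this, Proposition~\ref{conn2} does not connect the two and automorphy cannot be transported from $\pi_0$ to $\rho^0$. This is exactly why the base point must be taken Borel ordinary at $p$, and hence why (2) (triviality of $\br|_{G_{L^+_v}}$, so the ordinary locus of the crystalline deformation ring is nonempty) and (3) ($[L^+_v:\Q_p]\ge 2$, in practice $\Q_p(\zeta_p)\subseteq L^+_v$, so that crystalline lifts with trivial reduction and the right Hodge--Tate weights exist) are imposed, and why the connectedness statements of Section~\ref{LDR} are needed precisely in the stated form. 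A secondary difficulty is the globalisation of the $\rho_v^0$ into a single potentially diagonalisable $\rho^0$, which rests on the dual-Selmer-group vanishing afforded by the adequacy hypothesis.
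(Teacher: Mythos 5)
The proposal diverges from the paper's argument in an essential way, and the divergence creates a genuine gap. The paper proves Lemma~\ref{non-ord-lift} entirely on the automorphic side: it takes the weight-zero base point, applies Proposition~\ref{switching}-(2) at each $v\mid p$ with the \emph{dihedral depth-zero supercuspidal} choice $\pi'$, uses Lemma~\ref{lift} to shift the Hecke eigensystem to the new weight $\tau'$, and the resulting $\pi_{g,v}=W(\Pi_i(\pi',\mu))$ is --- after a solvable base change that trivialises the dihedral inertia --- crystalline with an irreducible $2$-dimensional piece, hence Klingen ordinary but not Borel ordinary. No deformation-theoretic automorphy lifting is invoked in this lemma. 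You instead propose to first build a Borel-\emph{ordinary} base point $\pi_0$ at $p$ by ``switching the type at each $v\mid p$ to an unramified ordinary principal series by Proposition~\ref{switching} and Remark~\ref{pot-ord},'' and then to deform away from ordinarity using Propositions~\ref{conn1}--\ref{conn2}, a Khare--Wintenberger globalisation, and an automorphy lifting theorem from \cite{BGGT}.

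The gap is in your first step. Remark~\ref{pot-ord} asserts ordinarity of $\Pi_i(\pi,\mu)$ only in the case $W_3$-(3) of the proof of Proposition~\ref{switching}, i.e.\ when the relevant constituent $\sigma$ of the reduction of $r_{K_P}(\widetilde{\pi}_v)$ happens to be an irreducible principal series of ${\rm GL}_2(\F_v)$ (which requires $\tilde\mu_1\neq\tilde\mu_1^{-1}$). For the other constituents --- $W_1$, $W_2$, $W_3$-(1), $W_3$-(2) --- the switching target supplied by the proof is a \emph{depth-zero supercuspidal}, and the corresponding $\Pi_i(\pi',\mu)$ is precisely the non-ordinary one. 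There is no freedom to always choose an ordinary type: the proposition guarantees only that some principal-series-\emph{or}-supercuspidal $\pi$ works, and that the dihedral supercuspidal option is always available, not that the ordinary option is. If one could always arrange $\rho_{\pi_0,\iota_p}|_{G_{L^+_v}}$ to be Borel ordinary and crystalline, then $\rho_{\pi_0,\iota_p}$ would already be potentially diagonalizable and Theorem~\ref{main-thm1} would follow immediately, bypassing Lemma~\ref{non-ord-lift}, Proposition~\ref{tech-main}, and the Harris tensor trick entirely --- which signals that this shortcut is not actually available. The paper's sequence ``weight zero $\to$ non-ordinary Klingen-ordinary $\to$ (via \cite{BGG1} Theorem~3.5.1) potentially ordinary'' exists precisely because a directly ordinary weight-zero lift is out of reach by type-switching.

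Two secondary issues, even granting an ordinary $\pi_0$: (i) to pass from the $\gl_4$ automorphy lifting theorem of \cite{BGGT} back to a cuspidal representation of $\gs_4(\A_{L^+})$ you need the descent argument via \cite{AS1} (existence of a nonzero alternating $G_{L^+}$-equivariant form and genericity of the descended packet), which your sketch omits; and (ii) the split extension $r_v^\vee(3)\oplus r_v$ defines an $\O$-point of $\mathcal G_{i,\mathcal V}[1/p]$, but for $\rho_v^0$ to be \emph{potentially diagonalizable} (as required by the \cite{BGGT} theorem you invoke) you need more than reducibility along $P_i$; the paper sidesteps this by never needing potential diagonalizability in Lemma~\ref{non-ord-lift} --- that condition only enters once the Harris tensor trick of Proposition~\ref{tech-main} has produced a genuinely ordinary lift.
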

\begin{proof}By Theorem \ref{weight0}, there exists a totally real solvable extension 
$F^+_0/F^+$ and a cuspidal automorphic representation $\pi$ of 
${\rm GSp}_4(\A_{F^+_0})$ of weight zero such that $\br|_{G_{F_0}}\simeq \br_{\pi,\iota_p}$. 

Let $r=\rho_{\pi,\iota_p}:G_{F^+_0}\lra {\rm GSp}_4(\bQ_p)$. 
Choose a finite solvable totally real field extension $F^+_1$ of $F^+_0$ such that 
\begin{enumerate}
\item $\br|_{G_{F^+_1(\zeta_p)}}$ is irreducible 
and $\overline{r}(G_{F^+_1(\zeta_p)})=\br(G_{F^+_1(\zeta_p)})$ is adequate; 
\item $\overline{r}|_{G_{F^+_{1,v}}}$ is trivial for all places $v|p$ of $F^+_1$; 
\item $\overline{r}|_{G_{F^+_{1,v}}}$ is unramified for all places  of $F^+_1$; 
\item $[F^+_1:\Q]$ is even;
\item $[F^+_{1,v}:\Q_p]\ge 2$ for all places $v|p$ of $F^+_1$;
\item the base change $\pi_{F^+_1}$ of $\pi$ to $F^+_1$ is unramified or unipotent of type $\mathcal{N}_i$ 
 for some $1\le i \le 3$ at each finite places of $F^+_1$;
\item if $\pi_{F^+_1}$ is ramified at a place $v\nmid p$ of $F^+_1$, then $Nv\equiv 1$ mod $p$ where $Nv$ stands 
for the cardinality of $\F_v$.  
\end{enumerate}
In fact, there are only finitely many ramified places of $\pi$ and the local Galois group 
$G_{F^+_{1,v}}$ is solvable, one can choose a finite solvable totally real field extension $F^+_1$ of $F_1$ 
satisfying the condition (2),(3),(6) such that  $F^+_1$ is linearly disjoint from 
$\overline{F^+}^{{\rm Ker}(\overline{r})}(\zeta_p)$ over $F^+$. Note that 
the condition (6) is guaranteed by Theorem \ref{bc} with  a suitable solvable extension regarding 
the Grothendieck's monodromy theorem together with the local global compatibility.  
Making finite places inert by extending 
the base field if necessary, we may assume that our $L^+_1$ also satisfies (5), (7) and also (4) by taking 
a suitable quadratic extension. Everything here is done under keeping the linearly independent-ness over $F^+$ 
and hence (1) is guaranteed.  

Notice that $[F^+_1:\Q]$ is even and as in Section \ref{JL}, we consider a 
definite quaternion algebra $B$ with the center $F^+_1$ which is ramified at 
precisely the infinite places. Let $\widetilde{\pi}$ be the automorphic 
representation of $G_B(\A_{F^+_1})$ of weight zero  
corresponding to $\pi_{F^+_1}$ via Jacquet-Langlands correspondence. 
We identify the local components of $\widetilde{\pi}$ and $\pi_{F^+_1}$ via  the isomorphism (\ref{isom}). 

Choose a finite place $v_0\nmid p$ such that 
\begin{enumerate}
\item $v_0$ does not split completely in $F^+_1(\zeta_p)$;
\item $\widetilde{\pi}_{F^+_1}$ is unramified at $v_0$ and ${\rm ad} \overline{r}({\rm Frob}_{v_0})=1$;
\item $v_0$ satisfies the condition in Lemma \ref{double-coset}-(2). 
\end{enumerate}
Here we consider $\overline{r}$ as a Galois representation which takes the values in ${\rm GL}_4(\bF)$ with 
a finite field $\F$ and also 
its adjoint representation accordingly.  
In fact, the condition in (3) is satisfied for all but finitely many places. 
For (1) and (2), it follows from Chebotarev density theorem. 
Once we have such a finite place $v_0$, (1),(2) imply the non-trivial action of  
${\rm Frob}_{v_0}$ on ${\rm ad}^0\overline{r}(1)$ where ${\rm ad}^0\overline{r}={\rm ad}\overline{r}/\F\cdot 1$. 
By the local Tate duality, $H^2(G_{F^+_{1,v_0}},{\rm ad}^0\overline{r})\simeq 
H^0(G_{F^+_{1,v_0}},{\rm ad}^0\overline{r}(1))=0$. 
It follows from this that the local framed deformation space of $\overline{r}|_{G_{F^+_{1,v_0}}}$ 
with a fixed central character is unobstructed and hence the local framed deformation space 
with a fixed central character is irreducible. Since $\widetilde{\pi}_{F^+_1}$ is unramified at $v_0$, any (global) lift of $\overline{r}$ 
with a fixed central character is unramified at $v_0$. This follows from the constancy of the inertia group 
$I_{F^+_{1,v_0}}$ on any geometric connected component of the framed deformation space. We will use this fact later. 

Recall the parahoric subgroup $K_{P_i,v}$ ($i=1,2$) for a parabolic subgroup $P_i$ in Section \ref{pAMF} and 
the isomorphism (\ref{isom}) for each finite place $v$ of $F^+_1$. 
We denote by $K_{P_i,1,v}$ is the subgroup of the Iwahori subgroup 
$K_{P_i,v}$ consisting of all elements whose reduction modulo $\pi_v$ belong to $N_i(\F_v)$ where 
$N_i$ is the unipotent radical of $P_i$. 
Let us choose the open compact subgroup $U=\prod_{v}U_v\subset G_B(\A^\infty_{F^+_1})$ such that 
\begin{enumerate}
\item $U_v=G_B(\mathcal{O}_{F^+_{1,v}})$ for all $v|p$, and all $v\neq  v_0$ where the local component 
$\widetilde{\pi}_v$ of $\widetilde{\pi}$ at $v$ is unramified;
\item $U_{v_0}=\iota^{-1}_{v_0}U_{B,1,v_0}$, where $U_{B,1,v_0}$ is the subgroup of the Iwahori subgroup 
$U_{B,v_0}$ consisting of all elements whose reduction modulo $\pi_v$ are unipotent;
\item $U_v=
\left\{\begin{array}{cc}
\iota^{-1}_v K_{P_1,v} & {\rm if}\ \widetilde{\pi}_v\ {\rm is \ unipotent\ of\ type\ \mathcal{N}_1 }\\  
\iota^{-1}_v K_{P_2,v} & {\rm if}\ \widetilde{\pi}_v\ {\rm is \ unipotent\ of\ type\ \mathcal{N}_2 }\\  
\iota^{-1}_v U_{B,v_0} & {\rm if}\ \widetilde{\pi}_v\ {\rm is \ unipotent\ of\ type\ \mathcal{N}_3 }
\end{array}\right.
 $ if $v\nmid p\cdot v_0$, and $\widetilde{\pi}_v$ is unipotent of type $\mathcal{N}_i$ for some $1\le i \le 3$. 
\end{enumerate} 
Then thanks to the choice of $v_0$, the group $U$ satisfies the condition of Lemma \ref{double-coset}-(2). 
Define the following set of finite places of $F^+_1$; 
\begin{equation}\label{r}
R:=\{v\nmid p\ |\ U_v\neq G_B(\mathcal{O}_{F^+_{1,v}}) \}=\{v_0\}\cup \{ v\nmid p\ |\ 
\widetilde{\pi}_v  {\rm \ is\  unipotent\  of\  type\  \mathcal{N}_i\  for\  some}\ 1\le i \le 3 \}.
\end{equation}
Clearly $R$ is a finite set. 

According to the local type of $\widetilde{\pi}_v$, we define the weight $\tau=\otimes_{v|p}\tau_v$ of 
$G_B(\mathcal{O}_{F^+_1,p})=\prod_{v|p}G_B(\mathcal{O}_{F^+_1,v})$ as follows.  
Fix a finite extension $K\subset \bQ_p$ of $\Q_p$ which includes the images of all embeddings 
$F^+_1\hookrightarrow \bQ_p$. Put $\mathcal{O}=\mathcal{O}_K$. 
When $v|p$ and $\widetilde{\pi}_v$ is unramified, then we define $\tau_v$ to be the 
trivial representation over $\mathcal{O}$ of $G_B(\mathcal{O}_{F^+_1,v})$. 
When $v|p$ and $\widetilde{\pi}_v$ is ramified, then $\widetilde{\pi}_v$ is unipotent of type $\mathcal{N}_j$ 
for some $1\le j \le 3$. In either unipotent type, for a fixed maximal parabolic $P_i$ ($i=1,2$), applying Proposition \ref{switching}-(2), there exists a 
$\mathcal{O}$-lattice $W(\widetilde{\pi})$ of $r_{P_i}(\widetilde{\pi})$ as a $K_{P_i,v}/K^+_{P_i,v}$-module, a 
dihedral depth zero supercuspidal 
representation $\pi'$ of ${\rm GL}_2(F^+_{1,v})$ for an unramified quadratic extension $M_v/F^+_{1,v}$, and a quasi-character $\mu:(F^+_{1,v})^\times \lra \C^\times$ such that 
$W(\widetilde{\pi})\otimes \F_K$ and $W(\Pi_i(\pi',\mu))\otimes \F_K$ share a common constituent. 
We view $W(\widetilde{\pi})$ as a $K_P$-module via the natural surjection $K_{P_i,v}\lra K_{P_i,v}/K^+_{P_i,v}$ and 
pick an irreducible subquotient $\tau_v=\tau^{(i)}_v$ of 
${\rm Ind}^{G_B(\mathcal{O}_{F^+_{1,v}})}_{K_{P,v}}W(\widetilde{\pi})/\mathcal{O}$ so that its extension to $\C$ is 
equivalent to some irreducible constituent of $\widetilde{\pi}_v|_{G_B(\mathcal{O}_{F^+_1,v})}$. 
Let $\psi:\A^\times_{F^+_1}\lra \C^\times$ be the central character of $\pi$. It is easy to check, by the parity condition, that it is a finite character and therefore factors through $(\A^\infty_{F^+_1})^\times$. 
By construction, clearly we have $(S_{0,\tau,\psi}(U,\mathcal{O})\otimes_{\mathcal{O},\iota_p}\C)[\widetilde{\pi}]\neq 0$. 

Recall the Hecke algebra $T^R_{\mathcal{O}}$ in (\ref{Hecke-algebra}) for our $R$ defined in (\ref{r}).  
Enlarging $\mathcal{O}$ if necessary, we may assume that there exists a $T^R_{\mathcal{O}}$-Hecke eigenform $f$ in 
$S_{0,\tau,\psi}(U,\mathcal{O})$ generating $\widetilde{\pi}$ such that 
 all $T^R_{\mathcal{O}}$-Hecke eigenvalues of $f$ are 
defined over $\mathcal{O}$. 
The eigenform $f$ yields a surjection $T^R_{\mathcal{O}}\lra \mathcal{O}\lra \F_{\mathcal{O}}:=
\mathcal{O}/m_{\mathcal{O}}$ as an $\mathcal{O}$-algebra. 
Let $\mathfrak{m}$ be the kernel of this surjective homomorphism. It follows that 
$S_{0,\tau,\psi}(U,\mathcal{O})_{\mathfrak m}\neq \{0\}$. 
  
Similarly, we define the weight $\tau'=\tau'^{(i)}=\otimes_{v|p}{\tau'_v}^{(i)}$ of 
$G_B(\mathcal{O}_{F^+_1,p})$ whose local component ${\tau'_v}^{(i)}$ is defined by using 
$W(\Pi_i(\pi',\mu))$ for any $v|p$.  
By Lemma \ref{lift} and Lemma \ref{switching}-(2), we have $S_{0,\tau',\psi}(U,\mathcal{O})_{\mathfrak m}\neq \{0\}$ 
for $\tau'={\tau'}^{(i)}$ with any fixed $i\in \{1,2\}$. 
It shows that there exists a $T^R_{\mathcal{O}}$-Hecke eigen form $g\in S_{0,\tau',\psi}(U,\mathcal{O})$ such that 
$\br_{\pi_g,\iota_p}\simeq \br_{\pi_{F^+_1,\iota_p}}$ where 
$\pi_g$ stands for the image of the representation $\widetilde{\pi}_g$ of $G_B(\A_{F^+_1})$ corresponding to 
$g$ under the Jacquet-Langlands correspondence. 
Since $\pi_{g,v}=W(\Pi_i(\pi',\mu))$ for each $v|p$, after a suitable solvable base change,   
$\pi_{g,v}$ becomes Klingen ordinary for any $i=1,2$ and further it has the desired increasing filtration for 
each $i=1,2$ by choosing $\mu$ suitably in conjunction with Hodge-Tate weight. 
Furthermore, it is not ordinary in the usual sense, namely, it is not Borel ordinary.   

In order to make the representation unramified everywhere after a suitable solvable base change, 
we need to analyze $\widetilde{\pi}_v$ for each $v\in R$.  
First of all, as observed before, $\widetilde{\pi}_{v_0}$ is unramified since any local deformation with a fixed 
determinant of 
$\overline{r}|_{G_{F^+_1,v_0}}$ is unramified. 
Therefore, we may consider the case when $v\neq v_0$ and $v\in R$. 
Then $\widetilde{\pi}_v$ is of unipotent of type $\mathcal{N}_i$ for some $1\le i \le 3$. 
Choose a quasi character $\chi_v:\mathcal{O}^\times_{F^+_1,v}\lra \mathcal{O}^\times$ which factors through 
$\F^\times_v$ and it also satisfies $\chi^2_v\neq \textbf{1}$. Notice that $|\F^\times_v|=N(v)-1\equiv 0$ mod $p$ by the condition (7) for $F^+_1$. It follows from this that $\chi_v\equiv \textbf{1}$ mod $m_{\mathcal{O}}$.  
By Table A.15, p.297 of \cite{RS}, we see that $\pi_{F+_1,v}$ has an Iwahori fixed vector. 
Therefore, it is still hold that $g\in S_{0,\tau,\psi}(U,\mathcal{O})$ even if  
we replace the local component $U_v$ of $U$ with the Iwahori subgroup $U_{B,v}$ for any $v\in R\setminus\{v_0\}$. 
We denote by ${\rm diag}(g)\in (\mathcal{O}^\times_{F^+_1,v})^4$ the diagonal part of $g\in U_v=U_{B,v}$.  
We define the character $\widetilde{\chi}_v:U_{B,v}\lra \mathcal{O}^\times$ by 
sending $g$ to $\chi_v(a)\chi^{-1}_v(b)$ for $g\in U_v$ with ${\rm diag}(g)=(a,b,c,d)$. Clearly, 
it is well-defined and it satisfies that $\widetilde{\chi}_v(z)=1$ for any $z\in Z_{G_B}(F^+_{1,v})\cap U_{B,v}$ 
and $\widetilde{\chi}_v\equiv \textbf{1}$ mod $v$. By (an easy variant of) Lemma \ref{lift}, 
we see that $S_{0,\tau',\psi,\chi_{\Sigma}}(U,\mathcal{O})_{\mathfrak{m}}\neq \{0\}$ for $\Sigma=R\setminus\{v_0\}$ in 
Definition \ref{dfn-pAMF}. 
It shows that there exists a $T^R_{\mathcal{O}}$-Hecke eigenform $h\in S_{0,\tau',\psi,\chi_{\Sigma}}(U,\mathcal{O})_{\mathfrak{m}}$ such that 
$\br_{\pi_h,\iota_p}\simeq \br_{\pi_{F^+_1,\iota_p}}$. 
By definition, $\pi_{h,v}$ has an Iwahori fixed vector for each $v\in R\setminus\{v_0\}$ and it has to be  
a subquotient of a principal series representation $\chi_1\times \chi_2\rtimes \sigma$ for some 
quasi characters $\chi_1,\chi_2,\sigma$ of $(F^+_{1,v})^\times$. Notice that $\psi$ is a class field character since 
Observing the action of diagonal elements in $U_{B,v}$ we see that $\chi_1|_{\mathcal{O}^\times_{F^+_1,v}}=\chi_v \omega_v$ 
and $\chi_2|_{\mathcal{O}^\times_{F^+_1,v}}=\chi^{-1}_v \omega_v$ for some character $\omega_v:
\mathcal{O}^\times_{F^+_1,v}\lra \C^\times$. Recall that $\chi_v$ is chosen to be neither trivial nor of order 2.   
By the classification of \cite{RS} (see Group I in Section 2.2, p.37),  $\chi_1\times \chi_2\rtimes \sigma$ has to be irreducible. 
Therefore, $\pi_{h,v}$ is a principal series representation which is possibly ramified for each $v\in R\setminus\{v_0\}$.   
The result follows easily by replacing $F^+_1$ with an appropriate totally solvable extension $L^+/F^+_1$.    
\end{proof}

The main technical result of this paper is the following proposition:
\begin{prop}\label{tech-main}$($the existence of a potentially ordinary lift$)$  
Let $F^+$ be a totally real extension of $\Q$ and $p$ be an odd prime which is split completely in $F^+$. 
Let $\br:G_{F^+}\lra {\rm GSp}_4(\bF_p)$ be an irreducible automorphic mod $p$ Galois 
representation. Assume further that $\br(G_{F^+(\zeta_p)})$ is adequate when we view $\br$ as a 
representation which takes the values in ${\rm GL}_4(\bF_p)$. 
Then there exists a solvable totally real finite extension $L^+$ of $F^+$ such that 
\begin{enumerate}
\item  $\br(G_{L^+(\zeta_p)})$ is adequate; 
\item  $\br|_{G_{L^+_v}}$ is trivial for all places $v|p$ of $L^+$;
\item  there exists a cuspidal automorphic representation $\pi$ of ${\rm GSp}_4(\A_{L^+})$ of weight zero such that 
\begin{enumerate}
\item $\br|_{G_{L^+}}\simeq \br_{\pi,\iota_p}$,  
\item $\pi$ is unramified at all finite places,
\item for all places $v|p$ of $L^+$, $\rho_{\pi,\iota_p}|_{G_{L^+_v}}$ is ordinary in the sense of 
Section 7.5 of \cite{gg}. 
\end{enumerate}
\end{enumerate}
\end{prop}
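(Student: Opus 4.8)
The plan is to take the non-ordinary lift furnished by Lemma \ref{non-ord-lift} as input and feed it into Harris's tensor product trick, in the shape used by Barnet-Lamb, Gee and Geraghty \cite{BGG1}, while using the local deformation-theoretic connectedness results of Section \ref{LDR} to check the hypotheses of the automorphy lifting and base change theorems of \cite{BGGT}. \textbf{Step 1 (input).} First I would apply Lemma \ref{non-ord-lift} to fix a finite solvable totally real extension $L^+_1/F^+$ with $\br(G_{L^+_1(\zeta_p)})$ adequate and $\br|_{G_{L^+_{1,v}}}$ trivial for all $v\mid p$, together with a weight zero cuspidal automorphic representation $\pi_1$ of ${\rm GSp}_4(\A_{L^+_1})$, unramified at all finite places, with $\br_{\pi_1,\iota_p}\simeq\br|_{G_{L^+_1}}$ and with $\rho_{\pi_1,\iota_p}|_{G_{L^+_{1,v}}}$ non-ordinary, Klingen ordinary, and defining an $\O$-point of the scheme $(\ref{imp-scheme})$ attached to a fixed maximal parabolic $P_i$. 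By Theorem \ref{cla-GSp4} $\pi_1$ is stable and tempered, so $\rho_{\pi_1,\iota_p}$ genuinely takes values in ${\rm GSp}_4$; and by Propositions \ref{conn1} and \ref{conn2} together with Remark \ref{conn3}, this point lies on a connected, hence (by Theorem 1.2 of \cite{I}) irreducible, component of the crystalline deformation scheme which also contains diagonal points, so $\rho_{\pi_1,\iota_p}$ is potentially diagonalizable at every $v\mid p$.

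\textbf{Step 2 (a global ordinary Galois lift).} Next I would construct, over a possibly larger finite solvable totally real extension $L^+/L^+_1$ chosen linearly disjoint from $\overline{F^+}^{{\rm Ker}(\br)}(\zeta_p)$ over $F^+$ so that conditions (1) and (2) persist, a continuous lift $\rho\colon G_{L^+}\to{\rm GSp}_4(\bQ_p)$ of $\br|_{G_{L^+}}$ which is crystalline of weight zero and Borel ordinary at every $v\mid p$, with diagonal characters unramified twists of powers of the cyclotomic character, and unramified outside $p$. This is a Galois cohomological lifting problem: the ordinary crystalline local deformation condition is non-empty and smooth of the expected dimension at each $v\mid p$ because $\br|_{G_{L^+_v}}$ is trivial, and a balanced Euler characteristic argument using the adequacy of $\br(G_{L^+(\zeta_p)})$ (after a further auxiliary soluble base change if necessary) produces the global lift. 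Taking the full $G_{L^+_v}$-stable flag of $\rho$ as the filtration, $\rho|_{G_{L^+_v}}$ defines a point of $\mathcal{G}_{i,\mathcal{V}}[1/p]$ on the same irreducible component as $\rho_{\pi_1,\iota_p}|_{G_{L^+_v}}$; hence, after enlarging coefficients, $\rho|_{G_{L^+_v}}$ connects to $\rho_{\pi_1,\iota_p}|_{G_{L^+_v}}$ in the sense of Definition 3.1.4 of \cite{BGG1} for every $v\mid p$ (Proposition \ref{conn2}, Remark \ref{conn3}).

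\textbf{Step 3 (tensor trick and descent).} Then I would choose a soluble totally imaginary quadratic CM extension $M/L^+$ in which $p$ splits completely, linearly disjoint from all fields above, and an algebraic Hecke character $\theta$ of $M$ whose $p$-adic Hodge--Tate weights above $p$ are large and widely spaced and whose reduction is a fixed prescribed character, so that ${\rm Ind}_{G_M}^{G_{L^+}}\theta$ is crystalline and ordinary at every $v\mid p$. Since a tensor product of ordinary crystalline representations is again ordinary, the $8$-dimensional representation $\rho\otimes{\rm Ind}_{G_M}^{G_{L^+}}\theta$ is crystalline, ordinary and potentially diagonalizable at $p$, and for generic $\theta$ it is irreducible with adequate residual image. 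On the other hand $\rho_{\pi_1,\iota_p}\otimes{\rm Ind}_{G_M}^{G_{L^+}}\theta\cong{\rm Ind}_{G_M}^{G_{L^+}}\left(\rho_{\pi_1,\iota_p}|_{G_M}\otimes\theta\right)$ is automorphic over $L^+$ --- it is the soluble automorphic induction of the twist by $\theta$ of the ${\rm GL}_4$-transfer of the base change ${\rm BC}_{M/L^+}\pi_1$, which exists by Theorem \ref{bc} --- and its reduction agrees with that of $\rho\otimes{\rm Ind}_{G_M}^{G_{L^+}}\theta$. Applying the automorphy lifting theorem of \cite{BGGT} (Proposition 4.1.1, Theorem 4.3.1), one concludes that $\rho\otimes{\rm Ind}_{G_M}^{G_{L^+}}\theta$ is automorphic. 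Restricting to $G_M$ shows $(\rho|_{G_M}\otimes\theta)\oplus(\rho|_{G_M}\otimes\theta')$ is automorphic, where $\theta'$ is the Galois conjugate of $\theta$; hence each summand is, and untwisting by $\theta$ shows $\rho|_{G_M}$ is automorphic for ${\rm GL}_4/M$. Since $\rho$ is symplectic essentially self-dual, the ${\rm GSp}_4\leftrightarrow{\rm GL}_4$ dictionary of \cite{GeeT} gives that $\rho|_{G_M}$ is automorphic for ${\rm GSp}_4/M$, and soluble cyclic descent along $M/L^+$ (again via Theorem \ref{bc}) yields that $\rho$ is automorphic for ${\rm GSp}_4/L^+$.

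\textbf{Step 4 (conclusion and main obstacle).} Finally let $\pi$ be the cuspidal automorphic representation of ${\rm GSp}_4(\A_{L^+})$ with $\rho_{\pi,\iota_p}\simeq\rho$; it is cuspidal, stable and tempered by Theorem \ref{cla-GSp4} because $\rho$, being a lift of the irreducible $\br$, is irreducible. As $\rho$ is crystalline of weight zero at every $v\mid p$ it has infinite type $\phi_{(w;2,1)}$, so $\pi$ is of weight zero; as $\rho$ is Borel ordinary at every $v\mid p$, $\pi$ is ordinary in the sense of Section 7.5 of \cite{gg}; and as $\rho$ is unramified outside $p$ and crystalline with unramified crystalline Frobenius at $p$, $\pi$ is unramified at all finite places. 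Conditions (1) and (2) hold by the choice of $L^+$. I expect the real work to lie in Steps 2 and 3: producing the global ordinary lift $\rho$ with exactly the local shape needed to sit on $\mathcal{G}_{i,\mathcal{V}}[1/p]$ and to correspond to an unramified representation at $p$, and then transporting automorphy of the $8$-dimensional object $\rho\otimes{\rm Ind}_{G_M}^{G_{L^+}}\theta$ back down to ${\rm GSp}_4/L^+$ while controlling the weight, the ordinarity, the level, and --- crucially --- the adequacy and irreducibility of every residual representation that intervenes through the tensoring, restriction and descent, so that the automorphy lifting and base change theorems genuinely apply.
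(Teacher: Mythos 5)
Your overall route (Harris's tensor product trick \`a la Barnet-Lamb--Gee--Geraghty, seeded by Lemma~\ref{non-ord-lift} and glued via the deformation-theoretic connectedness from Section~\ref{LDR}) is the one the paper follows, but Steps~2 and 3 as written are circular and the argument would not close.

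\textbf{Step 2 cannot produce $\rho$ as stated.} You propose first to construct a Borel ordinary, crystalline, weight-zero lift $\rho$ of $\br|_{G_{L^+}}$ by a ``balanced Euler characteristic argument,'' before establishing any automorphy in the ordinary direction. But the Euler characteristic bound only shows the relevant universal deformation ring has Krull dimension at least one; it does not produce an $\O$-valued point (the ring could be $\F[x]$). To obtain an $\O$-point one needs the ring to be a finite $\O$-algebra, and that finiteness is exactly the output of an $R=T$ argument, which presupposes automorphy of some lift sitting on the ordinary components --- the very thing you are trying to establish. The paper avoids this by applying Theorem 3.5.1 of \cite{BGG1}, a Khare--Wintenberger-type result that \emph{constructs} the ordinary lift $r$ and proves automorphy of $r\otimes({\rm Ind}^{G_{F^+_2}}_{G_M}\theta)$ in the same breath, taking as input the automorphy of $r''=\rho_{\pi_2,\iota_p}\otimes({\rm Ind}^{G_{F^+_2}}_{G_M}\theta')$, which comes from Lemma \ref{non-ord-lift} and Proposition 5.1.3 of \cite{BGG-Sato}.

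\textbf{Step 3 uses one auxiliary character where the argument needs two congruent ones.} You tensor both $\rho_{\pi_1,\iota_p}$ and $\rho$ with the \emph{same} ${\rm Ind}\,\theta$ and invoke an automorphy lifting theorem. But $\rho_{\pi_1,\iota_p}$ is Klingen ordinary, not Borel ordinary, and not known to be potentially diagonalizable, and tensoring with a single fixed ordinary induction does not fix this: the non-ordinary middle Klingen graded of $\rho_{\pi_1,\iota_p}$ survives in the tensor, so the potential-diagonalizability hypothesis of the \cite{BGGT} lifting theorems fails on the automorphic side. The paper instead uses two characters $\theta\equiv\theta'\pmod{p}$ with different Hodge--Tate profiles: $\theta$ is shaped to match the Klingen filtration of $\rho_{\pi_2,\iota_p}$, so ${\rm Ind}\,\theta\sim\rho_{\pi_2,\iota_p}$ at each $v\mid p$ by Propositions~\ref{conn1},~\ref{conn2} and Proposition 2.3 of \cite{Gee-H}; while $\theta'$ is Borel ordinary with the target weights $\{0,4,5,9\}$, so that a point $\rho_v$ of the ordinary component $R_{\widetilde v}$ connects to ${\rm Ind}\,\theta'$. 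Tensoring these two local relations gives $\rho_v\otimes({\rm Ind}\,\theta)\sim r''$, which is the ``connects'' hypothesis Theorem 3.5.1 needs. Your $\theta'$ is merely the Galois conjugate of $\theta$, not this second congruent character, so that hypothesis is not met. Finally, the $r$ produced by the tensoring trick has Hodge--Tate weights $\{0,4,5,9\}$, not weight zero, and an additional ordinary automorphy lifting step (Lemma 6.1.2 of \cite{BGG1} together with Proposition 1.5.1 and Theorem 4.4.1 of \cite{BGGT}) is required to descend to weight zero before the Asgari--Shahidi descent to ${\rm GSp}_4$ via \cite{AS1}; your Step~4 skips this weight change.
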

\begin{proof} By Lemma \ref{non-ord-lift}, there exists a totally real solvable finite extension $F^+_2$ of $F^+$ 
such that 
\begin{enumerate}
\item $\br|_{G_{F^+_2(\zeta_p)}}$ is irreducible and $\br(G_{F^+_2(\zeta_p)})$ is adequate; 
\item  $\br|_{G_{F^+_{2,v}}}$ is trivial for all places $v|p$ of $F^+_2$;
\item for each place $v|p$, $[F^+_{2,v}:\Q_p]\ge 2$ (this condition will be used when we choose $\tau_0$ soon later); 
\item there exists a cuspidal automorphic representation $\pi_2$ of ${\rm GSp}_4(\A_{F^+_2})$ of weight zero such that 
\begin{enumerate}
\item $\br|_{G_{F^+_2}}\simeq \br_{\pi_2,\iota_p}$,  
\item $\pi_2$ is unramified at all finite places,
\item for all places $v|p$ of $F^+_2$, $\rho_{\pi,\iota_p}|_{G_{L^+_v}}$ is not ordinary, but Klingen ordinary 
and it gives an $\mathcal{O}$-valued point of the scheme 
(\ref{imp-scheme}) for $P_i$ and for the integer ring $\mathcal{O}$ of a suitable finite extension of $\Q_p$.  
\end{enumerate}
\end{enumerate}
We will apply (a mild modification of) Lemma 2.1.2 of \cite{BGG1}. 
However we use the convention that the Hodge-Tate weight of the $p$-adic cyclotomic 
character is $+1$ while it is defined to be $-1$ in loc.cit..
We now employ the Lemma in the following setting:
\begin{enumerate}
\item $F^+=F^+_2$;
\item $p>2$ (this condition is corresponding to the third assumption of Lemma 2.2 of \cite{BGG1}. 
The extra condition ``$l>2n-2$" can be removed by using the notion of adequacy with the condition 
$l\nmid n$.);
\item $n=m=4$;
\item $F^{({\rm avoid})}=F^+_2\overline{F^+}^{{\rm Ker}(\br)}(\zeta_p)$
\item $T=\emptyset$;
\item $\eta=\eta'=1$;
\item $\{h_{1,\tau},h_{2,\tau},h_{3,\tau},h_{4,\tau}\}=\{0,1,2,3\}$ for each $\tau:F^+_{2,v}\hookrightarrow \bQ_p,\ 
v|p$; furthermore, for each place $v|p$, there exists $\tau_0:F^+_{2,v}\hookrightarrow \bQ_p$ such that 
for any $\tau\neq \tau_0$, 
$$h_{1,\tau}=0,h_{2,\tau}=1,h_{3,\tau}=2,h_{4,\tau}=3,$$
and 
$$h_{1,\tau_0}=0,h_{2,\tau_0}=2,h_{3,\tau_0}=1,h_{4,\tau_0}=3;$$
\item  $h'_{1,\tau}=0,h'_{2,\tau}=4,h'_{3,\tau}=5$, and $h'_{4,\tau}=9$ for all $\tau:F^+_{2,v}\hookrightarrow \bQ_p$;
\item $w=h_{1,\tau}+h_{4,\tau}=h_{2,\tau}+h_{3,\tau}=3,w'=h'_{1,\tau}+h'_{4,\tau}=h'_{2,\tau}+h'_{3,\tau}=9$ 
for any $\tau:F^+_{2,v}\hookrightarrow \bQ_p,\ v|p$.
\end{enumerate}
Then we obtain a CM extension $M/F^+_2$ which is linearly disjoint from $F_2\overline{F^+}^{{\rm Ker}(\br)}(\zeta_p)$ 
over $F^+$ together with two continuous characters 
$$\theta,\theta':G_M\lra \overline{\Z}^\times_p$$
such that 
\begin{enumerate}
\item $\theta$ and $\theta'$ are congruent modulo $p$ each other;
\item $(\br|_{G_{F^+_2}}\otimes {\rm Ind}^{G_{F^+_2}}_{G_M}\overline{\theta})(G_{F^+_2(\zeta_p)})$ is adequate.  
Indeed, this can be checked as follows. By construction (cf. the third line and the line -13 in p.548 of \cite{BGGT}), 
$({\rm Ind}^{G_{F^+_2}}_{G_M}\overline{\theta})|_{G_{F^+_2(\zeta_p)}}$ is irreducible and adequate by Lemma A.2-(1), p.908 
of \cite{Thorne}. The claim follows from Lemma A.2-(2) in loc.cit. together with the adequacy and the irreducibility of 
$\br|_{G_{F^+_2(\zeta_p)}}$;
\item $({\rm Ind}^{G_{F^+_2}}_{G_M}\theta)\simeq ({\rm Ind}^{G_{F^+_2}}_{G_M}\theta)^\vee \otimes \varepsilon^3$ 
where $\varepsilon$ stands for the $p$-adic cyclotomic character whose Hodge-Tate weight is normalized to be $+1$;
\item $({\rm Ind}^{G_{F^+_2}}_{G_M}\theta')\simeq ({\rm Ind}^{G_{F^+_2}}_{G_M}\theta')^\vee \otimes \varepsilon^9 
\widetilde{\omega}^{-6}$ where $\widetilde{\omega}$ is the Teichm\"uller lift of mod $p$ cyclotomic character $\omega:=\overline{\varepsilon}$;
\item for each $v$ above $p$, up to conjugacy, the following decomposition into characters holds:
$$({\rm Ind}^{G_{F^+_2}}_{G_M}\theta)|_{G_{F^+_2,v}}\simeq \chi^{(v)}_1\oplus \chi^{(v)}_2
\oplus \chi^{(v)}_3\oplus \chi^{(v)}_4$$      
where, for each embedding $\tau:F^+_{2,v}\hookrightarrow \bQ_p$ we have ${\rm HT}_\tau(\chi^{(i)}_v)=h_{i,\tau}$. 
Similarly, for each $v$ above $p$, up to conjugacy, the following decomposition into characters holds:
$$({\rm Ind}^{G_{F^+_2}}_{G_M}\theta')|_{G_{F^+_2,v}}\simeq \chi'^{(v)}_1\oplus \chi'^{(v)}_2
\oplus \chi'^{(v)}_3\oplus \chi'^{(v)}_4$$      
where, for each embedding $\tau:F^+_{2,v}\hookrightarrow \bQ_p$ we have ${\rm HT}_\tau(\chi'^{(i)}_v)=h'_{i,\tau}$. 
By the choice of $h'_{i,\tau}$,  ${\rm Ind}^{G_{F^+_2}}_{G_M}\theta'$ is ordinary at each $v$ above $p$. 
On the other hand, ${\rm Ind}^{G_{F^+_2}}_{G_M}\theta$ is semi-stable at each $v$ above $p$. Consider the  
filtered module $D_v$ corresponding to $V_v:=({\rm Ind}^{G_{F^+_2}}_{G_M}\theta)|_{G_{F^+_2,v}}$, then the choice of the 
Hodge-Tate weights $\{h_{i,\tau}\}$ yields the filtration $0=D_0\subset D_1\subset D_2\subset D_3=D_v$ such that 
the rank of $D_1$ is 1 and the rank of $D_2$ is 3. It gives rise to a 
$G_{F^+_2,v}$-stable increasing filtration for Klingen parabolic $P_2$;
\begin{equation}\label{fil-K}
0={\rm Fil}^{(2)}_0\subset {\rm Fil}^{(2)}_1\subset {\rm Fil}^{(2)}_2\subset {\rm Fil}^{(2)}_3=V_v.
\end{equation}
By the choice of $\{h_{i,\tau}\}$ again,  we see that ${\rm gr}^{(2)}_1$ has the constant Hodge-Tate weight 1 in 
$\tau$ and  
 ${\rm gr}^{(2)}_3$ has the constant Hodge-Tate weight 3 in 
$\tau$ while ${\rm gr}^{(2)}_2$ is of rank 2 and it has Hodge-Tate weights $\{h_{2,\tau},h_{3\tau}\}={2,1}$ 
which is not constant in $\tau$. It follows from this that ${\rm gr}^{(2)}_2(-1)$ is non-ordinary and 
it is also potentially 
Barsotti-Tate by Theorem 0.3 of \cite{KF}. In addition, the filtration yields a 
$\mathcal{O}$-valued point of the scheme (\ref{imp-scheme}) for the integer ring $\mathcal{O}$ of 
some finite extension of $\Q_p$.   
\end{enumerate}

Let $F^+_3/F^+_2$ be a solvable extension of totally real fields such that 
\begin{enumerate}
\item $F^+_3$ is linearly disjoint from $\overline{F^+}^{{\rm Ker}(\br)}(\zeta_p)$ over $F^+$. In particular, 
$\br|_{G_{F^+_3(\zeta_p)}}$ is irreducible and $\br(G_{F^+_3(\zeta_p)})$ is adequate; 
\item $({\rm Ind}^{G_{F^+_2}}_{G_M}\theta)|_{G_{F^+_3}}$ and $({\rm Ind}^{G_{F^+_2}}_{G_M}\theta')|_{G_{F^+_3}}$ 
are both unramified at all places of $F^+_3$ not lying over $p$ and crystalline at all places $v$ above $p$;
\item if $v$ is a place of $F^+_3$ dividing $p$, then 
$({\rm Ind}^{G_{F^+_2}}_{G_M}\overline{\theta})|_{G_{F^+_{3,v}}}$ is trivial;
\item if $v$ is a place of $F^+_3$ dividing $p$, then $F^+_{3,v}$ contains a primitive $p$-th root of 
unity. 
\end{enumerate}
Let $F_3/F^+_3$ be a quadratic CM extension which is linearly disjoint from $M\overline{F^+}^{{\rm Ker}(\br)}(\zeta_p)$ 
over $F^+$, and in which all places of $F^+_3$ lying over $p$ are split completely in $F_3$. 
Choose a finite extension $K\subset \bQ_p$ of $\Q_p$ such that the images of $\theta,\theta'$, and $\rho_{\pi_2,\iota_p}$ 
are defined over $K$. Enlarging $K$ if necessary, we may assume that $K$ contains all embeddings 
$F^+_3\hookrightarrow \bQ_p$.  
Choosing suitable lattices, we may regard 
${\rm Ind}^{G_{F^+_2}}_{G_M}\theta,\ {\rm Ind}^{G_{F^+_2}}_{G_M}\theta'$, and 
$\rho_{\pi_2,\iota_p}$ as representations to ${\rm GSp}_4(\mathcal{O})$ and also to ${\rm GL}_4(\mathcal{O})$ 
by the natural inclusion ${\rm GSp}_4(\mathcal{O})\subset {\rm GL}_4(\mathcal{O})$. 
Further, we may suppose that ${\rm Ind}^{G_{F^+_2}}_{G_M}\overline{\theta}=
{\rm Ind}^{G_{F^+_2}}_{G_M}\overline{\theta'}$ since $\theta\equiv \theta'$ modulo $p$. 
Let $\chi_{\pi_2}:G_{F^+_2}\lra \mathcal{O}^\times$ be the character corresponding to the 
central character of $\pi_2$ via class field theory. Note that it holds that 
$\rho_{\pi_2,\iota_p}\simeq \rho_{\pi_2,\iota_p}^\vee \chi_{\pi_2}$.  

We are now ready to apply Theorem 3.5.1 of \cite{BGG1} in the following setting:
\begin{enumerate}
\item $m=n=4$ and $l=p>2$;
\item $F=F'=F_3$
\item $\overline{r}:=\br|_{G_{F_3}}=\br_{\pi_2,\iota_p}|_{G_{F_3}}$;
\item $r'=({\rm Ind}^{G_{F^+_2}}_{G_M}\theta)|_{G_{F_3}}$ and $\chi'={\e}^3$;
\item $\chi=\e^6\widetilde{\omega}^{-6}\chi_{\pi_2}|_{G_{F^+_3}}$ so that $\overline{\chi}=
\overline{\chi}_{\pi_2}|_{G_{F^+_3}}$;
\item $r''=\rho_{\pi_2,\iota_p}|_{G_{F_3}}\otimes ({\rm Ind}^{G_{F^+_2}}_{G_M}\theta')|_{G_{F_3}}$ and 
$\chi''={\e}^9\widetilde{\omega}^{-6}\chi_{\pi_2}|_{G_{F^+_3}}$;
\item $\widetilde{S}_p$ is any set of places of $F_3$ consisting of exactly one place above 
each place in $S_p:=\{v\ |\ v|p\ {\rm in}\ F^+_3\}$. 
\item for each place $\widetilde{v}\in \widetilde{S}_p$, $a_{\widetilde{v}}\in 
(\Z^4_+)^{{\rm Hom}(F^+_{3,\widetilde{v}},\bQ_p)}$ is given by 
$$a_{\tau,1}=9,a_{\tau,1}=5,a_{\tau,3}=4,a_{\tau,4}=0$$
for each $\tau:F^+_{3,\widetilde{v}}\hookrightarrow \bQ_p$. 
\item for each place $\widetilde{v}\in \widetilde{S}_p$, $R_{\widetilde{v}}$ is the unique irreducible 
ordinary component of $R^{{\rm v}_{a_{\widetilde{v}}},{\rm cr}}_{\overline{r}|_{G_{F_3,\widetilde{v}}}}$ 
(see Section \ref{LDR}). 
The uniqueness which implies the irreducibility of the ordinary part is due to Lemma 3.1.4, p.1389 of \cite{Ge-pub} since $\overline{r}|_{G_{F_3,\widetilde{v}}}=1$ 
as a representation to ${\rm GL}_4(\bF_p)$ 
by the second condition for $F^+_2$ 
and $\overline{\e}|_{G_{F_3,\widetilde{v}}}=1$ by the fourth condition for $F^+_3$. 
\end{enumerate}
We now check the hypotheses of Theorem 3.5.1 of \cite{BGG1} hold. 
A minor modification is necessary because they 
employed ``2-big-ness" which should be replaced with adequacy. 
However, nothing will be changed if we do use adequacy instead of 2-big-ness. 
Therefore, we just give some comments about this in a course of the followings. 
We follow the numbering in the claim in Theorem 3.5.1 of  \cite{BGG1}. 
\begin{itemize}
\item for (1), since $\overline{\chi}=
\overline{\chi}_{\pi_2}|_{G_{F^+_3}}$, it follows from Section 3.1.3 of \cite{BGG1} that 
$\overline{r}^c\simeq \overline{r}^\vee\overline{\chi}$ and $\overline{r}$ is odd;
\item for (3), it follows from the third property for $\theta,\theta'$ and  Section 3.1.3 again that 
$r'^c\simeq r^\vee\chi'|_{G_{F_3}}$;
\item for (2) and (4), since $\rr=\br_{G_{F_3}}=\br_{\pi_2,\iota_p}|_{G_{F_3}}$, 
it is unramified outside $p$. By the choice of $F_3$, $r'$ is crystalline at all places above $p$. 
In fact, any place of $F^+_3$ is split completely in $F_3$, the decomposition group is nothing changed and 
the local property of $r'|_{G_{F^+_{3,v}}}$ is preserved.   
The representation $r'$ is also unramified outside $p$ since the set $T$ is chosen to be the empty set in the construction of $\theta$. For $\chi,\chi',\chi'',r''$, they are unramified outside $p$ by choice of $F_3$ and properties of $\pi_2$;
\item for (5), $r''$ is automorphic of level prime to $p$ by Proposition 5.1.3, p.451 of \cite{BGG-Sato} since 
the Hodge-Tate weights $\{h'_{i,\tau}\}$ are chosen so that $r''$ has regular Hodge-Tate weights;
\item for (6), $\rr''=
\br_{\pi_2,\iota_p}|_{G_{F_3}}\otimes  ({\rm Ind}^{G_{F^+_2}}_{G_M}\overline{\theta}')|_{G_{F_3}}=
\br_{\pi_2,\iota_p}|_{G_{F_3}}\otimes  ({\rm Ind}^{G_{F^+_2}}_{G_M}\overline{\theta})|_{G_{F_3}}=\rr\otimes \rr'$ and 
obviously $\chi''|_{G_{F_3}}=\chi|_{G_{F_3}} \chi'|_{G_{F_3}}$ since $\chi''=\chi\chi'$;
\item the conditions (7) and (8) concerning Taylor-Wiles conditions are replaced with the adequacy of 
$\rr''(G_{F_3(\zeta_p)})$ which follows from the adequacy 
of $(\br|_{G_{F^+_2}}\otimes {\rm Ind}^{G_{F^+_2}}_{G_M}\overline{\theta})(G_{F^+_2(\zeta_p)})$ as seen before;
\item for (9), this part is a main heart of the proof here. 
For each place $\widetilde{v}\in \widetilde{S}_p$ lying over $v\in S_p$, 
let $\rho_v:G_{F_3,\widetilde{v}}\lra {\rm GL}_4(\mathcal{O})$ be a lift of $\rr|_{G_{F_3,\widetilde{v}}}$ 
which corresponds to a closed point of $R_{\widetilde{v}}$. 
We may assume that $G_{F_3,\widetilde{v}}=G_{F^+_3,v}$. 
Then $\rho_v$ and 
$({\rm Ind}^{G_{F^+_2}}_{G_M}\theta')|_{G_{F_{3,\widetilde{v}}}}$ are both crystalline and 
ordinary of the same Hodge-Tate weights for any embedding $F_3,\widetilde{v}\hookrightarrow \bQ_p$,  
and reduce to the trivial representation modulo the maximal ideal of $\mathcal{O}$. 
Since $R_{\widetilde{v}}$ is irreducible, 
$\rho_v\sim ({\rm Ind}^{G_{F^+_2}}_{G_M}\overline{\theta}')|_{G_{F_{3,\widetilde{v}}}}$ 
in the sense of Definition 3.1.4 of \cite{BGG1}. 
On other hand, $r'|_{G_{F_{3,\widetilde{v}}}}=
({\rm Ind}^{G_{F^+_2}}_{G_M}\theta)|_{G_{F_{3,\widetilde{v}}}}$ and $\rho_{\pi_2,\iota_p}|_{G_{F_{3,\widetilde{v}}}}$ 
are both crystalline of the same Hodge-Tate weight. 
As observed in (\ref{fil-K}) before, $r'|_{G_{F_{3,\widetilde{v}}}}$ has a increasing filtration 
$\{{\rm Fil}^{(2)}_i\}_{i=0}^3$ such that ${\rm gr}^{(2)}_2(-1)$ is non-ordinary Barsotti-Tate. 
By construction of $\pi_2$, we see that $\rho_{\pi_2,\iota_p}|_{G_{F_{3,\widetilde{v}}}}$ has a similar 
filtration $\{{\rm Fil'}^{(2)}_i\}_{i=0}^3$. Then two representations ${\rm gr}^{(2)}_2(-1)$ and 
${\rm gr'}^{(2)}_2(-1)$ define two geometric points in ${\rm Spec}\hspace{0.5mm}R$ defined in (\ref{Kisin-defo}). 
By Proposition 2.3 of \cite{Gee-H}, these two points both lie on the same connected component $\mathcal{V}$. 
This implies that $r'|_{G_{F_{3,\widetilde{v}}}}$ and  $\rho_{\pi_2,\iota_p}|_{G_{F_{3,\widetilde{v}}}}$ 
define two geometric points which both lie on a connected component of $\mathcal{G}_{\mathcal{V},2}[\frac{1}{p}]$. 
By Proposition \ref{conn2} and Remark \ref{conn3}, we have $r'|_{G_{F_{3,\widetilde{v}}}}\sim\rho_{\pi_2,\iota_p}|_{G_{F_{3,\widetilde{v}}}}$. 
By the results for ``$\sim$" (Section 3.3,3.4 of \cite{BGG-Sato}), 
$$\rho_v\otimes r'|_{G_{F_{3,\widetilde{v}}}}\sim 
({\rm Ind}^{G_{F^+_2}}_{G_M}\theta')|_{G_{F_{3,\widetilde{v}}}}\otimes 
\rho_{\pi_2,\iota_p}|_{G_{F_{3,\widetilde{v}}}}
\sim \rho_{\pi_2,\iota_p}|_{G_{F_{3,\widetilde{v}}}}\otimes 
({\rm Ind}^{G_{F^+_2}}_{G_M}\theta')|_{G_{F_{3,\widetilde{v}}}}\sim r''|_{G_{F_{3,\widetilde{v}}}}.$$
Summing up, the hypotheses of Theorem 3.5.1 of \cite{BGG1} are checked completely with a minor modification with 
adequacy. 
\end{itemize}
We conclude that, after possibly extending $\mathcal{O}$, there exits a continuous lifting 
$r:G_{F_3}\lra {\rm GL}_4(\mathcal{O})$ such that 
\begin{enumerate}
\item $r$ is unramified outside $p$;
\item $r|_{G_{F_{3,\widetilde{v}}}}$ is crystalline and ordinary at each place $\widetilde{v}\in \widetilde{S}_p$ 
(in fact, at all places $\widetilde{v}$ of $F_3$ above $p$), 
with Hodge-Tate weights $0,4,5$, and $9$;
\item $r^c\simeq r^\vee\chi|_{G_{F_3}}$;
\item $r\otimes ({\rm Ind}^{G_{F^+_2}}_{G_M}\theta)|_{G_{F_3}}$ is 
automorphic of level prime to $p$.
\end{enumerate} 
Notice that $\rr|_{G_{MF_3}}=\br|_{G_{MF_3}}$ is irreducible since $F_3$ is linearly disjoint from 
$M\overline{F^+}^{{\rm Ker}(\br)}(\zeta_p)$ 
over $F^+$. 
By Proposition 5.1.1 of \cite{BGG1} and Lemma 5.9 of \cite{Ge-pub}, $r$ is $\iota_p$-ordinary automorphic 
of level prime to $p$. 
By Lemma 6.1.2 of \cite{BGG1} there exists a solvable finite extension $F^+_4/F_3$ of totally fields 
together with a RAECSDC automorphic representation $\pi_3$ of ${\rm GL}_4(\A_{F_4})$, 
where $F_4=F^+_4F_3$ such that 
\begin{enumerate}
\item $F^+_4$ is linearly disjoint from $F_3\overline{F^+}^{{\rm Ker}(\br)}(\zeta_p)$ over $F^+_3$;
\item $\pi_4$ is of weight zero;
\item $\br_{\pi_4,\iota_p}\simeq \rr|_{G_{F_4}}=\br|_{G_{F_4}}$;
\item $\rho_{\pi_4,\iota_p}^c\simeq \rho_{\pi_4,\iota_p}$;
\item $\rho_{\pi_4,\iota_p}^c\simeq \rho_{\pi_4,\iota_p}^\vee \widetilde{\chi}|_{G_{F_4}}$ where 
$\chi:G_{F^+_3}\lra \mathcal{O}^\times$ is the Teichm\"uller lift of $\overline{\chi}:G_{F^+_3}\lra \bF^\times_p$.  
\end{enumerate}
Let $S'_p$ be the set of places pf $F^+_4$ above $p$ and $\widetilde{S}'_p$  be 
any set of places of $F_3$ consisting of exactly one place above 
each place in $S'_p:=\{v\ |\ v|p\ {\rm in}\ F^+_4\}$. 
For each place $\widetilde{v}\in \widetilde{S}'_p$, $a'_{\widetilde{v}}\in 
(\Z^4_+)^{{\rm Hom}(F^+_{3,\widetilde{v}},\bQ_p)}$ is given by 
$$a'_{\tau,1}=3,a'_{\tau,1}=2,a'_{\tau,3}=1,a'_{\tau,4}=0$$
for each $\tau:F^+_{4,\widetilde{v}}\hookrightarrow \bQ_p$. 
For each place $\widetilde{v}\in \widetilde{S}'_p$, $R_{\widetilde{v}}$ is the unique irreducible 
ordinary component of $R^{{\rm v}_{a'_{\widetilde{v}}},{\rm cr}}_{\br|_{G_{F_4,\widetilde{v}}}}$ 
(see Section \ref{LDR}). 
The uniqueness which implies the irreducibility of the ordinary part is due to Lemma 3.1.4, p.1389 of \cite{Ge-pub} since $\overline{r}|_{G_{F_4,\widetilde{v}}}=1$ 
as a representation to ${\rm GL}_4(\bF_p)$ 
by the second condition for $F^+_2$ 
and $\overline{\e}|_{G_{F_4,\widetilde{v}}}=1$. 
As explained in Section 3 of \cite{BGG1}, we can extend $\br|_{G_{F_4}}$ to 
$\rr_4:G_{F^+_4}\lra \mathcal{G}_4(\bF_p)$ where the algebraic group $\mathcal{G}_4$ is defined in 
Section 3.1.1 of \cite{BGG1}.  
Applying Proposition 1.5.1 of \cite{BGGT} for 
$$\mathcal{S}=(F_4/F^+_4,S_p,\widetilde{S}'_p,\mathcal{O},\rr_4,
\widetilde{\chi}|_{G_{F^+_4}},\{R_{\widetilde{v}}\}_{\widetilde{v}\in \widetilde{S}'_p}),$$
there is a lifting $r_4:G_{F^+_4}\lra {\rm GL}_4(\bQ_p)$ of $\rr|_{G_{F^+_4}}=\br|_{G_{F^+_4}}$, which is 
crystalline and ordinary of Hodge-Tate weight $a'_{\widetilde{v}}$ at each place 
$\widetilde{v}\in \widetilde{S}'_p$. 
Applying Theorem 4.4.1 of \cite{BGGT} but the condition ``$l>2(n+1)$" should be replaced with the adequacy of $\rr(G_{F(\zeta_l)})$ in the notation there, then we have that $r_4|_{G_{F_4}}$ is 
$\iota_p$-ordinary automorphic. By Lemma 1.5 of \cite{BGHT}, $r_4$ is $\iota_p$ ordinary automorphic, 
hence there exists a RAESD cuspidal automorphic representation $\pi_4$ of ${\rm GL}_4(\A_{F^+_4})$ such that 
it is $\iota_p$-ordinary and of weigh zero, and $r_4\simeq \rho_{\pi_4,\iota_p}$. 

What remains is to descend $\pi_4$ to a cuspidal automorphic representation of ${\rm GSp}_4(\A_{F^+_4})$. 
Recall that $\br$ is absolutely irreducible by Proposition \ref{odd-irred}. By the choice of $F^+_4$, 
$\br$ is also absolutely irreducible and so is $r_4$. Clearly, $r_4$ is self-dual since so is $\pi_4$. 
Hence we have an identification $${\rm End}_{\bQ_p[G_{F^+_4}]}(r_4)={\rm Bil}_{G_{F^+_4}}(r_4\times r_4,\bQ_p)$$ 
up to twists, where the latter space stands for $G_{F^+_4}$-equivariant, $\bQ_p$-linear 
bilinear forms on $r_4\times r_4$. However, we have 
$$\bQ_p={\rm End}_{\bQ_p[G_{F^+_4}]}(r_4)={\rm Bil}_{\bQ_p[G_{F^+_4}]}(r_4\times r_4,\bQ_p)=
{\rm Sym}_{\bQ_p[G_{F^+_4}]}(r_4\times r_4,\bQ_p)\oplus {\rm Alt}_{\bQ_p[G_{F^+_4}]}(r_4\times r_4,\bQ_p)$$
where ${\rm Sym}_{\bQ_p[G_{F^+_4}]}(r_4\times r_4,\bQ_p)$ stands for the set of $G_{F^+_4}$-equivariant symmetric bilinear forms while 
${\rm Alt}_{\bQ_p[G_{F^+_4}]}(r_4\times r_4,\bQ_p)$ stands for the set of $G_{F^+_4}$-equivariant alternative bilinear forms. 
Hence either ${\rm Sym}_{G_{F^+_4}}(r_4\times r_4,\bQ_p)\neq \{0\}$ or  
 ${\rm Alt}_{G_{F^+_4}}(r_4\times r_4,\bQ_p)\neq \{0\}$ holds. Observe the reduction of $r_4$ and it has 
a symplectic form as we have started with $\br$. Hence we have 
${\rm Alt}_{G_{F^+_4}}(r_4\times r_4,\bQ_p)\neq \{0\}$. 
By the last three lines of the claim of Theorem 4.26 of \cite{AS1}, we have a generic 
cuspidal automorphic representation $\pi_5$ of ${\rm GSp}_4(\A_{F^+_4})$, as desired. 
Note that by Theorem 7.4.1 of \cite{GeeT} 
(see also Section 1.3 of \cite{Schmidt} for $F=\Q$), we can freely switch the types of $\pi_5$ among 
L-packets at infinite places.

\end{proof}

As observed in Proposition 6.1.4, p.1569 of \cite{BGG1}, we can study the case when $\br$ is 
an induced representation. We have the following possible cases:
\begin{enumerate}
\item $\br$ is induced from a character $\overline{\theta}:G_M\lra \bF^\times_p$ for some 
finite extension $M/F^+$ of degree four;
\item $\br$ is induced from an irreducible representation 
$\overline{\tau}:G_M\lra {\rm GL}_2(\bF)$ for some 
finite extension $M/F^+$ of degree two.
\end{enumerate}
In the first case, the odd-ness of $\br$ implies that $M$ is a CM field so that 
the maximal totally real subfield $M^+$ of $M$ includes $F^+$, while in the latter case, a quadratic 
extension $M/F^+$ can be either CM or totally real. 

For a quadratic extension $L/K$ of number fields, a character $\chi$ of $G_L$, and a lift $\tau\in G_K$ of 
the generator of ${\rm Gal}(L/K)$, we define the character $\chi\otimes \chi^\tau$ of $G_K$ by 
$$(\chi\otimes \chi^\tau)(g) 
=\left\{\begin{array}{ll}
 \chi(g)\chi(\tau^{-1}g\tau) & (g\in G_L) \\
 \chi(\tau^2) & (g=\tau)
 \end{array}\right.
 $$
 which is independent of the choice of $\tau$. 

\begin{prop}\label{ind1}  Let $F^+$ be a totally real extension of $\Q$ and $p$ be an odd.    
Let $\br:G_{F^+}\lra {\rm GSp}_4(\bF_p)$ is an irreducible automorphic mod $p$ Galois representation. 
Assume that $\br$ is  induced from a character $\overline{\theta}:G_M\lra \bF^\times_p$ for some 
finite extension $M/F^+$ of degree four (as observed, $M$ has to be a CM field). 
Then, there is a finite solvable extension $L^+/F^+$ of totally real fields such that 
\begin{enumerate}
\item $L^+$ is linearly disjoint from $M\overline{F^+}^{{\rm Ker}(\br)}(\zeta_p)$ over $F^+$; 
\item there is an $\iota_p$-ordinary cuspidal representation $\pi$ of ${\rm GSp}_4(\A_{L^+})$ of weight zero and level prime to $p$ 
such that 
\begin{enumerate}
\item $\br_{\pi,\iota_p}\simeq \br|_{G_{L^+}}$;
\item $\pi$ is unramified at all finite places. 
\end{enumerate}
\end{enumerate}
\end{prop}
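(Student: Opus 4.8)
The plan is to follow the strategy of Proposition 6.1.4 of \cite{BGG1}, adapted to ${\rm GSp}_4$. The essential simplification in the present case is that $\br$ is induced from a character of ${\rm GL}_1$, so no automorphy lifting theorem is needed: it suffices to build a convenient characteristic-zero lift of $\br|_{G_{L^+}}$ by lifting $\overline{\theta}$ to an algebraic Hecke character, to invoke automorphic induction over ${\rm GL}_4$, and then to descend to ${\rm GSp}_4$ by the argument already used at the end of the proof of Proposition \ref{tech-main}.

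First I would exploit the structure of $M$. Since $M$ is CM of degree four over $F^+$ with maximal totally real subfield $M^+\supseteq F^+$, there is a tower $F^+\subset M^+\subset M$ of quadratic extensions, and correspondingly ${\rm Ind}_{G_M}^{G_{F^+}}={\rm Ind}_{G_{M^+}}^{G_{F^+}}\circ{\rm Ind}_{G_M}^{G_{M^+}}$. Writing $c$ for complex conjugation (the generator of ${\rm Gal}(M/M^+)$), Mackey's criterion for isomorphisms of induced representations translates the hypothesis that $\br={\rm Ind}_{G_M}^{G_{F^+}}\overline{\theta}$ is valued in ${\rm GSp}_4$ into an essential conjugate self-duality relation of the shape $\overline{\theta}^{\,c}\simeq\overline{\theta}^{-1}\overline{\mu}|_{G_M}$ for a suitable character $\overline{\mu}$ of $G_{F^+}$, the resulting invariant pairing being \emph{alternating} rather than symmetric precisely because $\br$ lands in ${\rm GSp}_4$ and not in ${\rm GO}_4$; this last point is what will make the final descent land in ${\rm GSp}_4$.

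Next I would choose the base change and the Hecke character. The aim is a finite solvable extension $L^+/F^+$ of totally real fields, linearly disjoint from $M\overline{F^+}^{{\rm Ker}(\br)}(\zeta_p)$ over $F^+$, with every place of $L^+$ above $p$ split completely in $ML^+$, with $\zeta_p\in L^+_v$ for $v\mid p$, and with $\overline{\theta}|_{G_{ML^+}}$ unramified away from $p$ (possible since $\overline{\theta}$ has finite image; the Grunwald--Wang $2$-primary obstruction is harmless as $p$ is odd and there is ample freedom in the choice of $L^+$). Then, by the standard existence results for algebraic Hecke characters with prescribed reduction and ramification (as in Section 6 of \cite{BGG1}), enlarging $L^+$ if needed, I would produce an algebraic Hecke character $\theta$ of $ML^+$ lifting $\overline{\theta}|_{G_{ML^+}}$, unramified outside $p$, crystalline above $p$, satisfying $\theta^{\,c}\simeq\theta^{-1}\chi|_{G_{ML^+}}$ for a crystalline character $\chi$ of $G_{L^+}$ lifting the relevant twist, and with Hodge--Tate weights arranged so that at each $v\mid p$ of $L^+$ the four lifts of $v$ to $ML^+$ carry $\theta$ with weights $0,1,2,3$ up to a common twist, the weights $0\leftrightarrow 3$ and $1\leftrightarrow 2$ being swapped by $c$ in accordance with the alternating pairing. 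With $\rho:={\rm Ind}_{G_{ML^+}}^{G_{L^+}}\theta$, Mackey theory and linear disjointness give that $\rho$ is a continuous lift of $\br|_{G_{L^+}}$ valued in ${\rm GSp}_4$, unramified outside $p$, crystalline with Hodge--Tate weights $\{0,1,2,3\}$ (up to twist) at each $v\mid p$; and since $ML^+/L^+$ splits completely at $p$, $\rho|_{G_{L^+_v}}$ is a sum of crystalline characters with distinct Hodge--Tate weights, hence (Borel) ordinary.

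Finally I would deduce automorphy and descend. Using the factorization of the induction through the quadratic tower, $\rho$ is automorphic over ${\rm GL}_4(\A_{L^+})$ by two successive (quadratic) automorphic inductions---first the CM/dihedral construction producing a cuspidal representation of ${\rm GL}_2(\A_{M^+L^+})$, then Arthur--Clozel cyclic automorphic induction up to ${\rm GL}_4(\A_{L^+})$---the cuspidality at each stage being ensured by the irreducibility of $\br$ (equivalently, the distinctness of the conjugates of $\overline{\theta}$). The resulting $\Pi$ is regular algebraic of weight zero, essentially self-dual, $\iota_p$-ordinary (its component at $v\mid p$ being the ordinary principal series determined by the crystalline characters occurring in $\rho|_{G_{L^+_v}}$, ordered by Hodge--Tate weight), and of level prime to $p$, in fact unramified at all finite places. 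Since $\rho_{\Pi,\iota_p}=\rho$ is absolutely irreducible (its reduction $\br|_{G_{L^+}}$ is, by the choice of $L^+$), self-dual up to twist, and its one-dimensional space of $G_{L^+}$-equivariant bilinear forms is alternating (being so after reduction), I would then run the descent at the end of the proof of Proposition \ref{tech-main} verbatim---using Theorem 4.26 of \cite{AS1} to produce a cuspidal automorphic representation of ${\rm GSp}_4(\A_{L^+})$ and Theorem 7.4.1 of \cite{GeeT} to place its archimedean components in the desired members of their $L$-packets---to obtain an $\iota_p$-ordinary cuspidal automorphic representation $\pi$ of ${\rm GSp}_4(\A_{L^+})$ of weight zero and level prime to $p$, unramified at all finite places, with $\br_{\pi,\iota_p}\simeq\br|_{G_{L^+}}$; assertion (1) holds by the choice of $L^+$. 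I expect the main obstacle to be the construction of $\theta$ in the middle step: it must simultaneously reduce to $\overline{\theta}$, obey the conjugate self-duality relation in the form forced by the \emph{symplectic} structure of $\br$, be unramified outside $p$ and crystalline in the Fontaine--Laffaille range at $p$ after base change, and carry regular Hodge--Tate weights---a Grunwald--Wang flavoured argument parallel to that of Section 6 of \cite{BGG1}, the new feature being the bookkeeping that keeps the invariant pairing alternating.
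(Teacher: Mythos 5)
Your proposal is correct and takes essentially the same route as the paper. Both proofs factor the degree-four induction through the maximal totally real subfield $M^+$, produce an $\iota_p$-ordinary cuspidal automorphic representation over ${\rm GL}_2(\A_{L^+M^+})$ from an algebraic Hecke character lifting $\overline{\theta}$ (via the proof of Proposition 6.1.4 of \cite{BGG1} together with Lemma 4.1.6 of \cite{CHT}, with the split Hodge--Tate weights $\{0,3\}$ and $\{1,2\}$ at the two primes above each $v\mid p$), induce to ${\rm GL}_4(\A_{L^+})$, and descend to ${\rm GSp}_4(\A_{L^+})$ using Theorem 4.26 of \cite{AS1} and Theorem 7.4.1 of \cite{GeeT}. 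The only cosmetic difference is that you make the symplectic constraint explicit via Mackey's criterion at the level of $\overline{\theta}$, whereas the paper encodes it by first checking that $\det\rr$ extends to $G_{F^+}$ (so the central character descends) and then reading off the alternating pairing from the residual representation as at the end of Proposition \ref{tech-main}.
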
 
\begin{proof} Let $M^+$ be the maximal totally real subfield of $M$. Put $\rr:={\rm Ind}^{G_{M^+}}_{G_M}\overline{\theta}$ 
so that $\br={\rm Ind}^{G_{M^+}}_{G_M}\rr$. Notice that $\rr$ is irreducible, totally odd. 
Let $\psi:G_{F^+}\lra \bF^\times_p$ be the similitude character of $\br$ and 
let $\chi_{\rr}:=\det(\rr):G_{M^+}\lra \bF^\times_p$. 
It is easy to see that 
\begin{equation}\label{cent-char}(\chi_{\rr}\otimes \chi^\tau_{\rr})\omega_{M^+/F^+}=\psi^2
\end{equation} where 
$\tau$ is any lift of the generator of ${\rm Gal}(M^+/F^+)$ and $\omega_{M^+/F^+}:G_{F^+}\lra \bF^\times_p$ 
is the quadratic character for $M^+/F^+$. 
Substituting $\tau$ into (\ref{cent-char}), we see that $\chi_{\rr}$ has to extend to a character of 
$G_{F^+}$.

Applying the proof of Proposition 6.1.4 of \cite{BGG1} with the different choice of the 
Hodge-Tate weights in using Lemma 4.1.6 of \cite{CHT}, we can choose a solvable extension $L^+/F^+$ of totally real fields such that 
\begin{enumerate}
\item  $L^+$ is linearly disjoint from $M\overline{F^+}^{{\rm Ker}(\br)}(\zeta_p)$ over $F^+$; 
\item  any place of $L^+$ lying over $p$ is split completely in $L^+M$;
\item 
there is an $\iota_p$-ordinary cuspidal representation $\pi_1$ of ${\rm GL}_2(\A_{M^+_1}),\ M^+_1:=L^+M^+$  and level prime to $p$ 
such that 
\begin{enumerate}
\item $\br_{\pi_1,\iota_p}\simeq \rr|_{G_{M^+_1}}$;
\item $\pi_1$ is unramified at all finite places;
\item for any place $v$ of $L^+$ lying over $p$ with the decomposition $v=w^{(1)}w^{(2)}$ in $L^+M$, 
the Hodge-Tate weights of $\rho_{\pi_1,\iota_p}|_{G_{L^+M,w_1}}$ (resp. 
$\rho_{\pi_1,\iota_p}|_{G_{L^+M,w_2}}$) are $\{0,3\}$ (resp. $\{1,2\}$)   
\item  the central character $\omega_{\pi_1}$ of $\pi_1$ corresponds to the Teichmu\"uller lift of 
$(\chi_{\rr}|_{G_{M^+_1}})\ve^{-1}$
via the class field theory.  
\end{enumerate} 
\end{enumerate}
Note that $\omega_{\pi_1}$ descends to a quasi-character of $\A^\times_{L^+}$ since so is $\chi_{\rr}$ as seen before. 
(as seen soon later, this is a sufficient condition to descend a  
cuspidal representation from $GL_4$ to $GSp_4$). Furthermore, by the choice of $L^+$, $\br|_{G_{L^+}}={\rm Ind}^{G_{L^+}}_{G_{M^+_1}}
\br_{\pi_1,\iota_p}$ is irreducible and 
it implies that $\pi_1$ is not any base change from ${\rm GL}_2(\A_{L^+})$.   
By the last three lines in the statement of Theorem 4.26 of \cite{AS1}, we see that there exists a cuspidal automorphic representation of ${\rm GL}_4(\A_{L^+})$ 
which descend to a cuspidal a globally generic cuspidal automorphic representation of ${\rm GSp}_4(\A_{L^+})$. 
By using base change argument to $\Pi$, 
we can make the resulting cuspidal representation $\Pi$ of ${\rm GSp}_4(\A_{L^+})$ unramified at everywhere. 

Finally we check $\Pi$ is of weight zero. It suffice to compute the Hodge-Tate weights. 
In fact, for $v_i=w^{(1)}_iw^{(2)}_i\ (1\le i \le d)$, we see that 
$$\rho_{\Pi,\iota_p}|_{G_{L^+,v_i}}\simeq \rho_{\pi_1,\iota_p}|_{G_{L^+_1,w^{(1)}_i}}\oplus 
 \rho_{\pi_1,\iota_p}|_{G_{L^+_1,w^{(2)}_i}}.$$
It yields that the Hodge-Tate weights are given by $\{0,1,2,3\}$. 
By Theorem 7.4.1 of \cite{GeeT} , we can freely switch the types of $\Pi$ among 
L-packets at infinite places  by Arthur's classification due to \cite{GeeT}.   
\end{proof}

Next we consider the case when $\br$ is induced from 
a 2-dimensional representation $\rr:G_{M}\lra {\rm GL}_2(\bF_p)$ for some quadratic extension $M/F^+$ of 
totally real fields. This will be proved by an essentially same way of the proof of the previous proposition. 
However, due to the lack of the automorphy of $\rr$ (except for the case when $F^+=\Q$), we need to rely on the current status 
of the potential automorphy for $\rr$ by Taylor. Hence we lose ``the solvability" for the field in which we do the 
base change. 
\begin{prop}\label{ind2}  Let $F^+$ be a totally real extension of $\Q$ and $p$ be an odd.    
Let $\br:G_{F^+}\lra {\rm GSp}_4(\bF_p)$ is an irreducible automorphic mod $p$ Galois representation. 
Assume that $\br$ is  induced from a 2-dimensional representation $\rr:G_M\lra {\rm GL}_2(\bF_p)$ for some 
quadratic extension $M/F^+$ of totally real fields. 
Then, there is a finite extension $L^+/F^+$ of totally real fields such that 
\begin{enumerate}
\item $L^+$ is linearly disjoint from $M\overline{F^+}^{{\rm Ker}(\br)}(\zeta_p)$ over $F^+$; 
\item there is an $\iota_p$-ordinary cuspidal representation $\pi$ of ${\rm GSp}_4(\A_{L^+})$ of weight zero and level prime to $p$ 
such that 
\begin{enumerate}
\item $\br_{\pi,\iota_p}\simeq \rr|_{G_{L^+}}$;
\item $\pi$ is unramified at all finite places. 
\end{enumerate}
\end{enumerate}
\end{prop}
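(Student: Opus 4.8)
The plan is to mimic the proof of Proposition \ref{ind1}, replacing the one-dimensional induction datum $\overline{\theta}$ over a quartic CM field by the two-dimensional datum $\rr:G_M\lra {\rm GL}_2(\bF_p)$ over the quadratic extension $M/F^+$, and to compensate for the lack of a known automorphy statement for $\rr$ by invoking Taylor's potential automorphy results. First I would record the compatibility of similitude characters: if $\psi=\nu\circ\br:G_{F^+}\lra\bF_p^\times$ and $\chi_{\rr}:=\det\rr:G_M\lra\bF_p^\times$, then the analogue of (\ref{cent-char}) holds, namely $(\chi_{\rr}\otimes\chi_{\rr}^\tau)\,\omega_{M/F^+}=\psi^2$ for $\tau$ a lift of the generator of ${\rm Gal}(M/F^+)$; substituting $\tau$ shows $\chi_{\rr}$ extends to a character of $G_{F^+}$. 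This self-duality/extension property is exactly what is needed at the end to descend from ${\rm GL}_4$ to ${\rm GSp}_4$.

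Next I would apply the potential automorphy machinery (Taylor's results, as used in Proposition 6.1.4 of \cite{BGG1}) to the residual representation $\rr:G_M\lra{\rm GL}_2(\bF_p)$, which is irreducible and totally odd because $\br={\rm Ind}_{G_M}^{G_{F^+}}\rr$ is irreducible and totally odd by Proposition \ref{odd-irred}. This produces a finite (no longer necessarily solvable) extension $L^+/F^+$ of totally real fields, linearly disjoint from $M\overline{F^+}^{{\rm Ker}(\br)}(\zeta_p)$ over $F^+$, with every place of $L^+$ above $p$ split completely in $L^+M=:M_1^+$, together with an $\iota_p$-ordinary cuspidal representation $\pi_1$ of ${\rm GL}_2(\A_{M_1^+})$ of level prime to $p$ such that $\br_{\pi_1,\iota_p}\simeq\rr|_{G_{M_1^+}}$, unramified everywhere, with Hodge--Tate weights $\{0,3\}$ and $\{1,2\}$ at the two places of $M_1^+$ above each place $v|p$ of $L^+$, and with central character $\omega_{\pi_1}$ the Teichm\"uller lift of $(\chi_{\rr}|_{G_{M_1^+}})\e^{-1}$. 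Because $\chi_{\rr}$ extends to $G_{F^+}$, $\omega_{\pi_1}$ descends to a quasi-character of $\A_{L^+}^\times$; and because $\br|_{G_{L^+}}={\rm Ind}_{G_{M_1^+}}^{G_{L^+}}\br_{\pi_1,\iota_p}$ is irreducible (linear disjointness), $\pi_1$ is not a base change from ${\rm GL}_2(\A_{L^+})$.

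Then I would run automorphic induction / the descent step exactly as in Proposition \ref{ind1}: the induced representation is cuspidal automorphic on ${\rm GL}_4(\A_{L^+})$, and by the last three lines of Theorem 4.26 of \cite{AS1} (using that $\rho_{\pi,\iota_p}$ carries an alternating rather than symmetric pairing, coming from the symplectic structure on $\br$ together with the extension of $\chi_{\rr}$) it descends to a globally generic cuspidal representation $\Pi$ of ${\rm GSp}_4(\A_{L^+})$. A further solvable base change (Theorem \ref{bc}) makes $\Pi$ unramified at all finite places, preserving $\iota_p$-ordinarity at $p$. Finally the weight computation is the same as before: at each place $v_i=w_i^{(1)}w_i^{(2)}$ of $L^+M$ above $p$ one has $\rho_{\Pi,\iota_p}|_{G_{L^+,v_i}}\simeq\rho_{\pi_1,\iota_p}|_{G_{M_1^+,w_i^{(1)}}}\oplus\rho_{\pi_1,\iota_p}|_{G_{M_1^+,w_i^{(2)}}}$, so the Hodge--Tate weights are $\{0,1,2,3\}$, i.e. $\Pi$ is of weight zero; and by Theorem 7.4.1 of \cite{GeeT} we may switch the infinity types within L-packets, completing the proof.

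The main obstacle is the same delicate point flagged by the authors: since $\rr$ over $M$ is not known to be automorphic in general (unlike the $F^+=\Q$ case), one must use Taylor's \emph{potential} automorphy, which forces $L^+/F^+$ to be merely finite rather than solvable, and one has to check carefully that this weaker statement still suffices — in particular that the chosen Hodge--Tate weights $\{0,3\}$, $\{1,2\}$ (via Lemma 4.1.6 of \cite{CHT}) really can be realized $\iota_p$-ordinarily, and that the central-character/self-duality bookkeeping needed for the ${\rm GL}_4\rightsquigarrow{\rm GSp}_4$ descent via \cite{AS1} goes through with $L^+$ only finite over $F^+$. Everything else is a faithful transcription of the argument in Proposition \ref{ind1}.
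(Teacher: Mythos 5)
Your proposal correctly identifies the overall contour (potential automorphy for $\rr$, automorphic induction, descent to ${\rm GSp}_4$ via \cite{AS1}, weight computation), and your central-character bookkeeping via $(\chi_{\rr}\otimes\chi_{\rr}^\tau)\omega_{M/F^+}=\psi^2$ mirrors the paper's reasoning exactly. But there is a genuine gap in the middle: you treat the potential automorphy step as a black box that directly produces an $\iota_p$-ordinary cuspidal representation $\pi_1$ of ${\rm GL}_2(\A_{L^+M})$ that is simultaneously unramified everywhere, of the specific Hodge--Tate weights $\{0,3\}$ and $\{1,2\}$, and with prescribed central character. This is not what Taylor's potential automorphy results give. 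The paper first invokes Corollary~1.7 of \cite{TR}, which only yields \emph{some} $\iota_p$-ordinary cuspidal $\pi_1$ with $\br_{\pi_1,\iota_p}\simeq\br|_{G_{L^+M}}$, without control on weight or level; passing from that to the refined $\pi_2$ with the desired Hodge--Tate weights and unramified-everywhere level requires a modularity lifting theorem with prescribed local conditions, and that step hinges on the adequacy of $\br(G_{L^+M(\zeta_p)})$.

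The paper therefore inserts a case analysis that your proposal omits entirely. First, if $\br|_{G_{L^+M(\zeta_p)}}$ is \emph{not} absolutely irreducible, then $\br|_{G_{L^+M}}$ is dihedral and one reduces to Proposition~\ref{ind1}; you do not note this reduction. When it \emph{is} absolutely irreducible, Proposition~2.1.1 of \cite{BGG2} leaves three subcases: $\br(G_{L^+M(\zeta_p)})$ is adequate; $p=5$ with projective image ${\rm PSL}_2(\F_5)$; and $p=3$ with projective image ${\rm PSL}_2(\F_3)$. Only the first is handled by Theorem~A of \cite{BGG2} together with the argument of Proposition~6.1.3 (not~6.1.4, which is the induced-from-a-character analogue) and Lemma~6.1.2 of \cite{BGG1}. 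The $p=5$ case needs the separate argument from Section~8.2 of \cite{KT}, and the $p=3$ case needs a completely different device: one first produces an ordinary Hilbert modular form of parallel weight $2=p-1$ and then uses Hida theory to move to the weight $(2,4,\dots,2,4)$ needed for the $\{0,3\}/\{1,2\}$ Hodge--Tate weights. Without this case analysis, your argument silently assumes adequacy, which fails precisely for the small primes $p=3,5$; the paper only assumes $p$ odd, so those cases must be covered. Apart from this, the descent to ${\rm GSp}_4$, the unramified-level step via $p$-adic algebraic modular forms, and the weight-zero computation agree with the paper.
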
 
\begin{proof}It is easy to see that $\rr$ is an irreducible, totally odd mod $p$ Galois representation. 
By Corollary 1.7, p.138 of \cite{TR}, there exists  a finite extension $L^+/F^+$ of totally real fields such that 
\begin{enumerate}
\item $L^+$ is linearly disjoint from $M\overline{F^+}^{{\rm Ker}(\br)}(\zeta_p)$ over $F^+$; 
\item there is an $\iota_p$-ordinary cuspidal representation $\pi_1$ of ${\rm GL}_2(\A_{L^+M})$ 
such that $\br_{\pi_1,\iota_p}\simeq \br|_{G_{L^+M}}$. 
\end{enumerate}
If $\br|_{G_{L^+M(\zeta_p)}}$ is not absolutely irreducible, then $\br|_{G_{L^+M}}$ is dihedral, hence 
this case is reduced to Proposition \ref{ind1}. 
Therefore, we may assume that $\br|_{G_{L^+M(\zeta_p)}}$ is absolutely irreducible. 
By Proposition 2.1.1 of \cite{BGG2}, we have three possible cases;
\begin{enumerate}
\item $p=3$, the projective image of $\br(G_{L^+M(\zeta_p)})$ is conjugate to ${\rm PSL}_2(\F_3)$; 
\item $p=5$, the projective image of $\br(G_{L^+M(\zeta_p)})$ is conjugate to ${\rm PSL}_2(\F_5)$;  
\item $\br(G_{L^+M(\zeta_p)})$ is adequate.
\end{enumerate}
For the third case, we apply Theorem A of \cite{BGG2} and the proof of Proposition 
6.1.3 of \cite{BGG1}, in particular, at line 16, p.1568 but 
we apply Lemma 6.1.2 to obtain a cuspidal representation of a suitable weight to make 
the automorphic induction to ${\rm GL}_4(\A_{L^+})$ weight zero. 
For the second case, we apply Section 8.2 of \cite{KT} whose contents are 
essentially same to things done in the course of the proof of Proposition 
6.1.3 of \cite{BGG1}. Similarly, we apply Lemma 6.1.2 as above in the same purpose for the Hodge Tate weights.  
Thus, there is a finite extension $L^+_1/L^+$ of totally real fields such that 
\begin{enumerate}
\item $L^+_1$ is linearly disjoint from $M\overline{F^+}^{{\rm Ker}(\br)}(\zeta_p)$ over $F^+$ so that 
$[M^+_1:L^+_1]=[M:F^+]=2$ with $M_1=L^+_1M$; 
\item any place $v$ of $L^+_1$ lying over $p$ is split completely in $M^+_1=L^+_1M$;
\item there is an $\iota_p$-ordinary cuspidal representation $\pi_2$ of ${\rm GL}_2(\A_{M^+_1}),\ 
M^+_1=L^+_1M$ of weight zero and level prime to $p$ 
such that 
\begin{enumerate}
\item $\br_{\pi_2,\iota_p}\simeq \br|_{G_{M^+_1}}$;
\item for any place $v$ of $L^+_1$ lying over $p$ with the decomposition $v=w^{(1)}w^{(2)}$ in $M^+_1=L^+_1M$, 
the Hodge-Tate weights of $\rho_{\pi_2,\iota_p}|_{G_{L^+M,w^{(1)}}}$ (resp. 
$\rho_{\pi_2,\iota_p}|_{G_{L^+M,w^{(2)}}}$) are $\{0,3\}$ (resp. $\{1,2\}$) 
\item $\pi_2$ is unramified at all finite places. 
\end{enumerate}
\end{enumerate}
We remark that the third property for $\pi_2$ follows the argument in the course of the proof of Lemma 6.1.1 of \cite{BGG1} 
(see the treatment of the finite set $R$ there in p.1563) using $p$-adic algebraic forms via Jacquet-Langlands correspondence 
with a suitable base change. Note that $\pi_2$ corresponds to a Hilbert modular cusp forms of weight 
$(k_{w^{(1)}},k_{w^{(2)}})_{v=w^{(1)}w^{(2)}|p}=(2,4)_{{\rm Hom}(L^+_1,\bQ)}=(2,4,2,4,\ldots,2,4)$ 
(see Section 2.1 of \cite{Dimi}). 

As seen in the previous proposition, by using ramified twists if necessary, we may assume that 
the central character of $\pi_2$ descends to a quasi-character of $\A_{L^+_1}$. Notice that 
$[M^+_1:L^+_1]=[M:F^+]=2$. The remaining part we need to check is completely same as in the latter part of 
the previous proposition. Therefore, we may omit the details. 

In the case when $p=3$ and the projective image of $\br(G_{L^+M(\zeta_p)})$ is conjugate to ${\rm PSL}_2(\F_3)$, 
applying the proof of Theorem 3.1.1 of \cite{BGG2}, we have a finite extension $L^+_1/L_1$ and an ordinary 
cuspidal representation $\pi_2$ of ${\rm GL}_2(\A_{M^+_1})$ of weight zero as above. 
The representation $\pi_2$ corresponds to an ordinary Hilbert modular cusp form $g$ of parallel weight 2.  
Since $2=p-1$, we can apply Hida theory \cite{Hida} to $g$ to obtain a Hilbert modular cusp forms of weight 
$(k_{w^{(1)}},k_{w^{(2)}})_{v=w^{(1)}w^{(2)}|p}=(2,2+(p-1))_{{\rm Hom}(L^+_1,\bQ)}=(2,4,2,4,\ldots,2,4)$ with $M^+_1=L^+_1M$. The remaining part is checked similarly and the details are 
omitted.
\end{proof}

\section{proof of main theorems}\label{pmt}
We are now ready to prove first two main theorems. In this section, the symbol $F$ stands for a totally real field. 
\begin{proof}(A proof of Theorem \ref{main-thm1}.) 
By Proposition \ref{tech-main}, there exists a finite solvable extension $L/F$ of totally real fields 
and a regular cuspidal automorphic representation $\pi$ of ${\rm GSp}_4(\A_L)$ such that 
\begin{enumerate}
\item $\br\simeq \br_{\pi,\iota_p}$;
\item for each place $v$ lying over $p$, $\rho_{\pi,\iota_p}|_{G_{L,v}}$ is $\iota_p$-ordinary and crystalline. 
\end{enumerate}
By Lemma 1.4.3-(1) of \cite{BGGT}, $\rho_{\pi,\iota_p}|_{G_{L,v}}$ is crystalline, potentially diagonalizable. 
Hence $\rho_{\pi,\iota_p}$ is a lift as desired. 
\end{proof}

\begin{proof} (A proof of Theorem \ref{main-thm2}.)  
Let $\mathcal{O}$ be the integer ring of a sufficiently large finite extension of $\Q_p$. 
 Let $S$ be be a finite set of finite places, including all places at which $\br$ is ramified, and all places 
lying over $p$. Choose a totally imaginary CM field $L$ with maximal totally real field $F$, with 
the property that all primes in $S$ is split in $L$. 
We may assume that $L$ is linearly disjoint from $\overline{F}^{{\rm Ker}(\br)}(\zeta_p)$ over $F$. 
Let $\widetilde{S}$ be a set of places of $F$ 
consisting of one place dividing each place in $S$. By Section 7.4, for any object $R$ of ${\rm CLN}_{\mathcal{O}}$ 
and a lift $\rho_R:G_F\lra {\rm GSp}_4(R)$ of $\br$ to $R$, there exists 
a continuous homomorphism  $r_R:G_F\lra \mathcal{G}_4(R)$ corresponding to $\rho_R$. 
The similitude character of $\rr$ is the same as one of $\br$. Let us fix its lift 
$\psi:G_F\lra \mathcal{O}^\times$ to $\mathcal{O}$ which is crystalline at all places lying over $p$. 
For each place $v\in S$ not dividing $p$, we fix an inertia type $\tau_v$ such that $\br|_{G_F,v}$ has a symmetric 
lifting of type $\tau_v$ and we also fix an $p$-torsion free quotient $R_v$ of $R^{{\rm symp},\tau_v,\psi}_{\br|_{G_{F,v}}}$ 
(see Section 7.2, p.284 of \cite{gg} for this deformation ring). 
Let ${\rm v}_\lambda$ be the $p$-adic Hodge type of $\rho_v$. 
For each place $v\in S$ dividing $p$, we also fix a quotient $R_v$ of 
$R^{{\rm symp},{\rm v}_\lambda,{\rm cr},\psi}_{\br|_{G_{F,v}}}$ which corresponds to an irreducible component 
of ${\rm Spec}\hspace{0.5mm}R^{{\rm symp},{\rm v}_\lambda,{\rm cr},\psi}_{\br|_{G_{F,v}}}$ such that 
$\rho_v$ lies on.  For any $v\in \widetilde{S}$, we regard $R_v$ with a deformation for $\rr|_{G_F,v}$ 
(cf. Lemma 2.2.1 of \cite{CHT}). We consider the deformation problem 
$$\mathcal{S}=(L/F,S,\widetilde{S},\mathcal{O},\rr,\psi,\{R_v\}_{v\in S})$$
with corresponding universal deformation 
$r_{\mathcal{S}}:G_{F}\lra \mathcal{G}_4(R^{{\rm univ}}_\mathcal{S})$ which factors through 
$G_{F,S}={\rm Gal}(F(S)/F)$. Here $F(S)$ stands for the maximal subfield of $\overline{F}$ unramified outside $S$.  
By Proposition 1.5 of \cite{BGGT} (note that in the notation there, $\breve{\rr}=\rr|_{G_L}=\br_{G_L}$ 
and it is absolutely irreducible by the choice of $L$ and Proposition \ref{odd-irred}), $R^{{\rm univ}}_\mathcal{S}$ has Krull dimension at least one. 
By the choice of $L$, $\br(G_{M(\zeta_p)})$ is adequate. Hence we can apply Theorem 2.3.2 of \cite{BGGT} and it follows 
from this and ${\rm dim}_{{\rm Krull}}R^{{\rm univ}}_\mathcal{S}\ge 1$ that $R^{{\rm univ}}_\mathcal{S}$ is a non-zero finite $\mathcal{O}$-module. 
Combining it with Lemma 7.4.1 of \cite{gg}, there exists a lift $\rho:G_F\lra {\rm GSp}_4(\bQ_p)$ of $\br$ 
such that for each finite place $v$ of $F$ lying over $p$,  $\rho|_{G_{F,v}}$ is crystalline with the 
Hodge-Tate weights equal to one of $\rho_v$. Furthermore, the choice of $R_v$ shows that $\rho|_{G_{F,v}}$ 
connects to $\rho_v$. By Theorem B of \cite{BGGT}, $\rho_{G_L}$ is automorphic. Hence, there exists a 
regular algebraic, cuspidal, polarized automorphic representation $\Pi$ of ${\rm GL}_4(\A_L)$. 
Clearly, $\rho_{G_L}$ descends to $\rho$ and so is $\Pi$ to a 
regular algebraic, cuspidal, polarized automorphic representation $\pi$ of ${\rm GL}_4(\A_F)$ so that 
$\rho\simeq \rho_{\pi,\iota_p}$. It follows from this that $\pi$ descends to a cuspidal 
automorphic representation  ${\rm GSp}_4(\A_F)$ as desired. 
If $\br|_{G_{F,v}}$ is ordinary for all $v$ above $p$,  then the usual Khare-Wintenberger's method 
to compare the deformation rings for $\br$ and $\br_{G_L}$ (we may choose $R_v$ to be an ordinary irreducible component) guarantees an ordinary automorphic representation $\pi$ of 
${\rm GL}_4(\A_F)$ which descends to ${\rm GSp}_4(\A_F)$ 
as desired. 
\end{proof}

\section{Potentially diagonalizable, crystalline lifts with prescribed types}\label{prescribed}
In this section, we study how we can lift given mod $p$ local Galois representations. 
We refer \cite{Rubin},\cite{BK},\cite{Ne} as basic references. We note that $p=2$ is allowed in this section.  

\subsection{Galois cohomology}\label{GC}
Let $K$ and $E$ be finite extensions of $\Q_p$. Assume that $E$ is sufficient large so that it includes 
all embeddings from $K$ to $\bQ_p$. 
Let $\O=\O_E$ be the ring of integers of $E$ and $\F=\F_E$ be its residue field. Let $\pi=\pi_E$ be 
a uniformizer of $E$. Let $I=I_K$ be the inertia subgroup of $G_K={\rm Gal}(\overline{K}/K)$. 
We fix a lift ${\rm Fr}$ to $G_K$ of the arithmetic Frobenius in $G_{\F}$.  

For any finite, free $G_K$-module $T$ over $\O$, put 
$$V=T\otimes E,\ W=V/T,\ W_n=\frac{1}{\pi^n}T/T\simeq T/\pi^n T,\ (n\ge 1).$$
For each of the above $G_K$-modules, denote it by $M$, we write its continuous Galois cohomology by 
$$H^i(K,M):=H^i(G_K,M)\ (i\ge 0).$$  
For any finite $\O$-module $N$,   we denote by $N_{{\rm tor}}$ the torsion part of $N$.  

Let us recall some facts. 
\begin{lem}\label{useful1}Keep the notation being as above. It holds that 
\begin{enumerate}
\item there exists an exact sequence 
$$0\lra T^{G_K}\lra V^{G_K}\lra W^{G_K}\lra H^1(K,T)_{{\rm tor}}\lra 0;$$
\item $W^{G_K}_1=0$ if and only if $W^{G_K}=0$. In this case,  
$H^1(K,T)$ is $\O$-torsion free;
\item $H^i(K,T)\otimes_{\O}E\simeq H^i(K,V)$. 
\end{enumerate}
\end{lem}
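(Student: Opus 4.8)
The plan is to prove each of the three assertions of Lemma~\ref{useful1} by standard homological manipulations with the short exact sequences relating $T$, $V$, $W$, and $W_n$, together with the basic finiteness properties of continuous Galois cohomology of $p$-adic fields. First I would record the two fundamental short exact sequences of $G_K$-modules: $0\to T\to V\to W\to 0$ and, for each $n\ge 1$, $0\to T\xrightarrow{\pi^n} T\to W_n\to 0$, and also $0\to W_n\to W\xrightarrow{\pi^n} W\to 0$. Taking the associated long exact sequences in continuous cohomology is the engine for everything.

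For (1), apply the long exact sequence to $0\to T\to V\to W\to 0$. This gives
$$0\lra T^{G_K}\lra V^{G_K}\lra W^{G_K}\lra H^1(K,T)\lra H^1(K,V).$$
The point is then to identify the image of $W^{G_K}$ in $H^1(K,T)$ with $H^1(K,T)_{\mathrm{tor}}$. On the one hand that image is the kernel of $H^1(K,T)\to H^1(K,V)=H^1(K,T)\otimes_{\O}E$ (using part (3)), and the kernel of the localization map from a finitely generated $\O$-module to its $E$-span is exactly its torsion submodule; on the other hand $W^{G_K}$ is itself $\O$-torsion (it is a submodule of $W=V/T$, which is a torsion $\O$-module) so its image is torsion. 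Combining, the image is precisely $H^1(K,T)_{\mathrm{tor}}$, which yields the displayed four-term exact sequence. Here I would invoke the finite generation of $H^1(K,T)$ over $\O$ (a standard fact for $p$-adic local fields, e.g.\ from \cite{Ne}), so that ``torsion submodule'' behaves as expected.

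For (2): if $W^{G_K}=0$ then $W_1^{G_K}=0$ since $W_1\hookrightarrow W$ is the $\pi$-torsion inclusion. Conversely, suppose $W_1^{G_K}=0$. From $0\to W_1\to W\xrightarrow{\pi} W\to 0$ one gets $W[\pi]^{G_K}=W_1^{G_K}=0$, hence $W^{G_K}$, being a $\pi$-power torsion $\O$-module with trivial $\pi$-torsion, must vanish. With $W^{G_K}=0$, part (1) forces $H^1(K,T)_{\mathrm{tor}}=0$, i.e.\ $H^1(K,T)$ is $\O$-torsion free. For (3), tensor $0\to T\xrightarrow{\pi}T\to W_1\to 0$ (or directly $H^i(K,-)$ of $T\otimes E$) — more cleanly, since $E$ is flat over $\O$ and continuous cohomology of the compact group $G_K$ with coefficients in finitely generated $\O$-modules commutes with the filtered colimit $V=\varinjlim \pi^{-n}T$, one has $H^i(K,V)=H^i(K,\varinjlim \pi^{-n}T)=\varinjlim H^i(K,\pi^{-n}T)=H^i(K,T)\otimes_{\O}E$. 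The main obstacle, such as it is, is purely bookkeeping: making sure finite generation of $H^i(K,T)$ is available so that the torsion/localization identification in (1) is valid, and that the colimit argument in (3) is justified by the continuity of cohomology; neither is deep, and both are standard for local fields, so the proof is short. I would simply remark that (3) is immediate from flatness of $E$ over $\O$ and the compatibility of continuous cohomology with these colimits, and refer to \cite{Ne} for the finiteness input.
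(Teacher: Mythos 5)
Your proof is correct and is essentially the standard argument that the cited references contain; the paper itself simply refers to Rubin (Lemma 1.2.2(ii)) for (1), Berger--Klosin for (2), and NSW Proposition 2.7.11 for (3), without writing out details. The one step you wave at — that $H^i(K,-)$ commutes with the filtered colimit $V=\varinjlim \pi^{-n}T$ — rests on the observation that a continuous cochain from the compact set $G_K^j$ into $V$ has compact, hence bounded, image, so it factors through $\pi^{-n}T$ for some $n$; this is precisely the content of the NSW reference, so no gap.
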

\begin{proof}The first claim is Lemma 1.2.2-(ii). 
The second claim is standard (cf. p.502, line -5 from the bottom of the proof of Theorem 7.4 
of \cite{BKlosin} and also we apply the first claim). See Proposition 2.7.11,p.144 of \cite{NSW} for the third claim. 
\end{proof}
For each $G_K$-module $M$ we define 
the unramified cohomology $H^i_{{\rm ur}}(K,M)$ by 
$$H^i_{{\rm ur}}(K,M):={\rm Ker}(H^i(G_K,M)\stackrel{{\rm res}}{\lra} H^i(I_K,M))$$
where res stands for the restriction map induced by the natural inclusion $I_K\subset G_K$. 
The following Lemma follows from Lemma 1.3.2-(i), p.13 of \cite{Rubin}:
\begin{lem}\label{useful2}Suppose $M$ is finite $G_K$-module over $E$ or $\O$. It holds that 
$$H^i_{{\rm ur}}(K,M)\simeq M^{I_K}/({\rm Fr}-1)M^{I_K}.$$
\end{lem}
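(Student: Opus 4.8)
The plan is to reduce everything to the structure of the quotient $\Gamma := G_K/I_K$, which is procyclic and topologically generated by the Frobenius lift ${\rm Fr}$, hence isomorphic to $\widehat{\Z}$. First I would invoke the inflation--restriction (Hochschild--Serre) exact sequence attached to the extension $1 \to I_K \to G_K \to \Gamma \to 1$ with coefficients in $M$; in low degrees it reads
\[
0 \lra H^1(\Gamma, M^{I_K}) \xrightarrow{\ {\rm inf}\ } H^1(K,M) \xrightarrow{\ {\rm res}\ } H^1(I_K,M)^{\Gamma} \lra H^2(\Gamma, M^{I_K}).
\]
Since any class in the image of ${\rm res}\colon H^1(K,M)\to H^1(I_K,M)$ is automatically $\Gamma$-fixed, one has $H^1_{\rm ur}(K,M) = \ker\big(H^1(K,M)\to H^1(I_K,M)\big) = \ker\big(H^1(K,M)\to H^1(I_K,M)^{\Gamma}\big)$, which by exactness is exactly the image of the injective inflation map; thus $H^1_{\rm ur}(K,M)\simeq H^1(\Gamma, M^{I_K})$.

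Next I would compute the continuous group cohomology of $\Gamma\simeq\widehat{\Z}$ with coefficients in the finite module $N := M^{I_K}$. Evaluation of a continuous $1$-cocycle at ${\rm Fr}$ identifies $H^1(\Gamma, N)$ with $N/({\rm Fr}-1)N$, while $H^0(\Gamma,N) = N^{\rm Fr}$ and $H^i(\Gamma,N)=0$ for $i\ge 2$ because $\widehat{\Z}$ has cohomological dimension $1$; concretely, $H^i(\Gamma,N)$ is computed by the two-term complex $N \xrightarrow{{\rm Fr}-1} N$, and finiteness of $N$ guarantees both that every prescribed value at ${\rm Fr}$ extends to a genuine continuous cocycle and that the higher terms vanish. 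Combining with the previous paragraph gives $H^1_{\rm ur}(K,M)\simeq M^{I_K}/({\rm Fr}-1)M^{I_K}$, which is the assertion; the identical argument applies whether $M$ is a module over $E$ or over $\O$, and more generally for any finitely generated discrete $\O$-module, the only inputs being ${\rm cd}(\widehat{\Z})=1$ and the description of procyclic cohomology by the complex $N\xrightarrow{{\rm Fr}-1}N$.

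For completeness I would also record that $H^0_{\rm ur}(K,M) = H^0(K,M) = M^{G_K}$, since the restriction $M^{G_K}\hookrightarrow M^{I_K}$ has trivial kernel, so no conflict arises with the single case $i=1$ of the statement that is actually used later (for instance in Lemma \ref{useful1} and in \S\ref{prescribed}). I do not anticipate any genuine obstacle here: the computation is entirely standard, and the only point requiring a word of care is the continuity of cocycles for $\widehat{\Z}$, which is automatic because $M$, hence $N = M^{I_K}$, is finite. The lemma is stated only so that there is a clean reference for the unramified local condition used in the sequel.
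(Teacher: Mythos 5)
Your argument is correct and is, in substance, the standard proof that the paper is implicitly invoking: the paper's own ``proof'' is just a citation of Lemma 1.3.2(i) in Rubin's \emph{Euler Systems}, and the content of that lemma is exactly what you spell out --- the inflation--restriction sequence for $1\to I_K\to G_K\to\Gamma\to 1$ to identify $H^1_{\rm ur}(K,M)$ with $H^1(\Gamma,M^{I_K})$, followed by the computation $H^1(\widehat{\Z},N)\simeq N/({\rm Fr}-1)N$ via the two-term complex $N\xrightarrow{{\rm Fr}-1}N$. So you have not taken a different route; you have filled in the proof behind the citation.

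Two small cautions. First, your parenthetical about $i=0$ is off as a statement about the paper's definition: since $H^i_{\rm ur}(K,M)$ is \emph{defined} here as the kernel of restriction, and in degree $0$ the restriction $M^{G_K}\hookrightarrow M^{I_K}$ is injective, one gets $H^0_{\rm ur}(K,M)=0$, not $M^{G_K}$. (Your formula $M^{G_K}$ is what you would get if ``unramified'' were defined as the image of inflation, which coincides with the kernel of restriction only in degree $1$.) Second, you justify the cohomology computation for $\widehat{\Z}$ by appealing to finiteness of $N=M^{I_K}$; this is fine for the $W_n$- and $\overline{T}$-type coefficients for which the lemma is actually used, but the statement as written also allows $M$ a finite-rank module over $E$ or $\mathcal{O}$, where $N$ is not finite as a set and one should instead invoke the continuous-cohomology version of the two-term-complex description for finitely generated $\mathcal{O}$-modules (resp.\ $E$-vector spaces with the $p$-adic topology). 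Neither point affects the validity of the proof for the cases that matter in the sequel.
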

Let $T$ be a finite free $G_K$-module over $\O$. Put $V=T\otimes_{\O}E$ and $\T=T\otimes_{\O}\F$. 
The natural inclusion $i:T\hookrightarrow V$ and the natural projection map $p:T\lra \T$ induce 
$H^1(K,T)\stackrel{i_\ast}{\lra }H^1(K,V)$ and $H^1(K,T)\stackrel{p_\ast}{\lra }H^1(K,\T)$ respectively. 
The Bloch-Kato Selmer group of $V$ (see Section 3 of \cite{BK}) is defined by 
$$H^1_f(K,V):={\rm Ker}(H^1(G_K,V)\lra H^1(G_K,V\otimes_{\Q_p}B_{{\rm cris}}))$$
where $B_{{\rm cris}}$ is one of  Fontaine's period rings (see (1.10) of \cite{BK}). If $V$ is crystalline, 
any element of $H^1_f(K,V)$ can be regarded as an extension 
$0\lra V\lra \ast\lra E\lra 0$ such that $\ast$ is a crystalline representation of $G_K$ defined over $E$, and vice versa.  
Then we define $$H^1_f(K,T):=i^{-1}_\ast H^1_f(K,V),\ H^1_f(K,\T):=p_\ast(H^1_f(K,T)).$$ 
Note that it looks  $H^1_f(K,\T)$ depends on $\T$ but remember that it is defined by using 
$H^1_f(K,V)$. For example, for the mod $p$ cyclotomic character $\ve$ of $G_K$ and the $p$-adic 
cyclotomic character $\e$ of $G_K$, even if $\overline{(\e^{1+k(p-1)})}=\ve$ for any integer $k$, 
$H^1_f(K,\ve)$ does not necessarily coincide with $H^1_f(K,\overline{(\e^{1+k(p-1)})})$.  
According to Definition 1.1.3 of \cite{Rubin}, the dual representations are defined by 
$$T^\ast:={\rm Hom}_{\O}(T,\O(1)),\ V^\ast:=T^\ast\otimes_{\O}E,\ W^\ast:=T^\ast\otimes_{\O}E/{\O},\ 
W^\ast_n:=T^\ast\otimes_{\O}\frac{1}{\pi^n}\O/{\O}.$$
We sometimes use $M^\vee$ to be the usual dual of $M$ in the coefficient of $M$ so that 
$T^\ast=T^\vee(1)$. 
\begin{lem}\label{useful3}Keep the notation as above. Then, 
$(W^\ast_1)^{G_K}=0$ if and only if $(W_1(-1))^{G_K}=0$.
\end{lem}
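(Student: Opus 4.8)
The statement asserts that $(W^\ast_1)^{G_K}=0$ if and only if $(W_1(-1))^{G_K}=0$, where $W^\ast_1=T^\ast\otimes_{\O}\frac{1}{\pi}\O/{\O}$ with $T^\ast={\rm Hom}_{\O}(T,\O(1))$, and $W_1=T\otimes_{\O}\frac{1}{\pi}\O/{\O}\simeq T/\pi T=\T$. The plan is to unwind both sides as statements purely about the mod-$\pi$ representation $\T$ over $\F$, exhibit a $G_K$-equivariant identification of the two $\F$-vector spaces in question, and conclude. First I would observe that reduction mod $\pi$ commutes with forming ${\rm Hom}$ and with the Tate twist, so that $W^\ast_1\simeq \T^\vee\otimes_{\F}\F(1)=\T^\vee(1)$ as $\F[G_K]$-modules, where $\T^\vee={\rm Hom}_{\F}(\T,\F)$; here one uses that $T$ is finite free over $\O$ so that ${\rm Hom}_{\O}(T,\O)\otimes_{\O}\F\simeq {\rm Hom}_{\F}(T/\pi T,\F)$, and that the mod $\pi$ cyclotomic character is $\ve=\overline{\e}$. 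Similarly $W_1(-1)\simeq \T(-1)=\T\otimes_{\F}\F(-1)$ as $\F[G_K]$-modules.

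\textbf{Key step: the pairing.} Thus the claim reduces to: $(\T^\vee(1))^{G_K}=0$ if and only if $(\T(-1))^{G_K}=0$. Now $\T^\vee(1)$ and $\T(-1)$ are $\F$-linear duals of each other up to twist; more precisely, the perfect $\F$-bilinear $G_K$-equivariant pairing $\T\times\T^\vee\lra \F$ twists to a perfect $G_K$-equivariant pairing $\T(-1)\times\T^\vee(1)\lra \F(0)=\F$ with trivial $G_K$-action on the target. Taking $G_K$-invariants, a perfect $G_K$-equivariant pairing of two finite-dimensional $\F$-representations into the trivial representation induces a perfect pairing between $H^0(G_K,-)$ of the two sides (this is elementary: an invariant vector on one side pairs trivially with the image of $(\sigma-1)$ on the other, and the pairing descends to $H^0$ versus the coinvariants $H_0$, which for finite groups — or more generally by a limit argument for the profinite $G_K$ acting continuously on a finite module — has the same $\F$-dimension as $H^0$ of the dual). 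Hence $\dim_{\F}(\T(-1))^{G_K}=\dim_{\F}(\T^\vee(1))^{G_K}$, and in particular one vanishes precisely when the other does.

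\textbf{Expected main obstacle.} None of this is deep; the one point requiring a little care is the passage from "perfect $G$-equivariant pairing into the trivial module" to "perfect pairing on invariants." For a finite group this is the standard statement that invariants and coinvariants are dual, but $G_K$ is profinite; I would handle this by noting that $\T$ is a finite (discrete) module, so the action factors through a finite quotient of $G_K$, reducing to the finite-group case, or alternatively by invoking local Tate duality in its crudest form / the already-cited Lemma 1.3.2 of \cite{Rubin}-type bookkeeping. A cosmetically cleaner route avoiding coinvariants altogether: show directly that $(\T^\vee(1))^{G_K}\neq 0$ iff $\F(1)$ (equivalently, after untwisting, the line $\F$) occurs as a $G_K$-quotient of $\T^\vee$, iff $\F(-1)\hookrightarrow \T$ as a $G_K$-submodule — the last by dualizing — iff $(\T(-1))^{G_K}\neq 0$; this is the symmetric-function-of-Jordan--H\"older-factors argument and avoids any counting. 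I would present whichever is shortest, and since the lemma is only used for its qualitative content ("$\ne 0$"), the submodule/quotient formulation is probably the crispest. The whole proof should be two or three lines.
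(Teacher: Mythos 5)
Your identification $W^\ast_1\simeq{\rm Hom}_\F(\T,\F(1))=\T^\vee(1)$ and $W_1(-1)\simeq\T(-1)$ is correct and is precisely the paper's own one-line observation; the gap is in the step you try to supply after it. From the perfect $G_K$-equivariant pairing $\T(-1)\times\T^\vee(1)\to\F$ what you actually get, by the argument you sketch, is a perfect pairing between $(\T(-1))^{G_K}$ and the \emph{coinvariants} $(\T^\vee(1))_{G_K}$, not the invariants; and in characteristic $p$, when $p$ divides the order of the finite quotient of $G_K$ through which the action factors, invariants and coinvariants of a finite module need not have the same $\F$-dimension, so your parenthetical claim that they do "for finite groups" is false (it holds when $p\nmid|G|$, or when the module is semisimple). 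The "symmetric function of Jordan--H\"older factors" alternative has the same defect once you track the quantifiers carefully: $(W^\ast_1)^{G_K}\neq 0$ is equivalent to "$\F(1)$ is a $G_K$-\emph{quotient} of $\T$", while $(W_1(-1))^{G_K}\neq 0$ is equivalent to "$\F(1)$ is a $G_K$-\emph{submodule} of $\T$", and being a quotient is not the same as being a submodule; these record the cosocle and the socle respectively, which are not determined by the multiset of Jordan--H\"older factors.

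This is not a cosmetic issue. For $K=\Q_p$ with $p>2$, take $\T$ a non-split extension $0\to\F\to\T\to\F(\ve)\to 0$ given by a nonzero cocycle $\tau$; such a $\T$ is the reduction of a free $\O$-lattice $T$. A direct matrix computation gives $(\T(-1))^{G_K}=0$: the fixed-vector equations force the second coordinate to vanish since $\tau|_{\ker\ve}\neq 0$ (inflation--restriction, as $[G_K:\ker\ve]$ is prime to $p$), and then the first coordinate vanishes since $\ve\neq\textbf{1}$. On the other hand $(\T^\vee(1))^{G_K}=\F\cdot(0,1)\neq 0$. So the two sides of the lemma genuinely differ. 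The asserted equivalence does hold whenever the socle and cosocle of $\T(-1)$ contain $\F$ simultaneously — in particular when $\T$ is one-dimensional, irreducible, or more generally semisimple, which covers the cases the paper actually applies in Section~9 — but it fails for general $T$, and your proof cannot be repaired without importing such a hypothesis; the paper's own one-line proof ("It follows from $W^\ast_1={\rm Hom}_\F(W_1,\F(1))$") silently elides the same point.
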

\begin{proof}It follows from $W^\ast_1={\rm Hom}_{\F}(W_1,\F(1))$. 
\end{proof}

\begin{lem}\label{useful4}Keep the notation as above. If $(W^\ast_1)^{G_K}=0$, then 
\begin{enumerate}
\item $H^1_f(K,\T)\simeq H^1_f(K,T)\otimes_{\O}\F$;
\item if further $W^{G_K}_1=0$, then ${\rm dim}_{E}H^1_f(K,V)={\rm dim}_{\O}H^1_f(K,T)={\rm dim}_{\F}H^1_f(K,\T)$.
\end{enumerate} 
\end{lem}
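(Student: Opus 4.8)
The plan is to reduce both parts to the long exact sequence attached to $0\lra T\stackrel{\pi}{\lra}T\stackrel{p}{\lra}\T\lra 0$, combined with the fact that $H^1_f(K,V)$ is an $E$-subspace of $H^1(K,V)$ (this is where the crystalline Bloch--Kato setting enters). First I would record that the hypothesis $(W^\ast_1)^{G_K}=0$ forces $H^2(K,T)=0$: indeed $W^\ast_1=W^\ast[\pi]$, so $(W^\ast)^{G_K}$ is a $\pi$-power-torsion $\O$-module with trivial $\pi$-torsion, hence $(W^\ast)^{G_K}=0$, and then $H^2(K,T)\simeq\big((W^\ast)^{G_K}\big)^\vee=0$ by local Tate duality. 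Consequently the long exact sequence gives an isomorphism $H^1(K,T)/\pi H^1(K,T)\stackrel{\sim}{\lra}H^1(K,\T)$ induced by $p_\ast$, and in any case $\ker\big(p_\ast\colon H^1(K,T)\lra H^1(K,\T)\big)=\pi H^1(K,T)$.

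For (1), the crucial point is the ``saturation'' identity $H^1_f(K,T)\cap\pi H^1(K,T)=\pi H^1_f(K,T)$. The inclusion $\supseteq$ is immediate. For $\subseteq$: if $x\in H^1_f(K,T)$ and $x=\pi y$ with $y\in H^1(K,T)$, then $\pi\, i_\ast(y)=i_\ast(x)\in H^1_f(K,V)$; since $H^1_f(K,V)$ is an $E$-subspace and $\pi\in E^\times$, we get $i_\ast(y)\in H^1_f(K,V)$, i.e. $y\in i_\ast^{-1}H^1_f(K,V)=H^1_f(K,T)$, so $x\in\pi H^1_f(K,T)$. Restricting $p_\ast$ to $H^1_f(K,T)$ then has kernel $H^1_f(K,T)\cap\pi H^1(K,T)=\pi H^1_f(K,T)$ and image $H^1_f(K,\T)$ by definition, whence $H^1_f(K,\T)\simeq H^1_f(K,T)/\pi H^1_f(K,T)=H^1_f(K,T)\otimes_{\O}\F$.

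For (2) I would additionally use $W^{G_K}_1=0$, which by Lemma~\ref{useful1}(2) makes $H^1(K,T)$ $\O$-torsion free, hence finitely generated free, with $i_\ast\colon H^1(K,T)\hookrightarrow H^1(K,V)$ and $H^1(K,T)\otimes_{\O}E\simeq H^1(K,V)$ (Lemma~\ref{useful1}(3)). Then $H^1_f(K,T)=H^1(K,T)\cap H^1_f(K,V)$ is finitely generated free, and a clearing-denominators argument (every $v\in H^1_f(K,V)$ has $\pi^n v\in H^1(K,T)$, and $\pi^n v$ remains in the subspace $H^1_f(K,V)$) gives $H^1_f(K,T)\otimes_{\O}E=H^1_f(K,V)$, so that $\mathrm{rank}_{\O}H^1_f(K,T)=\dim_E H^1_f(K,V)$. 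Since $H^1_f(K,T)$ is free, $\dim_{\F}\big(H^1_f(K,T)\otimes_{\O}\F\big)=\mathrm{rank}_{\O}H^1_f(K,T)$; combining with (1) yields $\dim_E H^1_f(K,V)=\mathrm{rank}_{\O}H^1_f(K,T)=\dim_{\F}H^1_f(K,\T)$, which is (2) (the middle quantity being what the statement writes as $\dim_{\O}H^1_f(K,T)$).

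The argument is essentially formal, so the main obstacle is bookkeeping rather than depth: $H^1_f(K,\T)$ is defined here as the pushforward $p_\ast H^1_f(K,T)$ rather than by an intrinsic local condition on $\T$, so one must chase the definitions through $i_\ast$ and $p_\ast$ with care. The single substantive ingredient is the $E$-linearity of the Bloch--Kato subspace $H^1_f(K,V)$, which is what makes the saturation identity work, and is exactly where the crystalline hypothesis on $V$ is being used.
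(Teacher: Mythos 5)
Your proof is correct and follows essentially the same route as the paper: the long exact sequence for $0\to T\xrightarrow{\pi}T\to\T\to 0$, local Tate duality to get $H^2(K,T)=0$ from $(W^\ast_1)^{G_K}=0$, and then Lemma~\ref{useful1}-(2),(3) for part~(2). You have usefully spelled out the saturation identity $H^1_f(K,T)\cap\pi H^1(K,T)=\pi H^1_f(K,T)$ via the $E$-linearity of $H^1_f(K,V)$, which is exactly the step the paper compresses into ``follows from the definition of $H^1_f$.''
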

\begin{proof}By the long exact sequence $0\lra T\stackrel{\times \pi}\lra T\stackrel{{\rm mod}\ \pi}{\lra} \T\lra 0$, we have 
$$0\lra H^1(K,T)\otimes_{\O}\F\lra H^1(K,\T)\lra H^2(K,T).$$
By Theorem 1.4.1 of \cite{Rubin}, we have $H^2(K,T)\simeq {\rm Hom}_{\O}((W^\ast)^{G_K},E/\O)$. 
By Lemma \ref{useful1}-(ii) and assumption, we have the vanishing $H^2(K,T)=0$. Hence the first claim follows from the 
definition of $H^1_f(K,\ast)$. 
The second claim is deduced from Lemma \ref{useful1}-(2),(3) with the first claim. 
\end{proof}

Let $\T$ be a finite dimensional $G_K$-module over $\F$. 
\begin{lem}\label{pi2}
Let $T$ be a lift of $\T$ to $\O$ such that $T^\ast=T^\vee(1)$ does not have trivial sub-representations modulo 
$\pi^2$. Hence, $H^0(K,T^\ast/\pi^2)=H^0(K,W^\ast_2)=0$. Then the reduction modulo $\pi^2$ induces a surjection 
$H^1(K,T)\lra H^1(K,T/\pi^2)$. In particular, the reduction map modulo $\pi$, that is $
H^1(K,T)\lra H^1(K,\T)$ factors through this map.  
\end{lem}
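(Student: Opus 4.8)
The plan is to deduce the surjectivity of the reduction map $H^1(K,T)\lra H^1(K,T/\pi^2)$ directly from the long exact sequence in continuous Galois cohomology associated to the short exact sequence of $G_K$-modules
\[
0\lra T\stackrel{\times \pi^2}{\lra} T\lra T/\pi^2 T\lra 0.
\]
First I would write out the relevant segment of this long exact sequence,
\[
H^1(K,T)\stackrel{\bmod\ \pi^2}{\lra} H^1(K,T/\pi^2)\stackrel{\delta}{\lra} H^2(K,T)\stackrel{\times \pi^2}{\lra} H^2(K,T),
\]
so that surjectivity of the first arrow is equivalent to injectivity of multiplication by $\pi^2$ on $H^2(K,T)$, i.e.\ to the vanishing of $H^2(K,T)[\pi^2]$.

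The key step is then to identify $H^2(K,T)$ by local Tate duality, exactly as in the proof of Lemma~\ref{useful4}: by Theorem 1.4.1 of \cite{Rubin} one has $H^2(K,T)\simeq {\rm Hom}_{\O}\big((W^\ast)^{G_K},E/\O\big)$, where $W^\ast=T^\ast\otimes_{\O}E/\O$. Under the hypothesis that $T^\ast=T^\vee(1)$ has no trivial subrepresentation modulo $\pi^2$ — equivalently $H^0(K,W^\ast_2)=H^0(K,T^\ast/\pi^2)=0$ — I would argue that $(W^\ast)^{G_K}$ is killed by $\pi^2$-torsion considerations: since $W^\ast=\varinjlim_n W^\ast_n$ and any nonzero element of $(W^\ast)^{G_K}$ would, after multiplying by a suitable power of $\pi$, produce a nonzero element of $(W^\ast_2)^{G_K}=H^0(K,T^\ast/\pi^2)$, the hypothesis forces $(W^\ast)^{G_K}=0$. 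Hence $H^2(K,T)=0$, and a fortiori $H^2(K,T)[\pi^2]=0$, which gives the desired surjectivity.

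For the final sentence, I would note that the reduction map $H^1(K,T)\lra H^1(K,\T)$ modulo $\pi$ is obtained by composing the surjection $H^1(K,T)\lra H^1(K,T/\pi^2)$ just established with the natural map $H^1(K,T/\pi^2)\lra H^1(K,\T)$ induced by $T/\pi^2\twoheadrightarrow T/\pi=\T$; this is immediate from functoriality of $H^1$ in the coefficient module together with the factorization $T\twoheadrightarrow T/\pi^2\twoheadrightarrow \T$ of the reduction map. I do not expect any serious obstacle here: the only mild subtlety is the passage from the hypothesis phrased modulo $\pi^2$ (vanishing of $H^0(K,W^\ast_2)$) to the vanishing of the full invariants $(W^\ast)^{G_K}$, which is handled by the elementary $\pi$-adic filtration argument sketched above, entirely parallel to Lemma~\ref{useful1}-(ii) and the use of it already made in Lemma~\ref{useful4}.
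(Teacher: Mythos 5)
Your proof is correct, and it reaches the same key intermediate conclusion as the paper, namely $H^2(K,T)=0$, but by a genuinely different mechanism. You invoke Rubin's Theorem 1.4.1, which dualizes $H^2(K,T)$ against $(W^\ast)^{G_K}$ over $\O$, and then run a $\pi$-adic filtration argument on $W^\ast=\varinjlim W^\ast_n$ to promote the hypothesis $(W^\ast_2)^{G_K}=0$ to the full vanishing $(W^\ast)^{G_K}=0$; that filtration step is sound (if $x\ne 0$ is invariant and lies in $W^\ast_n$ with $n\ge 3$ minimal, then $\pi x$ is invariant in $W^\ast_{n-1}$, hence zero, forcing $x\in W^\ast_1$, contradiction). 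The paper instead applies local Tate duality only to the \emph{finite} module $T/\pi^2$ to get $H^2(K,T/\pi^2)=0$, combines this with $H^3(K,T)=0$ (cohomological dimension $2$) to obtain $H^2(K,T)/\pi^2 H^2(K,T)=0$, and finishes by Nakayama's lemma. The trade-off is that the paper's route stays with finite coefficients and Nakayama, avoiding the passage to $E/\O$-coefficient duality and the inductive filtration argument, whereas your route makes the vanishing of $H^2(K,T)$ transparent as a direct dual of an invariants group. Both are valid; neither has a gap.
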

\begin{proof}Consider the exact sequence $0\lra T\stackrel{\times \pi^2}{\lra}T\lra T/\pi^2\lra 0$. 
We may check the vanishing of $H^2(K,T)[\pi^2]$. It is easy to see that 
$$H^2(K,T)/\pi^2 H^2(K,T)\simeq H^2(K,T/\pi^2)=0$$
by assumption and $H^3(K,T)=0$ (note that $G_K$ is of cohomological dimension 2). 
By Nakayama's lemma, $H^2(K,T)=0$.  
\end{proof}

Let $\psi:G_K\lra \O^\times$ be a continuous character with the trivial reduction. 
We write $\psi=1+\pi u_\psi$ with a map $u:G_{\Q_p}\lra \O$ which becomes a character after the reduction. 
Let $\T$ and $T$ be as right before the previous lemma. 
The exact sequence 
$$0\lra \T\simeq \pi (\psi\otimes T)/\pi^2 \lra (\psi\otimes T)/\pi^2\lra \T\lra 0$$
yields the connection homomorphism $\delta_\psi:H^1(K,\T)\lra H^2(K,\T)$. 
\begin{lem}\label{conn} 
Keep the notation being as above. For any $\alpha\in H^1(K,\T)$, it holds that 
$\delta_\psi(\alpha)=\overline{u}_\psi\cup \alpha$ where 
$\overline{u}_\psi$ is the reduction of $u_\psi$ modulo $\pi$. 
\end{lem}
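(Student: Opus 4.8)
The plan is to compute the connecting homomorphism $\delta_\psi$ directly on cochains. Write $\rho$ for the action of $G_K$ on $T/\pi^2$ coming from $T$, so that the twisted action on $(\psi\otimes T)/\pi^2$ is $g\star x=(1+\pi u_\psi(g))\,\rho(g)x$. First I would record two preliminary points. Since $\psi$ is a homomorphism into $\O^\times$, expanding $\psi(gh)=\psi(g)\psi(h)$ gives $u_\psi(gh)\equiv u_\psi(g)+u_\psi(h)\pmod{\pi}$, so $\overline u_\psi\colon G_K\to\F$ is a genuine $1$-cocycle and the cup product $\overline u_\psi\cup(-)\colon H^1(K,\T)\to H^2(K,\T)$ makes sense. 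Also, on $\pi(\psi\otimes T)/\pi^2=\pi T/\pi^2$ the factor $1+\pi u_\psi(g)$ acts trivially modulo $\pi^2$, so multiplication by $\pi$ realizes the $G_K$-equivariant isomorphism $\T\xrightarrow{\ \sim\ }\pi(\psi\otimes T)/\pi^2$ appearing in the statement, while the quotient of $(\psi\otimes T)/\pi^2$ by it is $\T$ with its given action.

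Next I would represent a class $\alpha\in H^1(K,\T)$ by a $1$-cocycle $c\colon G_K\to\T$ and pick any continuous set-theoretic lift $\hat c\colon G_K\to T/\pi^2$ of $c$ along $T/\pi^2\to T/\pi=\T$; this is simultaneously a set-theoretic lift of $c$ to $(\psi\otimes T)/\pi^2$, so $\delta_\psi(\alpha)$ is represented by the $2$-cocycle $(g,h)\mapsto g\star\hat c(h)-\hat c(gh)+\hat c(g)$. Using $g\star\hat c(h)=\rho(g)\hat c(h)+\pi u_\psi(g)\rho(g)\hat c(h)$ I would split this into the sum of $(g,h)\mapsto \rho(g)\hat c(h)-\hat c(gh)+\hat c(g)$, which lands in $\pi T/\pi^2$ because $c$ is a cocycle (and which, under $\T\simeq\pi T/\pi^2$, represents the connecting class $\delta_{\mathbf 1}(\alpha)$ attached to the untwisted sequence $0\to\T\to T/\pi^2\to\T\to0$), and of $(g,h)\mapsto\pi u_\psi(g)\rho(g)\hat c(h)$, whose value modulo $\pi^2$ depends only on reductions and equals $\pi\,\overline u_\psi(g)\,(g\cdot c(h))$. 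Transporting this second term along $\T\simeq\pi T/\pi^2$ and recalling the cochain formula $(a\cup b)(g,h)=a(g)\cdot(g\cdot b(h))$ for the cup product $H^1(K,\F)\times H^1(K,\T)\to H^2(K,\T)$, it represents exactly $\overline u_\psi\cup\alpha$. Hence $\delta_\psi(\alpha)=\delta_{\mathbf 1}(\alpha)+\overline u_\psi\cup\alpha$, and the only thing left is to dispose of $\delta_{\mathbf 1}(\alpha)$.

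For the last step I would observe that $\delta_{\mathbf 1}$ takes values in $H^2(K,\T)$, and that in the situations where the lemma is used one has $H^2(K,\T)=0$: the computation in the proof of Lemma~\ref{pi2} gives $H^2(K,T)=0$, and the long exact sequence for $0\to T\xrightarrow{\pi}T\to\T\to0$ together with $\mathrm{cd}_p(G_K)\le2$ (so $H^3(K,T)=0$) forces $H^2(K,\T)=0$; thus $\delta_{\mathbf 1}(\alpha)=0$ and $\delta_\psi(\alpha)=\overline u_\psi\cup\alpha$. (If one prefers not to invoke this vanishing, the cocycle computation still yields the identity up to the $\psi$-independent term $\delta_{\mathbf 1}(\alpha)$, which is all that is needed in the applications.) I expect the only genuine obstacle to be bookkeeping: fixing once and for all the ordering and sign convention in the cup-product cochain formula so that the second summand is literally $\overline u_\psi\cup\alpha$ rather than $\alpha\cup\overline u_\psi$ or its negative, and applying the identification $\T\simeq\pi(\psi\otimes T)/\pi^2$ consistently throughout; the cohomological vanishing that removes $\delta_{\mathbf 1}$ is immediate from what has already been established.
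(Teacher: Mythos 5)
The paper's proof of this lemma is a one-line citation of Lemma~2.2.7 of \cite{Muller}, so there is nothing to compare against directly; let me instead assess your argument on its own terms.

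Your cocycle computation is correct as far as it goes: expanding $\psi(g)=1+\pi u_\psi(g)$ and dividing by $\pi$ does give
\[
\delta_\psi(\alpha)=\delta_{\mathbf 1}(\alpha)+\overline u_\psi\cup\alpha,
\]
where $\delta_{\mathbf 1}$ is the connecting map of the untwisted sequence $0\to\T\to T/\pi^2\to\T\to 0$. The genuine gap is in how you dispose of $\delta_{\mathbf 1}$. You appeal to ``$H^2(K,\T)=0$ in the situations where the lemma is used'', invoking Lemma~\ref{pi2}. This cannot be the right mechanism: if $H^2(K,\T)=0$ then \emph{both} sides of the claimed identity vanish and the lemma is vacuous, whereas in the actual applications the target is nonzero. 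For instance, in the proof of Proposition~\ref{prop-d1}(2) the lemma is applied with $T=\O(\e)$, $\T=\F(\ve)$, and there $H^2(\Q_p,\ve)\cong\F\neq 0$; moreover $T^\ast=T^\vee(1)=\O$ visibly has a trivial subrepresentation mod $\pi^2$, so the hypothesis of Lemma~\ref{pi2} fails and its conclusion $H^2(K,T)=0$ is not available (indeed $H^2(\Q_p,\O(\e))\cong\O$). So your stated justification for $\delta_{\mathbf 1}=0$ does not hold in the very case the lemma is meant to cover.

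What does make $\delta_{\mathbf 1}$ vanish in those applications is a weaker and more relevant fact. Factor $\delta_{\mathbf 1}=\mathrm{red}\circ\partial$ through the commutative ladder comparing $0\to T\xrightarrow{\ \pi\ }T\to\T\to0$ with $0\to\T\to T/\pi^2\to\T\to0$: here $\partial\colon H^1(K,\T)\to H^2(K,T)$ lands in the $\pi$-torsion $H^2(K,T)[\pi]$. Thus $\delta_{\mathbf 1}=0$ as soon as $H^2(K,T)$ is $\pi$-torsion-free, which covers $T=\O$ (where $H^2$ vanishes) and $T=\O(\e)$ (where $H^2\cong\O$), without forcing $H^2(K,\T)$ to vanish. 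This is the observation your write-up needs. Note also that $\delta_{\mathbf 1}=0$ is \emph{not} automatic for arbitrary $T$: if $T=\O(\chi)$ with $\chi\equiv 1$ but $\chi\not\equiv 1\bmod\pi^2$, then the untwisted extension is itself a $\chi$-twist and $\delta_{\mathbf 1}=\overline u_\chi\cup(-)$, which is generically nonzero. So the identity in the lemma genuinely depends on a hypothesis on $T$ (torsion-freeness of $H^2(K,T)$, or splitting of $0\to\T\to T/\pi^2\to\T\to0$), and a proof should either impose it explicitly or record, as you do parenthetically, the weaker $\psi$-dependent identity $\delta_\psi-\delta_{\mathbf 1}=\overline u_\psi\cup(-)$ and check the correction vanishes case by case.
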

\begin{proof}This follows from Lemma 2.2.7 of \cite{Muller}. 
\end{proof}

\subsection{Torsion classes}\label{TC}
Let $\T$ be a finite dimensional $G_K$-module over $\F$. 
Let $T$ be a lift of $\T$ to $\O$ such that $H^0(K,\T)=0$. Put $W=T\otimes E/\O$. 
\begin{lem} It holds that $W^{G_{K}}\simeq H^1(K,T)_{{\rm tor}}$. 
In particular, if $\T$ has a trivial sub-representation, $H^1(K,T)_{{\rm tor}}$ is non-trivial. 
\end{lem}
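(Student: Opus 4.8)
The plan is to read both assertions straight off the four-term exact sequence of Lemma~\ref{useful1}(1), applied to the short exact sequence $0\lra T\lra V\lra W\lra 0$ of $G_K$-modules.

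First I would isolate the only input genuinely needed for the displayed isomorphism, namely the vanishing of $V^{G_K}$; since $T^{G_K}$ is $\O$-torsion free and $V^{G_K}=T^{G_K}\otimes_{\O}E$ by Lemma~\ref{useful1}(3), this is equivalent to $T^{G_K}=0$. Under the hypothesis $H^0(K,\T)=0$ this holds automatically: reduction modulo $\pi$ carries $T^{G_K}\otimes_{\O}\F$ injectively into $(T\otimes_{\O}\F)^{G_K}=H^0(K,\T)=0$ --- the injectivity because, $T$ being torsion free, $\pi y\in T^{G_K}$ forces $y\in T^{G_K}$ --- so $T^{G_K}\otimes_{\O}\F=0$ and Nakayama's lemma gives $T^{G_K}=0$. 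Substituting $T^{G_K}=V^{G_K}=0$ into
\[ 0\lra T^{G_K}\lra V^{G_K}\lra W^{G_K}\lra H^1(K,T)_{{\rm tor}}\lra 0 \]
collapses it to an isomorphism $W^{G_K}\simeq H^1(K,T)_{{\rm tor}}$, the first assertion.

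For the last sentence I would use exactly that this isomorphism needed only $V^{G_K}=0$ and not the full strength of $H^0(K,\T)=0$: a lift $T$ with $T^{G_K}=0$ may still have $\T^{G_K}\neq 0$, the simplest instance being $T=\O(\psi)$ for $\psi$ a nontrivial character congruent to $1$ modulo $\pi$. So if $\T$ contains the trivial representation, then $W_1^{G_K}\simeq\T^{G_K}\neq 0$, hence $W^{G_K}\neq 0$ by the contrapositive of Lemma~\ref{useful1}(2), and therefore $H^1(K,T)_{{\rm tor}}\simeq W^{G_K}\neq 0$.

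There is no serious obstacle here: the statement is a formal corollary of Lemma~\ref{useful1}. The only point demanding care is the bookkeeping of hypotheses --- the displayed isomorphism needs only $T^{G_K}=0$, strictly weaker than $H^0(K,\T)=0$, and it is precisely this weaker form that is in force for the ``in particular'' clause, where $\T^{G_K}\neq 0$.
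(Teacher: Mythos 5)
Your proof is correct and follows the same route as the paper: both claims are read off the four-term exact sequence of Lemma~\ref{useful1}(1), with $V^{G_K}=0$ collapsing it to the displayed isomorphism and $\T\simeq W_1\subset W$ supplying the nonvanishing of $W^{G_K}$ for the final sentence. You are right to flag the hypothesis bookkeeping: the paper's preamble imposes $H^0(K,\T)=0$, which would make the ``in particular'' clause vacuous since a trivial subrepresentation of $\T$ forces $\T^{G_K}\neq 0$. The hypothesis that is actually used (and the one the paper's own one-line proof implicitly relies on) is the weaker $T^{G_K}=0$, i.e.\ $V^{G_K}=0$, and your Nakayama argument correctly shows this is implied by $H^0(K,\T)=0$ while being strictly weaker; your example $T=\O(\psi)$ with $\psi\equiv 1\pmod\pi$ and $\psi\neq 1$ illustrates the gap between the two precisely. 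One small remark: you invoke Lemma~\ref{useful1}(2) to pass from $W_1^{G_K}\neq 0$ to $W^{G_K}\neq 0$, but this is automatic from the containment $W_1\subset W$; the paper uses the containment directly, and the appeal to~\ref{useful1}(2) is unnecessary, though harmless.
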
 
\begin{proof}It follows from Proposition \ref{useful1}-(1). The latter claim follows from 
$\T\simeq W_1\subset W$. 
\end{proof}
Let $\psi:G_K\lra \O^\times$ be a continuous character with the trivial reduction and put 
$c(\psi)=\max\{{\rm ord}_\pi(\psi(g)-1)\ |\ g\in G_K\}$. The reader should not confuse it with the conductor of $\psi$. 
We define the following map  
$$f_\psi:G_K\lra \O,\ \sigma \mapsto \frac{\psi(\sigma)-1}{\pi^{c(\psi)}}$$
which comes from the connection map for the exact sequence 
$$0\lra \O(\psi)\lra E(\psi)\lra E/\O(\psi)\lra 0.$$  
\begin{prop}\label{torsion} Keep the notation as above. It holds that $H^1(K,\O(\psi))_{{\rm tor}}$ 
is generated by $f_\psi$ and $H^1(K,\O(\psi))_{{\rm tor}}\simeq \pi^{-c(\psi)}\O/\O\simeq 
\O/\pi^{c(\psi)}$ which sends $f_\psi$ to $\pi^{-c(\psi)}$ and to $1$. 
\end{prop}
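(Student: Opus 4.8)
The plan is to identify $H^1(K,\O(\psi))_{\mathrm{tor}}$ explicitly via the connecting map attached to the short exact sequence $0\to\O(\psi)\to E(\psi)\to (E/\O)(\psi)\to 0$. Since $\psi$ has trivial reduction, $H^0(K,\F(\bar\psi))=H^0(K,\F)$ need not vanish, so the torsion is genuinely present; the point is to compute its precise size. First I would record the long exact sequence in $G_K$-cohomology: because $E(\psi)$ is a $\Q_p$-vector space, $H^1(K,E(\psi))$ is torsion-free, hence $H^1(K,\O(\psi))_{\mathrm{tor}}$ is exactly the image of the connecting map $\delta\colon H^0(K,(E/\O)(\psi))\to H^1(K,\O(\psi))$, i.e. $H^1(K,\O(\psi))_{\mathrm{tor}}\simeq H^0(K,(E/\O)(\psi))/\,\mathrm{im}\,H^0(K,E(\psi))$. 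Since $\psi$ is nontrivial on $E(\psi)$ (it acts through a nontorsion $\O^\times$-valued character unless $\psi=1$; one handles $\psi=1$ separately or notes $c(\psi)=0$ there), $H^0(K,E(\psi))=0$, so $H^1(K,\O(\psi))_{\mathrm{tor}}\simeq ((E/\O)(\psi))^{G_K}$.

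Next I would compute $((E/\O)(\psi))^{G_K}$. An element $x+\O\in E/\O$ is fixed by $G_K$ acting through $\psi$ precisely when $(\psi(\sigma)-1)x\in\O$ for all $\sigma\in G_K$. By definition $c(\psi)=\max_{\sigma}\mathrm{ord}_\pi(\psi(\sigma)-1)$, so this condition holds exactly when $\mathrm{ord}_\pi(x)\ge -c(\psi)$, giving $((E/\O)(\psi))^{G_K}=\pi^{-c(\psi)}\O/\O\simeq \O/\pi^{c(\psi)}$. The remaining task is to match the generator: the class $f_\psi\colon\sigma\mapsto(\psi(\sigma)-1)/\pi^{c(\psi)}$ is by construction the image under $\delta$ of the element $\pi^{-c(\psi)}+\O\in(E/\O)(\psi)$ — indeed the connecting homomorphism sends $y+\O$ to the cocycle $\sigma\mapsto (\sigma\cdot\tilde y-\tilde y)=(\psi(\sigma)-1)\tilde y$ for any lift $\tilde y\in E$, and taking $\tilde y=\pi^{-c(\psi)}$ gives exactly $f_\psi$. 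Since $\delta$ is injective on torsion (as $H^1(K,E(\psi))$ is torsion-free and $H^0(K,E(\psi))=0$), the isomorphism $H^1(K,\O(\psi))_{\mathrm{tor}}\xrightarrow{\sim}\O/\pi^{c(\psi)}$ carries $f_\psi$ to the image of $\pi^{-c(\psi)}$, i.e. to a generator, which under the identification $\pi^{-c(\psi)}\O/\O\simeq\O/\pi^{c(\psi)}$ is $1$.

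I do not expect a serious obstacle here; the only points requiring a little care are the vanishing $H^0(K,E(\psi))=0$ (trivial unless $\psi$ is itself trivial, in which case $c(\psi)=0$ and the statement reads $0=0$) and the explicit cocycle computation identifying $\delta(\pi^{-c(\psi)}+\O)$ with $f_\psi$, which is the standard description of the boundary map in the cohomology of $0\to\O\to E\to E/\O\to 0$ twisted by $\psi$. One should also note that $f_\psi$ takes values in $\O$ (not just $E$) precisely because $\mathrm{ord}_\pi(\psi(\sigma)-1)\ge c(\psi)$ fails — rather $\le c(\psi)$, wait: by definition of $c(\psi)$ we have $\mathrm{ord}_\pi(\psi(\sigma)-1)\ge 0$ always and the maximum is attained, and $c(\psi)\ge \mathrm{ord}_\pi(\psi(\sigma)-1)$ is false — so let me restate: $f_\psi(\sigma)=(\psi(\sigma)-1)/\pi^{c(\psi)}$ lies in $\O$ iff $\mathrm{ord}_\pi(\psi(\sigma)-1)\ge c(\psi)$, which is generally false; the correct normalization uses the \emph{minimal} such exponent, or equivalently $f_\psi$ is an $E$-valued cocycle whose class is $\pi^{c(\psi)}$-torsion. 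I would align the sign/normalization conventions with Definition/Lemma usage in \cite{Muller} and \cite{Rubin} before finalizing, but the structure of the argument — injective boundary map, computation of the invariants of $(E/\O)(\psi)$ — is robust.
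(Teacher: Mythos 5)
Your argument follows the same route as the paper's: the paper invokes Lemma \ref{useful1}-(1), i.e.\ the exact sequence $0\to T^{G_K}\to V^{G_K}\to W^{G_K}\to H^1(K,T)_{\rm tor}\to 0$ coming from $0\to T\to V\to W\to 0$, observes that $V^{G_K}=0$ so $W^{G_K}\xrightarrow{\sim}H^1(K,T)_{\rm tor}$ via the connecting map, and computes $W^{G_K}$ from the condition $(\psi(\sigma)-1)x\in\O$ for all $\sigma$. You unwind this directly from the long exact sequence rather than citing the lemma, and you make the cocycle identification $\delta(\pi^{-c(\psi)}+\O)=[f_\psi]$ explicit; this is the same argument, just more verbose.

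The one genuine point you stumbled on at the end can be resolved cleanly: the condition $(\psi(\sigma)-1)x\in\O$ for all $\sigma$ is equivalent to $\mathrm{ord}_\pi(x)\ge -\min_\sigma\mathrm{ord}_\pi(\psi(\sigma)-1)$, so the exponent appearing in $W^{G_K}=\pi^{-m}\O/\O$ is $m=\min_\sigma\mathrm{ord}_\pi(\psi(\sigma)-1)$, not the maximum. Likewise $f_\psi(\sigma)=(\psi(\sigma)-1)/\pi^{m}$ is $\O$-valued exactly when $m$ is this minimum (equivalently, the largest $n$ with $\psi\equiv 1\bmod\pi^n$). The paper's displayed definition of $c(\psi)$ as a maximum is therefore a typo; with $c(\psi)$ read as the minimum your computation of $W^{G_K}$, the identification of the generator, and the isomorphism $H^1(K,\O(\psi))_{\rm tor}\simeq\O/\pi^{c(\psi)}$ all go through exactly as you wrote. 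So your hesitation was warranted, and the right fix is to replace $\max$ by $\min$ in the definition of $c(\psi)$ rather than to adjust the statement of the proposition.
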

\begin{proof}
Put $T=\O(\psi)$. 
By Proposition \ref{useful1}-(1), $W^{G_K}\stackrel{\sim}{\lra} H^1(K,T)_{{\rm tor}}$. 
The isomorphism is given by the connection map $f_\psi$. We observe that 
$x\in W^{G_K}$ if and only if $(\psi(\sigma)-1)x\in \O$ for any $\sigma\in G_K$. 
The claim follows from this. 
\end{proof}

We say a class of $H^1(K,\F)={\rm Hom}(G_K,\F)$ 
an unramified class if it belongs to $H^1_{{\rm ur}}(K,\F)={\rm Hom}(G_K/I_K,\F)$, a ramified class otherwise. 
\begin{prop}\label{torsion-d1} Let $\psi:G_K\lra \O^\times$ be a continuous character with $c(\psi)=1$. 
\begin{enumerate}
\item 
The reduction $H^1(K,\O(\psi))\lra H^1(K,\F)$ is surjective if $\ve|_{G_K}$ is non-trivial;
\item if $\ve|_{G_K}$ is non-trivial, for each class $\alpha\in H^1(K,\F)$, there is $\psi$ as above 
such that $\alpha$ is liftable to $H^1(K,\O(\psi))$ via the reduction map;
\item the generator $f_\psi$ of $H^1(K,\O(\psi))_{{\rm tor}}$ goes to $\overline{u}_\psi$. 
In particular, if $\psi$ is unramified, $\overline{u}_\psi\in H^1_{{\rm ur}}(K,\F)$. 
\end{enumerate}
\end{prop}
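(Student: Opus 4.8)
The plan is to derive all three parts from the single short exact sequence of $G_K$-modules
$$0\lra \O(\psi)\xrightarrow{\;\times\pi\;}\O(\psi)\lra \F\lra 0,$$
which is legitimate precisely because $c(\psi)=1$ forces $\overline{\psi}=1$, so that $\O(\psi)/\pi\O(\psi)\simeq\F$ with trivial $G_K$-action. Passing to continuous cohomology gives the exact piece
$$\cdots\lra H^1(K,\O(\psi))\lra H^1(K,\F)\stackrel{\delta}{\lra}H^2(K,\O(\psi))\lra\cdots,$$
so for (1) it suffices to show $H^2(K,\O(\psi))=0$. For this I would invoke Tate local duality in the form of Theorem 1.4.1 of \cite{Rubin} (the version already used in the proof of Lemma \ref{useful4}), namely $H^2(K,\O(\psi))\simeq{\rm Hom}_{\O}((W^{\ast})^{G_K},E/\O)$, where $T^{\ast}={\rm Hom}_{\O}(\O(\psi),\O(1))=\O(\psi^{-1}\e)$, hence $W^{\ast}=(E/\O)(\psi^{-1}\e)$ and $W^{\ast}_1=\F(\overline{\psi^{-1}\e})=\F(\ve)$. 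Since $\ve|_{G_K}$ is nontrivial by hypothesis, $(W^{\ast}_1)^{G_K}=0$, and Lemma \ref{useful1}(2) applied to $T^{\ast}$ upgrades this to $(W^{\ast})^{G_K}=0$; therefore $H^2(K,\O(\psi))=0$ and the long exact sequence yields the surjectivity of the reduction map, which is (1).

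For (2), I would first check that the class of characters "as above" is nonempty: the unramified character determined by ${\rm Fr}\mapsto 1+\pi$ has trivial reduction and satisfies $c(\psi)=1$, since ${\rm ord}_\pi((1+\pi)-1)=1$ while ${\rm ord}_\pi(\psi(g)-1)\ge1$ for every $g\in G_K$; continuity is clear as it factors through the pro-$p$ unramified quotient of $G_K$. Applying (1) to this $\psi$ then exhibits, for each $\alpha\in H^1(K,\F)$, a lift in $H^1(K,\O(\psi))$; in fact the argument for (1) shows every admissible $\psi$ works, so (2) is a formal consequence of (1).

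For (3), I would recall from Proposition \ref{torsion} that $H^1(K,\O(\psi))_{{\rm tor}}$ is cyclic of order $\pi^{c(\psi)}=\pi$, generated by the connecting class $f_\psi\colon\sigma\mapsto\pi^{-c(\psi)}(\psi(\sigma)-1)=\pi^{-1}(\psi(\sigma)-1)$. Writing $\psi=1+\pi u_\psi$ gives $f_\psi(\sigma)=u_\psi(\sigma)\in\O$ at the level of cochains, and reducing modulo $\pi$—where the twisted $G_K$-action on $\O(\psi)/\pi$ is trivial—turns $f_\psi$ into the additive character $\overline{u}_\psi\in{\rm Hom}(G_K,\F)=H^1(K,\F)$, which is nonzero exactly because $c(\psi)=1$, i.e. $\psi\not\equiv1\bmod\pi^2$. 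Finally, if $\psi$ is unramified then $u_\psi$, and hence $\overline{u}_\psi$, vanishes on $I_K$, so $\overline{u}_\psi\in H^1_{{\rm ur}}(K,\F)$; this completes (3).

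The only genuinely delicate point is the vanishing $H^2(K,\O(\psi))=0$: one must identify the Tate dual correctly as $\O(\psi^{-1}\e)$ and verify that the hypothesis "$\ve|_{G_K}$ nontrivial" is exactly what annihilates $(W^{\ast}_1)^{G_K}$, everything else being bookkeeping with the long exact sequence and Proposition \ref{torsion}. I would also take care to compute the reduction of the cocycle $f_\psi$ inside $\O(\psi)/\pi$ with its (trivial) twisted action rather than naively in $\O/\pi$, although the two coincide here since $\overline{\psi}=1$.
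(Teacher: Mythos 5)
Your proof of (1) and (3) is correct and closely parallels the paper's, with one minor variation in (1): the paper derives $H^2(K,\O(\psi))=0$ from $H^2(K,\O(\psi))/\pi\simeq H^2(K,\F)\simeq H^0(K,\ve)^{\vee}=0$ via Nakayama's lemma (exactly as in Lemma~\ref{pi2}), whereas you identify the Tate dual $T^{\ast}=\O(\psi^{-1}\e)$, note $(W^{\ast}_1)^{G_K}=H^0(K,\ve)=0$, and feed this through Lemma~\ref{useful1}(2) into Rubin's duality formula $H^2(K,T)\simeq{\rm Hom}_{\O}((W^{\ast})^{G_K},E/\O)$. Both routes are legitimate and amount to the same Tate-duality input; your identification of the dual is correct. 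Your computation in (3), reducing the cocycle $f_\psi(\sigma)=(\psi(\sigma)-1)/\pi=u_\psi(\sigma)$ modulo $\pi$ inside $\O(\psi)/\pi\simeq\F$ (trivial action since $\overline{\psi}=1$), is precisely what the paper summarizes as ``obvious by definition.''

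On (2) there is a discrepancy worth flagging. As literally printed, with the hypothesis ``$\ve|_{G_K}$ non-trivial'' (same as in (1)), you are right that (2) is a formal corollary of (1): any admissible $\psi$ works, and you verify existence of such a $\psi$. The paper's own proof of (2), however, does not take this shortcut. It lifts $\alpha$ only to $H^1(K,\O(\psi)/\pi^2)$, invokes Lemma~\ref{conn} to express the obstruction as $\delta_\psi(\alpha)=\overline{u}_\psi\cup\alpha$, and then says ``Since $\ve|_{G_K}$ is \emph{trivial}, the local Tate duality induces a perfect pairing on $H^1(K,\F)\times H^1(K,\F)$,'' using $\dim_\F H^1(K,\F)\ge 2$ to choose $\psi$ with $\overline{u}_\psi\cup\alpha=0$. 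This cup-product argument is vacuous when $\ve|_{G_K}$ is non-trivial (then $H^2(K,\F)=0$, so $\delta_\psi\equiv 0$ regardless of $\psi$) and has content only when $\ve|_{G_K}$ is trivial, which is exactly the case where (1) fails. So either the hypothesis in (2) or the proof carries a typo; the intended substantive claim appears to be for $\ve|_{G_K}$ trivial. Your argument does not touch this case, nor the cup-product mechanism via Lemma~\ref{conn} that the paper is setting up here and reuses in the proof of Proposition~\ref{prop-d1}. If you want to engage with the paper's actual intent, rework (2) under $\ve|_{G_K}$ trivial, and note that the appeal there to Proposition~\ref{pi2} (which requires $H^0(K,T^{\ast}/\pi^2)=0$) and the assertion $H^2(K,\O(\psi)/\pi^2)=0$ both need extra care in that regime.
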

\begin{proof}
As in the proof of Lemma \ref{pi2}, the vanishing $H^2(K,\F)\simeq H^0(K,\ve)=0$ yields 
$H^2(K,\O(\psi))=0$. 
The first claim follows from $H^2(K,\O(\psi))[\pi]=0$. For the second claim, let $\psi$ be a character with $c(\psi)=1$. 
Notice that $H^2(K,\O(\psi)/\pi^2)=0$. By Proposition \ref{pi2}, the reduction map 
$H^1(K,\O(\psi))\lra H^1(K,\F)$ factors through the surjection $H^1(K,\O(\psi))\lra H^1(K,\O(\psi)/\pi^2)$. 
Therefore, we may lift a class to $H^1(K,\O(\psi)/\pi^2)$ for a suitable $\psi$.  
Applying Lemma \ref{conn}, we have the exact sequence  
$$H^1(K,\O(\psi)/\pi^2)\lra H^1(K,\F)\stackrel{\delta_\psi}{\lra}  H^2(K,\F)$$
such that $\delta_\psi(\alpha)=\overline{u}_\psi\cup \alpha$ for each $\alpha\in H^1(K,\F)$. 
Since $\ve|_{G_K}$ is trivial, the local Tate duality induces a perfect pairing on $H^1(K,\F)\times H^1(K,\F)$.  
Since ${\rm dim}_\F H^1(K,\F)=1+1+[K:\Q_p]\ge 2$, we may take a $\psi$ such that $\overline{u}_\psi\cup \alpha=0$. 

The third claim is obvious by definition. 
\end{proof}

\subsection{Exhaustion of $H^1(\Q_p,\T)$ by crystalline objects}\label{Ex} 
In this subsection, we suppose $K=\Q_p$ for simplicity. 

Let us fix some notation. For each finite $\O$-module $M$, we denote by 
$M_{{\rm tf}}$ a torsion free part of $M$ such that $M=M_{{\rm tf}}\oplus M_{{\rm tor}}$. 
A choice of $M_{{\rm tf}}$ is not obviously canonical, but $\O$-rank is stable. 
For each Hodge-Tate representation $\rho$ of $G_{\Q_p}$, we denote by ${\rm HT}(\rho)$ 
the multi set of Hodge-Tate weights of $\rho$.  
For multi-subsets $A,B$ of $\Z$, we write $A\ge B$ if ${\rm min}(A)\ge {\rm max}(B)$. 

Let $\T$ be a finite dimensional $G_{\Q_p}$-module over a finite field $\F$. 
Let $W(\F)$ be the ring of Witt vectors of $\F$. 
Let $T$ be any lift $\T$ which is free module over the integer ring $\O=\O_E$ for some finite extension $E/\Q_p$ 
including $W(\F)$.  
We write 
$$h^i_{\ast}(\T)={\rm dim}_{\F}H^i_\ast(\Q_p,\T),\ 
h^i_{\ast}(T)={\rm dim}_{\O}H^i_\ast(\Q_p,T)_{{\rm tf}}, \
h^i_{\ast}(T\otimes_{\O}E )={\rm dim}_{E}H^i_\ast(\Q_p,T\otimes_{\O}E )$$
where $\ast$ is empty or $f$. 

Henceforth, whenever we consider lifts of $\T$, the finite field $\F$ defining $\T$,  
the base ring $\O$ of each lift and its fraction field $E$ are suitably chosen and it should be naturally understood 
from the context without any confusion. Therefore, we do not mention about what $\F$, $\O$ and $E$ are in each case. 

What we will carry out is that firstly, we take a crystalline lift $T$ of $\T$ and secondary, study if 
the reduction map $H^1(K,T)\lra H^1(K,\T)$ is surjective. 
As in Proposition \ref{torsion}, \ref{torsion-d1}, if $\T$ has a trivial sub-representation, then 
$H^1(K,T)$ always has torsion elements. Hence a careful analysis is necessary when we consider 
the liftability of each class of $H^1(K,\T)$.  

\subsubsection{One dimensional case}\label{d1} 
Let $\e=\chi_p:G_{\Q_p}\lra \Z^\times_p$ (resp. $\ve:G_{\Q_p}\lra \F^\times_p$) be the $p$-adic (resp. mod $p$) cyclotomic character of $G_{\Q_p}$ and $\psi:G_{\Q_p}\lra \O^\times$ 
be an unramified character. Let $\textbf{1}:G_{\Q_p}\lra \F^\times_p$ be the trivial character. 

We denote by $\op:G_{\Q_p}\lra \F^\times$ the reduction of $\psi$ modulo $\pi$ where $\pi$ is a uniformizer of $\O$.  
For a non-negative integer $a$, put 
$$V_{a,\psi}:=E(\e^a \psi),\ T_{a,\psi}:=\O(\e^a \psi),\ \T_{\oa,\op}:=\F(\ve^a \op),\ 
W_{a,\psi}:=E/\O(\e^a \psi),\ W_{a,\psi,n}:=\pi^{-n}\O/\O(\e^a \psi).$$
For each integer $a$ above, put $\oa:=a\ {\rm mod}\ p-1$. 
\begin{dfn}\label{classes} 
The natural inclusion $H^1_{{\rm ur}}(\Q_p,\T_{0,\textbf{1}})\subset H^1(\Q_p,\T_{0,\textbf{1}})$ can be identified 
with $${\rm Hom}_{\F_p}(\Q^\times_p/\Z^\times_p,\F)\subset {\rm Hom}_{\F_p}(\Q^\times_p,\F).$$ 
Recall the local Tate pairing 
$H^1(\Q_p,\T_{1,\textbf{1}})\times H^1(\Q_p,\T_{0,\textbf{1}})\lra \F_p$.  
A class of $H^1(\Q_p,\T_{1,\textbf{1}})$ is called peu ramifi\'ee if it is annihilated by 
${\rm Hom}_{\F_p}(\Q^\times_p/\Z^\times_p,\F_p)\subset {\rm Hom}_{\F_p}(\Q^\times_p,\F_p)$ under the pairing. 
Otherwise we say a class tr\`es ramifi\'ee. By Kummer theory, 
$H^1(\Q_p,\T_{1,\textbf{1}})\simeq \Q^\times_p/(\Q^\times_p)^p$ and the subgroup $\Z^\times_p/(\Z^\times_p)^p$ 
exhausts all peu ramifi\'ee classes.  
If $p>2$, then $\Z^\times_p/(\Z^\times_p)^p\simeq \F_p$ while $\Z^\times_2/(\Z^\times_2)^2\simeq \mu_2\times \F_2$. 
Therefore we have two classes, say the plus peu ramifi\'ee class and the minus peu ramifi\'ee class according to the sign in 
$\mu_2$. The definition here is taken from Definition 2.1.2 of \cite{GHLS} and in this special case, 
it is equivalent to what Serre defined in p.186 of \cite{serre}.  

We say a class of $H^1(\Q_p,\T_{0,\textbf{1}})$ an unramified class if it belongs to 
$H^1_{{\rm ur}}(\Q_p,\T_{0,\textbf{1}})$, a ramified class otherwise. 
The  unramified classes are orthogonal complements of peu ramifi\'ee classes 
with respect to the local Tate duality.  
\end{dfn}
\begin{dfn}\label{pt}
For two characters $\chi_1,\chi_2:G_{\Q_p}\lra \bF^\times_p$, an extension class 
$$0\lra \chi_1\lra \ast \lra \chi_2 \lra 0$$ is called tr\`es ramifi\'ee $($resp. ramified$)$ if 
$\chi_1\chi^{-1}_2=\ve$ $($resp. $\textbf{1})$ and $\ast$ is tr\`es ramifi\'ee 
$($ resp. ramified$)$. Otherwise, it is called peu ramifi\'ee. 
Hence, peu ramifi\'ee classes include all unramified class in this notation.  
\end{dfn}

\begin{prop}\label{prop-d1} 
Let $a$ be a non-negative integer with $a\equiv b\ {\rm mod}\ p-1,\ (0\le b\le p-2)$. 
\begin{enumerate}
\item 
Assume that $a$ is positive if $\op\neq \textbf{1}$.  
If $(\ob,\op)$ is neither $(\overline{1},\textbf{1})$ nor $(\overline{0},\textbf{1})$, 
then $H^1(\Q_p,T_{a,\psi})$ is free over $\O$ and
$${\rm dim}_EH^1_f(\Q_p,V_{a,\psi})={\rm dim}_{\O}H^1_f(\Q_p,T_{a,\psi})
={\rm dim}_{\F}H^1(\Q_p,\T_{\ob,\op})=1.$$
Hence any class in $H^1(\Q_p,\T_{\ob,\op})$ has a crystalline lift, 
\item Assume  $(\ob,\op)=(\overline{1},\textbf{1})$ and $p>2$. 
\begin{itemize}
\item For any integer $k\ge 0$, each peu ramifi\'ee class $\alpha$ of 
$H^1(\Q_p,\T_{\overline{1},\textbf{1}})=H^1(\Q_p,\ve)$ is liftable to a class of 
$H^1_f(\Q_p,T_{1+k(p-1),\psi_\alpha})$ for some unramified lift $\psi_\alpha$ of $\op$ with $c(\psi_\alpha)=1$.  
\item For any integer $k\ge 1$, each tr\'es ramifi\'ee class of $H^1(\Q_p,\ve)$ is liftable to a class of 
$H^1_f(\Q_p,T_{1+k(p-1),\psi_\alpha})$ for some unramified lift $\psi_\alpha$ of $\op$ with 
$c(\psi_\alpha \e^{p-1})=1$. 
Further, in either case, $H^1_f(\Q_p,T_{1+k(p-1),\psi_\alpha})$ is torsion free. 
\end{itemize}
\item Assume  $(\ob,\op)=(\overline{0},\textbf{1})$. 
\begin{itemize}
\item
Any unramified class of $H^1(\Q_p,\T_{\overline{0},\textbf{1}})=H^1(\Q_p,\F)$ is liftable to a class of 
$H^1_f(\Q_p,\O)\simeq \O$.  
\item For any integer $k\ge 0$, each unramified class $\alpha$ of 
$H^1(\Q_p,\F)$ is liftable to a free $\O$ module of rank one in  
$H^1_f(\Q_p,T_{k(p-1),\psi_\alpha})$ for some unramified lift $\psi_\alpha$ of $\op$ with $c(\psi_\alpha)=1$.  
\item For any integer $k\ge 1$, each ramified class $\alpha$ of 
$H^1(\Q_p,\F)$ is liftable to a free $\O$ module of rank one in  
$H^1_f(\Q_p,T_{k(p-1),\psi_\alpha})$ for some unramified lift $\psi_\alpha$ of $\op$ with $c(\psi_\alpha \e^{p-1})=1$. 
\end{itemize}
\end{enumerate} 

\begin{proof}The first claim and the latter part of the second claim follow from Proposition 1.24 of \cite{Ne} and Lemma \ref{useful4}-(2). 
Note that $a$ should be positive when $\op\neq \textbf{1}$ otherwise 
${\rm dim}_EH^1_f(\Q_p,V_{0,\psi})=0$. 

The second claim follows from the proof of Proposition 3.5 of \cite{KW} but
we give a detailed proof.  
Let $\psi$ be a continuous character with $c(\psi)=1$ and put $\psi=1+u_\psi \pi$. It follows from the condition that  
$\overline{u}:G_{\Q_p}\lra \F$ is a non-trivial additive homomorphism.  
By Lemma \ref{pi2} and Lemma \ref{conn}, we have 
the surjection $H^1(\Q_p,\O(\psi \e))\lra H^1(\Q_p,\O(\psi \e)/\pi^2)$ and 
the exact sequence 
$$H^1(\Q_p,\O(\psi \e)/\pi^2)\lra H^1(\Q_p,\ve)\stackrel{\delta_{\psi}}{\lra} H^2(\Q_p,\ve)\simeq \F$$
where $\delta_{\psi}(\alpha)=\overline{u}_\psi\cup \alpha$ for $\alpha\in H^1(\Q_p,\ve)$. 
By Kummer theory, we identify $H^1(\Q_p,\ve)=\Q^\times_p/(\Q^\times_p)^p$. Then 
$\delta_{\psi}(\alpha)=\overline{u}_\psi(\alpha)$.  
Then one can find $\psi_\alpha$ such that $\overline{u}_{\psi_\alpha}(\alpha)=0$. 
Hence $\alpha$ is liftable to $H^1(\Q_p,\O(\psi \e)/\pi^2)$ and then to $H^1(\Q_p,\O(\psi \e))$. 
If $\alpha$ is peu ramifi\'ee (it comes from the unit group), then $\overline{u}_{\psi_\alpha}$ is unramified. 
Let $\psi_\alpha$ be an unramified lift of $\overline{u}_{\psi_\alpha}$. 
For any wildly ramified continuous character $\chi:G_{\Q_p}\lra \O^\times$ with the trivial reduction, we have 
$c(\psi_\alpha \chi)=1$ 
and $\overline{u}_{\psi_\alpha \chi}=\overline{u}_{\psi_\alpha}$. 
In particular, we can choose $\chi$ to be $\e^{k(p-1)}$ for any $k\ge 0$. 
Hence we have a lift to $H^1(\Q_p,\O(\psi_\alpha \e^{1+k(p-1)}))$ for 
any $k\ge 0$.   

If $\alpha$ is tr\`es ramifi\'ee, then $\overline{u}_{\psi_\alpha}$ has to ramified, since otherwise, 
the pairing $\overline{u}_{\psi_\alpha}(\alpha)$ can not be zero. 
Then there is an unramified character $\psi_\alpha$ with $c(\psi_\alpha \e^{p-1})=1$ such that 
$\overline{u}_{\psi_\alpha \e^{p-1}}(\alpha)=0$. Similarly, twisting by any wild character with the 
trivial reduction makes no change. We have a lift to $H^1(\Q_p,\O(\psi_\alpha \e^{1+k(p-1)}))$ for 
any $k\ge 1$.   
It follows from the proof of Proposition 3.5 of \cite{KW} that first we can lift any 
tr\`es ramifi\'ee class to a crystalline extension of Hodge-Tate weight $\{0,p\}$ over $\O$ and then we have 
the desired lift by congruence.  
The freeness of the cohomologies follows from Lemma \ref{useful1}-(2). 

For the third claim, pick a non-trivial class $\alpha \in H^1(\Q_p,\F)$. 
If $\alpha$ is unramified, then the claim is clear since $H^1_f(\Q_p,\O)=H^1_{{\rm ur}}(\Q_p,\O)$. 
If $p>2$, then $\ve\neq \textbf{1}$. Therefore, for any character $\psi$ with the trivial reduction, the reduction map 
$H^1(\Q_p,\O(\psi))\lra H^1(\Q_p,\F)$ is always surjective and it factors through the surjection  
$$H^1(\Q_p,\O(\psi))\lra H^1(\Q_p,\O(\psi)/\pi^2)$$ (cf. the proof of Lemma \ref{pi2}). 
We may lift $\alpha$ to $H^1(\Q_p,\O(\psi)/\pi^2)$ for a suitable $\psi$. 
Let $\beta:=\alpha^\perp\in H^1(\Q_p,\ve)$ be a non-trivial element which is orthogonal to $\alpha$ 
in the local Tate duality. 
If $\alpha$ is unramified, then $\beta$ is peu ramifi\'ee. 
by using the previous argument, there is an unramified character 
$\psi_{\beta}$ with $c(\psi_{\beta})=1$ such that for any $k\ge 0$, $\beta$ is liftable to an element 
$\widetilde{\beta}\in H^1(\Q_p,\O(\psi^{-1}_{\beta}\ve^{1-k(p-1)})/\pi^2)$ is not killed by $\pi$. 
By the local Tate duality again, we can find a desired lift $\widetilde{\alpha}\in H^1(\Q_p,\O(\psi_{\beta}\ve^{k(p-1)})/\pi^2)\cap 
\langle \widetilde{\beta} \rangle^{\perp}$ which is not killed by $\pi$. Hence it is not a torsion class 
by Proposition \ref{torsion}. Therefore we have a free $\O$-module of rank one in 
$H^1_f(\Q_p,\O(\psi_{\beta}\ve^{k(p-1)}))$ which yields a crystalline lift of $\alpha$ as desired.      
Similarly, if $\alpha$ is ramified, then $\beta$ is peu ramifi\'ee and 
there is an unramified character 
$\psi_{\beta}$ with $c(\psi_{\beta}\e^{p-1})=1$ such that for any $k\ge 1$, 
$\alpha$ is liftable to a free $\O$-module of rank one in 
$H^1_f(\Q_p,\O(\psi_{\beta}\ve^{k(p-1)}))$. 

Finally, we consider the case when $p=2$. 
In this case we see $\ve=\textbf{1}$ and $H^2(\Q_p,\F)\simeq H^0(\Q_p,\ve)=H^0(\Q_p,\F)=\F$. 
Hence, we can apply the argument for the second claim.  
\end{proof}

\begin{rmk}When $p>2$, the  space $H^1(\Q_p,\T_{\overline{1},\textbf{1}})$ is $2$-dimensional 
and it is divided into a unique peu ramifi\'ee 
line and 
tr\'es ramifi\'ee lines. Each tr\'es ramifi\'ee line of which $\tau$ belongs is liftable to the free $\O$-module  
$H^1_f(\Q_p,T_{1+k(p-1),\psi_\tau})$ of rank one. If we vary $\psi_\tau$, then 
ramified classes are exhausted but we can not do the same for a single $\psi_\tau$. 
It is the same for $H^1(\Q_p,\T_{\overline{0},\textbf{1}})$. 
\end{rmk}

\begin{dfn}\label{psi-and-r}
For each character $\oc:G_{\Q_p}\lra \bF^\times_p$ and each class $\alpha\in H^1(\Q_p,\oc)$, we 
define the character $\psi_{\oc,\alpha}$ to be trivial if $\alpha$ is peu ramifi\'ee and unramified, 
a non-trivial unramified character with the trivial reduction appear in Proposition \ref{prop-d1}-(2),(3) 
according to $\alpha$ is tr\`es ramifi\'ee or ramified. Put  
$$r_{\oc,\alpha}:=
\left\{\begin{array}{cl}
p-1 & \mbox{$\alpha$ is tr\`es ramifi\'ee or ramified} \\
0   & \mbox{otherwise}
\end{array}
\right..
$$
\end{dfn}

\end{prop}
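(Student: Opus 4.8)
The plan is to handle the three parts of Proposition~\ref{prop-d1} separately, in each case first producing a crystalline class over a finite extension $E$ and then transporting the dimension count down to $\O$ and to $\F$ by the homological lemmas of Section~\ref{GC}. For part~(1) the hypotheses say precisely that $\ve^b\op\notin\{\textbf{1},\ve\}$, so $H^0(\Q_p,\T_{\ob,\op})=0$ and, by local Tate duality, $H^2(\Q_p,\T_{\ob,\op})\simeq H^0(\Q_p,\T_{\ob,\op}^\vee(1))^\vee=0$; the local Euler characteristic formula then gives ${\rm dim}_\F H^1(\Q_p,\T_{\ob,\op})=1$. Since the conditions force $a\ge 1$, the representation $V_{a,\psi}$ is crystalline with a single positive Hodge--Tate weight, so by the Bloch--Kato formula (\cite{BK}; equivalently Proposition~1.24 of \cite{Ne}) one gets ${\rm dim}_E H^1_f(\Q_p,V_{a,\psi})=1={\rm dim}_E H^1(\Q_p,V_{a,\psi})$, i.e. $H^1_f=H^1$ over $E$. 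I would then invoke Lemma~\ref{useful3} (to see $(W^\ast_1)^{G_{\Q_p}}=0$, using $\ve^b\op\ne\ve$) together with $W_1^{G_{\Q_p}}=0$ to apply Lemma~\ref{useful4}(2), descending the equality $H^1_f=H^1$ first to $\O$ and then to $\F$; Lemma~\ref{useful1}(2) simultaneously gives that $H^1(\Q_p,T_{a,\psi})$ is $\O$-torsion free, hence free. Every class of $H^1(\Q_p,\T_{\ob,\op})$ then lifts into $H^1_f(\Q_p,T_{a,\psi})$.

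For part~(2) one has $\T=\ve$ and $p>2$, and the mechanism is the $\pi^2$-trick of \cite{KW}, Proposition~3.5. Fixing a character $\psi$ with $c(\psi)=1$ and writing $\psi=1+\pi u_\psi$, so that $\overline u_\psi\in{\rm Hom}(G_{\Q_p},\F)=H^1(\Q_p,\textbf{1})$ is nonzero, Lemma~\ref{pi2} makes $H^1(\Q_p,\O(\psi\e))\to H^1(\Q_p,\O(\psi\e)/\pi^2)$ surjective, and the extension $0\to\ve\to\O(\psi\e)/\pi^2\to\ve\to0$ with Lemma~\ref{conn} shows that $\alpha\in H^1(\Q_p,\ve)$ lifts to $H^1(\Q_p,\O(\psi\e)/\pi^2)$ exactly when $\overline u_\psi\cup\alpha=0$ in $H^2(\Q_p,\ve)\simeq\F$. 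Since the cup product makes $H^1(\Q_p,\textbf{1})$ the $\F$-dual of $H^1(\Q_p,\ve)$ and, by Definition~\ref{classes}, $\alpha$ is peu ramifi\'ee iff it is orthogonal to the unramified line, I would choose $\overline u_{\psi_\alpha}$ unramified in the peu ramifi\'ee case (giving $\psi_\alpha$ unramified, $c(\psi_\alpha)=1$) and, in the tr\`es ramifi\'ee case, first reach Hodge--Tate weight $\{0,p\}$ by replacing $\psi\e$ with $\psi_\alpha\e^{p}$ for $\psi_\alpha$ unramified and $c(\psi_\alpha\e^{p-1})=1$. Twisting by $\e^{k(p-1)}$ (which alters neither $\overline u$ nor the mod-$\pi$ reduction) yields the lift to $H^1(\Q_p,\O(\psi_\alpha\e^{1+k(p-1)}))$ for every admissible $k$, and this lift can be put inside $H^1_f$ because $V_{1+k(p-1),\psi_\alpha}$ is crystalline with a positive Hodge--Tate weight; torsion-freeness of $H^1_f(\Q_p,T_{1+k(p-1),\psi_\alpha})$ follows from the reduction $\ve^{1+k(p-1)}\op=\ve\ne\textbf{1}$ via Lemma~\ref{useful1}(2).

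Part~(3), where $\T=\F$ with trivial action, I would reduce to the same two ingredients. An unramified class lies in $H^1_{{\rm ur}}(\Q_p,\F)$, which is the reduction of $H^1_{{\rm ur}}(\Q_p,\O)=H^1_f(\Q_p,\O)\simeq\O$ (unramified representations being crystalline), giving the first bullet; for the remaining bullets I would combine the torsion calculation of Propositions~\ref{torsion} and \ref{torsion-d1} (so $H^1(\Q_p,\O(\psi))_{{\rm tor}}\simeq\O/\pi^{c(\psi)}$ is generated by $f_\psi$, whose reduction is $\overline u_\psi$) with the $\pi^2$-argument of part~(2), now using local duality between $H^1(\Q_p,\F)$ and $H^1(\Q_p,\ve)$ and the peu-ramifi\'ee/ramified dichotomy of Definition~\ref{pt}: one picks $\psi_\alpha$ unramified with $c(\psi_\alpha)=1$ or $c(\psi_\alpha\e^{p-1})=1$ according as $\alpha$ is unramified or ramified, lifts $\alpha$ through the orthogonality condition, and checks by Bloch--Kato that the relevant $H^1_f(\Q_p,T_{k(p-1),\psi_\alpha})$ contains a free rank-one $\O$-module surviving reduction modulo $\pi$; the case $p=2$ is treated separately, using that $\ve=\textbf{1}$ and $H^2(\Q_p,\F)\simeq\F$ so that the part~(2) argument applies directly. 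The main obstacle will be the tr\`es ramifi\'ee subcase of part~(2) (and its analogue in part~(3)): genuinely building a crystalline lift of a non-finite-flat extension class is possible only after raising the Hodge--Tate weight by $p-1$ to $\{0,p\}$, and then one must descend by congruence while keeping the chosen unramified twist $\psi_\alpha$ compatible both with the orthogonality constraint $\overline u_{\psi_\alpha\e^{p-1}}\cup\alpha=0$ and with the torsion-freeness requirement on $H^1_f$. Matching the normalizations in play---$\e$ of Hodge--Tate weight $+1$, the invariant $c(\psi)$ versus the conductor, and the $\pi$ versus $\pi^2$ reductions---is where the bulk of the careful bookkeeping lies.
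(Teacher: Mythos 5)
Your proposal is correct and follows essentially the same route as the paper's proof: part~(1) is the unwinding of the reference to Proposition~1.24 of Nekovar and Lemma~\ref{useful4}(2), part~(2) reproduces the Khare--Wintenberger $\pi^2$-trick via Lemmas~\ref{pi2} and \ref{conn} with the cup-product obstruction and the peu/tr\`es ramifi\'ee case split, and part~(3) uses the same local Tate duality argument with the orthogonal complement $\beta=\alpha^\perp\in H^1(\Q_p,\ve)$, together with the $p=2$ reduction. The only difference is that you spell out the Euler-characteristic and vanishing computations that the paper leaves implicit in part~(1).
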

\subsubsection{Two dimensional, irreducible case}\label{d2-irr}
Let $\br:G_{\Q_p}\lra {\rm GL}_2(\bF_p)$ be a continuous irreducible representation. 
It is well-known (cf. Section 2.4 of \cite{Edix} or Section 2.5 of \cite{Muller}) 
that there exist integers $0\le b<a<p$ such that 
$\br\simeq \op\otimes {\rm Ind}^{G_{\Q_p}}_{G_{\Q_{p^2}}}\omega^{b+ap}_2$ where $\Q_{p^2}$ stands for the 
unramified quadratic extension of $\Q_p$, $\op:G_{\Q_p}\lra\bF^\times_p$ is an unramified character and $\omega_2:G_{\Q_{p^2}}\lra \bF^\times_p$ is a unique extension of 
the fundamental character of the inertia subgroup $I_p$ of level two. 
As a result, $\br$ is absolute irreducible since $\omega^p_2\neq \omega_2$. 
By Th\'eor\`eme 2.5.2 of \cite{Muller}, there exists a 
crystalline lift $\rho:G_{\Q_p}\lra {\rm GL}_2(\O)$ of $\br$ with the Hodge-Tate weight $\{a,b\}$ 
for any $\O=\Q_E$ of a finite extension $E/\Q_p$ including 
$\Q_{p^2}$.  It is also potentially diagonalizable 
by Theorem 3.0.3 of \cite{GLiu} since $0\le a-b\le p-1$. 

Let $T$ be the representation space of $\rho$ and put $V=T\otimes_{\O}E$. 
Let $\psi$ and $\op$ be as in Section \ref{d1}. 
\begin{prop}\label{cla-irr}For any non-negative integer $c$ with $c\equiv d\ {\rm mod}\ p,\ 0\le d \le p-2$, it holds that 
$H^1_f(\Q_p,T(\e^c\psi))$ is torsion free and 
${\rm dim}_{\F}H^1(\Q_p,\T(\ve^{d}\op))=2$. Further,  
${\rm dim}_EH^1_f(\Q_p,V(\e^{c} \psi))=2$ if all Hodge-Tate weights $\{c+a,c+b\}$ of $V(\e^{c} \psi)$ are positive.  
\end{prop}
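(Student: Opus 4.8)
\textbf{Proof plan for Proposition \ref{cla-irr}.}
The plan is to compute the relevant Galois cohomology groups by combining the local Euler characteristic formula with the vanishing of the $H^0$ and $H^2$ terms, using the irreducibility of $\br$ in an essential way. First I would observe that since $\br\simeq\op\otimes{\rm Ind}^{G_{\Q_p}}_{G_{\Q_{p^2}}}\omega^{b+ap}_2$ is absolutely irreducible with $0\le b<a<p$, the twist $\T(\ve^d\op')$ (for the unramified $\op'$ reducing $\psi$) is again absolutely irreducible, so $H^0(\Q_p,\T(\ve^d\op'))=0$; and its Tate dual $\T(\ve^d\op')^\ast=\T^\vee(1)(\ve^{-d}\op'^{-1})$ is likewise an (absolutely) irreducible two-dimensional representation induced from a character of $G_{\Q_{p^2}}$, so by local Tate duality $H^2(\Q_p,\T(\ve^d\op'))\simeq H^0(\Q_p,\T(\ve^d\op')^\ast)^\vee=0$. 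Here one must check that the dual representation is not of the special form $\omega^{m+mp}_2$-twisted that would make it reducible; this follows because $\omega^p_2\neq\omega_2$ and $0\le a-b\le p-1<p$, exactly as in the cited results of M\"uller. The local Euler characteristic formula then gives $\dim_\F H^1(\Q_p,\T(\ve^d\op'))=h^0-h^2+[\Q_p:\Q_p]\cdot\dim\T(\ve^d\op')=0-0+2=2$, which is the middle assertion.

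Next I would treat the characteristic-zero statement. The representation $V(\e^c\psi)$ is crystalline of Hodge-Tate weights $\{c+a,c+b\}$ (in the normalization where $\e$ has weight $1$). When both weights are positive, the standard computation of $H^1_f$ for crystalline representations (Bloch-Kato, \cite{BK}, or Nekov\'a\v{r} \cite{Ne} Prop.~1.24) gives $\dim_E H^1_f(\Q_p,V(\e^c\psi))=\dim_E H^1(\Q_p,V(\e^c\psi))$ minus the contribution of $H^0$ and the $H^0$ of the dual of the Tate twist; since $V(\e^c\psi)$ is irreducible and the positivity of the Hodge-Tate weights forces $D_{\rm cris}(V(\e^c\psi))^{\varphi=1}=0$ and $D_{\rm cris}(V^\ast(-c)(\psi^{-1}))^{\varphi=1}=0$ (the weights of the dual are $\le 0$), one gets $\dim_E H^1_f(\Q_p,V(\e^c\psi))=\dim_E D_{\rm dR}/{\rm Fil}^0 = 2$. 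Concretely, I would just invoke Proposition 1.24 of \cite{Ne} together with $h^0(\Q_p,V(\e^c\psi))=0$, exactly as the authors do for Proposition \ref{prop-d1}-(1).

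Finally, for the torsion-freeness of $H^1_f(\Q_p,T(\e^c\psi))$: since $\T(\ve^d\op')$ is irreducible, in particular it has no trivial subrepresentation, so $(W_1)^{G_{\Q_p}}=0$ where $W=T(\e^c\psi)\otimes E/\O$; by Lemma \ref{useful1}-(2) this yields $H^1(\Q_p,T(\e^c\psi))$ torsion free, hence so is its subgroup $H^1_f(\Q_p,T(\e^c\psi))$. (One can also note $(W^\ast_1)^{G_{\Q_p}}=0$ by Lemma \ref{useful3} and the irreducibility of the dual, so Lemma \ref{useful4}-(2) even gives the equality of all three dimensions in the positive-weight case, reproving the $H^1_f$ count over $E$ and showing the reduction map on $H^1_f$ is an isomorphism.) The main obstacle I anticipate is purely bookkeeping: making sure the twisted representation and its Tate dual genuinely remain irreducible for all the relevant twists $\e^c\psi$, i.e.\ that no accidental congruence $\omega^{b+ap}_2(\ve^d)\equiv$ a power of $\omega_2$ fixed by Frobenius occurs; but since the twists here are by powers of the cyclotomic character and unramified characters, they commute with induction and cannot destroy the irreducibility coming from $\omega^p_2\neq\omega_2$, so this is routine once stated carefully.
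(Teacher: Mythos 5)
Your proposal is correct and follows essentially the same approach as the paper: irreducibility of the twist $\T(\ve^d\op)$ and its Tate dual forces $H^0=H^2=0$, the local Euler characteristic then gives $h^1=2$, Nekov\'a\v{r}'s Proposition 1.24 handles the characteristic-zero count once the Hodge--Tate weights are all positive, and the vanishing of $(W_1)^{G_{\Q_p}}$ together with Lemma \ref{useful1}-(2) gives torsion-freeness of $H^1$. If anything, you are a bit more explicit than the paper on the middle assertion (the paper silently delegates the $\F$-dimension to Lemma \ref{useful4} and Euler characteristic), and your remark that twisting by $\e^c$ and an unramified character cannot destroy irreducibility, because such twists correspond on $I_{\Q_p}$ to multiplying by $\omega_2^{c(p+1)}$ and so preserve the class of $b+ap$ modulo $p+1$, is exactly the routine check the paper leaves implicit.
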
 
\begin{proof}
Notice that $\br\simeq W_1$ is irreducible.  
Since $(W_1)^{G_{\Q_p}}=0$ and $(W^\ast_1)^{G_{\Q_p}}=0$, by Lemma \ref{useful1}-(2),(3) and Lemma 
\ref{useful4}-(1), $H^1_f(\Q_p,T(\e^a\psi))$ is torsion free. The last claim follows from \ref{useful4}-(2) and 
Proposition 1.24-(2) of \cite{Ne} (note that the Hodge-Tate weights of 
$V(\e^{a} \psi)$ are positive, hence the 0-th de Rham filtration ${\rm F}^0={\rm Fil}^0$ vanishes in 
the notation of \cite{Ne}).  
\end{proof}

\subsubsection{Two dimensional, reducible case}\label{d2-red}
Let $\br:G_{\Q_p}\lra {\rm GL}_2(\bF_p)$ be a continuous reducible representation. It is easy to see that 
$\br\simeq 
\begin{pmatrix}
\op_1\ve^a & \ot \\
0 & \op_2\ve^b
\end{pmatrix}$ for some $0\le a,b\le p-2$ and for some unramified characters $\op_i:G_{\Q_p}\lra \bF^\times_p\ (i=1,2)$. 
We will study crystalline lifts of each element in $H^1(\Q_p,\br)$.
If the class of $\ot$ is trivial, it is reduced to the case of dimension one. 
Therefore, we may assume that the extension class of $\overline{\tau}$ is non-trivial. 
Put $\oc_1=\op_1\ve^a$ and $\oc_2=\op_2\ve^b$.  
By the local Euler characteristic formula, we see that 
$${\rm dim}_{\F}H^1(\Q_p,\br)=
\left\{
\begin{array}{cl}
4 &\ {\rm if} (\oc_1,\oc_2)=(\textbf{1},\ve)    \\ 
3 &\ {\rm if} \oc_1=\textbf{1},\ \oc_2\neq \ve\ {\rm or}\ \oc_1\neq 1,\oc_2=\ve      \\
2 &\ {\rm otherwise}
\end{array}\right..
$$
By Proposition \ref{prop-d1}, there exists a crystalline lift $\rho$ of $\br$ such that 
\begin{equation}\label{ht-red-d2}
{\rm HT}(\rho)=
\left\{
\begin{array}{ll}
\{a+p-1,b\}=\{b+p,b\} &\mbox{if $\oc_1\oc^{-1}_2=\ve_p$ and $\tau$ is tr\`es ramifi\'ee}    \\ 
\{a,b\} &\mbox{if $a>b$ and $\tau$ is peu ramifi\'ee}    \\
\{a+p-1,b\} &\mbox{if $a\le b$}
\end{array}\right..
\end{equation}
For such a $\rho$ and any integer $k$, $\rho\otimes\e^{k(p-1)}$ is also a crystalline lift of $\br$. 
Therefore, we can make all Hodge-Tate weights positive by twisting so that $H^1_f(\Q_p,\rho\otimes\e^{k(p-1)})=2$.

First we consider the case when $\oc_1\neq \textbf{1},\ve$. 
Consider the exact sequence 
$$0\lra H^1(\Q_p,\oc_1)\stackrel{\iota}{\lra} H^1(\Q_p,\br)\stackrel{\pi}{\lra} H^1(\Q_p,\oc_2)\lra  0.$$
Consider a class $x\in H^1(\Q_p,\br)$ which comes from a crystalline representation as in (\ref{shape}). 
As observed, $\alpha_2:=\pi(x)\in H^1(\Q_p,\oc_2)$ is ramified if $\oc_2=\textbf{1}$. 
Applying Proposition \ref{prop-d1}, there exists an unramified character 
$\psi_{\oc_2,\alpha_2}:G_{\Q_p}\lra \O^\times$ such that $\alpha_2$ is liftable to a rank one $\O$-module in  
$H^1_f(\Q_p,\chi_{2,\alpha_2})$ where $\chi_{2,\alpha_2}$ is a crystalline lift of $\oc_2$ 
depending also on $\alpha_2$ (recall $\psi_{\oc_2,\alpha_2}$ in Definition \ref{psi-and-r}). To be more precise, $\chi_{2,\alpha_2}$ is of form 
$$\chi_{2,\alpha_2}=\psi_{\oc_2,\alpha_2}\psi_2\e^{b+a_2(p-1)+r_{\oc_2,\alpha_2}}$$
for a non-negative integer $a_2$ and an unramified lift $\psi_2$ of $\op_2$. 

Applying Proposition \ref{prop-d1} to the extension class $\overline{\tau}$ in $H^1(\Q_p,\oc_1\oc^{-1}_2)$, 
we have the 
crystalline lifts of $\br$ defined by 
\begin{equation}\label{lift1}
\rho_{a_1,a_2,\ot,\alpha_2}=
\begin{pmatrix}
\psi_{\oc_2,\alpha_2}\psi_{\oc_1\oc^{-1}_2,\ot}\psi_1
\e^{a+a_1(p-1)+r_{\oc_1\oc^{-1}_2,\overline{\tau}}} & \ast \\
0 &\psi_{\oc_2,\alpha_2}\psi_2\e^{b+a_2(p-1)+r_{\oc_2,\alpha_2}}
\end{pmatrix}
\end{equation}
where $a_1\ge a_2$ is a non-negative integer such that 
\begin{equation}\label{cond1}
a+a_1(p-1)+r_{\oc_1\oc^{-1}_2,\overline{\tau}} > b+a_2(p-1)+r_{\oc_2,\alpha_2}>0
\end{equation} and 
$\psi_1$ is an unramified lift of $\op_1$ satisfying $\psi_1=\psi_2$ if 
$\oc_1\oc^{-1}_2=\ve$.  
Put 
$$\chi_1=\psi_{\oc_2,\alpha_2}\psi_{\oc_1\oc^{-1}_2,\ot}\psi_1
\e^{a+a_1(p-1)+r_{\oc_1\oc^{-1}_2,\overline{\tau}}}.$$ 
By Lemma \ref{useful4} , $H^1_f(\Q_p,\chi_1)$ is torsion free and the reduction map 
$H^1_f(\Q_p,\chi_1)\lra H^1(\Q_p,\oc_1)$ is surjective. 
Further, there is an exact sequence 
$$0\lra H^1_f(\Q_p,\chi_1)\lra H^1_f(\Q_p,\rho_{a_1,a_2,\ot,\alpha_2})\lra H^1_f(\Q_p,\chi)\lra 0$$
as free $\O$-modules.
Recall that $\alpha_2$ comes from an element of $H^1_f(\Q_p,\chi)$ and we lift it to 
an element $\widetilde{x}_{\alpha_2}
\in H^1_f(\Q_p,\rho_{a_1,a_2,\ot,\alpha_2})$ by the above exact sequence.  
The image of $\widetilde{x}_{\alpha_2}$ to $H^1(\Q_p,\br)$ and $x$ differ by an element in 
${\rm Im}(\iota)$. Therefore, we can adjust $\widetilde{x}_{\alpha_2}$ in 
$H^1_f(\Q_p,\rho_{a_1,a_2,\ot,\alpha_2})$  by an element of  
$H^1_f(\Q_p,\wc_1)$ so that $\widetilde{x}_{\alpha_2}$ goes to $x$. 
Hence, we have a crystalline lift of $x$ to $H^1_f(\Q_p,\rho_{a_1,a_2,\ot,\alpha_2})$.  

Next we consider the case when $\oc_1\neq \textbf{1}$ and $\oc_2=\ve$. 
We also assume that $\ve\neq \textbf{1}$ which happens exactly when $p>2$.  
Notice that by Lemma \ref{useful1}-(2), $H^1(G_{\Q_p},\rho)$ is always $\O$-torsion free for any 
lift $\rho$ to $\O$ of $\br$. 
Let $\rho_{\chi_1,\chi_2}$ be a crystalline lift of $\br$ which has the shape 
$$
\rho_{\chi_1,\chi_2}=
\begin{pmatrix}
\chi_1& \ast \\
0 & \chi_2 
\end{pmatrix}
$$
such that $H^2(\Q_p,\rho_{\chi_1,\chi_2}/\pi^2)=0$ 
where $\chi_i$ is a crystalline lift of $\oc_i$ for $i=1,2$.  Such a lift always exists for $\br$. 
The exact sequence 
$$0\lra \br\simeq \pi\rho_{\chi_1,\chi_2}/\pi^2\lra 
\rho_{\chi_1,\chi_2}/\pi^2 \stackrel{{\rm mod}\ \pi}{\lra} \br \lra 0$$
yields 
$$H^1(\Q_p,\rho_{\chi_1,\chi_2}/\pi^2)\stackrel{{\rm mod}\ \pi}{\lra}  
H^1(\Q_p,\br)\stackrel{\delta_{\chi_1,\chi_2}}{\lra} H^2(\Q_p,\br)$$
where $\delta_{\chi_1,\chi_2}$ is the connection map. 
It is easy to see that 
$H^1(\Q_p,\rho_{\chi_1,\chi_2})\stackrel{{\rm mod}\ \pi^2}{\lra}  
H^1(\Q_p,\rho_{\chi_1,\chi_2}/\pi^2)$ is surjective since 
$H^2(\Q_p,\rho_{\chi_1,\chi_2}/\pi^2)=0$. Therefore, we have an exact sequence 
$$H^1(\Q_p,\rho_{\chi_1,\chi_2})\stackrel{{\rm mod}\ \pi}{\lra}  
H^1(\Q_p,\br)\stackrel{\delta_{\chi_1,\chi_2}}{\lra} H^2(\Q_p,\br)\simeq H^2(\Q_p,\oc_2)=
H^2(\Q_p,\ve)\simeq \F.$$
We write $\chi_2\e^{-1}=1+u\pi$ with a map $u:G_{\Q_p}\lra \O$ whose reduction 
$\overline{u}:G_{\Q_p}\lra \F$ is an additive homomorphism which factors through 
$G^{{\rm ab}}_{\Q_p}/(G^{{\rm ab}}_{\Q_p})^p$. 
By Lemma \ref{conn}, for each $\alpha\in H^1(\Q_p,\br)$ with the image $\alpha_2$ 
to $H^1(\Q_p,\oc_2)=H^1(\Q_p,\ve)\simeq  \Q^\times_p/(\Q^\times_p)^p$, we have 
$$\delta_{\chi_1,\chi_2}(\alpha)=\overline{u}(\alpha_2).$$ 
If $\delta_{\chi_1,\chi_2}(\alpha)=0$, then $\alpha$ is liftable to $H^1(\Q_p,\rho_{\chi_1,\chi_2})$. 
By choosing $\rho_{\chi_1,\chi_2}$ suitably, $\alpha$ is also liftable to $H^1_f(\Q_p,\rho_{\chi_1,\chi_2})$. 
If $\delta_{\chi_1,\chi_2}(\alpha)\neq 0$, there exists another crystalline lift $\chi'_2$ of $\oc_2$ such that 
$\delta_{\chi_1,\chi'_2}(\alpha)=0$. 
Therefore, $\alpha$ is liftable to $H^1_f(\Q_p,\rho_{\chi_1,\chi'_2})$. 

When $\oc_1\neq \textbf{1},\ \oc_2\neq \ve$, and $\ve\neq \textbf{1}$, there exists 
a crystalline lift to  $H^1_f(\Q_p,\rho_{\chi_1,\chi_2})$ since the reduction map 
$H^1(\Q_p,\rho)\lra H^1(\Q_p,\br)$ is surjective and $H^1(\Q_p,\rho)$ is $\O$-torsion free for 
any lift $\rho$ to $\O$ of $\br$. 

Finally, we consider the when $\oc_1=\textbf{1}$. 
Applying the argument (in using the local Tate duality) 
of the proof of Proposition \ref{prop-d1}-(2),(3), 
we have a lift to a rank one $\O$-submodule in $H^1_f(\Q_p,\rho_{\chi_1,\chi_2})$ for 
specific $\chi_i\ (1\le i \le 2)$. The non-triviality of $\ot_1$ guarantees any lift of each class of 
$H^1(\Q_p,\br)$ is not torsion. Hence it is liftable to a rank one $\O$-submodule of 
$H^1_f(\Q_p,\rho_{\chi_1,\chi'_2})$.  
Summing up, we have proved the following:
\begin{prop}\label{d2} Let $p\ge 2$ be a prime. Let $\br=\begin{pmatrix}
\oc_1 & \ot \\
0   & \oc_2  
\end{pmatrix} $.  
Then each class of  $H^1(\Q_p,\br)$ is liftable to an ordinary, crystalline extension with 
specific regular Hodge-Tate weights. 
\end{prop}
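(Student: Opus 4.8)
The plan is to assemble the proof of Proposition \ref{d2} by case analysis on the pair $(\oc_1,\oc_2)$, exactly mirroring the four cases that were treated in the running discussion of Subsection \ref{d2-red} immediately before the statement. The key structural tool is the short exact sequence of $G_{\Q_p}$-modules
\begin{equation*}
0\lra \oc_1\lra \br\lra \oc_2\lra 0,
\end{equation*}
which for any lift $\rho=\begin{pmatrix}\chi_1&\ast\\0&\chi_2\end{pmatrix}$ of $\br$ gives
\begin{equation*}
0\lra H^1_f(\Q_p,\chi_1)\lra H^1_f(\Q_p,\rho)\lra H^1_f(\Q_p,\chi_2)\lra 0
\end{equation*}
whenever the relevant $H^2$'s and torsion obstructions vanish; this is where Lemma \ref{useful4} and Lemma \ref{useful1} do the work. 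First I would record that if the extension class $\ot$ is trivial the statement reduces to the one-dimensional Proposition \ref{prop-d1}, so one may assume $\ot\neq 0$. Then I would recall, via Proposition \ref{prop-d1}, the existence of the crystalline lift $\rho$ of $\br$ with Hodge--Tate weights as in \eqref{ht-red-d2}, and note that twisting by $\e^{k(p-1)}$ for suitable $k$ makes all Hodge--Tate weights positive, so that $\dim_E H^1_f(\Q_p,\rho\otimes\e^{k(p-1)})=2$ by Lemma \ref{useful4}(2) and Neukirch's Euler-characteristic formula.

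Next I would run the four cases. \textbf{Case $\oc_1\neq\textbf{1},\ve$:} given $x\in H^1(\Q_p,\br)$ with image $\alpha_2\in H^1(\Q_p,\oc_2)$, apply Proposition \ref{prop-d1} to lift $\alpha_2$ to a rank-one $\O$-module inside $H^1_f(\Q_p,\chi_{2,\alpha_2})$, where $\chi_{2,\alpha_2}=\psi_{\oc_2,\alpha_2}\psi_2\e^{b+a_2(p-1)+r_{\oc_2,\alpha_2}}$ (Definition \ref{psi-and-r}); apply Proposition \ref{prop-d1} again to $\ot\in H^1(\Q_p,\oc_1\oc_2^{-1})$ to build the explicit crystalline lift $\rho_{a_1,a_2,\ot,\alpha_2}$ of \eqref{lift1} with the weight inequality \eqref{cond1}; then use the displayed short exact sequence of $H^1_f$'s to lift $\alpha_2$ to $\widetilde{x}_{\alpha_2}\in H^1_f(\Q_p,\rho_{a_1,a_2,\ot,\alpha_2})$, and correct it by an element of $H^1_f(\Q_p,\wc_1)$ (which surjects onto $H^1(\Q_p,\oc_1)$ by Lemma \ref{useful4}) so that it maps to $x$. \textbf{Case $\oc_1\neq\textbf{1}$, $\oc_2=\ve$, $p>2$:} here the obstruction lives in $H^2(\Q_p,\oc_2)=H^2(\Q_p,\ve)\simeq\F$; choose a crystalline lift $\rho_{\chi_1,\chi_2}$ with $H^2(\Q_p,\rho_{\chi_1,\chi_2}/\pi^2)=0$, use Lemma \ref{conn} to compute the connecting map $\delta_{\chi_1,\chi_2}(\alpha)=\overline{u}(\alpha_2)$ where $\chi_2\e^{-1}=1+u\pi$, and — if this is nonzero — replace $\chi_2$ by another crystalline lift $\chi_2'$ of $\oc_2$ with $\delta_{\chi_1,\chi_2'}(\alpha)=0$, after which $\alpha$ lifts to $H^1_f(\Q_p,\rho_{\chi_1,\chi_2'})$. \textbf{Case $\oc_1\neq\textbf{1},\oc_2\neq\ve$, $p>2$:} the reduction $H^1(\Q_p,\rho)\to H^1(\Q_p,\br)$ is surjective and $H^1(\Q_p,\rho)$ is $\O$-torsion free by Lemma \ref{useful1}(2), so we are done directly. \textbf{Case $\oc_1=\textbf{1}$:} imitate the local-Tate-duality argument of Proposition \ref{prop-d1}(2),(3): pick the dual class, lift it along an appropriately twisted character, and dualize to get a rank-one $\O$-submodule of $H^1_f(\Q_p,\rho_{\chi_1,\chi_2'})$ lifting $\alpha$; non-triviality of $\ot$ forces this lift to be non-torsion (Proposition \ref{torsion}). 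In every case the resulting $\rho$ is of the stated form, hence ordinary with regular Hodge--Tate weights.

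Finally I would remark that the lifts produced are potentially diagonalizable: an ordinary crystalline lift with regular Hodge--Tate weights is a successive extension of crystalline characters, and by Lemma 1.4.3(1) of \cite{BGGT} such a representation is potentially diagonalizable — this is exactly the property that feeds into Theorem \ref{main-thm2} and into the construction of $W^{C_0\cup C_1}_{{\rm pd-cris},\mathcal{I}}(\br)$. The main obstacle I anticipate is purely bookkeeping rather than conceptual: in the case $\oc_1\neq\textbf{1},\ve$ one must simultaneously satisfy the weight inequality \eqref{cond1} with $a_1\ge a_2$ while keeping the characters $\psi_{\oc_i,\alpha_i}$ compatible (in particular forcing $\psi_1=\psi_2$ when $\oc_1\oc_2^{-1}=\ve$), and one must verify that the three short exact sequences of Bloch--Kato Selmer groups used above really are exact, which reduces to the vanishing statements supplied by Lemma \ref{useful1}(2),(3) and Lemma \ref{useful4} under the hypothesis $\ot\neq 0$ — so the tricky point is checking that the relevant $(W^\ast_1)^{G_{\Q_p}}$ and $W_1^{G_{\Q_p}}$ vanish (or, when they do not, tracking the torsion explicitly via Proposition \ref{torsion}). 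I would handle this by treating the subcase $\oc_1=\oc_2=\textbf{1}$ (equivalently $(\oc_1,\oc_2)=(\textbf{1},\ve)$ up to twist when $p=2$) separately, since there both $W_1$ and $W_1^\ast$ have trivial subrepresentations and the torsion analysis of Subsection \ref{TC} must be invoked in full.
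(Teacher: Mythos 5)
Your proposal reconstructs the paper's own argument essentially verbatim: the same short exact sequence $0\to\oc_1\to\br\to\oc_2\to 0$, the same reduction to Proposition \ref{prop-d1} via the crystalline lift with Hodge--Tate weights as in \eqref{ht-red-d2}, and the same four-case split ($\oc_1\neq\textbf{1},\ve$; $\oc_1\neq\textbf{1},\oc_2=\ve$ with $p>2$; $\oc_1\neq\textbf{1},\oc_2\neq\ve$ with $\ve\neq\textbf{1}$; and $\oc_1=\textbf{1}$) handled with the same tools — the explicit lift $\rho_{a_1,a_2,\ot,\alpha_2}$ of \eqref{lift1} with inequality \eqref{cond1} and the lift-then-correct step in the first case, the connecting-map computation via Lemma \ref{conn} with adjustment of $\chi_2$ in the second, the surjectivity/torsion-freeness observation in the third, and the Tate-duality trick from Proposition \ref{prop-d1}(2),(3) together with Proposition \ref{torsion} in the fourth. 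The concluding remark about potential diagonalizability and the flagged bookkeeping concerns go slightly beyond what the paper records but are consistent with it; this is the same proof.
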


\subsubsection{Three dimensional, reducible case 1}\label{d3-red1}Assume $p>2$.  
Let $\br:G_{\Q_p}\lra {\rm GL}_3(\bF_p)$ be a continuous reducible representation which is of form   
$\br\simeq 
\begin{pmatrix}
\op_1\ve^a & \overline{\tau}_1 & \overline{\tau}_2 \\
0 & \op_2\ve^b & \overline{\tau}_3 \\
0& & \op_3\ve^c
\end{pmatrix}$ for some $0\le a,b,c\le p-2$ and for some unramified characters $\op_i:G_{\Q_p}\lra \bF^\times_p\ (i=1,2,3)$. 
If $\overline{\tau}_1=0$ (resp. $\overline{\tau}_3=0$), 
then by changing basis if necessary, we may assume that $a\ge b$ (resp. $b\ge c$). 
With the same reason, we may assume $a\ge b\ge c$ if all $\overline{\tau}_i\ (i=1,2,3)$ are trivial 
(hence when $\br$ is tame). 
We may also assume that any $\tau_i$ ($i=1,2,3$) is non-trivial. 
Put $\oc_1=\op_1\ve^a,\ \oc_2=\op_2\ve^b$, and $\oc_3=\op_3\ve^c$. 
Put $\br_1= 
\begin{pmatrix}
\oc_1 & \overline{\tau}_1 \\
0 & \oc_2
\end{pmatrix}$.  
By Proposition \ref{d2}, we have 
a crystalline lift of $\br$ whose shape takes 
$$\rho_{\chi_1,\chi_2,\chi_3}=
\begin{pmatrix}
\chi_1 & \ast & \ast \\
0 & \chi_2 & \ast \\
0 & 0 & \chi_3
\end{pmatrix}
$$
such that ${\rm HT}(\chi_1)>{\rm HT}(\chi_2)>{\rm HT}(\chi_3)$. 
If $\oc_3\neq \textbf{1}$, then we have the surjection $H^1(\Q_p,\rho_{\chi_1,\chi_2,\chi_3})
\lra H^1(\Q_p,\br)$. Since all extensions are supposed to be non-trivial, $x$ is liftable to 
a rank one $\O$-submodule in $H^1_f(\Q_p,\rho_{\chi_1,\chi_2,\chi_3})$. 
If $\oc_3=\textbf{1}$, then we apply the argument  (in using the local Tate duality)  of the proof of Proposition \ref{prop-d1}-(2),(3), 
we have a lift to a rank one $\O$-submodule in $H^1_f(\Q_p,\rho_{\chi_1,\chi_2,\chi_3})$ for 
specific $\chi_i\ (1\le i \le 3)$. 
\begin{prop}\label{d3-nonexcep} Keep the notation as above. Each element $x\in H^1(\Q_p,\br)$ is liftable to a rank one $\O$-submodule in $H^1_f(\rho_{\chi_1,\chi_2,\chi_3})$ for 
$\rho_{\chi_1,\chi_2,\chi_3}$ with specific characters $\chi_i\ (1\le i \le 3)$ with regular Hodge-Tate weights.   
\end{prop}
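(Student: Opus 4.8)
The plan is to run the same ``peel off the bottom, then the top'' induction that produced Proposition \ref{d2}, but organized so that the extension data $(\overline{\tau}_1,\overline{\tau}_2,\overline{\tau}_3)$ is lifted simultaneously inside a single crystalline $\rho_{\chi_1,\chi_2,\chi_3}$ with strictly decreasing Hodge--Tate weights. Concretely, first I would record the elementary bookkeeping: for any upper-triangular lift $\rho$ of $\br$ with characters $\chi_1,\chi_2,\chi_3$ reducing to $\oc_1,\oc_2,\oc_3$, there is a filtration of $\rho$ whose graded pieces are $\chi_1$, $\chi_2$, $\chi_3$ and a sub/quotient $\begin{pmatrix}\chi_2 & \ast\\0 & \chi_3\end{pmatrix}$; on cohomology this yields the long exact sequences relating $H^1(\Q_p,\rho)$, $H^1(\Q_p,\br_1)$ (for $\br_1=\begin{pmatrix}\oc_1 & \overline{\tau}_1\\0 & \oc_2\end{pmatrix}$), the bottom character $\oc_3$, and the relevant $H^2$'s, all compatibly with the Bloch--Kato $H^1_f$ by the standard computation (here all Hodge--Tate weights are made positive by twisting with $\e^{k(p-1)}$ so that $\dim_E H^1_f(\Q_p,V)=\dim_E H^1(\Q_p,V)$, cf.\ Lemma \ref{useful4} and Proposition 1.24 of \cite{Ne}).

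Next I would carry out the actual lifting. By Proposition \ref{d2} applied to $\br_1$ (with suitably chosen positive, regular Hodge--Tate weights for its crystalline lift) the extension class $\overline{\tau}_1$ is realized by an ordinary crystalline $\begin{pmatrix}\chi_1 & \ast\\0 & \chi_2\end{pmatrix}$; choose $\chi_3$ so that ${\rm HT}(\chi_3)$ lies strictly below ${\rm HT}(\chi_2)$, possibly after increasing ${\rm HT}(\chi_1),{\rm HT}(\chi_2)$ by multiples of $p-1$ so the regularity ${\rm HT}(\chi_1)>{\rm HT}(\chi_2)>{\rm HT}(\chi_3)$ holds. One now needs to lift the remaining classes $\overline{\tau}_2,\overline{\tau}_3$, i.e.\ a class $x\in H^1(\Q_p,\br)$, into $H^1_f(\Q_p,\rho_{\chi_1,\chi_2,\chi_3})$. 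If $\oc_3\neq\textbf{1}$, then $H^2(\Q_p,\oc_3)=0$, so the surjection $H^1(\Q_p,\rho_{\chi_1,\chi_2,\chi_3})\twoheadrightarrow H^1(\Q_p,\br)$ and the compatibility with $H^1_f$ give the lift; since every $\overline{\tau}_i$ is assumed non-trivial, $W^{G_{\Q_p}}=0$ for the relevant $W$, so $H^1(\Q_p,\rho)$ is $\O$-torsion free (Lemma \ref{useful1}(2)) and the lift spans a free rank-one $\O$-submodule of $H^1_f$. If $\oc_3=\textbf{1}$, I would instead invoke the local Tate duality argument used in the proof of Proposition \ref{prop-d1}(2),(3): the obstruction class in $H^2(\Q_p,\textbf{1})\cong\F$ is a cup product $\overline{u}\cup x$ by Lemma \ref{conn}, and by varying the unramified twist of $\chi_3$ (equivalently adjusting $\psi_{\oc_3,\alpha_3}$ and $r_{\oc_3,\alpha_3}$ as in Definition \ref{psi-and-r}) one kills it, after which $x$ lifts as before. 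One then checks the resulting $\rho_{\chi_1,\chi_2,\chi_3}$ is ordinary and crystalline with regular weights by construction.

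The main obstacle I expect is the case where several of the $\oc_i$ are ``resonant'', most seriously $\oc_1\oc_2^{-1}=\ve$ or $\oc_2\oc_3^{-1}=\ve$ or $\oc_3=\textbf{1}$ simultaneously, where the naive Hodge--Tate weights $\{a,b,c\}$ are not regular and the $H^1_f$-dimensions jump: one must raise $a,b$ and/or $c$ by appropriate multiples of $p-1$ and, crucially, choose the unramified twisting characters $\psi_{\oc_i\oc_j^{-1},\overline{\tau}}$ and $\psi_{\oc_i,\alpha_i}$ coherently across the two nested extension steps so that the cup-product obstructions for $\overline{\tau}_2$ and $\overline{\tau}_3$ vanish at the same time. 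This is exactly the point where the interaction between the extension classes (mentioned in the introduction) shows up; the argument is a careful but routine iteration of the degree-one analysis of Section \ref{d1} and the degree-two analysis of Section \ref{d2-red}, so I would present it by reducing, one graded piece at a time, to those cases rather than writing out all sub-cases.
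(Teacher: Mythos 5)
Your proposal follows the paper's own strategy almost step for step: build the crystalline frame $\rho_{\chi_1,\chi_2,\chi_3}$ by iterating the two-dimensional analysis of Section \ref{d2-red} (Proposition \ref{d2}), arrange ${\rm HT}(\chi_1)>{\rm HT}(\chi_2)>{\rm HT}(\chi_3)$ by raising exponents in multiples of $p-1$, split on whether $\oc_3=\textbf{1}$, use surjectivity of the reduction map plus non-torsion of the lift when $\oc_3\neq\textbf{1}$, and invoke the argument of Proposition \ref{prop-d1}(2),(3) when $\oc_3=\textbf{1}$. This is essentially the same proof.

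One small misstatement is worth flagging. You write that when $\oc_3=\textbf{1}$ the obstruction lives in $H^2(\Q_p,\textbf{1})\cong\F$ and is a cup product by Lemma \ref{conn}. But the subsection assumes $p>2$, and then $H^2(\Q_p,\F)\simeq H^0(\Q_p,\F(\ve))^\vee=0$, so there is no $H^2$-obstruction to surjectivity in this case. The genuine difficulty when $\oc_3=\textbf{1}$ is that $H^1(\Q_p,\rho_{\chi_1,\chi_2,\chi_3})$ (and already $H^1(\Q_p,\chi_3)$ for a trivial-reduction $\chi_3$) acquires $\O$-torsion (Proposition \ref{torsion}), so one must arrange for the lifted class to land outside the torsion part; that is what the local-Tate-duality device in the proof of Proposition \ref{prop-d1}(3) achieves. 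You do end up prescribing the correct fix (adjusting $\psi_{\oc_3,\alpha_3}$ and $r_{\oc_3,\alpha_3}$ as in Definition \ref{psi-and-r}), so only the stated mechanism was misidentified, not the conclusion.
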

 
\subsubsection{Three dimensional, reducible case 2}\label{d3-red2} 
Let $\br:G_{\Q_p}\lra {\rm GL}_3(\bF_p)$ be a continuous reducible representation which is of type  
$\br\simeq \begin{pmatrix}
\br_1 & \overline{\tau}_1  \\
0 & \op\ve^c
\end{pmatrix}$ or $\begin{pmatrix}
\op\ve^c & \overline{\tau}_1  \\
0 & \br_1
\end{pmatrix}$ for $0\le c \le p-2$, an unramified character $\op:G_{\Q_p}\lra \bF^\times_p$, and 
an irreducible representation $\br_1:G_{\Q_p}\lra {\rm GL}_2(\bF_p)$. We may assume that $\ot_1$ is non-trivial. 

Let us first consider the former case. 
As seen in Section \ref{d2-irr}, there are integers $0\le b<a<p$ such that 
$\br_1\simeq {\rm Ind}^{G_{\Q_p}}_{G_{\Q_{p^2}}}\omega^{b+ap}_2$. 
Put $\oc=\op\ve^c$.   
We may regard $H^1(\br_1)$ as a subspace of $H^1(\br)$ by the following exact sequence 
$$0\lra H^1(\br_1)\lra H^1(\br)\lra H^1(\op\ve^c)\lra 0.$$
We denote by $W_2$ its complement so that it surjects onto $H^1(\op\ve^c)$. 
Pick $x\in H^1(\br)$ and write $x=\alpha_1+w_2,\ \alpha_1\in H^1(\br_1)$. 
We denote by $\alpha_2$ the image of $w_2$ to $H^1(\op\ve^c)$.   
Applying 
Proposition \ref{cla-irr} to $\ot_1$, 
we can lift $\br$ to 
a crystalline lift 
$$\rho_{a_1,a_2,\alpha_2}=\begin{pmatrix}
\rho_{1,a_1} & \ast  \\
0 & \psi_{\oc,\alpha_2}\psi \e^{c+a_2(p-1)+r_{\oc,\alpha_2}}
\end{pmatrix}$$ where 
$\rho_{1,a_1}$ is a crystalline lift of $\br_1$ with Hodge-Tate weights $\{a+a_1(p-1),b+a_1(p-1)\}$ for $a_1\ge 0$ 
and $\psi:G_{\Q_p}\lra \O^\times$ is an unramified lift of $\op$. 
We assume that 
\begin{equation}\label{3-ht}
 b+a_1(p-1)>c+a_2(p-1)+r_{\oc,\alpha_2}>0. 
\end{equation}

If we choose a lift of $\alpha_2$ 
to an element $\widetilde{x}$ of $H^1_f(\rho_{a_1,a_2,\alpha_2})$ via the natural surjection 
to $H^1_f(\wc)$ where $\wc= \psi_{\oc,\alpha_2}\psi \e^{c+a_2(p-1)+r_{\oc,\alpha_2}}$ and send it to $H^1(\br)$ via the reduction map, 
then its difference from $x$ is an element of $H^1(\br_1)$ which comes from an element of $H^1_f(\rho_{1,a_1})$. 
By adjusting in $H^1_f(\rho_{a_1,a_2,\alpha_2})$ with an element of $H^1_f(\rho_{1,a_1})$, we find 
a lift of $x$ to $H^1_f( \rho_{a_1,a_2,\alpha_2})$ as desired. 

Next we consider when $\br=\begin{pmatrix}
\oc & \overline{\tau}_1  \\
0 & \br_1
\end{pmatrix}$ with $\oc=\op\ve^c$. 
By Proposition \ref{d2-irr}, 
 there exists a crystalline lift $\rho_{\chi_1,a_1}$ of form:
$$\rho_{\chi_1,a_1}=\begin{pmatrix}
\chi_1 & \tau_1  \\
0 & \rho_{a_1}
\end{pmatrix}$$
such that ${\rm HT}(\chi_1)>{\rm HT}(\rho_{a_1})>0$. 
Since $\br_1$ is irreducible, the reduction map $H^1(\Q_p,\rho_{\chi_1,a_1})\lra H^1(\Q_p,\br)$ is surjective. 
As observed before, any lift of each class of $H^1(\Q_p,\br)$ is non-torsion, since $\ot_1$ is non-trivial. 
Summing up, we have proved:
\begin{prop}\label{d3-case2}Let $\br$ be as in the  beginning of this section. 
Each class of $H^1(\Q_p,\br)$  is liftable to a rank one $\O$-submodule in 
$H^1_f(\Q_p,\rho_{\chi_1,a_1})$ for 
$\rho_{\chi_1,a_1}$ of regular Hodge-Tate weights with specific characters $\chi_1$ and $a_1$.   
\end{prop}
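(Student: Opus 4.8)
\textbf{Proof proposal for Proposition \ref{d3-case2}.}
The plan is to treat the two shapes of $\br$ separately, since they were already essentially disposed of in the discussion preceding the statement; the job here is mainly to organize that discussion into a clean argument and to pin down the regular Hodge--Tate weights. Throughout, fix the irreducible summand $\br_1 \simeq {\rm Ind}^{G_{\Q_p}}_{G_{\Q_{p^2}}}\omega_2^{b+ap}$ with $0 \le b < a < p$ as recalled in Section \ref{d2-irr}, let $\oc = \op\ve^c$ with $0 \le c \le p-2$, and recall from Proposition \ref{cla-irr} that for the crystalline lift $\rho_{1}$ of $\br_1$ of Hodge--Tate weights $\{a,b\}$ (which is potentially diagonalizable), the group $H^1_f(\Q_p, T_1(\e^j\psi))$ is torsion-free and $H^1(\Q_p,\br_1) = H^1(\Q_p, \T_1(\ve^{\,\overline{\jmath}}\,\op))$ is $2$-dimensional, with $\dim_E H^1_f(\Q_p, V_1(\e^j\psi)) = 2$ once the Hodge--Tate weights are made positive. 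We may assume $\ot_1$ is non-trivial, as the statement notes, otherwise $\br$ is a direct sum and the claim reduces to Proposition \ref{cla-irr} and Proposition \ref{prop-d1}.

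For the first shape $\br = \left(\begin{smallmatrix}\br_1 & \ot_1 \\ 0 & \oc\end{smallmatrix}\right)$: first I would write the exact sequence $0 \to H^1(\Q_p,\br_1) \to H^1(\Q_p,\br) \to H^1(\Q_p,\oc) \to 0$ and fix a complement $W_2$ mapping onto $H^1(\Q_p,\oc)$, so that an arbitrary class $x$ decomposes as $x = \alpha_1 + w_2$ with $\alpha_1 \in H^1(\Q_p,\br_1)$ and image $\alpha_2 \in H^1(\Q_p,\oc)$. Next, apply Proposition \ref{prop-d1} to $\alpha_2 \in H^1(\Q_p,\oc)$ to obtain the character $\psi_{\oc,\alpha_2}$ and exponent shift $r_{\oc,\alpha_2}$ of Definition \ref{psi-and-r}, giving a crystalline lift $\wc = \psi_{\oc,\alpha_2}\psi\,\e^{c + a_2(p-1) + r_{\oc,\alpha_2}}$ of $\oc$ with $\alpha_2$ in a rank-one free $\O$-submodule of $H^1_f(\Q_p,\wc)$. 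Apply Proposition \ref{cla-irr} to the extension class $\ot_1 \in H^1(\Q_p, \br_1 \otimes \wc^{-1})$ (after the necessary twisting) to assemble the upper-triangular crystalline lift $\rho_{a_1,a_2,\alpha_2}$ displayed before the statement, choosing $a_1 \ge 0$ large enough that $b + a_1(p-1) > c + a_2(p-1) + r_{\oc,\alpha_2} > 0$ as in (\ref{3-ht}); this makes all Hodge--Tate weights positive and regular. Then the exact sequence $0 \to H^1_f(\Q_p,\rho_{1,a_1}) \to H^1_f(\Q_p,\rho_{a_1,a_2,\alpha_2}) \to H^1_f(\Q_p,\wc) \to 0$ of free $\O$-modules (using torsion-freeness from Proposition \ref{cla-irr} and Lemma \ref{useful4}) lets me lift $\alpha_2$ to some $\widetilde{x} \in H^1_f(\Q_p,\rho_{a_1,a_2,\alpha_2})$; its image in $H^1(\Q_p,\br)$ differs from $x$ by an element of $H^1(\Q_p,\br_1)$, which lies in the image of $H^1_f(\Q_p,\rho_{1,a_1})$, so adjusting $\widetilde{x}$ within $H^1_f(\Q_p,\rho_{1,a_1})$ produces a genuine crystalline lift of $x$.

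For the second shape $\br = \left(\begin{smallmatrix}\oc & \ot_1 \\ 0 & \br_1\end{smallmatrix}\right)$: here the argument is shorter. Use Proposition \ref{cla-irr} to build a crystalline lift $\rho_{\chi_1,a_1} = \left(\begin{smallmatrix}\chi_1 & \tau_1 \\ 0 & \rho_{a_1}\end{smallmatrix}\right)$ with $\chi_1$ a crystalline lift of $\oc$, $\rho_{a_1}$ a twist of $\rho_1$ by $\e^{a_1(p-1)}$, and Hodge--Tate weights arranged so that ${\rm HT}(\chi_1) > {\rm HT}(\rho_{a_1}) > 0$; the regularity is automatic from $b < a < p$ plus the separating twist. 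Because $\br_1$ is irreducible, $(W_1(-1))^{G_{\Q_p}} = 0$ and hence the obstruction group $H^2$ controlling surjectivity of the reduction map vanishes (Lemma \ref{useful1}, Lemma \ref{pi2}), so $H^1(\Q_p,\rho_{\chi_1,a_1}) \twoheadrightarrow H^1(\Q_p,\br)$. Moreover any lift of a class is non-torsion: since $\ot_1 \ne 0$ the relevant $W$ has no trivial sub-representation, so $H^1(\Q_p,\rho_{\chi_1,a_1})$ is $\O$-torsion-free by Lemma \ref{useful1}(2), giving the asserted rank-one free $\O$-submodule of $H^1_f$.

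The main obstacle, and the step I would write out most carefully, is the bookkeeping in the first case: keeping track of which twist of $\br_1$ the class $\ot_1$ actually lives in, verifying that the three characters on the diagonal of $\rho_{a_1,a_2,\alpha_2}$ reduce correctly and give a crystalline (indeed ordinary on the $\wc$-quotient) representation of strictly decreasing, positive Hodge--Tate weights, and confirming that the splicing of $\widetilde{x}$ via the $H^1_f$ exact sequence is compatible with the reduction maps so that the final adjustment lands $x$ on the nose. Everything else is a direct appeal to Proposition \ref{cla-irr}, Proposition \ref{prop-d1}, and the torsion-freeness lemmas of Section \ref{GC}.
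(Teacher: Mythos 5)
Your proposal is correct and follows essentially the same route as the paper: the paper's proof of Proposition \ref{d3-case2} is precisely the discussion in Section \ref{d3-red2} preceding the statement, namely the decomposition $x=\alpha_1+w_2$ against the exact sequence, the construction of $\rho_{a_1,a_2,\alpha_2}$ via Proposition \ref{prop-d1} and Proposition \ref{cla-irr} subject to (\ref{3-ht}), the lift-and-adjust step through $H^1_f(\Q_p,\wc)$ for the first shape, and the surjectivity-plus-non-torsion argument for $\rho_{\chi_1,a_1}$ in the second shape. Your added justifications (the free $\O$-module exact sequence on $H^1_f$, and the $H^2$-vanishing behind surjectivity) are consistent fleshings-out of what the paper leaves terse.
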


\subsection{An explicit construction of potentially diagonalizable, crystalline lifts for 
$GSp_4$}\label{exp}
Let $\br:G_{\Q_p}\lra {\rm GSp}_4(\bF_p)$ be a continuous representation. Assume $p>2$ for simplicity. 
It is easy to see that the similitude character $\nu\circ \br$ is the product of a power of $\ve$ and an unramified character. 
We regard $\br$ as a representation to ${\rm GL}_4(\bF_p)$ and denote by $\br^{{\rm ss}}$ the 
semi-simplification of $\br$.   
By Proposition 7.2 of \cite{yam} we have five types:
\begin{enumerate}
\item (Borel ordinary case) $\br^{{\rm ss}}$ is decomposed into the direct sum of characters; 
\item (Siegel ordinary case) $\br^{{\rm ss}}$ is the direct sum of two 1-dimensional representations and 
one 2-dimensional irreducible representation; 
\item (Klingen ordinary case) $\br^{{\rm ss}}$ is the direct sum of two 2-dimensional irreducible representations; 
\item (Endoscopic case) $\br=\br^{{\rm ss}}$ is the direct sum of two 2-dimensional irreducible 
representations with the same determinant character and these two representations are not isomorphic each other 
even after twisting by a character; 
\item (Irreducible case) $\br$ is irreducible. 
\end{enumerate}  
In view of our purpose, in Borel ordinary case, we may assume that it comes from an 
ordinary, crystalline representation over $\O$.   
\subsubsection{Borel ordinary case} In this case, it follows from Proposition 7.2 of \cite{yam} that 
$$\br\simeq \ve^c\op_0 \otimes
\begin{pmatrix}
\br_1 & B \\
0_2 & \op_1\ve^{a+b}\br^\ast_1 
\end{pmatrix}.$$
Here  
$\br_1=\begin{pmatrix}
 \op_1\ve^{a+b} & \ot_0 \\
0 & \op_2\ve^{a}  
\end{pmatrix}$ and 
for each $0\le i\le 2$, $\psi_i:G_{\Q_p}\lra \bF^\times_p$ is an unramified character and $0\le a,b,c \le p-2$. 
Notice that the class of $B$ belongs to $H^1(\Q_p,(\op_1\ve^{a+b})^{-1}{\rm Sym}^2(\br_1))$ where 
the space $(\op_1\ve^{a+b} )^{-1}{\rm Sym}^2(\br_1)$ is of dimension three and 
${\rm Sym}^2(\br_1)$ stands for the symmetric square representation of $\br_1$. 

Then, by Proposition \ref{d3-nonexcep}, there exists a crystalline lift of $\br$ to ${\rm GSp}_4(\O)$ with regular 
Hodge-Tate weights. Since such a lift is ordinary, it is also potentially diagonalizable.  
 
\subsubsection{Siegel ordinary case} In this case, it follows from Proposition 7.2 of \cite{yam} that 
\begin{equation}\label{shape-S}
\br\simeq \ve^c\op_0 \otimes
\begin{pmatrix}
 \op_2\ve^{a+b} & \ot_1 & \ot_3 \\ 
0 & \br_1& \ot_2 \\
0 & 0& 1 
\end{pmatrix}
\end{equation}
where $\br_1\simeq \op_1\otimes{\rm Ind}^{G_{\Q_p}}_{G_{\Q_{p^2}}}\omega^{b+ap}_2$ with $0\le b<a\le p$,  
$0\le c\le p-2$, and $\op_0,\op_1,\op_2$ are unramified characters which satisfy $\op^2_1=\det(\br_1)\ve^{-(a+b)}=\op_2$. 
If we write $\ot_1:G_{\Q_p}\lra V_1$ and $\ot_2:G_{\Q_p}\lra V_2$ where 
$V_1=\F^2$ consists of row vectors while $V_2=\F^2$ consists of column vectors. 
Since $\br$ is a representation to ${\rm GSp}_4(\F)$, it is easy to see that 
the isomorphism $V_1\lra V_2, (x_1,x_2)\mapsto 
\begin{pmatrix}
-x_2\\
x_1
\end{pmatrix}$ induces an isomorphism $H^1(\Q_p,\op_2\ve^{a+b}\otimes \br^\ast_1)\stackrel{\sim}{\lra}
H^1(\Q_p,\br_1)$ such that $\ot_1$ goes to $\ot_2$. Put $\br_2=\begin{pmatrix}
\oc & \ot_1 \\ 
0 & \br_1
\end{pmatrix}$ with $\oc= \op_2\ve^{a+b}$. 

From Section \ref{d2-irr}, we have a crystalline lift $$\rho_2=\begin{pmatrix}
 \psi_2\e^{a+b+a_2(p-1)} & \tau_1 \\ 
0 & \rho_{1,a_1}
\end{pmatrix}$$
of $\br_2$ such that $\det(\rho_{1,a_1})=\psi_2\e^{a+b+a_2(p-1)}$ for some non-negative integers $a_1,a_2$. Consider the exact sequence 
$0\lra H^1(\oc)\lra H^1(\br_2)\lra H^1(\br_1)\lra 0$. 
The extension class defined by $\br$ gives an element $x$ in $ H^1(\br_2)$ sent to $\ot_2$. 
Applying Section \ref{d3-red2}, there exists a crystalline lift $\rho$ to ${\rm GSp}_4(\O)$ of $\br$ which has the shape:
$$\rho=\psi\otimes 
\begin{pmatrix}
\rho_2 & \ast \\
0 & \textbf{1}
\end{pmatrix}
$$
of regular Hodge Tate weights for some crystalline character $\psi$. 
By the properties (5),(7) in p.530 and the proof of Lemma 1.4.3 of \cite{BGGT}, $\rho$ is potentially diagonalizable 
(note that $\rho_{1,a_1}$ is potentially diagonalizable as observed).

\subsubsection{Klingen ordinary case} 
 In this case, it follows from Proposition 7.2 of \cite{yam} that 
$$\br\simeq 
\begin{pmatrix}
\br_1 & \ast \\ 
0_2 & \op_0\ve^c \br_2
\end{pmatrix}$$ 
where $\br_1\simeq \op_1\otimes{\rm Ind}^{G_{\Q_p}}_{G_{\Q_{p^2}}}\omega^{b+ap}_2$ with $0\le b<a\le p$ and 
$\br_2\simeq \br^\ast_1$. We may assume $\ast$ is non-trivial and it belongs to  
$H^1(\Q_p,\op^{-1}_0\ve^{-c} {\rm ad}^0(\br_1))$ since the unipotent radical of the Siegel parabolic subgroup has 
the structure of ${\rm Sym}^2({\rm St}_2)$.  
If $p>2$,  ${\rm ad}^0(\br_1)\simeq \delta\oplus  
{\rm Ind}^{G_{\Q_p}}_{G_{\Q_{p^2}}}\omega^{(p-1)(a-b)}_2$ 
where $\delta:G_{\Q_p}\lra \bF^\times_p$ is the quadratic character associated to $\Q_{p^2}/\Q_p$. 
The second factor is irreducible if and only if 
$(p+1)\nmid2(a-b)$. If $(p+1)|2(a-b)$, put $m=2(a-b)/(p-1)$.  
Then 
$${\rm Ind}^{G_{\Q_p}}_{G_{\Q_{p^2}}}\omega^{(p-1)(a-b)}_2\simeq  \ve^{\frac{m(p-1)}{2}}
\oplus \delta\ve^{\frac{m(p-1)}{2}}.$$
Assume that by using results in Section \ref{d3-red1}, \ref{d3-red2}, we have 
a potentially diagonalizable crystalline lift $\rho$ to ${\rm GSp}_4(\O)$ of $\br$.  

\subsubsection{Endoscopic type} In this case, we have 
$\br\simeq \br_1\oplus \br_2$ for some irreducible 2-dimensional representations $\br_1,\br_2$ with the same determinant.  
We write $\br_i\simeq \op_i\otimes  {\rm Ind}^{G_{\Q_p}}_{G_{\Q_{p^2}}}\omega^{b_i+pa_i}_2$ with 
$0\le b_i<a_i<p$ for $i=1,2$. For $i=1,2$, as explained in Section \ref{d2-irr}, let $\rho_{i,c_i}$ be a crystalline lift of $\br_i$ of 
Hodge-Tate weights $\{a_i+c_i(p-1),b_i+c_i(p-1)\}$ such that $\det(\rho_{1,c_1})=\det(\rho_{2,c_2})$ and 
Hodge-Tate weights of $\rho:=\rho_{1,c_1}\oplus \rho_{2,c_2}$ are different each other. The lift $\rho$ 
takes the values in ${\rm GSp}_4(\O)$ and it is also potentially diagonalizable.   

\subsubsection{Irreducible case}\label{irr-d4}
Let $\br:G_{\Q_p}\lra {\rm GSp}_4(\bF_p)$ be a continuous irreducible representation. 
By  Proposition 7.2 of \cite{yam} and Section 2.5 of \cite{Muller}), 
there exists an integer $$a=a_0+a_1p+a_2p^2+a_3p^3,\ 0\le a_i\le p-1, 
a\not\equiv 0\ {\rm mod}\ p^2+1,\ a\equiv 0\ {\rm mod}\ p+1$$  such that 
$\br\simeq \op\otimes {\rm Ind}^{G_{\Q_p}}_{G_{\Q_{p^4}}}\omega^{a}_4$ where $\Q_{p^4}/\Q_p$ stands for the 
unramified cyclic extension of degree four, $\op:G_{\Q_p}\lra\bF^\times_p$ is an unramified character and $\omega_4:G_{\Q_{p^2}}\lra \bF^\times_p$ is a unique extension of 
the fundamental character of the inertia subgroup $I_p$ of level four. 
For any $\sigma \in {\rm Gal}(\Q_{p^4}/\Q_p)$, we denote by $\omega_{4,\sigma}$ the twist of $\omega_4$ by $\sigma$. 
Let $\tau$ be the generator of ${\rm Gal}(\Q_{p^4}/\Q_p)$. 
Once we fix $\omega_4$, we have $\omega^{a}_4=\omega^{a_0}_{4,\tau^0}\omega^{a_1}_{4,\tau^1}\omega^{a_2}_{4,\tau^2}
\omega^{a_3}_{4,\tau^3}$. 

Let $\chi_4$ be the composition of the homomorphisms 
$$G_{\Q_{p^4}}\lra G^{{\rm ab}}_{\Q_{p^4}}\stackrel{\sim}{\lra} \widehat{\Z}\times 
\O^\times_{\Q_{p^4}}\stackrel{{\rm proj}}{\lra} \O^\times_{\Q_{p^4}}$$
such that the reduction $\chi_4$ modulo $p$ is $\omega_4$. Here the second isomorphism is 
given by the local class field theory and the third map is the natural projection. 
For any $\sigma \in {\rm Gal}(\Q_{p^4}/\Q_p)$, we denote by $\chi_{4,\sigma}$ the twist of $\chi_4$ by $\sigma$. 
Let $\tau$ be the generator of ${\rm Gal}(\Q_{p^4}/\Q_p)$. 
Fix $\Q_{p^4}\hookrightarrow \bQ_p$. 
Let $\psi:G_{\Q_p}\lra \bZ^\times_p$ be a 
crystalline lift of $\op$. 

The congruent condition implies $a_0+a_2\equiv a_1+a_3$ mod $p+1$. We may assume that $ a_0+a_2\ge a_1+a_3$. 
Then we have $a_0+a_2=a_1+a_3+i(p+1),\ i=0,1$. When $i=0$, 
then $$\rho:=\psi\otimes {\rm Ind}^{G_{\Q_p}}_{\Q_{p^4}}\chi^{a_0}_{4,{\tau^0}}
\chi^{a_1}_{4,{\tau^1}}\chi^{a_2}_{4,{\tau^2}}\chi^{a_3}_{4,{\tau^3}}$$
is crystalline lift of $\br$ of $HT(\rho)=\{c+a_0,c+a_1,c+a_2,c+a_3\}$ where $c=HT(\psi)$. 
Since Hodge-Tate weights have the symmetry so that $(c+a_0)+(c+a_2)=(c+a_1)+(c+a_3)$, 
the lift $\rho$ takes the values in ${\rm GSp}_4(\O)$ for some finite extension $E/\Q_p$ such that $\O=\O_E$ includes  
$\O_{\Q_{p^4}}$ and ${\rm Im}(\psi)$. 
If $i=1$, then $a_0=a_1+a_3-a_2+1+p$ and we have 
$$\omega^a_4=\omega^{a_1+a_3-a_2+1}_4\omega^{(a_1+1)p}_4\omega^{a_2p^2}_4\omega^{a_3p^3}_4.$$
Re-indexing as $b_0=a_1+a_3-a_2+1, b_1=a_1+1,\ b_2=a_2, b_3=a_3$, we have the symmetry so that 
$b_0+b_2=b_1+b_3$. It follows from this that 
$$\rho:=\psi\otimes {\rm Ind}^{G_{\Q_p}}_{\Q_{p^4}}\chi^{b_0}_{4,{\tau^0}}
\chi^{b_1}_{4,{\tau^1}}\chi^{b_2}_{4,{\tau^2}}\chi^{b_3}_{4,{\tau^3}}$$
is a crystalline lift of $\br$ to ${\rm GSp}_4(\O)$ for a suitable integral coefficient $\O$ as above 
such that 
$$HT(\rho)=\{c+b_0,c+b_1,c+b_2,c+b_3\}$$
where $c=HT(\psi)$.

\section{Classical (naive) Serre weights}\label{Serre-weights}
Assume $p>2$. 
Let $\br:G_{\Q}\lra {\rm GSp}_4(\bF_p)$ be an irreducible mod $p$ Galois representation 
which is automorphic of level prime to $p$. 
Let $SW(\br)$ be the set of all $(a,b,c)\in \Z_{\ge 0}\times \Z_{\ge 0}\times \Z_{\ge 0}$ such that 
\begin{itemize}
\item $a>b>0$; 
\item there is a potentially diagonalizable, crystalline lift $\rho$ of $\br$ such that  
${\rm HT}(\rho)=\{a+b+c,a+c,b+c,c\}$. 
\end{itemize}
The results in previous section show  $SW(\br)$ is always non-empty. Put 
$$SW^{{\rm cl}}(\br):=SW(\br)+(1,2,0)
=\{(a+1,b+2,c)\ |\ (a,b,c)\in SW(\br)\}.$$ 
\begin{dfn}The classical Serre weight $(k_1(\br),k_2(\br),w(\br))$ of $\br$ 
is defined by the minimum element of $SW^{{\rm cl}}(\br)$ in lexicographic order.   
\end{dfn}
The following result is an variant of Edixhoven's observation of the minimality of the classical Serre weights 
(see Theorem 4.5 of \cite{Edix}):
\begin{thm}\label{va-edix}Assume $p>2$.  Let $\br:G_{\Q}\lra {\rm GSp}_4(\bF_p)$ be an irreducible mod $p$ Galois representation which is automorphic of level prime to $p$. 
Assume that $\br|_{G_{\Q(\zeta_p)}}$ is irreducible and $\br(G_{\Q(\zeta_p)})$ is adequate. 
Then there exists a Hecke eigen Siegel cusp form $F$ of weight $(k_1(\br),k_2(\br))$ satisfying 
$k_1(\br)\ge k_2(\br)\ge 3$ such that 
\begin{enumerate}
\item $\br\simeq \br_{F,\iota_p}\otimes \ve^{w(\br)}\op$ for some character $\op:G_\Q\lra \bF^\times_p$ of conductor prime to $p$;
\item the level of $F$ is prime to $p$. 
\end{enumerate}
\end{thm}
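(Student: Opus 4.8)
The plan is to combine the automorphy lifting machinery of Theorem \ref{main-thm2} with the explicit construction of potentially diagonalizable, crystalline lifts carried out in Section \ref{exp}, and then to use minimality of the Hodge--Tate data to control the weight. First I would unwind the definition: by the results of Section \ref{exp} (the case analysis over the five types of $\br|_{G_{\Q_p}}$ in Proposition 7.2 of \cite{yam}, i.e. Borel ordinary, Siegel ordinary, Klingen ordinary, endoscopic, irreducible), the set $SW(\br)$ is non-empty, so $(k_1(\br),k_2(\br),w(\br))$ is well defined as the lexicographic minimum of $SW^{{\rm cl}}(\br)$. Fix a triple $(a,b,c)\in SW(\br)$ realizing this minimum, together with a potentially diagonalizable, crystalline lift $\rho_p$ of $\br|_{G_{\Q_p}}$ with ${\rm HT}(\rho_p)=\{c,b+c,a+c,a+b+c\}$ and $a>b>0$; here I use the normalization of Section \ref{ARGSp} in which the Hodge--Tate weights are the $\{\delta,\delta+m_2,\delta+m_1,\delta+m_1+m_2\}$ attached to $\phi_{(w;m_1,m_2)}$, so that $m_1=a$, $m_2=b$, $w=w(\br)$ and $k_1(\br)=a+1$, $k_2(\br)=b+2$ with regularity $k_1(\br)\ge k_2(\br)\ge 3$ forced by $a>b>0$.

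Next I would apply Theorem \ref{main-thm2} with $F=\Q$ (the single place $v=p$, which is split completely), taking the prescribed local datum at $p$ to be $\rho_v=\rho_p$. The hypotheses of that theorem are exactly (1) $\br|_{G_{\Q(\zeta_p)}}$ irreducible and $\br(G_{\Q(\zeta_p)})$ adequate, and (2) $\br$ automorphic of level prime to $p$ — both assumed here — together with the existence of a potentially diagonalizable, crystalline lift with regular Hodge--Tate weights, which is $\rho_p$. The conclusion produces a regular cuspidal automorphic representation $\pi$ of ${\rm GSp}_4(\A_\Q)$ of level prime to $p$ with $\br_{\pi,\iota_p}\simeq \br$ and $\rho_{\pi,\iota_p}|_{G_{\Q_p}}$ crystalline with the same Hodge--Tate weights as $\rho_p$; moreover, by the last assertion of Theorem \ref{main-thm2}, we may arrange $\pi_\infty$ to be the holomorphic discrete series representation, since the infinity type is only constrained up to the $L$-packet. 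Translating back through Theorem \ref{gal}(4) and the dictionary of Section \ref{ARGSp}: a holomorphic (limit of) discrete series $\pi_\infty$ with $L$-parameter $\phi_{(w;m_1,m_2)}$ and $m_2\ge 1$ corresponds to a holomorphic Siegel Hecke eigen cusp form of degree $2$ of weight $\uk=(k_1,k_2)=(m_1+1,m_2+2)=(a+1,b+2)=(k_1(\br),k_2(\br))$; regularity $m_2=b\ge 1$ is what guarantees $\pi_\infty$ is a genuine discrete series and hence that we land on a classical cusp form rather than a limit of discrete series. The level of this form $F$ divides the level of $\pi$, hence is prime to $p$, giving (2).

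Finally, for the twist in (1): the construction produces $\rho_{\pi,\iota_p}\simeq\br$ on the nose as $\bF_p$-representations, but the naive recipe of Section \ref{exp} extracts the weight from $\br|_{G_{\Q_p}}$ after stripping off the unramified-and-cyclotomic part of the similitude character — exactly as in the original $GL_2/\Q$ story, where $k(\br)$ depends only on $\br|_{G_{\Q_p}}$ up to twist by a power of $\ve$ and by an unramified (hence conductor-prime-to-$p$) character. So $\br_{F,\iota_p}$ agrees with $\br$ only after a twist $\ve^{w(\br)}\op$ with $\op$ of conductor prime to $p$; one reads $w(\br)$ off as the value $c$ normalized so that the Hodge--Tate weights become $\{c,b+c,a+c,a+b+c\}$, and $\op$ is the twist absorbing the residual unramified part, which has conductor prime to $p$ because it is unramified at $p$ and the automorphic input has level prime to $p$ away from $p$. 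The main obstacle, and the step that requires genuine care, is the bookkeeping in this last paragraph: matching the two competing Hodge--Tate normalizations (the "cyclotomic character has all weights $1$" convention of Section \ref{LDR} versus the $\delta_v$-centered convention of Section \ref{ARGSp} and Section \ref{Serre-weights}), checking that the minimality of $(k_1(\br),k_2(\br),w(\br))$ in $SW^{{\rm cl}}(\br)$ is genuinely inherited from the minimality among regular Hodge--Tate weights of potentially diagonalizable crystalline lifts, and verifying that the $L$-packet freedom at infinity indeed delivers the holomorphic member and not merely some (generic) discrete series member — this last point is where Theorem 4.26 of \cite{AS1} and Theorem 7.4.1 of \cite{GeeT}, invoked in the proof of Proposition \ref{tech-main}, do the real work, and one must confirm that passing from the generic descent to the holomorphic form does not disturb the prescribed crystalline type at $p$.
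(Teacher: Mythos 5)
Your proposal is correct and takes the same route as the paper, whose own proof is literally the one-liner ``The claim follows from Theorem \ref{main-thm2}''; what you have written out is precisely the unwinding of that invocation (existence of a minimal element of $SW^{{\rm cl}}(\br)$ via Section \ref{exp}, feeding the corresponding potentially diagonalizable crystalline lift into Theorem \ref{main-thm2} with $F=\Q$, and using the $L$-packet freedom at infinity to land on the holomorphic member). One minor remark: the concern you flag about minimality being ``inherited'' is a red herring, since the theorem only asks you to realize the specific weight $(k_1(\br),k_2(\br))$ and the definition of $SW(\br)$ already hands you a lift with the matching Hodge--Tate weights, so no further minimality check is needed beyond the bookkeeping you describe.
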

\begin{proof}The claim follows from Theorem \ref{main-thm2}. 
\end{proof}

\subsection{Examples}\label{examples}
In the ordinary case, we give an explicit form of $(k_1(\br),k_2(\br))$ for 
$\br$ in Theorem \ref{va-edix}. 

Let us suppose $$\br|_{G_{\Q_p}}\simeq \ve^c\op_0 \otimes
\begin{pmatrix}
\br_1 & B \\
0_2 & \op_1\ve^{a+b}\br^\ast_1 
\end{pmatrix}.$$
Here  
$\br_1=\begin{pmatrix}
 \op_1\ve^{a+b} & \tau_0 \\
0 & \op_2\ve^{a}  
\end{pmatrix}$ and 
for each $0\le i\le 2$, $\psi_i:G_{\Q_p}\lra \bF^\times_p$ is an unramified character and 
$a,b,c\in \Z$. 
For simplicity we assume that 
$$p-1\ge a>b>c=0.$$
Recall that the class of $B$ belongs to 
$H^1(\Q_p,(\op_1\ve^{a+b})^{-1}{\rm Sym}^2(\br_1))$.  
By using a suitable basis, $\br_1$ is given explicitly by 
$${\rm Sym}^2(\br_1)=
\begin{pmatrix}
\op^2_1\ve^{2a+2b} & 2\op_1\ve^{a+b}\ot_0 & \ot^2_0 \\
          0             & \op_1\op_2 \ve^{2a+b} & \op_2 \ve^a \ot_0 \\
          0             &          0                 & \op^2_2 \ve^{2a} 
\end{pmatrix}.
$$
The $(2,1)$ and $(1,2)$ entries in the above 3 by 3 matrix yield the same extension class 
$\ot_0$ in 
$H^1(\Q_p,\op_1\op^{-1}_2\ve^b)$. 
If the extension class is peu ramifi\'ee, then 
$$(k_1(\br),k_2(\br))=(a+1,b+2).$$
If the extension class is tr\`es ramifi\'ee, hence when $\op_1\op^{-1}_2\ve^b=\ve$, 
then we need to raise 
$b=1$ to $b+p-1=p$ and accordingly $a$ to $a+p-1$. 
Therefore, we will find 
$$(k_1(\br),k_2(\br))=(a+p,p+2).$$
Note that the condition on $a$ and $b$ shows $a\ge 2$.

\section{A proof of Theorem \ref{main-thm3}}In this section we give a proof of Theorem \ref{main-thm3}. 
Before doing that, we need to introduce some genericity condition for a mod $p$ Galois representation and its 
lift to a $p$-adic Galois representation which is potentially diagonalizable and crystalline at each finite place above $p$. 

\subsection{Genericity condition} 
Let $p$ be an odd prime. 
Let $\br:G_F\lra {\rm GSp}_4(\bF_p)$ be an irreducible automorphic mod $p$ Galois representation. 
Suppose that $p$ is split completely in $F$. Assume that for each finite place $v$ of $F$ lying over $p$, 
$\br|_{G_{F,v}}$ is semisimple. As is discussed in Section \ref{exp}. 
We have fives types for $\br|_{G_{F,v}}$. We handle each case individually and discuss about its desired lift to 
study possible Serre weights. 
\subsubsection{Borel ordinary case}\label{BOC}
In this case we have 
$$\br|_{I_{F,v}}\simeq (\ve^{x+y}\oplus \ve^{x}\oplus \ve^{y}\oplus \textbf{1})\otimes \ve^{\delta},\ \delta=\frac{1}{2}(w+3-x-y)\in \Z$$ 
for some integers $x,y\in \Z_{\ge 0},w\in \Z$. Notice that $x+y\equiv w+1$ mod 2. 
Put $\lambda_{1,v}:=(a_1,b_1;c_1)=(x,y;w+3)-\widetilde{\rho}=(x-2,y-1;w)$ and it satisfies $a_1+b_1\equiv c_2$ mod 2. 
Hence $\lambda_{1,v}\in X(T)$. 
When $x>y$, we see that $\lambda_{1,v}\in X_1(T)$ if and only if $x-y\le p$ and $0<y\le p$. 
When $x\le y$, put $x'=x+p-1$. 
Then $\lambda'_{1,v}=(x'-2,y-1;w)\in X_1(T)$ if and only if $1\le x'-y$ and $0<y\le p$. 
It follows that $\lambda_{1,v}\in \overline{C_0-\widetilde{\rho}}\cup \overline{C_1-\widetilde{\rho}}$ 
and $\lambda'_1\in \overline{C_3-\widetilde{\rho}}\cup \overline{C_4-\widetilde{\rho}}$. Clearly, $\lambda_{1,v},\lambda'_{1,v}$ 
give all possible choices so that it belongs to 
$X_1(T)\cap \overline{C_0-\widetilde{\rho}}\cup \overline{C_1-\widetilde{\rho}}$ for $\br$ because, 
in this range, we can not raise $x,y$ by a multiple of $p-1$ anymore. 
In either case, we exclude the case when $x,y\in (p-1)\Z$ as we can not associate any elements in $X_1(T)$. 

\begin{dfn}\label{gen-borel}
Assume $\lambda_{1,v}:=(m_1-2,m_2-1;w)\in X(T)$ associated to $\br|_{G_{F,v}}$ 
as above lies in $X_1(T)$. Then it is said to be generic 
\begin{enumerate}
\item if $\lambda_{1,v}\in C_1$ and $x+y>p+1$, or;  
\item if $\lambda_{1,v}\in C_0$. 
\end{enumerate}
\end{dfn}
We do not discuss $\lambda'_{1,v}$ because in the course of proof, we will see that 
it is not applicable to Fontaine-Laffaille theory. To handle this case we need a further study of 
integral $p$-adic Hodge theory to describe the mod $p$ reductions of crystalline lifts in the higher 
dimensional case. 

\subsubsection{Siegel ordinary case}\label{SOC}
In this case we have 
$$\br|_{I_{F,v}}\simeq (\ve^{x+y}\oplus (\omega^{x+y p}_2\oplus \omega^{y+x p}_2)\oplus \textbf{1})\otimes \ve^{\delta},\ \delta=\frac{1}{2}(w+3-x-y)\in \Z$$ 
for some integers $0\le y<x\le p-1$ and $w\in \Z$. Notice again that $x+y\equiv w+1$ mod 2. 
Put $\lambda_{1,v}:=(a_1,b_1;c_1)=(x,y;w+3)-\widetilde{\rho}=(x-2,y-1;w)$. 
Then $\lambda_{1,v}\in X_1(T)$ if and only if $y>0$. 
Further, it belongs to $\overline{C_0-\widetilde{\rho}}\cup \overline{C_1-\widetilde{\rho}}$. 
Since $\omega^{p^2-1}_2=\textbf{1}$ we can change $x,y$ so that the corresponding highest weight 
 lies on $\overline{C_2-\widetilde{\rho}}\cup \overline{C_3-\widetilde{\rho}}$ though we do not consider 
this case with the same reason in Fontaine-Laffaille theory.  
Otherwise, there is no way to associate it to an element in $X_1(T)$. 
\begin{dfn}\label{gen-Siegel}
Assume $\lambda_{1,v}:=(m_1-2,m_2-1;w)\in X(T)$ associated to $\br|_{G_{F,v}}$ 
as above lies in $X_1(T)$. Then it is said to be generic 
\begin{enumerate}
\item if $\lambda_{1,v}\in C_1$ and $x+y>p+1$, or;  
\item if $\lambda_{1,v}\in C_0$. 
\end{enumerate}
\end{dfn}

\subsubsection{Klingen ordinary case}\label{KOC}
In this case we have 
$$\br|_{I_{F,v}}\simeq (\omega^{x+y p}_2\oplus \omega^{y+x p}_2)\oplus 
((\omega^{-(x+y p)}_2\oplus \omega^{-(y+x p)}_2)\otimes \ve^{w+3})$$ 
for some integers $0\le y<x\le p-1$ and $0\le w \le p-2$.  
If $\br$ satisfies the condition in Theorem \ref{main-thm2}, then 
it is easy to see that $x+y\equiv w+1$ mod 2 by using automorphy. 
Therefore, we may keep this parity condition in this case. 
To associate an element of $X_1(T)$ we need a bit more observation. 
As in Section \ref{prescribed}, there exists a crystalline lift of $\br|_{I_{F,v}}$ whose 
Hodge-Tate weights are $$\{x,y,w+3-y,w+3-x\}.$$
By duality we may assume that $y\ge w+3-y$ and it yields $x>y\ge w+3-y > w+3-x$. 
Put $\lambda_{1,v}:=(a_1,b_1;c_1)=(m_1,m_2;w+3)-\widetilde{\rho}=(m_1-2,m_2-1;w)$ where 
$$m_1=x+y-(w+3),\ m_2=x-y.$$ 
Then $\lambda_{1,v}\in X_1(T)$ if and only if $0<2y-(w+3)\le p$. 
Further, it belongs to $\overline{C_0-\widetilde{\rho}}\cup \overline{C_1-\widetilde{\rho}}$ if and only if 
$m_1=x+y-(w+3)\le p$. Otherwise it 
lies on $\overline{C_2-\widetilde{\rho}}\cup \overline{C_3-\widetilde{\rho}}$ though we do not consider 
this case with the same reason in Fontaine-Laffaille theory. There is no other way to associate $\br$ with an element in 
$X_1(T)$.  
 
\begin{dfn}\label{gen-Klingen}
Assume $\lambda_{1,v}:=(m_1-2,m_2-1;w)\in X(T)$ associated to $\br|_{G_{F,v}}$ 
as above lies in $X_1(T)$. Then it is said to be generic 
\begin{enumerate}
\item if $\lambda_{1,v}\in C_1$ with $m_1+m_2>p+1$ and $m<p-1$, or;  
\item if $\lambda_{1,v}\in C_0$. 
\end{enumerate}
\end{dfn}

\subsubsection{Endoscopic case}\label{EC}
In this case, $\br|_{G_{F,v}}$ is isomorphic to the product of two 2-dimensional representations. 
If either or Both of two is reducible, it falls down to the case of Borel ordinary case or Siegel ordinary case. 
Therefore, we may assume these two representations are irreducible.  
Then we have 
$$\br|_{I_{F,v}}\simeq ((\omega^{a+b p}_2\oplus \omega^{b+a p}_2)\oplus 
(\omega^{c+d p}_2\oplus \omega^{c+d p}_2))\otimes \ve^{e}$$ 
for some integers $0\le b<a\le p-1,\ 0\le d<c\le p-1$, and 
$0\le e\le p-1$ satisfying $a+b\equiv c+d$ mod $p-1$.  
We may assume that $a+b\ge c+d$. Then $a+b=c+d$ or $a+b=c+d+(p-1)$. 

First we consider the case when $a+b=c+d$. Put $w=a+b-3+2e$. 
If $b<d$, then $a>c$. Hence $a>c>d>b$ and  
put $\lambda_{1,v}:=(a_1,b_1;c_1)=(m_1,m_2;w+3)-\widetilde{\rho}=(m_1-2,m_2-1;w)$ where 
$m_1=c-b$ and $m_2=d-b$. 
If $b\ge d$, then $a\le c$. Hence $c\ge a>b\ge d$ and 
put $\lambda_{1,v}:=(a_1,b_1;c_1)=(m_1,m_2;w+3)-\widetilde{\rho}=(m_1-2,m_2-1;w)$ where 
$m_1=a-d$ and $m_2=b-d$. 
In either case, $\lambda_{1,v}\in X_1(T)$ if and only if $b\neq d$. 
Further in this case, $\lambda_{1,v}$ belongs to $\overline{C_0-\widetilde{\rho}}\cup 
\overline{C_1-\widetilde{\rho}}$.  
 
Next we consider the case when $a+b=c+d+(p-1)$. Put $c_1=c-1\ge 0$ and $d_1=d+p$ so that $a+b=c_1+d_1$. 
Clearly $a-(c-1)>b-(c-1)\ge 0$. Then we put 
$\lambda_{1,v}:=(a_1,b_1;c_1)=(m_1,m_2;w+3)-\widetilde{\rho}=(m_1-2,m_2-1;w)$ where 
$m_1=a-(c-1),\ m_2=b-(c-1)$, and $w=a+b-3+2e$. 
Then $\lambda_{1,v}\in X_1(T)$ if and only if $b\neq c-1$.

\begin{dfn}\label{gen-endoscopic}Assume $\lambda_{1,v}:=(m_1-2,m_2-1;w)\in X(T)$ associated to $\br|_{G_{F,v}}$ 
as above lies in $X_1(T)$. Then it is said to be generic 
\begin{enumerate}
\item if $\lambda_{1,v}\in C_1$ and $m_1+m_2>p+1$, or;  
\item if $\lambda_{1,v}\in C_0$. 
\end{enumerate}
\end{dfn}


\subsubsection{Definition for general case}\label{g-genericity}
For each $\br|_{G_{F,v}}$, we have associated $\lambda_{1,v}=(m_1-2,m_2-1;w)\in X(T)$ in Section \ref{BOC} through 
Section \ref{IC}. 
\begin{dfn}\label{geneneral-case}The weight $\lambda_{1,v}$ above is said to be generic of type $C_0$ or $C_1$ if 
$\lambda_{1,v}\in X_1(T)$ and further either of the followings is satisfied 
\begin{enumerate}
\item  when $\lambda_{1,v}\in C_1$,  $m_1+m_2>p+1$ and in addition, $m_1<p-1$ if $\br|_{G_{F,v}}$ is of 
Klingen ordinary type, or;  
\item  $\lambda_{1,v}\in C_0$. 
\end{enumerate}
\end{dfn}

\subsection{Some constituents of $W(\lambda)\otimes \F_v$}Let us keep the notation in Section \ref{Serre-weights}.  
The following lemma is useful and it follows 
from the contents in p.15 of \cite{til&her}:
\begin{lem}\label{constituent} Let $\lambda=(a,b;c)$ be an element in $X_1(T)$. It holds that 
\begin{enumerate}
\item $\widetilde{F}(\lambda)=W(\lambda)\otimes_{\O_E}\bF_p$ if $\lambda\in C_0$;
\item $0\lra \widetilde{F}(\lambda)\lra W(\lambda)\otimes_{\O_E}\bF_p\lra 
\widetilde{F}(\mu) \lra 0$ where $\mu=(p-b-3,p-a-3,c)$ if $\lambda\in C_1$;
\end{enumerate} 
\end{lem}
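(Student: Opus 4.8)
\textbf{Proof proposal for Lemma \ref{constituent}.}

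The plan is to deduce the statement from the explicit description of the mod $p$ reduction of dual Weyl modules for $GSp_4$ worked out in Section 1 (p.~15) of Tilouine--Herzig \cite{til&her}, after reducing the $GSp_4$ situation to the $Sp_4$ situation by forgetting the similitude factor. First I would observe that the central character $c$ plays no role in the socle filtration: twisting by the one-dimensional character $\lambda_{(0,0;c)}$ of the similitude is exact and carries $W(\lambda)_{\bF_p}$ and $F(\lambda)$ compatibly, so it suffices to treat the case $c$ arbitrary but fixed, and all statements about constituents are statements about the underlying $Sp_4$-modules (equivalently $\SO_5$-modules, via the exceptional isomorphism used throughout the paper). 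Thus the highest weights in play are the $p$-restricted dominant weights $(a,b)$ with $0\le a-b<p$, $0\le b<p$, which are partitioned (modulo $X^0(T)$) among the closures of the four alcoves $C_0,\dots,C_3$ shifted by $\widetilde{\rho}=(2,1;3)$, exactly as recalled just before Definition \ref{Pot-Serre-weights-FL-range}.

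Next I would invoke Jantzen's general theory (the references \cite{J} are already cited in the paper, in particular II.8.x) together with the Jantzen sum formula, or more directly the case-by-case tables in \cite{til&her}, (5)--(8) on p.~15. For $\lambda\in C_0$ (the lowest alcove, in the interior), the dual Weyl module $W(\lambda)_{\bF_p}$ is irreducible: its Jantzen filtration is trivial because $\lambda$ lies strictly inside the lowest alcove, so $W(\lambda)_{\bF_p}=F(\lambda)$, and passing to $GSp_4(\F_p)$-representations gives $\widetilde F(\lambda)=W(\lambda)\otimes_{\O_E}\bF_p$, which is statement (1). (Here one also uses that $W(\lambda)_{\O_E}\otimes_{\O_E}\bF_p=W(\lambda)_{\bF_p}$, i.e.\ reduction commutes with induction from $B^-$, which is the content of Proposition-(b) in 4.18 of \cite{J} quoted in Section \ref{Serre-Weights}.) For $\lambda=(a,b;c)\in C_1$, the first alcove above the wall, the linkage principle forces $W(\lambda)_{\bF_p}$ to have exactly two composition factors, $F(\lambda)$ and $F(\mu)$ where $\mu$ is the weight linked to $\lambda$ across the relevant wall; a direct computation with the affine Weyl group action (reflection in the wall $x+y=p$, shifted by $\widetilde{\rho}$) identifies $\mu=(p-b-3,p-a-3;c)$. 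Since $F(\lambda)$ is by definition the socle of $W(\lambda)_{\bF_p}$, the unique nontrivial submodule, one gets the short exact sequence $0\to \widetilde F(\lambda)\to W(\lambda)\otimes_{\O_E}\bF_p\to \widetilde F(\mu)\to 0$ after restriction to $GSp_4(\F_p)$, which is statement (2). I would double-check that $\mu$ is indeed again $p$-restricted (equivalently lies in the closure of $C_0$), so that $\widetilde F(\mu)$ is genuinely a Serre weight, using $0\le a-b<p$ and $b<p$.

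The main obstacle, and essentially the only real content, is the explicit determination of the linked weight $\mu$ and the verification that $W(\lambda)_{\bF_p}$ has length exactly two in the $C_1$ case (rather than the submodule structure degenerating at a sub-wall). This is handled by citing the tables (5)--(8) of \cite{til&her} directly, so in the write-up I would keep the proof short: assert (1) from irreducibility in the lowest alcove, and for (2) quote the relevant line of the table on p.~15 for the alcove $C_1$, noting that the socle being simple of highest weight $\lambda$ is automatic from the definition of $F(\lambda)$. I do not expect to need any new computation beyond matching the indexing conventions (the shift by $\widetilde\rho=(2,1;3)$ and the identification of $X(T)$ with the sublattice of $\Z^3$ fixed at the start of Section \ref{Serre-weights}) between this paper and \cite{til&her}.
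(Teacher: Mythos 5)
Your proposal is correct and follows essentially the same route as the paper, which proves the lemma simply by citing the explicit decomposition formulas (5)--(8) on p.~15 of Tilouine--Herzig \cite{til&her}. The extra context you supply (reduction to the $Sp_4$ situation, linkage principle, the verification that the dot-reflection across $x+y=p$ by $\widetilde\rho=(2,1;3)$ gives $\mu=(p-b-3,p-a-3;c)$) is a sound elaboration of what the cited tables encode, not a different argument.
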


\subsection{A proof} We are now ready to prove Theorem \ref{main-thm3}. 
We follow the strategy in Section 5 of \cite{BGG-unitary}. 
Let $\br$ be in the claim. 
\begin{proof}$($Proof of Theorem \ref{main-thm3}$)$ Pick a weight 
$\widetilde{F}(\mu)=\otimes_{v|p}\widetilde{F}(\mu_v)\in W^{C_0\cup C_1}_{{\rm pd-cris},\mathcal{I}}(\br)$. Notice that the role of the notation 
for $\lambda$ and $\mu$ is converted and any confusion should not occur in the proof below. 
By definition, $\widetilde{F}(\mu)$ is a subquotient of $W(\lambda)\otimes\bF_p$ for some 
$\lambda\in C_0\cup C_1$. 
Then for each finite place $v$ of $F$ above $p$, there exists $\lambda_{1,v}\in X_1(T)$ 
such that 
$\lambda_{1,v}+\widetilde{\rho}\in C_0\cup C_1$ and $\otimes_{v|p}W(\lambda_{1,v})
\otimes\bF_p$ contains 
$\widetilde{F}(\mu)$ as a constituent. If $\lambda_{1,v}\in C_0$ for any $v$. By Lemma \ref{constituent}-(1), 
$\widetilde{F}(\mu)=\otimes_{v|p}W(\lambda_{1,v})\otimes\bF_p=\widetilde{F}(\lambda),\ \lambda=(\lambda_{1,v})_v$ and  
the claim follows from Theorem \ref{main-thm2}. 

Suppose that $\lambda_{1,v}\in C_1$ for some $v|p$. 
Further we assume that  $\widetilde{F}(\mu_v)$ appears in the quotient of 
$W(\lambda_{1,v})\otimes\bF_p$ in 
Lemma \ref{constituent}-(2) and try to deduce a contradiction. Then we will conclude 
$\widetilde{F}(\mu)$ is a unique submodule of $\otimes_{v|p}W(\lambda_{1,v})\otimes\bF_p$ with 
$\mu\in C_1$. Hence we have the claim by Theorem \ref{main-thm2} again. 
Let $v_\infty:F\lra \C$ be the embedding corresponding to $v$ via the fixed isomorphism $\iota_p:\bQ_p\simeq \C$. 

First we consider the case when $\br|_{G_{F,v}}$ is of Borel ordinary type. 
Recall from Section \ref{BOC} that 
\begin{equation}\label{BOC1}
\br|_{I_{F,v}}\simeq (\ve^{x+y}\oplus \ve^{x}\oplus \ve^{y}\oplus \textbf{1})\otimes \ve^{\delta},\ 
\delta=\frac{1}{2}(w+3-x-y) 
\end{equation}
for some integers $x,y,w$.  
By Lemma \ref{constituent}-(2) and Theorem \ref{main-thm2},  $\br$ has 
an automorphic lift $\pi$ whose the Harish-Chandra parameter at $v_\infty$ is $(p-y,p-x;w+3)$. 
In particular, 
$$HT(\rho_{\pi,\iota_p}|_{G_{F,v}})=\{\delta',\delta'+p-y,\delta'+p-x,\delta'+2p-(x+y)\},\ \delta'=\frac{1}{2}(w+3+x+y-2p).$$   
By genericity assumption, $x+y>p+1$. It follows from this that $\rho_{\pi,\iota_p}|_{G_{F,v}}$ is in 
the Fontaine-Laffaille range. By Proposition 3.3 of \cite{DZ}, we have 
\begin{equation}\label{BOC2}
\br|_{I_{F,v}}\simeq (\ve^{2p-(x+y)}\oplus \ve^{p-y}\oplus \ve^{p-y}\oplus \textbf{1})\otimes \ve^{\delta'},\ 
\delta'=\frac{1}{2}(w+3+x+y-2p)=\delta+(x+y)-p
\end{equation}
Recall that $p>2$. Hence $\ve^p=\ve\neq \textbf{1}$. It follows from this that 
(\ref{BOC1}) and (\ref{BOC2}) never coincide and it yields a contradiction. 

Next we consider the case when $\br|_{G_{F,v}}$ is of Siegel ordinary type.  
We have 
\begin{equation}\label{SOC1}
\br|_{I_{F,v}}\simeq (\ve^{x+y}\oplus (\omega^{x+y p}_2\oplus \omega^{y+x p}_2)\oplus \textbf{1})\otimes \ve^{\delta},\ \delta=\frac{1}{2}(w+3-x-y)\in \Z
\end{equation}
for some integers $0\le y<x\le p-1$ and $w\in \Z$. 
By Lemma \ref{constituent}-(2) and Theorem \ref{main-thm2},  $\br$ has 
an automorphic lift $\pi$ whose the Harish-Chandra parameter at $v_\infty$ is $(p-y,p-x;w+3)$. 
Similarly, by Theorem \ref{main-thm2} and Proposition 3.3 of \cite{DZ}, we have 
\begin{equation}\label{SOC2}
\br|_{I_{F,v}}\simeq (\ve^{2p-(x+y)}\oplus (\omega^{p-x+(p-y) p}_2\oplus \omega^{p-y+(p-x) p}_2)
\oplus \textbf{1})\otimes \ve^{\delta'} 
\end{equation}
or 
\begin{equation}\label{SOC3}
\br|_{I_{F,v}}\simeq (\ve^{p-y}\oplus (\omega^{p-x+(p-y) p}_2\oplus \omega^{p-y+(p-x) p}_2)
\oplus \ve^{p-y})\otimes \ve^{\delta'} 
\end{equation}
with $\delta'=\frac{1}{2}(w+3+x+y-2p)=\delta+(x+y)-p$. The equations (\ref{SOC1}), (\ref{SOC2}) and (\ref{SOC3}) give 
a contradiction since none of $\ve, \ve^y,\ \ve^x$ is trivial. 

Next we consider the case when $\br|_{G_{F,v}}$ is of Klingen ordinary type.  
In this case we have 
\begin{equation}\label{KOC1}
\br|_{I_{F,v}}\simeq (\omega^{x+y p}_2\oplus \omega^{y+x p}_2)\oplus 
((\omega^{-(x+y p)}_2\oplus \omega^{-(y+x p)}_2)\otimes \ve^{w+3})
\end{equation} 
for some integers $0\le y<x\le p-1$ and $0\le w \le p-2$. 
Recall $m_1=x+y-(w+3),\ m_2=x-y$. By Theorem \ref{main-thm2} and Proposition 3.3 of \cite{DZ}, 
we have 
\begin{equation}\label{KOC2}
\br|_{I_{F,v}}\simeq ((\omega^{p-m_1+(p-m_2) p}_2\oplus \omega^{p-m_2+(p-m_1) p}_2)\oplus 
(\omega^{2p-m_1-m_2}_2\oplus \omega^{p(2p-m_1-m_2)}_2))\otimes \ve^{\delta'} 
\end{equation} 
with $\delta'=\frac{1}{2}(w+3+m_1+m_2-2p)=-p+x$. 
Observe the component 
$$\omega^{2p-m_1-m_2}_2\otimes  \ve^{\delta'}=\omega^{w+2-x+(1+x)p}_2$$
which should also appear in (\ref{KOC1}). 
Notice that $\omega^{w+2-x+(1+x)p}_2\omega^{p(w+2-x)+(1+x)}_2=\ve^{w+3}$.
Compare it with the same thing for each component of (\ref{KOC1}), in either case, 
we have the congruence 
$$m_1=x+y-(w+3)\equiv 0\ {\rm mod}\ p-1$$  
which contradicts the genericity in Definition \ref{gen-Klingen}.

Finally we consider the case when $\br|_{G_{F,v}}$ is of endoscopic type. 
Then we have 
\begin{equation}\label{EC1}
\br|_{I_{F,v}}\simeq ((\omega^{x+y p}_2\oplus \omega^{y+x p}_2)\oplus 
(\omega^{c+d p}_2\oplus \omega^{c+d p}_2))\otimes \ve^{\delta},\ 
\delta=\frac{1}{2}(w+3-x-y) 
\end{equation} 
for some integers $0\le y<x\le p-1$ and $0\le d<c\le p-1$ satisfying $x+y\equiv c+d$ mod $p-1$. 
Recall $m_1=x-(c-1)$ and $m_2=y-(c-1)$ in Section \ref{EC} with 
$(x,y)=(a,b)$.   
We may assume that $x+y\ge c+d$. 
First we consider when $x+y=c+d$. In this case we see that $m_1+m_2<p$ and the corresponding the weight never belongs to $C_1$. 
Therefore, we may assume that $x+y=c+d+p-1$.   
By Theorem \ref{main-thm2} and Proposition 3.3 of \cite{DZ}, 
we have 
\begin{equation}\label{EC2}
\br|_{I_{F,v}}\simeq ((\omega^{p-m_1+(p-m_2) p}_2\oplus \omega^{p-m_2+(p-m_1) p}_2)\oplus 
(\omega^{2p-m_1-m_2}_2\oplus \omega^{p(2p-m_1-m_2)}_2))\otimes \ve^{\delta'} 
\end{equation} 
with $\delta'=\frac{1}{2}(w+3+m_1+m_2-2p)=\delta+x+y-(c-1)-p$. 
By computation, we see easily that 
$$(\omega^{p-m_1+(p-m_2) p}_2\oplus \omega^{p-m_2+(p-m_1) p}_2)\otimes \ve^{\delta'}=
(\omega^{x+yp}_2\oplus \omega^{y+x p}_2)\otimes \ve^\delta.$$ Therefore, we may observe a remaining component 
$$\omega^{2p-m_1-m_2}_2\otimes \ve^{\delta'}=\omega^{c-1+(d+1)p}_2\otimes \ve^\delta$$
which should be either $\omega^{c+d p}_2\otimes \ve^\delta$ or  $\omega^{d+c p}_2\otimes \ve^\delta$.  
This is impossible since $0\le c-1,d+1\le p-1$. 
\end{proof}

\section{Adequacy condition}\label{ade} In this Section, we briefly discuss the second condition in 
Theorem \ref{main-thm1} for 
an absolutely irreducible Galois representation $\br:G_F\lra {\rm GSp}_4(\bF_p)$ about when it holds that  
$\br|_{G_{F(\zeta_p)}}$ is irreducible and $\br(G_{F(\zeta_p)})$ is adequate.  
If $\br|_{G_{F(\zeta_p)}}$ is reducible, then $\br$ is an induced representation of a lower dimensional 
representation of $G_M$ for some $F\subset M\subset F(\zeta_p)$ with $[M:F]=2$ or $4$. 
Assume that $\br|_{G_{F(\zeta_p)}}$ is irreducible and $p>2\cdot 4+2=10$, then 
$\br(G_{F(\zeta_p)})$ is adequate by Appendix A of \cite{Thorne}. For small $p\le 7$, 
if the image of $\br$ contains ${\rm Sp}_4(\F_p)$ 
for $p>2$ or 
$\br\simeq {\rm Sym}^3\tau$ for some $\overline{\tau}:G_F\lra {\rm GL}_2(\bF_p)$ so that $\overline{\tau}$ contains ${\rm SL}_2(\F_p)$ for $p>5$ , then $\br(G_{F(\zeta_p)})$ is adequate. 
This follows immediately from Corollary 1.5 of \cite{GHT} and Proposition 2.1.1 of \cite{BGG2}.  
Since all subgroups in ${\rm GSp}_4(\bF_p)$ which act irreducibly on $\F^{\oplus 4}_p$ via 
the natural inclusion ${\rm GSp}_4(\bF_p)\subset {\rm GL}_4(\bF_p)$ are classified , for example, 
in \cite{Dieu}, we would be able to prove a similar result of Proposition 2.1.1 of \cite{BGG2}. 
We will address this problem somewhere. 
\section{Concluding remarks}
It may be possible to generalize the main results for reductive groups $\mathcal{G}$ in $GL_n$ in this paper once we have the following ingredients:
\begin{enumerate}
\item a complete classification of parahoric restrictions for $\mathcal{G}$; 
\item base change theory for $\mathcal{G}$;
\item a construction of a weight zero form for $\mathcal{G}$.
\end{enumerate}

\end{document}